\newtheorem{thm}{Theorem}[chapter]
\newtheorem{prop}[thm]{Proposition}
\newtheorem{corol}[thm]{Corollary}
\theoremstyle{remark} \newtheorem{rmk}[thm]{Remark}
\theoremstyle{definition} \newtheorem{defn}[thm]{Definition}
\theoremstyle{definition} \newtheorem{ex}[thm]{Example}
\theoremstyle{definition} \newtheorem{ej}{Exercise}
\let\emph\relax
\DeclareTextFontCommand{\emph}{\it\bfseries}
\newcommand{\git}{\mathbin{/\mkern-6mu/}}
\newcommand{\hkgit}{\mathbin{/\mkern-6mu/\mkern-6mu/\mkern-6mu/}}
\newcommand{\C}{\mathbb C}
\newcommand{\RR}{\mathbb R}
\newcommand{\Z}{\mathbb Z}
\newcommand{\bbA}{\mathbb A}
\newcommand{\bbR}{\mathbb R}
\newcommand{\bbN}{\mathbb N}
\newcommand{\bbB}{\mathbb B}
\newcommand{\bbE}{\mathbb E}
\newcommand{\bbP}{\mathbb P}
\newcommand{\cA}{\mathcal A}
\newcommand{\cB}{\mathcal B}
\newcommand{\cC}{\mathcal C}
\newcommand{\cD}{\mathcal D}
\newcommand{\cE}{\mathcal E}
\newcommand{\cF}{\mathcal F}
\newcommand{\cH}{\mathcal H}
\newcommand{\cG}{\mathcal G}
\newcommand{\cL}{\mathcal L}
\newcommand{\cM}{\mathcal M}
\newcommand{\cN}{\mathcal N}
\newcommand{\cP}{\mathcal P}
\newcommand{\cS}{\mathcal S}
\newcommand{\cX}{\mathcal X}
\newcommand{\fU}{\mathfrak U}
\newcommand{\fg}{\mathfrak{g}}
\newcommand{\fk}{\mathfrak k}
\newcommand{\fu}{\mathfrak u}
\newcommand{\sO}{\mathscr O}
\newcommand{\sE}{\mathscr E}
\newcommand{\sF}{\mathscr F}
\newcommand{\sL}{\mathscr L}
\newcommand{\sM}{\mathscr M}
\newcommand{\sG}{\mathscr G}
\newcommand{\sV}{\mathscr V}
\newcommand{\Bt}{\mathrm{B}}
\newcommand{\dR}{\mathrm{dR}}
\newcommand{\Dol}{\mathrm{Dol}}
\newcommand{\Hod}{\mathrm{Hod}}
\newcommand{\R}{\mathrm{R}}
\newcommand{\Ad}{\mathrm{Ad}}
\newcommand{\ad}{\mathrm{ad}}
\DeclareMathOperator{\Aut}{Aut}
\DeclareMathOperator{\Hom}{Hom}
\DeclareMathOperator{\Pic}{Pic}
\DeclareMathOperator{\Jac}{Jac}
\DeclareMathOperator{\End}{End}
\DeclareMathOperator{\Spec}{Spec}
\DeclareMathOperator{\Proj}{Proj}
\DeclareMathOperator{\tr}{tr}
\DeclareMathOperator{\Bun}{Bun}
\DeclareMathOperator{\Nm}{Nm}
\DeclareMathOperator{\GL}{GL}
\DeclareMathOperator{\SL}{SL}
\DeclareMathOperator{\PGL}{PGL}
\DeclareMathOperator{\U}{U}
\DeclareMathOperator{\SU}{SU}
\DeclareMathOperator{\PU}{PU}
\DeclareMathOperator{\id}{id}
\DeclareMathOperator{\delbar}{\bar{\partial}}
\DeclareMathOperator{\diag}{diag}
\DeclareMathOperator{\Lie}{Lie}
\DeclareMathOperator{\rk}{rk}
\DeclareMathOperator{\pr}{pr}
\title{Non-Abelian Hodge Theory and Moduli Spaces of Higgs Bundles}
\author{Guillermo Gallego}
\address{Institut für Mathematik \\ Freie Universität Berlin \\ 14195 Berlin, Germany}
\email{\normalfont\href{mailto:guillermo.gallego.sanchez@fu-berlin.de}{guillermo.gallego.sanchez@fu-berlin.de}}
\urladdr{\normalfont\href{https://guillegallego.xyz}{https://guillegallego.xyz}}
\begin{document}
\begin{titlepage}
\setcounter{page}{1}
\thispagestyle{empty}
\begin{center}
\Huge \bfseries Non-Abelian Hodge Theory and Moduli Spaces of Higgs Bundles\par\vspace{2em}
\Large Guillermo Gallego \par\bigskip
\end{center}

\vspace{7em}

\begin{center}
\textbf{Abstract}
\end{center}
This paper provides an introduction to non-abelian Hodge theory and moduli spaces of Higgs bundles on compact Riemann surfaces. We develop the moduli theory of vector bundles and Higgs bundles, establish the main correspondences of non-abelian Hodge theory, and interpret them through the hyperkähler structure on the Hitchin moduli space. We study the Hitchin fibration and its geometric properties, including SYZ mirror symmetry and topological mirror symmetry for type $\mathsf{A}$ Hitchin systems. As an illustration, we compute the Poincaré polynomial of the rank 2 moduli space and verify topological mirror symmetry in this case. \\
\\
\textbf{Mathematics Subject Classification (2020)}: Primary: 14D20, 53C26. Secondary: 14H60, 14H70, 14J33 \\
\\
\textbf{Keywords}: Higgs bundles, non-abelian Hodge theory, Hitchin fibration

\vfill
\hrule \vspace{1em}
\begin{center}
\textsc{Institut für Mathematik, Freie Universität Berlin, 14195 Berlin, Germany} \vspace{1em}
\textit{Email:} \normalfont\href{mailto:guillermo.gallego.sanchez@fu-berlin.de}{guillermo.gallego.sanchez@fu-berlin.de} \hspace{1em}
\textit{URL:} \normalfont\href{https://guillegallego.xyz}{https://guillegallego.xyz}
\end{center}
\end{titlepage}

\tableofcontents
\chapter{Introduction}

\section{Abelian Hodge theory}
Hodge theory is about finding representatives of the cohomology classes of a manifold. In the case of a compact complex manifold, Hodge theory gives representatives of the Dolbeault cohomology classes. Moreover, if the compact complex manifold is also Kähler, there is the Hodge decomposition theorem, which splits the (smooth, $\C$-valued) de Rham cohomology of the manifold in terms of the Dolbeault cohomology. We are particularly interested in the case of a genus $g$ compact Riemann surface $X$. In that case, the Hodge decomposition theorem yields an isomorphism
\begin{equation*}
H^1_{\dR}(X,\C) \cong H^{1,0}(X) \oplus H^{0,1}(X)
\end{equation*}
and $H^{1,0}(X)$ and $H^{0,1}(X)$ are exchanged by conjugation. Therefore, one can identify
\begin{equation} \label{eq:abelian-hodge}
H^{1,0}(X) \cong H^1_{\dR}(X,\bbR) \text{ and } H^{0,1}(X)\cong H^1_{\dR}(X,i\bbR).
\end{equation}
The de Rham theorem matches the de Rham cohomology space $H^1_{\dR}(X,\bbR)$ with the singular cohomology $H^1(X,\bbR)$, which is in turn canonically isomorphic to $\Hom(\pi_1(X,x_0),\bbR)\cong \bbR^{2g}$, for any point $x_0\in X$. We can also take coefficients in the circle group
\begin{equation*}
\U(1)= \left\{ z\in \C : |z|=1 \right\}.
\end{equation*}
Note that there is the short exact sequence
\begin{center}
  \begin{tikzcd}
0 \rar & 2\pi i \Z \rar & i\bbR \rar{\exp} \rar & \U(1) \rar & 0,
  \end{tikzcd}
\end{center}
which allows us to give a diffeomorphism
\begin{equation} \label{eq:abelianNS}
\Hom(\pi_1(X,x_0),\U(1)) \cong H^1(X,i\bbR)/H^1(X,2\pi i \Z) \cong H^{0,1}(X)/H^1(X,2\pi i \Z).
\end{equation}
The quotient $\Jac(X):= H^{0,1}(X)/H^1(X,2\pi i \Z) = H^1(X,\sO_X)/H^1(X,2\pi i \Z)$ is an abelian variety of complex dimension $g$, called the Jacobian of $X$. On the other hand, note that $\Hom(\pi_1(X,x_0),\U(1))=\U(1)^{2g}$ is indeed a $2g$-dimensional compact real torus. The space $\Jac(X)$ is also the neutral connected component of the cohomology group $H^1(X,\sO_X^*)$, and thus parametrizes topologically trivial holomorphic line bundles. More generally, if we take coefficients on $\C^*$, we obtain a diffeomorphism
\begin{equation*}
\Hom(\pi_1(X,x_0),\C^*) \cong H^1(X,\C)/H^1(X,2\pi i \Z) \cong H^{1,0}(X) \times \Jac(X).
\end{equation*}
An element of $\Jac(X) \times H^{1,0}(X)$ is an isomorphism class of a pair $(\sL,\varphi)$ formed by a topologically trivial holomorphic vector bundle $\sL$ on $X$ and a holomorphic $1$-form $\varphi \in H^{1,0}(X)$. Such a pair $(\sL,\varphi)$ is an example of a (rank $1$, topologically trivial) Higgs bundle. We remark the fact that $\Hom(\pi_1(X,x_0),\C^*)=(\C^{2g})^*$ is a complex torus of complex dimension $2g$, while on the right hand side we get a product of an abelian variety of complex dimension $g$ with a complex vector space of complex dimension $g$. These are two different complex manifolds, which are diffeomorphic, but notably they are not complex analytically isomorphic.

The ``representatives'' of the cohomology classes that we mentioned at the beginning of the section are the harmonic forms. This means that the isomorphisms \eqref{eq:abelian-hodge} from where the identifications of the spaces of (topologically trivial) line bundles and of rank $1$ Higgs bundles with the corresponding representation spaces followed are mediated by solving the Laplace equation on the Riemann surface $X$. We are in the setting of abelian ($\U(1)$, or $\C^*$) gauge theory (aka electromagnetism).

\section{Non-abelian Hodge theory: A short history of the topic}
One could date back the genesis of non-abelian Hodge theory to 1965 with the publication of the celebrated result of M.S. Narasimhan and C.S. Seshadri \cite{NarasimhanSeshadri}
\begin{equation*}
\Hom(\pi_1(X,x_0),\U(r))/\U(r) \cong \cN_{r,0} ,
\end{equation*}
giving a homeomorphism between the moduli space of rank $r$ unitary representations of the fundamental group of $X$ with the moduli space $\cN_{r,0}$ classifying (semistable, topologically trivial) holomorphic rank $r$ vector bundles on $X$. For $r=1$, one recovers the statement \eqref{eq:abelianNS}. The structure of unstable bundles is more complicated, but notably Günter Harder and Narasimhan \cite{HarderNarasimhan} introduced an invariant that one can associate to any bundle, now called the Harder--Narasimhan filtration. Moreover, Harder and Narasimhan studied the global topology of the moduli spaces of holomorphic vector bundles in terms of the Weil conjectures.

Almost two decades later, in the early 1980s, Michael Atiyah and Raoul Bott \cite{AtiyahBott} reinterpreted the theorem of Narasimhan--Seshadri in terms of gauge theory. More precisely, they studied the Yang--Mills equations for vector bundles with unitary connections on $X$. It turns out that on a Riemann surface the Yang--Mills equations are equivalent to the equations for a flat connection (or, more generally, for a connection of constant central curvature). They then reformulated the Narasihman--Seshadri theorem as the existence of a (unique) Hermitian metric, such that its Chern connection is of constant central curvature in any polystable holomorphic vector bundle on $X$. Notably, Atiyah and Bott also reproved the formulas of Harder and Narasimhan for the Betti numbers of the moduli space of bundles in terms of gauge theory.

Around that time, Simon Donaldson (who was then a PhD student of Atiyah and Nigel Hitchin) gave a new proof of the theorem of Narasimhan--Seshadri \cite{DonaldsonNS} in these gauge-theoretical terms. His proof relied strongly on deep analytical results about connections which had been proven by Karen Uhlenbeck \cite{Uhlenbeck}. Donaldson's proof would later serve as a ``model'' for many other correspondences of this sort, between objects which satisfy some algebro-geometric stability conditions, and solutions to some gauge-theoretical equation. Correspondences of this kind are generally called ``Hitchin--Kobayashi'' correspondences. Notable examples are the Donaldson--Uhlenbeck--Yau theorem \cite{Donaldson, UhlenbeckYau} (the higher dimensional analogue of Narasimhan--Seshadri) and the Hitchin--Simpson \cite{Hitchin_SelfDuality,Simpson_Variations} correspondence for Higgs bundles.

Higgs bundles and the Hitchin fibration were introduced by Hitchin in his two seminal papers of 1987 \cite{Hitchin_SelfDuality,Hitchin_Systems}, although the term was coined one year later by Carlos Simpson\footnote{Notably, Hitchin talks in \cite{Hitchin_SelfDuality} about the ``Higgs field''. Similar terminology had already been used in the monopole literature by him and others (see, for example \cite{Hitchin_MonopolesGeodesics}), to refer to the additional section appearing in some gauge equations, specially when these are obtained by dimensional reduction of the Yang--Mills equations, as is the case for the Hitchin equations and for the Bogomolny equations governing monopoles. There is no direct relation with the widely popular and talked-about ``Higgs boson'', although see Witten's comment in \cite[Remark 2.1]{Witten_Hitchin}.} \cite{Simpson_Variations}. In \cite{Hitchin_SelfDuality}, Hitchin defined the stability condition for $\SL_2(\C)$-Higgs bundles, and proved that every polystable $\SL_2(\C)$-Higgs bundle admits a Hermitian metric such that the Chern connection solves the (now called) Hitchin equations. The further relation with representations of the fundamental group would then follow from a result of Donaldson in \cite{Donaldson_Harmonic}, which is the paper following \cite{Hitchin_SelfDuality} in the journal it was published. This then established non-abelian Hodge theory for $\SL_2(\C)$,
\begin{equation*}
\Hom(\pi_1(X,x_0),\SL_2(\C))\git \SL_2(\C) \simeq \check{\cM}_{2,0} ,
\end{equation*}
relating the $\SL_2(\C)$-character variety of $X$ with the moduli space of polystable $\SL_2(\C)$-Higgs bundles on $X$. Shortly after, Simpson \cite{Simpson_Variations} gave a wide generalization of Hitchin's results to arbitrary rank and to arbitrary dimension of the base (i.e. to arbitrary compact Kähler manifolds). The existence of harmonic metrics on flat bundles was proved in general by Kevin Corlette in 1988 \cite{Corlette}.

The global topology of the moduli space of Higgs bundles was already studied by Hitchin in \cite{Hitchin_SelfDuality}, who computed the Poincaré polynomial of the moduli space $\check{\cM}_{2,1}$ of degree $1$ (twisted) $\SL_2(\C)$-Higgs bundles by using a Morse-theoretical method. Hitchin's argument was later generalized to rank $3$ in the thesis of Peter Gothen \cite{Gothen}. The Betti numbers of the moduli space of Higgs bundles are not known for general rank, but there are conjectural formulas by Tamás Hausel and Fernando Rodríguez-Villegas \cite{HauselRodriguezVillegas}. The Morse-theoretical arguments of Hitchin were extended to arbitrary rank in a paper by Oscar García-Prada, Jochen Heinloth and Alexander Schmitt \cite{OscarHeinlothSchmitt}, using a motivic point of view. In particular, they gave explicit formulas in the case of rank $4$ and odd degree, and verified the conjecture of Hausel and Rodríguez-Villegas up to genus 21 using a computer algebra system.

The original formulation of the Hitchin fibration in \cite{Hitchin_Systems} is in terms of $G$-Higgs bundles, for general reductive $G$, and Hitchin already hinted in there towards the duality of Hitchin fibres for Langlands dual groups\footnote{See the remark at the top of page 109, where Hitchin mentions that the generic fibres for the type $\mathsf{B}$ and type $\mathsf{C}$ Hitchin fibrations are \textit{the same} Prym variety. Notably, in \cite{HitchinG2}, Hitchin mentions that Thaddeus later pointed out a mistake on his original paper, the resolution of which would yield that the fibres for types $\mathsf{B}$ and $\mathsf{C}$ are indeed \textit{dual}, and not the same as stated originally.}. Such duality was later, in 2002, explicitly stated in the case of $\SL_r(\C)$ and $\PGL_r(\C)$ by Hausel and Michael Thaddeus \cite{HauselThaddeus}, who interpreted it as SYZ mirror symmetry for the de Rham moduli space. This led them to conjecture their ``topological mirror test'' concerning a certain agreement of the (stringy) $E$-polynomials of the moduli spaces of $\SL_r(\C)$ and $\PGL_r(\C)$-Higgs bundles. They proved their conjectures for $r=2$ and $3$, using the Morse-theoretical study of the cohomology of the moduli space by Hitchin and Gothen. The general statement was proven in 2017 by Michael Groechenig, Dimitri Wyss and Paul Ziegler \cite{GWZ_Mirror}, using a technique of $p$-adic integration. A different proof appeared in 2020 by Davesh Maulik and Junliang Shen \cite{MaulikShen_Endoscopic}. Another major highlight of the study of the global topology of the Hitchin system is the proof by Ngô Bao Chau in 2008 \cite{Ngo} of the Fundamental Lemma of Langlands--Shelstad, which won him the Fields Medal in 2010. A different proof of Ngô's results using $p$-adic integration was given in 2018 by Groechenig, Wyss and Ziegler \cite{GWZ_Ngo}.

The description of the generic Hitchin fibres for general $G$ was developed in 2000 by Ron Donagi and Dennis Gaitsgory \cite{DonagiGaitsgory}, and the duality statement for general $G$ was formulated and proven by Donagi and Tony Pantev \cite{DonagiPantev} in 2006. They interpreted this statement as a ``classical limit'' of the ``geometric Langlands conjecture''. Notably, the geometric Langlands conjecture was first formulated by Alexander Beilinson and Vladimir Drinfeld as a ``quantization of Hitchin's system'' \cite{BeilinsonDrinfeld} in a preprint available since the mid 1990s. Also in 2006, Anton Kapustin and Edward Witten \cite{KapustinWitten} gave a physical interpretation of the geometric Langlands conjecture as ``hyperkähler-enhanced'' homological mirror symmetry on Hitchin's moduli space. A proof of (a refined and corrected version of) the geometric Langlands conjecture as a collaborative project of 9 authors (D. Arinkin, D. Beraldo, J. Campbell, L. Chen, J. Faergeman, D. Gaitsgory, K. Lin, S. Raskin and N. Rozenblyum) has been very recently made available \cite{ProofGLC}.

\section{Recent developments}
In recent years, research centered around the moduli space of Higgs bundles remains very active. We shall mention just a few lines of current research and subsequent ramifications, although there are many more.

\subsection*{The $P=W$ theorem and the cohomology of the moduli space} As we have already mentioned, the study of global geometric properties of the Hitchin fibration, and in particular the study of its cohomology ring, remains widely open. A major breakthrough has been made recently with the proof of the ``$P=W$ property'', conjectured by Mark de Cataldo, Hausel and Luca Migliorini \cite{PWConjecture}. The statement is about the agreement of two different filtrations on the cohomology of the moduli space of Higgs bundles: $W$ stands for the weight filtration associated with a complex affine variety, which comes from the fact that the moduli space is homeomorphic to the character variety; the other filtration is $P$, the perverse filtration associated with a proper map, which comes from the fact that the Hitchin fibration is proper. Two different proofs appeared almost at the same time, in September 2022, one by Maulik and Shen \cite{MaulikShen_PW}, using a support theorem and the ``global Springer theory'' of Zhiwei Yun \cite{Yun}, and another one by Hausel, Anton Mellit, Alexandre Minets and Olivier Schiffmann \cite{H2_PW}, which uses an action of the Heisenberg algebra.

\subsection*{Higgs bundles for real groups and higher rank Teichmüller theory} As many other topics related with Higgs bundles, the notion of a Higgs bundle with real structure group dates back to Hitchin's original paper \cite{Hitchin_SelfDuality}. There, he showed that a component of the moduli space of $\SL_2(\bbR)$-Higgs bundles can be identified with the Teichmüller space parametrizing hyperbolic metrics on $X$. In a later paper \cite{Hitchin_Classes}, Hitchin generalized this observation to higher rank, and more generally to split real groups $G_\bbR$. More precisely, he shows the existence of a component (now called the Hitchin component) inside the moduli space of $G_\bbR$-Higgs bundles which is a ball, and can be understood as a ``higher rank analogue'' of Teichmüller space. Nonabelian Hodge theory has been in fact generalized to consider $G_\bbR$-Higgs bundles, where $G_\bbR$ is any real reductive group, in the work of García-Prada with Gothen and Ignasi Mundet i Riera \cite{OscarGothenMundet}.

\subsection*{Understanding geometric Langlands duality} Even though a proof of the geometric Langlands program is now available, identifying some objects on both sides of the correspondence and constructing one from the other can be a complicated problem. One of the features of the ``hyperkähler enhanced'' mirror symmetry formulated by Kapustin and Witten, is that it predicts the duality of certain objects supported on certain special submanifolds of the moduli space of Higgs bundles, called branes. For example, branes supported on hyperkähler submanifolds are called ``BBB branes'', while branes supported on holomorphic Lagrangian submanifolds are called ``BAA branes''. These two types of branes are conjecturally dual. It is then an interesting problem to find holomorphic Lagrangian submanifolds of the moduli space and hyperkähler submanifolds of the moduli space with dual group which are dual to these. Moduli spaces of Higgs bundles with structure group a real form of a complex group are natural sources of Lagrangian submanifolds. These are in fact particular cases of ``Gaiotto Lagrangians'' \cite{Gaiotto,GinzburgRozenblyum} which are holomorphic Lagrangian submanifolds of the moduli space of Higgs bundles induced by Hamiltonian $G$-spaces. These can also be interpreted as ``boundary conditions'' in the ``relative Langlands program'' of David Ben-Zvi, Yiannis Sakellaridis and Akshay Venkatesh \cite{BZSV}. Some other selected references in this topic are \cite{BaragliaSchaposnik,HellerSchaposnik,BiswasOscarJacques,HauselEnhanced,FGOP,FrancoHanson,HameisterLuoMorrissey,ChenHsiaoYang}.

\subsection*{Generalizations of the Hitchin fibration} The Hitchin fibration is modelled over the Chevalley restriction map $\mathfrak{g}\rightarrow \mathfrak{g}\git G$. There are some other fibrations also arising from maps of the form $M\rightarrow M\git G$, which appear in the study of Higgs bundles and related topics. For example, the Hitchin fibration for higher dimensional manifolds, originally defined by Simpson \cite{SimpsonII}, arises from the $G\times \mathrm{GL}_{n}$-action on the commuting variety $\mathfrak{C}_n(\mathfrak{g})$ of $n$ elements on $\mathfrak{g}$. The case of $G=\GL_r$ is already quite complicated, and has been studied by Ngô and Tsao-Hsien Chen \cite{NgoChen} who obtain some results in the case of $n=2$. A similar kind of fibration, but over Riemann surfaces, associated with Higgs pairs twisted by a vector bundle, was studied in \cite{twistedpaper}. Another example is the Hitchin fibration for Higgs bundles with real structure group, considered in the work of García-Prada and Ana Peón-Nieto \cite{AnaOscar} and studied in much detail in the work of Thomas Hameister and Benedict Morrissey \cite{HameisterMorrissey}. In particular, they develop a framework to deal with several types of generalized Hitchin fibrations, called the ``regular quotient'', inspired in some constructions of García-Prada and Peón-Nieto \cite{AnaOscar}, which were originally based in the work of Abramovich--Olsson--Vistoli \cite[Appendix A]{AOV}. It is also worth mentioning ``multiplicative'' Hitchin fibrations, which arise from the Steinberg map $G\rightarrow G\git G$. These have been used by Griffin Wang to provide a more direct proof of the Fundamental Lemma \cite{Griffin}. A ``Donagi--Pantev-style'' duality for multiplicative Hitchin fibrations has been recently studied in \cite{dualitypaper}.

\section{About this paper}
Our aim in this paper is to provide a self-contained introduction to the rich theory of Higgs bundles and their moduli spaces, suitable for readers with a background in algebraic geometry or differential geometry. We begin with fundamental notions—vector bundles, connections, and holomorphic structures—building up to the sophisticated machinery of non-abelian Hodge theory. Along the way, we develop the necessary tools from geometric invariant theory to construct moduli spaces and establish stability conditions for both vector bundles and Higgs bundles.

A central theme is the interplay between algebraic, differential-geometric, and topological perspectives. We show how the Hitchin equations give rise to a hyperkähler structure on the moduli space, unifying three seemingly different moduli spaces: the Betti, de Rham, and Dolbeault moduli spaces. This hyperkähler geometry leads naturally to the Hitchin fibration, whose properties reveal deep connections to mirror symmetry and the Langlands program.

In the final chapter, we work out some explicit computations in rank 2. More precisely, following Hitchin's original approach, we calculate the Poincaré polynomial using Morse-theoretic methods and verify the topological mirror symmetry conjecture of Hausel and Thaddeus. These calculations illustrate the powerful techniques available for studying the global topology of Higgs bundle moduli spaces.

\subsection*{A note on proofs and technical details} We emphasize that this paper is primarily expository in nature. Although the definitions and statements are meant to be precise, we do not provide complete proofs of the main theorems of non-abelian Hodge theory—namely, the Narasimhan-Seshadri, Corlette-Donaldson, and Hitchin-Simpson theorems. These results require substantial analytical machinery, including elliptic regularity theory, estimates for non-linear PDEs, and Uhlenbeck's compactness theorems, which would take us too far afield. Instead, we explain the geometric content of these theorems and their role in the theory. Similarly, our treatment of geometric invariant theory sketches the main ideas while omitting some technical details. In Chapter \ref{sec:global}, however, we provide complete calculations for the Poincaré polynomials and verification of topological mirror symmetry in rank 2, as these serve to illustrate the general theory. Throughout, we provide references to the literature where interested readers can find rigorous proofs and further details.

\subsection*{Exercises and examples} Throughout the paper, we have included a substantial collection of exercises that serve multiple purposes: some verify claims made in the text, others develop important examples and special cases, and several extend the theory in directions we do not pursue in the main exposition. We encourage readers to work through these exercises, as they provide valuable intuition and complement the theoretical development. Examples include explicit computations with vector bundles on $\bbP^1$, constructions of special Higgs bundles, and verifications of key formulas.

\section{Organization of the paper}
In Chapter \ref{sec:primer}, we establish the foundational theory of vector bundles and connections on compact Riemann surfaces. We develop the differential-geometric perspective on holomorphic structures via Dolbeault operators, introduce flat connections and their relationship to representations of the fundamental group via the Riemann-Hilbert correspondence, and discuss Hermitian metrics and the Chern connection.

Chapter \ref{sec:classification} addresses the classification problem for vector bundles from multiple perspectives. We begin with the topological classification in terms of rank and degree, then introduce the Jacobian variety and its role in parametrizing holomorphic line bundles. The main focus is the construction of the moduli space of semistable vector bundles: we provide a review of geometric invariant theory and sketch both the algebraic construction as a GIT quotient and the differential-geometric construction as a symplectic (Kähler) quotient. We conclude by presenting the Narasimhan--Seshadri theorem, which identifies this moduli space with the character variety of unitary representations.

In Chapter \ref{sec:naht}, we develop the full non-abelian Hodge correspondence. We introduce the Betti moduli space (character varieties), the de Rham moduli space (vector bundles with connections), and the Dolbeault moduli space (Higgs bundles), establishing their real-analytic equivalence. The key results are the Donaldson--Corlette and the Hitchin--Simpson theorems on the existence of canonical metrics, which we interpret through the hyperkähler structure on the Hitchin moduli space. We explain how the different complex structures arise from a single hyperkähler metric and introduce the twistor family and $\lambda$-connections.

Chapter \ref{sec:hitchinsystem} is devoted to the Hitchin fibration and its remarkable properties. We define the Hitchin map and explain the spectral correspondence, which describes generic fibers as Prym varieties of spectral curves. We develop the theory for both $\SL_r$ and $\PGL_r$ Higgs bundles, establishing the duality between their Hitchin fibrations. This leads naturally to a discussion of SYZ mirror symmetry and topological mirror symmetry, which predicts a relationship between (stringy) $E$-polynomials of dual moduli spaces. We conclude with remarks on Langlands duality and connections to the geometric Langlands program.

Finally, Chapter \ref{sec:global} presents explicit calculations in rank $2$. We introduce equivariant cohomology and the theory of perfect stratifications, then review the Atiyah--Bott computation of the Poincaré polynomial for the moduli space of vector bundles using the Harder--Narasimhan stratification. We then reproduce Hitchin's calculation for Higgs bundles using the Bialynicki-Birula stratification induced by a $\C^*$-action, and conclude by verifying the topological mirror symmetry conjecture of Hausel--Thaddeus in this case.

\section{Acknowledgements}
This paper originated from a three-hour mini-course I taught in the ``Workshop on character varieties and Higgs bundles'' celebrated in Liberia, Guanacaste on the 4-8 of August 2025. Thus I am grateful to the organizers of that conference, Alexander Schmitt and Ronald Zúñiga-Rojas for giving me that oportunity. I would like to thank Sam Engleman, Cesare Goretti and Alfonso Zamora, for discussions and suggestions about preliminary versions of the paper. I also thank Miguel González for answering some questions about upward flows.

My research is funded by a project of the Deutsche Forschungsgemeinschaft (DFG) with number 524596398, under a postdoctoral contract at the Freie Universität Berlin.

\chapter{A primer on vector bundles and connections} \label{sec:primer}
\section{Vector bundles in different categories} \label{ss:vectorbundles}
Let $X$ be a compact Riemann surface. We can trivialize $X$ by giving a complex atlas: namely, we cover $X$ by a family $\mathfrak{U}$ of open subsets $U\subset X$ with a homeomorphism  $\psi_U:U\rightarrow D_U\subset \mathbb{C}$ with some disk $D_U$ in  $\mathbb{C}$, for each $U\in \mathfrak{U}$, in such a way that the coordinate change functions $\psi_{UV}=\psi_V \circ \psi_U^{-1}: D_U \cap D_V \rightarrow D_U \cap D_V$ are holomorphic.

\begin{defn}
A \emph{vector bundle} $E$  of rank $r$ over $X$ is given by gluing spaces of the form $E_U=U\times \mathbb{C}^r$ using a set of continuous transition functions \[\left\{g_{UV}: U,V \in \fU, U\cap V \neq \varnothing\right\},\] with \[g_{UV}:U\cap V \rightarrow \GL_r(\C)\] that satisfy the (1-)\emph{cocycle condition}
\begin{equation*}
g_{UV} g_{VW} = g_{UW}.
\end{equation*} 

A vector bundle is \emph{smooth}, \emph{holomorphic} or a \emph{local system} if the transition functions $g_{UV}$ are respectively smooth, holomorphic or locally constant.
\end{defn}

\begin{rmk}
Two vector bundles $E$ and $E'$ are isomorphic if and only if their corresponding $1$-cocycles $(g_{UV})$ and $(g'_{UV})$ are cohomologous, meaning that there exists some family $\left\{f_U:U\rightarrow \GL_r(\C): U\in \fU \right\}$ (i.e. a $0$-coboundary) such that
\begin{equation*}
g_{UV}' = f_U g_{UV} f_V^{-1}.
\end{equation*} 
The action of such $0$-coboundaries on the $1$-cocycles determines a groupoid, which naturally classifies vector bundles. The set of isomorphism classes of this groupoid can be understood as a ``non-abelian sheaf cohomology set'' \[H^1(X,\GL_r(\mathscr{A})).\]  Here, $\mathscr{A}$ denotes the sheaf $C^\infty_X$ of smooth functions on $X$, the sheaf $\sO_X$ of holomorphic functions on $X$ or the sheaf $\underline{\C}_X$ of locally constant functions on $X$ depending on whether we are considering smooth bundles, holomorphic bundles or local systems, respectively.
\end{rmk}

\begin{rmk}
Any compact Riemann surface is biholomorphic to the analytification of a smooth complex projective curve. Zariski open subsets determine open subsets in the analytic topology. Thus, if we let $\fU$ correspond to a cover by Zariski open subsets of $X$, we can also consider \emph{algebraic} vector bundles, determined by the condition that the transition functions $g_{UV}$ are regular maps into $\GL_r$ (regarded as a smooth algebraic variety over $\C$). Serre's GAGA theorem \cite{GAGA} implies that the category of algebraic vector bundles over a smooth complex projective curve is equivalent to the category of holomorphic vector bundles over its analytification.
\end{rmk}

\begin{rmk}
Consider a local system $E$ over $X$ determined by a set of locally constant transition functions $\left\{g_{UV}\right\}$. Given any point $x_0\in X$ we can define the \emph{monodromy representation} $\rho_E:\pi_1(X,x_0)\rightarrow \GL_r(\C)$ constructed as follows. If $\sigma$ is a loop on $X$ based at $x_0$, then we can partition the unit interval $t_0=0<t_1<t_2<\dots<t_N=1$ in such a way that, for every $i=1,\dots,N$ there exists some  $U_i \in \fU$ such that $\sigma([t_i,t_{i+1}])\subset U_i$. If we call $x_i=\sigma(t_i)$ and $g_{ij}=g_{U_iU_j}$, we can define
\begin{equation*}
\rho_E(\sigma) = g_{12}(x_1) g_{23}(x_2) \dots g_{N1}(x_N).
\end{equation*} 
\end{rmk}

\begin{ej} Verify the following statements about the remark above.
\textit{	
\begin{enumerate}
	\item There exists such a partition of the unit interval. {\rm \textbf{Hint}: Use ``Lebesgue's Number Lemma", \cite[p. 179]{Munkres}}.
	\item The map $\rho_E(\sigma)$ does not depend on the choice of ``Lebesgue partition".
	\item The map $\rho_E(\sigma)$ does not depend on the chosen representative of the homotopy class $[\sigma]$.
\end{enumerate}	}
\end{ej}

\subsection*{Some notations and basic rudiments of complex geometry}
We denote by $TX\rightarrow X$ (resp. $T^*X\rightarrow X$) the (smooth) tangent (resp. cotangent) bundles of $X$. Its transition functions are the differentials $d\psi_{UV}\in \GL_2(\bbR)$ (resp. the duals of the differentials $(d\psi_{UV})^*$) of the coordinate change functions $\psi_{UV}$ determined by the smooth structure of $X$. Moreover, since by assumption the $\psi_{UV}$ are holomorphic, they satisfy the Cauchy--Riemann equations, which precisely means that the matrices $d\psi_{UV}$ actually lie in the image of $\C^*$ under the natural embedding $\C^* \rightarrow \GL_2(\bbR)$. This means that $TX$ and $T^*X$ are actually holomorphic line bundles. When regarded as such, we shall write $\bm{T}X$ and $\bm{T}^* X$. Equivalently, we can see $X$ as equipped with an (integrable, although this is trivial in this case for dimensional reasons) almost-complex structure $I:TX \otimes \C \rightarrow TX \otimes \C$, which splits the complexified tangent space in eigenspaces $T^{1,0}X$ and $T^{0,1}X$, and then identify $\bm{T}X = T^{1,0}X$. In turn, we identify $\bm{T}^* X = (T^{1,0}X)^*$.

If $E\rightarrow X$ is a vector bundle, then we generally denote its space of sections as $\Gamma(X,E)$ or, more generally, we write $\Gamma(U,E)$ to denote the space of sections of $E$ defined on an open subset $U\subset E$. These sets determine a sheaf $\Gamma_E$, the sheaf of sections of $E$. It is a locally free sheaf of $\mathscr{A}$-modules, where $\mathscr{A}$ is the sheaf $C^\infty_X$, $\sO_X$ or $\underline{C}_X$, depending on if $E$ is smooth, holomorphic or locally constant. When $E$ is smooth, we shall also use the notation $\Omega^0(U,E):= \Gamma(U,E)$. The sheaf of sections of a holomorphic line bundle will generally be denoted with calligraphic font $\mathscr{E}$, and sometimes we will abuse notation and identify a holomorphic line bundle with its sheaf of sections, even if they are two different objects.

Given an integer $k$, we consider the (complexified) wedge product bundle $\Lambda^kX \otimes \C := \wedge^k (TX\otimes \C)$, whose global sections $\Omega^k(X)$ ($=\Omega^k(X,\C)$) are the (complex-valued) differential $k$-forms on $X$.
Given integers $p$ and $q$, we can consider the wedge product bundle $\Lambda^{p,q} X=\wedge^p (T^{1,0}X)^* \wedge \wedge^q(T^{0,1}X)^*$. The sheaf of sections of this bundle is denoted by $\Omega^{p,q}_X$. The space of global sections $\Omega^{p,q}(X)=\Gamma(X,\Lambda^{p,q}X)$ is the space of $(p,q)$-forms. More generally, if $E$ is a smooth complex vector bundle, we can also consider the tensor vector bundles $\Lambda^k X\otimes E$ and $\Lambda^{p,q} X \otimes E$, and define $\Omega^k(X,E)=\Gamma(X,\Lambda^k X\otimes E)$ $\Omega^{p,q}(X,E):= \Gamma(X,\Lambda^{p,q} X \otimes E)$. In particular, we obtain the (holomorphic) sheaves of holomorphic $p$-forms $\bm{\Omega}^p_X := \Omega^{p,0}_X$.

\section{Connections and curvature}
Recall that we have exterior differentiation $d:\Omega^k(X)\rightarrow \Omega^{k+1}(X)$. Connections provide a way to generalize exterior differentiation to differential forms with coefficients on a vector bundle.

 \begin{defn}
Let $E$ be a smooth vector bundle on $X$. A \emph{connection} $D$ on $E$ is a  $\C$-linear operator 
\begin{equation*}
D: \Omega^0(X,E) \rightarrow \Omega^1(X,E)
\end{equation*} 
satisfying the \emph{Leibniz rule}
\begin{equation*}
D(fs)= fD s + df \otimes s,
\end{equation*} 
for $f$ a smooth function on $X$ and $s$ a section of $E$ on $X$.
\end{defn}

\begin{rmk}
Note that the existence of smooth partitions of unity implies that $D$ can actually be regarded as a map of sheaves of sections
\begin{equation*}
D: \Gamma_E \rightarrow \Gamma_E \otimes \Omega^1_X.
\end{equation*} 
\end{rmk}

\begin{rmk}
If $U\in \fU$ is an open subset of $X$ in the complex atlas, then the space of sections of  $E$ on $U$ is a free $C^\infty(U)$-module of rank $r$. A basis  $\left\{e_1,\dots,e_r\right\}$  of this module is called a \emph{frame} of $E$ on $U$. If  $D$ is a connection on  $E$, for each $e_i$ of the frame, the connection acts as
\begin{equation*}
D e_i = \sum_j e_j A_i^j,
\end{equation*} 
for some $1$-form $A_i^j \in \Omega^1(U)$. In matrix notation, writing $e=(e_1 \cdots e_r)$ and $A=(A_i^j)$ as a square matrix, we obtain
\begin{equation*}
D e = eA.
\end{equation*} 
Any section of $E$ on $U$ can be written as $s=\sum_i s^i e_i$, for  $s^i \in C^\infty(U)$. Therefore, we can write
\begin{equation*}
D s = \sum_i (ds^i e_i + s^i D e_i) = \sum_i (ds^i e_i + s^i e_j A^i_j) = (d+A) s
\end{equation*} 
The matrix $A$ is called the \emph{connection $1$-form} of $D$ on $U$.
\end{rmk}

The exterior differential $d$ is well known to satisfy the condition $d^2$. This is however not true in general for connections, which gives rise to the notion of curvature. More precisely, there is a unique way of extending the map $D:\Omega^0(X,E)\rightarrow \Omega^1(X,E)$ to a map $D: \Omega^k(X,E) \rightarrow \Omega^{k+1}(X,E)$ in such a way that
\begin{equation*}
D(\omega \wedge \alpha) = d\omega \wedge \alpha + (-1)^k \omega \otimes D \alpha, 
\end{equation*} 
and
\begin{equation*}
D(\alpha \wedge \omega) = D\alpha \wedge \omega + (-1)^k \alpha \otimes D \omega,
\end{equation*} 
for $\omega \in \Omega^p(X)$ and $\alpha \in \Omega^{k-p}(X,E)$.

\begin{defn}
Let $D$ be a connection on a smooth vector bundle $E$. We define the \emph{curvature}	of $D$ as the operator
 \begin{equation*}
D^2: \Omega^0(X,E) \rightarrow  \Omega^2(X,E).
\end{equation*} 
\end{defn}

\begin{rmk}
The curvature $D^2$ is a  $C^\infty$-linear map since
 \begin{equation*}
D^2(fs)=D(sdf + fD s) = Ds \wedge df + df \wedge D s + f\wedge D ^2s = fD^2 s,
\end{equation*} 
for $s\in \Omega^0(X,E)$ and $f\in C^\infty(X)$.
\end{rmk}

\begin{rmk}
If $e$ is a local frame of $E$ on $U$, we have
\begin{equation*}
D^2(e)=D(eA)=D e \wedge A + edA = e(A\wedge A + dA) = eF_A,
\end{equation*} 
for $F_A=dA + A \wedge A$ a matrix of  $2$-forms called the \emph{curvature $2$-form} of $D$ on $U$.
\end{rmk}

\begin{ej}
Let $E_1$ and $E_2$ be two vector bundles, and consider an isomorphism $g:E_1\rightarrow E_2$. Fix two local frames  $e_1$ and $e_2$ of  $E_1$ and $E_2$, respectively, on $U$, and consider the associated matrix  $g_U: U \rightarrow \GL_r(\C)$. Let $D_1$ be a connection on $E_1$ and consider the ``gauge transformed''  connection $D_2=g \circ D_1 \circ g^{-1}$ on $E_2$.  Let $A_1$ and $A_2$ denote the corresponding connection $1$-forms on $U$, of $D_1$ and $D_2$ with respect to $e_1$ and $e_2$.  
\textit{Show that
\begin{equation*}
A_2 = g_U A_1 g_U^{-1} + g_U dg_U^{-1},
\end{equation*} 
and
\begin{equation*}
F_{A_2} = g_U F_{A_1} g_U^{-1}.
\end{equation*} }
In particular, this implies that the locally defined $F_A$ determine a globally defined $(\End E)$-valued $2$-form $F_D\in \Omega^2(X,\End E )$.
\end{ej}

\begin{ej}[Distributions and connections]
A \emph{distribution} over a smooth manifold $M$ of dimension $n$ is a subbundle $\Xi\subset TM$ of the tangent bundle of  $M$. Consider now the natural projection $p:E\rightarrow X$ of a smooth vector bundle over $X$. Its differential determines a natural morphism $TE \rightarrow p^*TX$ of vector bundles over $E$. The kernel of this map is the bundle $V_E:=p^*E\subset TE$, which we call the \emph {vertical distribution}. 

1. \textit{Prove that a connection $D$ on $E$ determines a \emph{horizontal distribution}, namely, that it determines a distribution $H_D\subset TE$ such that
 \begin{equation*}
TE = V_E \oplus H_D.
\end{equation*} }

A distribution is \emph{involutive} if, for any two sections of it (that is, for any two vector fields $\xi$, $\eta$ which lie on it), their Lie bracket $[\xi,\eta]$ also lies on $\Xi$.

2. \textit{Prove that $H_D$ is involutive if and only if $D^2=0$}.
\end{ej}

\section{Flat bundles and local systems}
\begin{defn}
A connection $D$ on a smooth vector bundle $E$ is \emph{flat} if its curvature is $0$. A pair  $(E,D)$ formed by a smooth vector bundle and a flat connection is called a \emph{flat bundle}. 

If $(E_1,D_1)$ and $(E_2,D_2)$ are two flat bundles, a \emph{morphism of flat bundles} $g:(E_1,D_1)\rightarrow (E_2,D_2)$ is determined by a morphism of bundles $g:E_1\rightarrow E_2$ such that $D_2=g\circ D_1 \circ g^{-1}$.
\end{defn}

\begin{thm}[Frobenius] \label{thm:Frobenius}
	Let $(E,D)$ be a flat bundle. Suppose that $E$ is determined by a cocycle $(g_{UV})$. Then there exists a $0$-coboundary $(f_U)$ such that the functions $g'_{UV}=(f_V)^{-1} g_{UV} f_U$ are locally constant. The corresponding local system $E'$ determined by $(g'_{UV})$ is called the \emph{holonomy local system} associated with $(E,D)$.
\end{thm}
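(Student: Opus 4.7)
The plan is to construct, on each $U \in \fU$, a \emph{flat frame} $e'_U$ of $E|_U$ (a frame satisfying $De'_U = 0$). Once such frames exist, their transition functions must automatically be locally constant: from $e'_V = e'_U g'_{UV}$ and $De'_U = De'_V = 0$ one gets $e'_U \, dg'_{UV} = 0$, hence $dg'_{UV} = 0$. To produce a flat frame on $U$, I would seek it in the form $e'_U = e_U h_U$, where $e_U$ is the frame determining the given trivialization of $E|_U$ (with connection $1$-form $A$, so $De_U = e_U A$) and $h_U : U \to \GL_r(\C)$ is a smooth map to be found. The Leibniz rule gives
$$De'_U = D(e_U h_U) = e_U(Ah_U + dh_U),$$
so flatness of $e'_U$ is equivalent to the linear first-order PDE system
$$dh_U = -A h_U.$$

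Applying $d$ to both sides and using $d^2 = 0$ yields the integrability condition $(dA + A \wedge A) h_U = F_A h_U = 0$, which is automatic because $(E,D)$ is flat. Taking the open sets $U$ of the atlas to be coordinate disks in $\C$, they are contractible, and the classical Frobenius integrability theorem for overdetermined linear PDE systems (equivalently, the involutivity of the horizontal distribution $H_D \subset TE$ established in the exercise of the preceding section, under $D^2 = 0$) produces a solution $h_U$ on all of $U$, unique once an initial value $h_U(x_U) \in \GL_r(\C)$ at some point $x_U \in U$ is prescribed. Since $h_U(x_U)$ is invertible and the equation is linear, $h_U$ takes values in $\GL_r(\C)$ on all of $U$.

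On overlaps $U \cap V$, the flat frames satisfy
$$e'_V = e_V h_V = e_U g_{UV} h_V = e'_U(h_U^{-1} g_{UV} h_V),$$
so the new cocycle is $g'_{UV} = h_U^{-1} g_{UV} h_V$, which with the relabelling $f_U := h_U$ (up to the convention of the statement for placement of inverses in the cohomologous formula) is exactly the claimed form; these $g'_{UV}$ are then locally constant by the opening observation, and therefore define the holonomy local system $E'$. The only non-routine step is the construction of $h_U$ on each $U$: this is the one place where the flatness hypothesis $F_D = 0$ is used in an essential way, and it is precisely the content of the classical Frobenius theorem applied to the horizontal distribution of $D$. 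Everything else in the argument is bookkeeping about frames and cocycles.
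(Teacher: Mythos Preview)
Your proposal is correct and follows essentially the same approach as the paper: construct flat frames $e'_U = e_U h_U$ on each chart by solving the linear PDE $dh_U + A h_U = 0$ via the Frobenius theorem (whose integrability condition is exactly $F_A = 0$), and then observe that the transition functions between flat frames are locally constant. The only differences are cosmetic (notation, and the harmless ambiguity in the left/right placement of the coboundary that you already flagged).
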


\begin{proof}
Suppose that we can find, for each $U\in \fU$, a frame  $\epsilon_U$ of $E$ on $U$	such that $D \epsilon_U = 0$. If we start from the family of frames $\left\{e_U:U\in \fU\right\}$ determining $E$ in terms of the cocycle $(g_{UV})$, each $\epsilon_U$ is of the form  $\epsilon_U = e_U f_U$, for $f_U:U\rightarrow \GL_n(\C)$. Now, on a non-empty overlap $U\cap V$, putting $g_{UV}'=f_V^{-1} g_{UV}f_U$, we have
\begin{equation*}
0=D \epsilon_U = D(e_U f_U) = D(e_V g_{UV} f_U) = D(\epsilon_Vg'_{UV})= D \epsilon_V g'_{UV} + \epsilon_V dg'_{UV}.
\end{equation*} 
We conclude that $dg'_{UV}=0$ and thus the $g'_{UV}$ are locally constant.

It remains to see that we can find such a frame $\epsilon_U$. Equivalently, we want to find matrix-valued functions  $f_U:U\rightarrow \GL_n(\C)$ satisfying
\begin{equation*}
0= D(e_U f_U) = D(e_U)f_U + e_U df_U = e_U(A f_U + df_U),
\end{equation*} 
where $A$ is the connection  $1$-form in the frame $e_U$. Therefore, our problem is reduced to that of finding solutions  $f$ to the differential equation
\begin{equation*}
df + Af=0.
\end{equation*} 
As we explain in Exercise \ref{ej:Frobenius}, this is just an application of Frobenius theorem, where the integrability condition corresponds precisely to $F_A=dA+ A\wedge A = 0.$
\end{proof}

\begin{ej}[The Frobenius theorem. Analysts version] \label{ej:Frobenius}
Consider an open subset $U\times V\subset \bbR^m \times \bbR^n$, where  $U$ is a neighborhood of $0\in \bbR^m$. Consider a family of $C^\infty$ functions $F_1,\dots,F_m:U\times V\rightarrow \RR^n$. The theorem of Frobenius tells us	that, for every $x \in V$, there exists one and only one smooth function  $\alpha:W\rightarrow V$, defined in a neighborhood  $W$ of $0$ in $\bbR^n$, with  $\alpha(0)=x$ and solving the PDE
\begin{equation*}
\frac{\partial \alpha}{\partial t^i}(t)=F_i(t,\alpha(t)), \text{ for all } t\in W,
\end{equation*} 
if and only if there is a neighborhood of $(0,x)\in U\times V$ on which
 \begin{equation*}
\frac{\partial F_j}{\partial t^i} - \frac{\partial F_i}{\partial t^j} + \sum_{k=1}^n \frac{\partial F_j}{\partial x^k} F_i^k - \sum_{k=1}^n \frac{\partial F_i}{\partial x^k} F_j^k  =0,
\end{equation*} 
for $i,j=1,\dots,m$.
\textit{Prove that the equation $df+ Af=0$ can be written as the PDE above, and that the integrability condition corresponds to the condition $dA + A\wedge A = 0$.}
\end{ej}

\begin{ej}[The Frobenius theorem. Geometers version]
A distribution $D$ on a smooth manifold $M$ is \emph{integrable} if there exists some submanifold $N\subset M$ such that, for any point  $p\in N$, we have that  $T_pN=D_p$. In that case, we say that  $N$ is an integral manifold of $D$. The ``geometers version'' of the Frobenius theorem says that a distribution $D$ is integrable if and only if it is involutive.

1. \textit{Prove the geometers version of Frobenius theorem from the ``analysts version'' from the previous exercise}.

We saw in a previous exercise that a connection $D$ on  $E$ determines a horizontal distribution $H_D \subset TE$ and that $D$ is flat if and only if  $H_D$ is involutive. Consider the corresponding integral manifold $Y\subset E$.

2.\textit{	 Show that the natural projection $p:E\rightarrow X$ restricts to a local homeomorphism  $\pi:Y\rightarrow X$. }

3.\textit{	 Show that this $\pi:Y\rightarrow X$ is in fact a covering space.}

4.\textit{	 Show that the monodromy representation associated with $\pi:Y\rightarrow X$ coincides with the monodromy representation associated with the holonomy local system determined by $(E,D)$.}
\end{ej}

\begin{ej}
We have shown that with any flat bundle $(E,D)$ we can associate a local system $E'$. We want to show that this can be upgraded to an equivalence of categories. In order to do so, we must show the following.

1. \textit{Show that there is a bijection between the set of morphisms $(E_1,D_1)\rightarrow (E_2,D_2)$ of flat bundles and between the set of $0$-coboundaries $(f_U)$ such that $g'_{1,UV}=f_U g'_{2,UV} f_V^{-1}$}.

2. \textit{Given a local system $E'$ over $X$, construct a flat bundle $(E,D)$ such that its holonomy local system is isomorphic to $E'$}.
\end{ej}

\begin{ej}[de Rham theorem for degree 1 cohomology]
Consider the trivial line bundle $\C_X:=X\times \C\rightarrow X$. \textit{Show that the set of equivalence classes of flat connections on $\C_X$ is in natural bijection with the de Rham cohomology group  $H^1_{\mathrm{dR}}(X,\C)$. Use the correspondence of this section to prove that this group is isomorphic to the singular cohomology group $H^1(X,\C)$.}
\end{ej}

\begin{ej}[Connections of constant central curvature]
Let $\omega_X\in \Omega^2(X)$ be a volume form on $X$ with $\int_X \omega_X = 1$.	A connection $D$ on a smooth vector bundle has \emph{constant central curvature} if
\begin{equation*}
F_D = c \id_E \omega_X,
\end{equation*} 
for some constant $c\in \C^*$. Formally, we can assume that $\omega_X$ can get more and more concentrated at a single point $x_1\neq x_0 \in X$ so that in the limit we would obtain a ``Dirac delta distribution'' $\omega_X=\delta(x_1)$. In this limit, a connection $D$ of constant central curvature is flat away from $x_1$, so $(E,D)$ restricts to a flat bundle over $X\setminus \left\{x_1\right\}$ and thus determines a representation of the fundamental group
\begin{equation*}
\rho: \pi_1(X\setminus \left\{ x_1 \right\},x_0) \rightarrow \GL_r(\C).
\end{equation*} 
\textit{Let $\sigma$ be a contractible loop in $X$ around $x_1$ and based in  $x_0$.
Show that the representation $\rho$ must map the class of $\sigma$ to $\exp(c)I_r$.}
\end{ej}

As a wrap up for this section, we summarize the main results in the following.

\begin{thm}
  The holonomy representation determines an equivalence of categories between the category of flat bundles on $X$ and the category of linear representations of the fundamental group $\pi_1(X,x_0)$.

  More generally, holonomy determines an equivalence between the category of pairs $(E,D)$ formed by a smooth vector bundle on $X$ and a connection $D$ of constant central curvature $F_D=c\id_E \omega_X$ and the category of representations $\pi_1(X\setminus \left\{ x_1 \right\},x_0)\rightarrow \GL_r(\C)$ mapping the class of a loop $\sigma$ around $x_1$, based in $x_0$ and contractible in $X$, to $\exp(c)I_r$.
\end{thm}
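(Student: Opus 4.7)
The plan is to combine the outcomes of the preceding exercises into a single functorial statement. First I would define the \emph{holonomy functor} $\mathcal{H}$ from flat bundles to representations of $\pi_1(X,x_0)$: send $(E,D)$ to the monodromy representation $\rho_{E'}$ of its holonomy local system $E'$ produced by Theorem~\ref{thm:Frobenius}, and send a morphism of flat bundles $g:(E_1,D_1)\to(E_2,D_2)$ to the induced intertwiner. Functoriality reduces to the observation that the condition $D_2=g\circ D_1\circ g^{-1}$ forces $g$ to map $D_1$-parallel frames to $D_2$-parallel frames, so the cocycle-level computation of a monodromy is preserved.

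To show $\mathcal{H}$ is an equivalence, I would construct an explicit quasi-inverse. Given $\rho:\pi_1(X,x_0)\to\GL_r(\C)$, set $E_\rho := \widetilde{X}\times_\rho \C^r$, where $\widetilde{X}\to X$ is the universal cover and $\pi_1(X,x_0)$ acts by deck transformations on $\widetilde{X}$ and by $\rho$ on $\C^r$. The trivial connection $d$ on the product commutes with this action (since $\rho$ acts by constants on the fibers), so it descends to a flat connection $D_\rho$ on $E_\rho$; unwinding the definitions shows that the monodromy of $(E_\rho,D_\rho)$ is $\rho$, which gives essential surjectivity. Fully faithfulness follows from the same construction: a morphism $(E_{\rho_1},D_{\rho_1})\to(E_{\rho_2},D_{\rho_2})$ is determined by its value on the fiber over $x_0$, and the flatness condition together with compatibility with $\rho_1$ and $\rho_2$ is exactly the statement that this fiberwise map is a $\pi_1$-intertwiner.

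For the second statement, I would restrict $(E,D)$ to $X\setminus\{x_1\}$. In the formal limit described in the exercise, where $\omega_X$ concentrates at $x_1$, the restriction is flat and thus produces a representation $\rho:\pi_1(X\setminus\{x_1\},x_0)\to\GL_r(\C)$ by the first part. The constraint on a contractible loop $\sigma$ encircling $x_1$ is obtained via Stokes' theorem applied to a disk $\Delta\subset X$ bounded by $\sigma$ and containing $x_1$: the holonomy of $D$ around $\sigma$ is
\begin{equation*}
\exp\!\left(\int_\Delta F_D\right)\id_E = \exp\!\left(c\int_\Delta \omega_X\right)\id_E,
\end{equation*}
which tends to $\exp(c)\,I_r$ as $\omega_X$ concentrates at $x_1$, giving the stated normalization. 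Conversely, from such a representation one builds $E_\rho$ as above and equips it with the sum of the flat connection coming from $\rho$ and a ``bump'' $1$-form supported near $x_1$ whose differential equals $c\,\omega_X$ times the identity.

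The main obstacle is making the delta-distribution limit rigorous in the second statement: for a genuine smooth $\omega_X$ the connection $D$ is not flat on any open subset, so one must either work perturbatively near the limit or replace the categorical equivalence with a statement about a suitable completion of both sides. The first statement, by contrast, is essentially a bookkeeping exercise once Theorem~\ref{thm:Frobenius} and the associated-bundle construction are in hand, and the identification of $\Hom$-sets is purely formal.
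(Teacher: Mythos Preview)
Your proposal is correct and aligns with the paper's treatment, which presents this theorem as a summary of the preceding exercises rather than giving a separate proof. The one organizational difference is that the paper routes the equivalence through the intermediate category of local systems (flat bundle $\leadsto$ local system via Frobenius, then local system $\leadsto$ representation via monodromy, with the inverse constructed by interpreting locally constant cocycles as defining both a bundle and a flat connection), whereas you build the quasi-inverse directly via the associated bundle $\widetilde{X}\times_\rho\C^r$ over the universal cover. Both constructions are standard and manifestly inverse to holonomy; yours is arguably more economical, while the paper's emphasizes the role of local systems as a bridge.

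Your honest flagging of the delta-distribution issue in the second statement is apt: the paper treats this at the same formal level (``Formally, we can assume that $\omega_X$ can get more and more concentrated\dots''), so no additional rigor is expected here.
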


\section{Holomorphic structures as Dolbeault operators}
Recall that we have the Dolbeault differentials $\delbar: \Omega^{p,q}(X)\rightarrow \Omega^{p,q+1}(X)$. Holomorphic structures arise naturally by generalizing Dolbeault operators to vector bundles.

\begin{defn}
Let $E$ be a smooth vector bundle on $X$. A \emph{holomorphic structure} $\delbar_{\sE}$ on $E$ is a $\C$-linear operator
\begin{equation*}
\delbar_{\sE}: \Omega^0(X,E) \rightarrow \Omega^{0,1}(X,E)
\end{equation*} 
such that
\begin{equation*}
\delbar_{\sE}(fs)=s \delbar f + f \delbar s
\end{equation*} 
for every smooth function $f$ on $U$ and every section $s$ of $E$ on $U$, for any open subset $U\subset X$. 
\end{defn}

\begin{rmk}
In higher dimensions, to obtain a holomorphic structure one should add the ``integrability condition'' that $\delbar_{\sE}^2 = 0$. However on Riemann surfaces this condition is empty, since  $\Omega_X^{0,2}=0$.
\end{rmk}

There is what we could call an ``analogue'' of the Frobenius theorem for holomorphic structures, with a sustantially more difficult proof.

\begin{thm}
Consider a pair $(E,\delbar_{\sE})$ formed by a smooth vector bundle on $X$ with a holomorphic structure $\delbar_{\sE}$. Suppose that  $E$ is determined by a cocycle $(g_{UV})$. Then there exists a $0$-coboundary $(f_U)$ such that the functions $g_{UV}'=(f_V)^{-1}g_{UV}f_U$ are holomorphic.
\end{thm}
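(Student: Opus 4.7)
The plan is to mirror the strategy of Theorem~\ref{thm:Frobenius}: on each $U\in \fU$ I look for a smooth change of frame $\epsilon_U = e_U f_U$, with $f_U:U\rightarrow \GL_r(\C)$, such that the new frame is $\delbar_{\sE}$-closed, i.e.\ $\delbar_{\sE}\epsilon_U = 0$. If this succeeds, then on an overlap $U\cap V$, using the cocycle relation $e_U = e_V g_{UV}$, one obtains $\epsilon_U = e_V g_{UV} f_U = \epsilon_V (f_V^{-1} g_{UV} f_U) = \epsilon_V g'_{UV}$. Applying $\delbar_{\sE}$ to this identity and using $\delbar_{\sE}\epsilon_U = \delbar_{\sE}\epsilon_V = 0$ forces $\epsilon_V\,\delbar g'_{UV} = 0$, so that $g'_{UV}$ is automatically holomorphic, as required.

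The second step is to recast the condition $\delbar_{\sE}\epsilon_U = 0$ as a local PDE. In the smooth frame $e_U$, the holomorphic structure can be written as $\delbar_{\sE}(e_U s) = e_U(\delbar s + \alpha s)$, for some $r\times r$ matrix $\alpha$ of smooth $(0,1)$-forms on $U$ (the Dolbeault analogue of the connection $1$-form of the previous section). A short computation parallel to the gauge transformation exercise above gives $\delbar_{\sE}\epsilon_U = e_U(\delbar f_U + \alpha f_U)$, so the task reduces to solving
\begin{equation*}
\delbar f_U + \alpha f_U = 0
\end{equation*}
for a smooth $\GL_r(\C)$-valued function $f_U$. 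After shrinking $U$ to a holomorphic chart identified with a disk $D_\rho = \left\{|z|<\rho\right\}\subset \C$ and writing $\alpha = A\,d\bar z$, this becomes the matrix $\delbar$-equation $\partial f_U/\partial\bar z = -A f_U$.

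The main step, and the source of the analytic difficulty that makes this proof substantially harder than that of Theorem~\ref{thm:Frobenius}, is the local solvability of this equation. I would appeal to the Cauchy--Pompeiu integral operator
\begin{equation*}
(Tg)(z) = \frac{1}{2\pi i}\int_{D_\rho}\frac{g(w)}{w-z}\,dw\wedge d\bar w,
\end{equation*}
which inverts $\delbar$ on $D_\rho$ for smooth data and has operator norm $O(\rho)$ on $C^0(\overline{D_\rho})$. Imposing the normalization $f_U(0) = I_r$, the PDE becomes equivalent to the fixed-point equation $f_U = I_r - T(A f_U)$; for $\rho$ small enough, the affine map $f\mapsto I_r - T(A f)$ contracts on a closed ball about $I_r$ in $C^0(\overline{D_\rho},\Mat_r(\C))$, so Banach's fixed-point theorem produces a continuous solution. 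Elliptic regularity of $\delbar$ upgrades it to a smooth solution, and continuity at $0$ guarantees invertibility in a neighborhood of the origin; shrinking $D_\rho$ once more yields the desired local holomorphic frame. Note that, unlike in the $C^\infty$ Frobenius case, no integrability obstruction intervenes: the condition $\delbar_{\sE}^2 = 0$ is automatic in this dimension ($\Omega_X^{0,2}=0$), and the whole obstacle is analytic, concentrated entirely in this local $\delbar$-inversion on a disk.
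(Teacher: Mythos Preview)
Your reduction is exactly the one the paper sketches: build local $\delbar_{\sE}$-closed frames by solving $\delbar f + \alpha f = 0$ on a disk, after which the transition functions between such frames are automatically holomorphic. The paper does not actually prove the local solvability of this equation; it defers to \cite[Section 5]{AtiyahBott} or, alternatively, invokes Newlander--Nirenberg, whereas you supply a direct argument via the Cauchy--Pompeiu integral operator and a Banach fixed-point. Your analytic sketch is sound; one small quibble is that the fixed-point equation $f = I_r - T(Af)$ does not literally pin down $f(0) = I_r$, but this is harmless, since the contraction already forces $f$ uniformly close to $I_r$ on the shrunken disk, which by itself guarantees invertibility.
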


\begin{rmk}
Following the same arguments as in the proof of \ref{thm:Frobenius}, it suffices to show that there exist local frames $\epsilon_U$ with  $\delbar_E \epsilon_U$. This problem itself reduces to finding solutions to the equation
 \begin{equation*}
\delbar f + Af = 0.
\end{equation*} 
We refer the reader to \cite[Section 5]{AtiyahBott} for details on the integrability of this equation. Alternatively, we can also deduce the result as an application of the Newlander--Niremberg theorem about the integrability of almost-complex structures.
\end{rmk}

\begin{rmk}
The above theorem tells us that, instead of thinking about a holomorphic vector bundle $\sE$, we can think about the pair $(E,\delbar_{\sE})$ formed by the smooth vector bundle $E$ underlying $\sE$ and the holomorphic structure $\delbar_\sE$. This is the typical approach in gauge theory.
\end{rmk}

\begin{rmk}
We also remark the fact that, if $D$ is a connection on a smooth vector bundle $E$, then we can take its $(0,1)$ part $\delbar_D=D^{0,1}$, which determines a holomorphic structure on $E$. 
\end{rmk}

\section{Holomorphic connections}
In the previous sections, we have studied ``smooth'' connections. Considering a notion of connection which is intrinsically holomorphic is quite interesting, as it also allows us to regard the same objects in algebraic geometry.

\begin{defn}
  Let $\sV$ be a holomorphic vector bundle on $X$. A \emph{holomorphic connection} $\bm{D}$ on $\sV$ is a $\C$-linear morphism of sheaves
  \begin{equation*}
\bm{D}: \sV \longrightarrow \sV \otimes \bm{\Omega}^1_X
  \end{equation*}
  satisfying the \emph{holomorphic Leibniz rule}
  \begin{equation*}
\bm{D}(fs) = f \bm{D} s + \partial_X f \otimes s,
  \end{equation*}
  for any local sections $f \in \sO_X(U)$ and $s\in \sV(U)$.

  A \emph{holomorphic vector bundle with connection} is a pair $(\sV,\bm{D})$ formed by a holomorphic vector bundle $\sV$ on $X$ and a holomorphic connection $\bm{D}$ on $\sV$. A morphism $f:(\sV_1,\bm{D}_1) \rightarrow (\sV_2,\bm{D}_2)$ of holomorphic vector bundles with connection is a morphism of holomorphic vector bundles $g:\sV_1\rightarrow \sV_2$ such that $\bm{D}_2 \circ g = (g\otimes \id_{\bm{\Omega}^1_X}) \circ \bm{D}_1$.
\end{defn}

\begin{rmk}
Note that, since $X$ has complex dimension $1$, holomorphic connections on $X$ are automatically integrable.
\end{rmk}

Let $(\sV,\bm{D})$ be a holomorphic vector bundle with holomorphic connection and let $E$ be the underlying smooth complex vector bundle. The holomorphic structure of $\sV$ determines the operator
  \begin{equation*}
\delbar_{\sV}: \Omega^0(X,E) \longrightarrow \Omega^{0,1}(X,E).
  \end{equation*}
  On the other hand, the holomorphic connection $\bm{D}$ determines an operator
  \begin{equation*}
\bm{D}: \Omega^0(X,E) \longrightarrow \Omega^{1,0}(X,E).
  \end{equation*}
  The fact that $\bm{D}$ is a holomorphic connection on $\sV$ is equivalent to the commutativity of these two operators. That is, to the vanishing of the $E$-valued $(1,1)$-form
  \begin{equation*}
[\bm{D},\delbar_{\sV}] \in \Omega^{1,1}(X,E).
  \end{equation*}

\begin{ej}
 \textit{Verify that the sum}
  \begin{equation*}
D=\bm{D} + \delbar_{\sV} : \Omega^0(X,E) \rightarrow \Omega^1(X,E)
  \end{equation*}
\textit{determines a connection on $E$. Show that this connection is flat.} \textbf{Hint}: Show that $F_D=[\bm{D},\delbar_{\sV}]=0$.
\end{ej}

Conversely, if $(E,D)$ is a flat bundle on $X$, the $(0,1)$-part of the connection $D^{0,1}$ determines a holomorphic structure on $E$, while the $(1,0)$-part $D^{1,0}$ determines a holomorphic connection on the holomorphic vector bundle $(E,D^{0,1})$. Indeed, we have $[D^{1,0},D^{0,1}]=F_D=0$. We conclude the following.

\begin{prop} \label{prop:holconnflatconn}
The map $(\sV,\bm{D})\mapsto (E,\bm{D}+\delbar_{\sV})$ determines an equivalence of categories between the category of holomorphic bundles with connection on $X$ and the category of flat bundles on $X$.
\end{prop}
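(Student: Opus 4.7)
The plan is to construct an explicit inverse functor and verify mutual inverseness on objects together with full faithfulness on morphisms. The crucial observation powering everything is that on a Riemann surface $\Omega^{2,0}_X = \Omega^{0,2}_X = 0$ for dimensional reasons, so the curvature of any connection has pure type $(1,1)$ and decomposes as $F_D = [D^{1,0}, D^{0,1}]$.

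First I would define the candidate inverse functor by sending a flat bundle $(E,D)$ to the pair $((E, D^{0,1}), D^{1,0})$. That $D^{0,1}$ defines a holomorphic structure is immediate: taking the $(0,1)$-part of the smooth Leibniz rule $D(fs) = df \otimes s + f Ds$ yields $D^{0,1}(fs) = \delbar f \otimes s + f D^{0,1} s$, which is precisely the defining relation of a Dolbeault operator, with integrability vacuous on a curve.

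The substantive step is verifying that $\bm{D} := D^{1,0}$ defines a holomorphic connection on $\sV := (E, D^{0,1})$. This requires two checks: (i) $\bm{D}$ sends holomorphic sections of $\sV$ to holomorphic sections of $\sV \otimes \bm{\Omega}^1_X$, and (ii) the holomorphic Leibniz rule holds. Condition (i) is equivalent to the compatibility $[\bm{D}, \delbar_\sV] = 0$, which by the type decomposition of the curvature is in turn equivalent to flatness $F_D = 0$; this is where all the content lives, and it is precisely the identity already used in the exercise preceding the statement. Condition (ii) follows by taking the $(1,0)$-part of the smooth Leibniz rule and specializing to a holomorphic $f$, for which $df = \partial_X f$ is of pure type $(1,0)$, giving $\bm{D}(fs) = \partial_X f \otimes s + f \bm{D} s$.

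Mutual inverseness on objects is then transparent: the compositions $(\sV, \bm{D}) \mapsto (E, \bm{D}+\delbar_\sV) \mapsto ((E, \delbar_\sV), \bm{D})$ and $(E, D) \mapsto ((E,D^{0,1}), D^{1,0}) \mapsto (E, D^{1,0}+D^{0,1})$ each return the original data tautologically. For the morphism bijection, a smooth bundle map $g: E_1 \rightarrow E_2$ intertwines $D_1$ and $D_2$ if and only if it separately intertwines their $(1,0)$- and $(0,1)$-parts, since the Hodge-type splitting is canonical; these two conditions are precisely that $g$ be a holomorphic bundle map and that it intertwine the holomorphic connections, completing the equivalence.
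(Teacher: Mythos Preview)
Your proposal is correct and follows essentially the same approach as the paper: the paper's argument (given in the exercise and the paragraph immediately preceding the proposition) constructs the inverse functor by splitting $D$ into its $(1,0)$- and $(0,1)$-parts and invokes the identity $F_D=[D^{1,0},D^{0,1}]$ to see that flatness is equivalent to $D^{1,0}$ being a holomorphic connection on $(E,D^{0,1})$. You simply flesh this out, adding the explicit check of the Leibniz rules and the bijection on morphisms that the paper leaves implicit.
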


\begin{ej}
The correspondence above can be extended to consider connections with constant central curvature. In order to do this, we need to fix a point $x_1\in X$ and consider the sheaf $\bm{\Omega}^1_X(* x_1)$ of meromorphic $1$-forms on $X$ with a simple pole on $x_1$. A \emph{meromorphic connection} $\bm{D}$ on a holomorphic vector bundle $\sV$ is a $\C$-linear morphism of sheaves
  \begin{equation*}
\bm{D}: \sV \longrightarrow \sV \otimes \bm{\Omega}^1_X(* x_1)
  \end{equation*}
  satisfying the holomorphic Leibniz rule
  \begin{equation*}
\bm{D}(fs) = f \bm{D} s + \partial_X f \otimes s,
  \end{equation*}
  for any local sections $f \in \sO_X(U)$ and $s\in \sV(U)$.
  Near the point $x_1$, the ``connection $1$-form'' of any such connection is a meromorphic $\End E$-valued $1$-form $A(z)$ with a Laurent expansion near the point $x_1$ of the form
  \begin{equation*}
A(z) = A_{-1} z^{-1} + A_0 + A_1 z + A_2 z^2 + \dots.
  \end{equation*}
  The matrix $\mathrm{res}_{x_1}\bm{D}=A_{-1}$ is called the \emph{residue} of $\bm{D}$ at $x_1$. \textit{Show that, if $(\sV,\bm{D})$ is a bundle with meromorphic connection such that $\mathrm{res}_{x_1}\bm{D}=-i\tfrac{c}{2\pi} I_r$, then $D=\bm{D}+\delbar_{\sV}$ is a connection with constant central curvature $F_D=c\id_E \omega_X$, and viceversa.}
\end{ej}

We conclude the \emph{Riemann-Hilbert correspondence}.
\begin{thm}
The map $(\sV,\bm{D})\mapsto (E,\bm{D}+\delbar_{\sV})$, composed with the holonomy representation, determines an equivalence of categories between the category of holomorphic bundles with connection on $X$ and the category of linear representations of the fundamental group $\pi_1(X,x_0)$.

More generally, it determines an equivalence between the category of holomorphic bundles with meromorphic connection $(\sV,\bm{D})$ with a simple pole on $x_1$ and with $\mathrm{Res}_{x_1}\bm{D}=-i\tfrac{c}{2\pi} I_r$ and the category of representations $\pi_1(X\setminus \left\{ x_1 \right\},x_0)\rightarrow \GL_r(\C)$ mapping the class of a loop $\sigma$ around $x_1$, based in $x_0$ and contractible in $X$, to $\exp(c)I_r$.
\end{thm}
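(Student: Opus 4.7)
The plan is to assemble this theorem as a composition of equivalences of categories already established in the chapter. For the first assertion, Proposition \ref{prop:holconnflatconn} produces an equivalence between holomorphic bundles with connection on $X$ and flat bundles on $X$ via $(\sV,\bm{D})\mapsto (E,\bm{D}+\delbar_{\sV})$. The preceding theorem then identifies flat bundles, via their holonomy representation, with linear representations $\pi_1(X,x_0)\to\GL_r(\C)$. Composing these two equivalences yields the first claim.

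For the second assertion, the strategy is parallel, with the meromorphic refinement supplied by the exercise immediately before the statement: a holomorphic bundle with meromorphic connection $(\sV,\bm{D})$ having a simple pole at $x_1$ and residue $-i\tfrac{c}{2\pi}I_r$ corresponds, under the same formula $(\sV,\bm{D})\mapsto (E,\bm{D}+\delbar_{\sV})$, to a pair $(E,D)$ where $D$ has constant central curvature $F_D=c\,\id_E\omega_X$. Combining this correspondence with the second part of the preceding theorem---which identifies such pairs with representations of $\pi_1(X\setminus\{x_1\},x_0)$ sending a small contractible loop around $x_1$ to $\exp(c)I_r$---gives the second equivalence.

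The only genuinely new check is that these identifications are compatible at the level of morphisms, so that the chain of functors is actually a chain of equivalences and not merely a bijection of objects. For Proposition \ref{prop:holconnflatconn}, a morphism $g:\sV_1\to\sV_2$ intertwines the holomorphic connections $\bm{D}_i$ iff it intertwines both the $(1,0)$- and $(0,1)$-parts of the associated flat connections $D_i=\bm{D}_i+\delbar_{\sV_i}$, i.e.\ iff $g\circ D_1=D_2\circ g$; the holonomy functor is functorial by construction. Hence the composite functor is fully faithful and essentially surjective. The meromorphic version is identical away from $x_1$, with the condition on the residue translating exactly into the prescribed value of the holonomy around $x_1$.

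The main obstacle in this line of argument is not the composition itself but rather the analytical inputs that the chapter takes for granted: the Frobenius-type theorem on the existence of local $\delbar$-flat frames underlying Proposition \ref{prop:holconnflatconn}, and the precise translation of a first-order pole with residue $-i\tfrac{c}{2\pi}I_r$ into the ``Dirac-concentrated'' central curvature condition, which rests on a Poincaré--Lelong-type computation suggested by the preceding exercise. Once these are granted, the theorem itself is purely formal.
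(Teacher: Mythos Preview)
Your proposal is correct and matches the paper's approach exactly: the paper states the theorem with the preamble ``We conclude the Riemann--Hilbert correspondence'' and gives no separate proof, treating it as an immediate composition of Proposition~\ref{prop:holconnflatconn}, the preceding theorem on holonomy of flat (respectively constant-central-curvature) bundles, and the exercise on meromorphic connections with prescribed residue. Your additional remarks on morphism-compatibility and the analytic inputs are more explicit than what the paper writes, but the underlying argument is the same.
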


\section{Hermitian metrics and the Chern correspondence}
\begin{defn}
Let $E$ be a smooth vector bundle on $X$. A \emph{Hermitian metric} $H$ on $E$ is determined by a Hermitian product  $\langle-,-\rangle_{H,x}$	on each fibre $E_x$, in such a way that for every open subset $U\subset X$ and for every two sections  $s$ and $t$ of  $E$ on $U$, the map
\begin{align*}
\langle s,t \rangle_H: U & \longrightarrow \C \\
x & \longmapsto \langle s(x), t(x) \rangle_{H,x}
\end{align*}
is smooth. A pair $(E,H)$ formed by a smooth vector bundle and a Hermitian metric is called a \emph{Hermitian vector bundle}.
\end{defn}

\begin{defn}
Let $(E,H)$ be a Hermitian vector bundle. A connection $\nabla$ on $E$ is ($H$-)\emph{unitary} if, for every two local sections $s$ and  $t$ of $E$ and for every vector field of $\xi$ on an open $U\subset X$, we have
\begin{equation*}
d \langle s,t \rangle_H(\xi) = \langle \nabla s(\xi), t \rangle_H + \langle s, \nabla t(\xi) \rangle_H.
\end{equation*}
\end{defn}

\begin{ej}
\textit{Show that if $(E,H)$ is a Hermitian vector bundle and $\nabla$ is a flat  unitary connection on $E$, then the monodromy representation $\rho:\pi_1(X,x_0)\rightarrow \GL_n(\C)$ associated with the corresponding local system factors through the unitary group $\U(n)\subset \GL_n(\C)$ .}
\end{ej}

\begin{thm}
Let $\sE=(E,\delbar_{\sE})$ be a holomorphic vector bundle. For every Hermitian metric $H$ on $E$, there exists a unique unitary connnection $\nabla_H$ on  $E$ such that $\delbar_{\sE}=\nabla_H^{0,1}$. This connection is called the \emph{Chern connection}.
\end{thm}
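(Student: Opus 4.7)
The plan is to establish uniqueness globally first, then construct $\nabla_H$ in a local holomorphic frame, and finally patch the local constructions using uniqueness.

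For uniqueness, suppose $\nabla_1$ and $\nabla_2$ are two unitary connections both satisfying $\nabla_i^{0,1}=\delbar_\sE$. Their difference $\alpha:=\nabla_1-\nabla_2$ is $C^\infty(X)$-linear, since the Leibniz terms cancel, and hence defines an element of $\Omega^1(X,\End E)$. Since $\alpha^{0,1}=0$, it actually lies in $\Omega^{1,0}(X,\End E)$. Subtracting the two unitarity relations yields $\langle \alpha(\xi)s,t\rangle_H + \langle s, \alpha(\xi)t\rangle_H = 0$ for all local sections $s,t$ and vector fields $\xi$, so $\alpha$ is fibrewise skew-Hermitian with respect to $H$. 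But the $H$-adjoint of a $(1,0)$-form valued endomorphism is of type $(0,1)$, because complex conjugation interchanges the two types; the skew-Hermitian condition $\alpha+\alpha^*=0$ combined with $\alpha$ being of pure $(1,0)$-type therefore forces $\alpha=0$.

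For existence, I would work on an open set $U$ carrying a local holomorphic frame $e=(e_1,\ldots,e_r)$ of $\sE$, and let $H(x)$ denote the positive-definite Hermitian Gram matrix with entries $H_{ij}(x)=\langle e_i,e_j\rangle_{H,x}$. I seek a connection whose matrix $A$ in the frame $e$ is of pure type $(1,0)$; this forces $\nabla_H^{0,1} e_i = 0 = \delbar_\sE e_i$, and $C^\infty(U)$-linearity extends the identity $\nabla_H^{0,1}=\delbar_\sE$ to all sections on $U$. Writing the unitarity condition $d\langle e_i,e_j\rangle_H = \langle \nabla_H e_i,e_j\rangle_H + \langle e_i, \nabla_H e_j\rangle_H$ in matrix form yields an identity of the shape $dH = A^\top H + H\overline{A}$ (the precise placement of transposes depending on the convention chosen for the Hermitian product). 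Decomposing into $(1,0)$ and $(0,1)$ parts and using that $A$ is of type $(1,0)$, the $(0,1)$-equation $\delbar H = H\overline{A}$ uniquely determines $\overline{A}$, and hence $A$. The $(1,0)$-equation is then automatic because $H$ is Hermitian (so $\overline{H}=H^\top$) and conjugation interchanges $\partial$ with $\delbar$, which forces $A^\top H=\partial H$ from the explicit formula for $A$.

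Finally, on the overlap of two holomorphic trivializations, the two locally-defined unitary connections must agree by the uniqueness argument applied over the overlap, so they patch into a global unitary connection $\nabla_H$ on $E$. The only real subtlety is the bookkeeping of Hermitian conventions needed to extract the explicit local formula for $A$ from the unitarity equation; once this is laid out, everything reduces to elementary linear algebra fibrewise.
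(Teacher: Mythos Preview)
Your proof is correct and follows essentially the same approach as the paper: both reduce to a local holomorphic frame, observe that the connection matrix must be of type $(1,0)$, and solve the unitarity equation $dh = A^\top h + h\overline{A}$ for $A$. Your separate global uniqueness argument (a skew-Hermitian $(1,0)$-form valued in $\End E$ must vanish, since its adjoint is of type $(0,1)$) is a clean addition that the paper leaves implicit, and your explicit patching step fills in what the paper omits, but the core computation is identical.
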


\begin{proof}
Consider a frame $\left\{e_1,\dots,e_r\right\}$	of $E$ over some open subset $U\in \fU$, and assume that this frame is holomorphic; that is, that $\delbar_{\sE} e_i = 0$, for  $i=1,\dots,r$. Let us consider the functions $h_{ij}=\langle e_i, e_j \rangle_H$. If such a $\nabla_H$ exists, then its connection $1$-form $A$ with respect to this framing must be of type $(0,1)$, since we must have $\delbar_{\sE}=\nabla_H^{0,1}$. But then
\begin{equation*}
dh_{ij} = d \langle e_i, e_j \rangle = \sum_k A_i^k h_{kj} + h_{ik} \bar{A}^k_j,
\end{equation*}
so $\partial h_{ij}= \sum_k A^k_i h_{kj}$ and $\delbar h_{ij} =\sum_k h_{ik}\bar{A}^k_j$. Therefore, if we consider the matrix $h=(h_{ij})$ we can just set
$A = h^{-1} \partial h$.
\end{proof}

\section{Principal $G$-bundles}
Let $G$ be a Lie group. Recall the definition of vector bundle from Section \ref{ss:vectorbundles}. If we replace the local pieces $E_U$ by $P_U=U\times G$ and the functions $g_{UV}$ determining the gluing cocycle by continuous functions $g_{UV}:U\cap V \rightarrow G$, we obtain the notion of a \emph{principal $G$-bundle} $P\rightarrow X$. Here, we are regarding the group $G$ acting on itself by right multiplication. Moreover, if the functions $g_{UV}$ are smooth or locally constant, we obtain a \emph{smooth principal $G$-bundle} or a \emph{$G$-local system}, respectively. If the group $G$ is a complex Lie group or a complex algebraic group, then we can also consider functions $g_{UV}$ which are holomorphic or regular, respectively, in which case we get a \emph{holomorphic principal $G$-bundle} or an \emph{algebraic principal $G$-bundle}, respectively. It also follows from Serre's GAGA that the categories of holomorphic and principal $G$-bundles are equivalent \textit{over a Riemann surface}\footnote{This is indeed trickier in higher dimensions, since the Zariski topology is usually not fine enough to trivialize holomorphic $G$-bundles. Therefore, in algebraic geometry principal $G$-bundles are generally locally trivialized in the \emph{étale topology}. However, over complex dimension $1$ this distinction is not necessary, since every non-compact Riemann surface is a Stein manifold.}.

\begin{ej}
\textit{Show that a principal $G$-bundle admits a section if and only if it is isomorphic to a trivial bundle}.
\end{ej}

A complex vector bundle $E\rightarrow X$ determines automatically a principal $\GL_r(\C)$-bundle, its \emph{frame bundle} $\mathrm{Fr}(E)\rightarrow X$, whose fibre over $x\in X$ is the set of basis of the fibre $E_x$. The group $\GL_r(\C)$ has a natural free and transitive action on the fibre $\mathrm{Fr}(E)_x$. Conversely, a principal $\GL_r(\C)$-bundle $P\rightarrow X$ determines a vector bundle $E\rightarrow X$, defined as
\begin{equation*}
E = (P \times \C^r)/\left\{ (p,v) \sim (p\cdot g, g^{-1} \cdot v),\ g\in \GL_r(\C) \right\}.
\end{equation*}
More generally, for any continuous linear representation $\rho:G \rightarrow \GL(V)$, of the group $G$ on a vector space $V$, any principal $G$-bundle $P\rightarrow X$ determines a vector bundle
\begin{equation*}
E:= P\times^{\GL_r(\C)} V := (P \times V)/\left\{ (p,v) \sim (p\cdot g, \rho(g)^{-1} \cdot v),\ g\in \GL_r(\C) \right\}.
\end{equation*}

Given a morphism of Lie groups $f:H\rightarrow G$, any principal $H$-bundle $P_H$ determines a principal $G$-bundle $P_G$, by putting
\begin{equation*}
P_G:= P_H\times^{H} G := (P_H \times G)/\left\{ (p,g) \sim (p\cdot h, f(h)^{-1} g),\ h\in H \right\}.
\end{equation*}
When $f:H\hookrightarrow G$ is the inclusion of a closed subgroup, this is called \emph{extension of the structure group}. Conversely, if $P_G$ is a $G$-bundle, then a \emph{reduction of the structure group} from $G$ to $H$ is determined by a $G$-equivariant map $\sigma: P_G \rightarrow G/H$, where $G/H$ is the corresponding homogeneous $G$-space.

\begin{ej}
\textit{Show that, given a reduction of structure group $\sigma:P_G \rightarrow G/H$, the fibre $P_\sigma=\sigma^{-1}(H)\subset P_G$ naturally admits the structure of a principal $H$-bundle.  Show that, if the map $\sigma$ is smooth, holomorphic or locally constant, then $P_\sigma$ is smooth, holomorphic or a local system, respectively.}
\end{ej}

\begin{ej}
\textit{Show that a Hermitian metric $H$ on a complex vector bundle $E$ determines a reduction of the frame bundle $\mathrm{Fr}(E)$ from $\GL_r(\C)$ to $\U(r)$.}
\end{ej}

\chapter{Classifying vector bundles} \label{sec:classification}
\section{Topological classification}
With any vector bundle $E$ on $X$ we can associate its \emph{determinant line bundle}, defined as follows. If $E$ is determined by gluing spaces of the form $E_U=U\times \C^r$ via transition functions $g_{UV}:U\cap V \rightarrow \GL_r(\C)$, then $\det E$ is obtained by gluing the spaces $(\det E)_U = U \times \wedge^r \C^r$ through the transition functions $\det g_{UV}: U\cap V \rightarrow \C^*$.

\begin{ej}
\textit{Show that any smooth vector bundle $E$ of rank $r>1$ has a nowhere vanishing global section.	Is this true for holomorphic vector bundles?} \textbf{Hint}: Use a transversality argument.
\end{ej}

The section $s$ from the exercise determines an injection $s:\C_X \hookrightarrow E$. Now, any smooth vector bundle admits a Hermitian metric, so we can orthogonally decompose  $E=s(\C_X) \oplus s(\C_X)^\perp$. Iterating this process, we obtain that $E$ can be written as
\begin{equation*}
E = \C_X^{r-1} \oplus L,
\end{equation*} 
for some line bundle $L$. Note however that  $L$ must be isomorphic to the determinant line bundle $L\cong \det E$. We conclude the following.

\begin{thm}
A smooth vector bundle on $X$ is determined by its rank and its determinant.	
\end{thm}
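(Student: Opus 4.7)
The plan is to combine the two facts assembled just before the statement: (i) every smooth vector bundle of rank $> 1$ on the real surface $X$ admits a nowhere vanishing global section (the exercise, by a transversality argument using that the zero section has real codimension $2r$ in the total space of $E$, which exceeds the real dimension $2$ of $X$ as soon as $r > 1$), and (ii) every smooth vector bundle carries a Hermitian metric. Together, these let us split off a trivial line summand, and induction on $r$ reduces the classification to the determinant.

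More concretely, I would proceed by induction on the rank $r$. The base case $r=1$ is tautological, since a line bundle $L$ coincides with its own determinant $\det L = L$. For the inductive step, assume $r > 1$ and let $E$ be a smooth rank $r$ bundle on $X$. By the exercise, choose a nowhere vanishing section $s \in \Omega^0(X,E)$; this yields a nowhere zero bundle map $s : \C_X \hookrightarrow E$ whose image is a trivial rank $1$ subbundle. Pick any Hermitian metric $H$ on $E$ and form the $H$-orthogonal complement, giving a smooth splitting
\begin{equation*}
E \;\cong\; \C_X \oplus s(\C_X)^{\perp},
\end{equation*}
where $s(\C_X)^{\perp}$ is a smooth rank $r-1$ bundle. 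Applying the inductive hypothesis to $s(\C_X)^{\perp}$, which is classified up to isomorphism by its rank and its determinant, we obtain a line bundle $L$ and an isomorphism $s(\C_X)^{\perp} \cong \C_X^{r-2} \oplus L$, and hence $E \cong \C_X^{r-1} \oplus L$.

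It remains to identify $L$ with $\det E$. Using multiplicativity of the determinant on direct sums together with $\det \C_X^{r-1} = \C_X$, we get
\begin{equation*}
\det E \;\cong\; \det(\C_X^{r-1}) \otimes \det L \;\cong\; \C_X \otimes L \;\cong\; L.
\end{equation*}
Hence $E \cong \C_X^{r-1} \oplus \det E$, so any two smooth rank $r$ bundles with isomorphic determinants are themselves isomorphic.

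The only delicate step is the very first one, the existence of a nowhere vanishing section. Everything else is essentially formal (partition-of-unity Hermitian metric, orthogonal decomposition, multiplicativity of $\det$), but the transversality argument does genuinely need $\dim_\bbR X = 2 < 2r$, and is precisely the point where the result would fail in the holomorphic category — a good holomorphic section of a rank $r$ bundle on $X$ generically vanishes at $\deg E$ points, so the inductive splitting $E \cong \sO_X \oplus (\text{rank } r-1)$ does not hold holomorphically. I would therefore emphasize that the theorem is intrinsically a $C^{\infty}$-statement, and that the whole richness of the classification problem addressed in the rest of the chapter (Jacobian, moduli spaces, stability) arises precisely because this smooth splitting has no holomorphic analogue.
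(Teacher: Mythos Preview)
Your proof is correct and follows essentially the same approach as the paper: split off a trivial line summand using a nowhere vanishing section and a Hermitian metric, then iterate (the paper says ``iterating this process'' where you phrase it as induction on $r$) to obtain $E \cong \C_X^{r-1} \oplus L$, and finally identify $L$ with $\det E$. Your write-up is slightly more explicit about the base case and the multiplicativity of $\det$, but the argument is the same.
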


It thus remains to solve the question of classfying smooth line bundles. Now, recall that these line bundles are classified by the cohomology group $H^1(X,(C^\infty_X)^*)$, where $(C^\infty_X)^*$ denotes the sheaf of smooth functions $U\rightarrow \C^*$. The exponential exact sequence
\begin{center}
\begin{tikzcd}
	0 \rar & 2\pi i \Z \rar & C^\infty_X\rar & (C^\infty_X)^* \rar & 0
\end{tikzcd}
\end{center}
induces an exact sequence
\begin{center}
\begin{tikzcd}
	H^1(X,C^\infty_X) \rar & H^1(X,(C^\infty_X)^*) \rar{\delta} & H^2(X,2\pi i \Z) \rar & H^2(X,C^\infty_X).
\end{tikzcd}
\end{center}
The existence of smooth partitions of unity implies that $H^i(X,C^\infty_X)=0$ for $i>0$, so we obtain an isomorphism  $\delta:H^1(X,(C^\infty_X)^*)\cong H^2(X,2\pi i \Z)$. If $L$ is a smooth line bundle on $X$ represented by a cohomology class $[L]\in H^1(X,(C^\infty_X)^*)$, we define the \emph{first Chern class} of $L$ as
 \begin{equation*}
c_1(L) = \frac{i}{2\pi} \delta([L]) \in H^2(X,\Z).
\end{equation*} 
Recall that integration determines an isomorphism
\begin{align*}
\int_X: H^2(X,\Z)  \rightarrow \Z, \  \alpha  \mapsto \int_X \alpha.
\end{align*} 
The \emph{degree} of $L$ is the number
\begin{equation*}
\deg L= \int_X c_1(L) \in \Z.
\end{equation*} 

More generally, if $E$ is a vector bundle, then we define its first Chern class as $c_1(E)=c_1(\det E)$, and its degree as $\deg E = \deg(\det E)$.

\begin{ej}[Chern-Weil theory: Computing Chern classes using curvature]
Let $E$ be a smooth vector bundle on $X$. If $D$ is a connection on $E$ and $F_D \in \Omega^2(X,\End E)$ its curvature. Its trace determines a  $2$-form $\mathrm{tr}(F_D)\in \Omega^2(X)$, and we can consider its cohomology class $[\tr(F_D)]$.
\textit{Prove that}
\begin{equation*}
c_1(E) = \frac{i}{2\pi} [\tr(F_D)].
\end{equation*} 
In particular, this implies that, if $E$ admits a flat connection, then $\deg E = 0$.
\end{ej}

To sum up, we conclude the following.
\begin{thm}
Smooth vector bundles on a Riemann surface are classified by their rank and their degree.
\end{thm}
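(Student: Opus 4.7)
The plan is to combine the two reductions already carried out in the excerpt: (i) the decomposition $E \cong \C_X^{r-1} \oplus \det E$, which shows that a smooth vector bundle is determined up to isomorphism by its rank and its determinant line bundle; and (ii) the cohomological description of smooth line bundles together with the definition of the degree. The theorem therefore reduces to the statement that the degree map
\begin{equation*}
\deg:H^1(X,(C^\infty_X)^*) \longrightarrow \Z
\end{equation*}
is a bijection, after which the classification for arbitrary rank follows by taking the determinant.

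To prove that $\deg$ is a bijection, I would factor it through the two isomorphisms that are already essentially established in the text. By definition, $\deg L = \int_X \frac{i}{2\pi}\delta([L])$, so $\deg$ is the composition
\begin{equation*}
H^1(X,(C^\infty_X)^*) \xrightarrow{\tfrac{i}{2\pi}\delta} H^2(X,\Z) \xrightarrow{\int_X} \Z.
\end{equation*}
The first arrow is an isomorphism: the long exact sequence associated with the exponential sequence $0 \to 2\pi i \Z \to C^\infty_X \to (C^\infty_X)^* \to 0$, combined with the vanishing $H^i(X,C^\infty_X)=0$ for $i>0$ (from the existence of smooth partitions of unity), forces $\delta:H^1(X,(C^\infty_X)^*)\to H^2(X,2\pi i\Z)$ to be an isomorphism, and the rescaling $z\mapsto \tfrac{i}{2\pi}z$ identifies $2\pi i \Z$ with $\Z$. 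The second arrow is the integration isomorphism $H^2(X,\Z)\cong \Z$ for a compact connected oriented surface.

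Putting these pieces together, two smooth line bundles on $X$ are isomorphic if and only if they have the same degree, and any integer is realized as the degree of some line bundle. Combined with $E\cong \C_X^{r-1}\oplus \det E$, this shows that two smooth vector bundles are isomorphic if and only if they have the same rank and the same degree. There is no real obstacle here: the substantive work, namely the splitting into a line bundle and a trivial summand and the identification of the relevant sheaf cohomology groups, has already been carried out; the remaining step is only to observe that the definition of the degree assembles these isomorphisms into a single bijection.
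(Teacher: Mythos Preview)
Your proposal is correct and follows exactly the approach the paper intends: the theorem is stated as a summary (``To sum up, we conclude the following'') of the two preceding reductions --- the splitting $E \cong \C_X^{r-1} \oplus \det E$ and the identification of smooth line bundles with $\Z$ via the exponential sequence and integration --- and you have simply made explicit the composition showing that $\deg$ is a bijection.
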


\section{Holomorphic line bundles: the Jacobian}
Consider now the sheaf $\sO_X$ of holomorphic functions on $X$ and the sheaf $\sO^*_X$ of non-vanishing holomorphic functions.  Isomorphism classes of holomorphic line bundles form the \emph{Picard group} $\Pic(X)=H^1(X,\sO_X^*)$. In the holomorphic case we also have an exponential exact sequence
\begin{center}
\begin{tikzcd}
	0 \rar & 2\pi i \Z \rar & \sO_X \rar & \sO_X^* \rar & 0,
\end{tikzcd}
\end{center}
which induces an exact sequence
\begin{center}
\begin{tikzcd}
	H^1(X,2\pi i \Z) \rar &	H^1(X,\sO_X) \rar & \Pic(X) \rar{\delta} & H^2(X,2\pi i \Z).
\end{tikzcd}
\end{center}
Consider the subgroup $\Pic^0(X)=\left\{[L]\in \Pic(X): \delta([L])=0\right\}$. It follows from the exact sequence above that $\Pic^0(X)$ is isomorphic to the \emph{Jacobian} of $X$, which is defined as the quotient
\begin{equation*}
\Jac(X) = H^1(X,\sO_X)/H^1(X,2\pi i \Z).
\end{equation*}
More generally, the map $\delta$ splits $\Pic(X)$ in connected components
\begin{equation*}
\Pic(X) = \bigsqcup_{d\in \Z} \Pic^d(X),
\end{equation*}
labelled by the degrees of the line bundles in them.

\begin{ej}
	\textit{ Verify that $\Jac(X)$ is an abelian variety of dimension $g$, where  $g$ is the genus of $X$}. \textbf{Hint}: First, you need to convince yourself that $H^1(X,\sO_X)$ is a complex vector space of dimension $g$. This follows either directly from Hodge theory or from GAGA and the fact that $X$ is the analytification of a smooth projective curve (which essentially follows from Hodge theory). Second, you need to verify that there is a Riemann form with respect to the lattice $H^1(X,2\pi i \Z)$. You can construct this form using Poincaré duality.
\end{ej}

\begin{rmk}
Note that this already hints on the complexity of classifying holomorphic vector bundles. While smooth line bundles were simply determined by a number, there is an infinite amount of isomorphism classes of holomorphic line bundles with the same degree. However, these isomorphism classes can be nicely organized in a complex manifold (a \emph{moduli space}) and we can study its geometry.
\end{rmk}

\begin{ej}
A holomorphic \emph{$\SL_{n}(\C)$-vector bundle} is a pair $(\sE,\eta)$ formed by a holomorphic vector bundle $\sE$ and by a holomorphic trivialization of its determinant line bundle $\eta: \sO_X\overset{\sim}{\rightarrow} \det \sE$. \textit{Show that the frame bundle $\mathrm{Fr}(\sE)$ of a holomorphic $\SL_n(\C)$-vector bundle admits a holomorphic reduction of structure group from $\GL_n(\C)$ to $\SL_n(\C)$.}
\end{ej}

We can also consider ``twisted'' versions of $\SL_n(\C)$-vector bundles. Namely, if we fix any holomorphic line bundle $\xi$, we can consider pairs $(\sE,\eta)$ formed by a holomorphic vector bundle $\sE$ and an isomorphism $\eta:\xi \overset{\sim}{\rightarrow} \det \sE$. We call these \emph{holomorphic $\xi$-twisted $\SL_n(\C)$-vector bundles}. Generally we will drop the trivialization $\eta$ from the notation and just talk about the vector bundle $\sE$.

\begin{ej}
Let $\xi$ and $\xi'$ be holomorphic line bundles with the same degree $d$. \textit{Show that there is a natural equivalence of categories between the category of holomorphic $\xi$-twisted $\SL_n(\C)$-vector bundles and the category of holomorphic $\xi'$-twisted $\SL_n(\C)$-vector bundles}. Therefore, we can just fix our favourite $\xi$ (for example, $\xi=\sO_X(d x_1)$, for some point $x_1\in X$) and talk about \emph{holomorphic $d$-twisted $\SL_n(\C)$-vector bundles} without loss of generality. \textbf{Hint}: The equivalence is given by tensorization with the degree $0$ line bundle $\xi^{-1} \xi'$.
\end{ej}

\begin{ej}
Let $\sE$ and $\sE'$ be two holomorphic vector bundles and consider their frame bundles $\mathrm{Fr}(\sE)$ and $\mathrm{Fr}(\sE')$. Let $\mathrm{PFr}(\sE)$ and $\mathrm{PFr}(\sE')$ be the corresponding principal $\PGL_r(\C)$-bundles induced by the natural projection $\GL_r(\C)\rightarrow \PGL_r(\C)$. \textit{Show that $\mathrm{PFr}(\sE)$ and $\mathrm{PFr}(\sE')$ are isomorphic if and only if there is some holomorphic line bundle $\sL$ such that $\sE'=\sE \otimes \sL$.}
Suppose moreover that $\sE$ and $\sE'$ are holomorphic $d$-twisted $\SL_n(\C)$-vector bundles. \textit{Show that $\sE'=\sE \otimes \sL$ if and only if $\sL^{\otimes r}$ is trivial, and thus determines an $r$-torsion point in the Jacobian $\Jac(X)$.}
\end{ej}

\section{Towards the moduli space}

\paragraph{Early generalizations.} Generalizing the notion of Jacobian to higher rank was the starting point of the theory of moduli spaces of bundles, in the work of Weil (see Grothendieck's note \cite{Grothendieck_Bourbaki}).
When the genus of $X$ is low, the problem of classifying vector bundles can be solved relatively easily. For example, Grothendieck \cite{Grothendieck} showed that every holomorphic vector bundle over the Riemann sphere $\C\mathbb{P}^1$ can be decomposed as a direct sum of line bundles. For genus $1$, Atiyah \cite{atiyah} obtained a explicit description of vector bundles in terms of extensions. The problem gets its full complexity for genus $\geq 2$.

\paragraph{Non-abelian sheaf cohomology.} For studying the problem in general genus, the first naive approach would be to simply consider the whole set of isomorphism classes of vector bundles of rank $r$. As we explain in Section \ref{ss:vectorbundles}, this set of isomorphism classes can be understood as a non-abelian sheaf cohomology set
\begin{equation*}
\Bun_{r}:=H^1(X,\GL_r(\C)_X),
\end{equation*}
where $\GL_r(\C)_X$ is the sheaf of germs of holomorphic functions from $X$ to $\GL_r(\C)$. This set has a natural geometric structure, as it is the quotient of the space of \v{C}ech $1$-cocycles $Z^1(\mathfrak{U},\GL_r(\C)_X)$ by the action of the topological group of $0$-cochains $C^0(\fU,\GL_r(\C)_X)$. The space $\Bun_r$ has infinite connected components, labelled by the degrees of the vector bundles. For each $d\in \Z$, we denote by $\Bun_{r,d}$ the connected component of isomorphism classes of vector bundles of rank $r$ and degree $d$.

\paragraph{Holomorphic structures.} Let $E\rightarrow X$ be a smooth vector bundle of rank $r$ and degree $d$. We can consider the space $\cC_E$ of holomorphic structures $\delbar_{\sE}$ on $E$. The difference of any two holomorphic structures is a $(1,0)$-form valued in in $\End E$. Therefore, the space $\cC_E$ is an affine space modelled by the infinite dimensional vector space $\Omega^{0,1}(X,\End E)$. The \emph{complex gauge group} $\cG_E^\C= \Omega^0(X,\Aut E)$ is an infinite dimensional Lie group which acts on $\cC_E$ by conjugation
\begin{equation*}
g \cdot \delbar_{\sE} = g \delbar_{\sE} g^{-1}.
\end{equation*}
For such a $g\in \cG_E^\C$, the holomorphic vector bundles $(E,\delbar_{\sE})$ and $(E,g\cdot \delbar_{\sE})$ are isomorphic. Conversely, two holomorphic vector bundles $\sE$ and  $\sE'$ are isomorphic if and only if their associated operators $\delbar_{\sE}$ and $\delbar_{\sE'}$ are related by some $g\in \cG_E^\C$.
The quotient set
\begin{equation*}
\Bun_{r,d}=\cC_E/\cG_E^\C
\end{equation*}
is again the set of isomorphism classes of holomorphic vector bundles with underlying smooth bundle $E$. This provides an equivalent way to endow $\Bun_{r,d}$ with a geometric structure.

\paragraph{Algebraic moduli problem and moduli spaces.} Another way to endow the set $\Bun_{r,d}$ with natural geometric structure comes from algebraic geometry.  In order to do so, we regard $X$ as the analytification of some smooth complex projective curve, that we also denote by $X$. In general, if $Q$ is a set of geometric objects over $X$ and $S$ is a $\C$-scheme, then by a \emph{family of objects of $Q$ parametrized by $S$} we mean a locally free sheaf $\sF$ on $S\times X$ such that, for every closed point $s\in S(\C)$, the isomorphism class of $\sF_s$ is an element of $Q$. We say that two families $\sF_1$ and $\sF_2$ parameterized by $S$ are \emph{equivalent} if there is some line bundle $\sL \rightarrow S$ such that
\begin{equation*}
\sF_2 \cong \sF_1 \otimes \mathrm{pr}_{S}^* \sL.
\end{equation*}
The \emph{moduli problem for $Q$} is the functor
\begin{align*}
\mathsf{Q}: (\C\text{-schemes})^{\mathrm{op}}  \longrightarrow  \text{Set}
\end{align*}
which maps a $\C$-scheme $S$ to the set of equivalence classes of families of objects of $Q$ parametrized by $S$, and a morphism  $S\rightarrow T$ to the map sending a family to its pull-back.

A \emph{fine moduli space for $Q$} is a $\C$-scheme $M$ \emph{representing} $\mathsf{Q}$, that is, for every $\C$-scheme $S$, we have
\begin{equation*}
\mathsf{Q}(S) = \Hom(S,M).
\end{equation*}
In particular, note that $Q=M(\C)$, so this is a natural way to give a geometric description of $Q$. The existence of a fine moduli space amounts to the existence of a ``universal family'' $\mathscr{U}\rightarrow M\times X$ from which every other family arises as pullback. That is, every family $\sF$ of objects of $Q$ parametrized by $S$ is equivalent to the family $\sF_u$ determined by a map $u:S\rightarrow M$ by taking the pullback
\begin{center}
  \begin{tikzcd}
 \sF_u   \ar{r} \ar{d} & \mathscr{U} \ar{d} \\
 S \times X   \ar{r}{(s,\id_X)} & M \times X.
  \end{tikzcd}
\end{center}

Sometimes, asking for a fine moduli space is too much. There is the weaker notion of coarse moduli space that will be useful for us. A \emph{coarse moduli space for $Q$} is a $\C$-scheme $M$ with a morphism of functors $\Psi:\mathsf{Q}\rightarrow \Hom(-,M)$ \emph{universally corepresenting} $\mathsf{Q}$, meaning that
\begin{enumerate}
	\item $\Psi(\C):Q=\mathsf{Q}(\C) \rightarrow M(\C)$ is a bijection,
	\item for every $\C$-scheme  $M'$ and any morphism $\Psi': \mathsf{Q} \rightarrow \Hom(-,M')$, there exists a unique morphism $M\rightarrow M'$ such that the following diagram commutes
		 \begin{center}
		\begin{tikzcd}
		&& \Hom(-,M) \ar{dd} \\
			\mathsf{Q} \ar{rru}{\Psi} \ar{rrd}{\Psi'} && \\
		&& \Hom(-,M')  .
		\end{tikzcd}
		\end{center}
\end{enumerate}

We could try to take $Q=\mathrm{Bun}_{r,d}$ and thus look for a fine or a coarse moduli space for it, but it turns out that such a space cannot exist. Algebraic geometers find two ways to solve this issue. The first one is to restrict the problem to consider a very wide subclass of bundles for which a coarse moduli space can be constructed; these are the \emph{stable bundles}. The other solution is to consider the theory of stacks. Roughly, a stack is similar to a moduli functor, but it is ``groupoid-valued'' instead of ``set-valued'', with the appropriate higher-categorical notion necessary to make sense of this. This is then a relatively nice notion of space, that allows one to make sense and study certain geometric structures. More details about stacks are given in Section \ref{ss:stacks}.

\paragraph{The jumping phenomenon.} The main inconvenient for considering all isomorphism classes of vector bundles is the issue that the space $\Bun_{r,d}$ cannot be separated. Indeed, if it were separated the following ``jumping phenomenon'' would not happen. If the genus of $X$ is greater than $0$, then the cohomology space $H^1(X,\sO_X)$ is not trivial. This cohomology space parametrizes extensions
\begin{center}
  \begin{tikzcd}
0 \rar & \sO_X \rar & E \rar & \sO_X \rar & 0.
  \end{tikzcd}
\end{center}
Given a class $\alpha \in H^1(X,\sO_X)$, we can consider the map $\C \rightarrow H^1(X,\sO_X)$, $t\mapsto t\alpha$, which determines a continuous family $E_t$ of extensions of $\sO_X$ by $\sO_X$ parametrized by $\C$. Now, if $t,t' \neq 0$, the bundles $E_t$ and $E_{t'}$ are isomorphic and non-trivial, but for $t=0$, we have $E_0 \cong \sO_X^2$.

\begin{ej}
  Consider a point $x\in X$  and a small disk $D$ around it, with holomorphic coordinate $z$. Let $f$ be a holomorphic function on $D\setminus \left\{ x \right\}$.

  1. \textit{Show that $f$ determines a cohomology class $\alpha \in H^1(X,\sO_X)$}.

  2. The bundle $E_t$ can be trivialized over $X\setminus \left\{ x \right\}$ and over $D$. \textit{Show that then $E_t$ is determined by the transition function}
  \begin{equation*}
\begin{pmatrix}
  1 & tf \\
  0 & 1
\end{pmatrix}.
  \end{equation*}

  3. \textit{Show that $E_t$ and $E_{t'}$ are isomorphic, for $t,t'\neq 0$, and that $E_0$ is trivial}.

\end{ej}

\begin{ej} \label{ej:jumpingP1}
  On $X=\bbP^1$ we have $H^1(X,\sO_X)=0$ but the same ``jumping phenomenon'' occurs. \textit{Can you come up with an example of a $\C$-family $E_t$ of vector bundles on $\bbP^1$ such that $E_0$ is trivial and all the $E_t$ are isomorphic for $t\neq 0$?} \textbf{Hint}: Take $E_t$ to be isomorphic to $\sO_{\bbP^1}^2$ for $t\neq 0$ and to $\sO_{\bbP^1}(1)\oplus\sO_{\bbP^1}(-1)$ for $t=0$ and use ``Birkhoff factorization''
  \begin{equation*}
\begin{pmatrix}
  z^{-1} & t \\
  0 & z
\end{pmatrix}=
\begin{pmatrix}
  0 & 1 \\
  1 & t^{-1}z
\end{pmatrix}
\begin{pmatrix}
  -t^{-1} & 0 \\
  z^{-1} & t
\end{pmatrix}.
  \end{equation*}
\end{ej}

\paragraph{Size issues} Another important issue, also related with the jumping phenomenon, is that the space parametrizing all isomorphism classes of vector bundles of fixed rank and degree would by all means be ``too big''. In algebraic terms, this means that the stack of vector bundles is not of finite type. In fact, as we explain in Section \ref{ss:HarderNarasimhan}, it is an infinite union of disjoint accumulating strata.

\begin{ej} \label{ej:stratificationP1}
  This ``weird topology'' of the space of vector bundles is easy to illustrate on $X=\bbP^1$. A vector bundle on $\bbP^1$ of degree $0$ and rank $2$ is isomorphic to a bundle of the form $E_n=\sO_{\bbP^1}(n)\oplus \sO_{\bbP^1}(-n)$. Therefore, as a set $\Bun_{2,0}=\left\{ E_n:n\in \mathbb{N} \right\}$. However, Exercise \ref{ej:jumpingP1} shows that $E_1$ lies in the closure of $E_0$. \textit{Using a similar argument, show that $E_n$ lies in the closure of $E_{m}$, for every $m\leq n$.} If we denote $\Bun_{2,0}^n = \left\{ E_n \right\}$, we conclude that we can stratify
  \begin{equation*}
\Bun_{2,0} = \bigcup_{n\in \mathbb{N}} \Bun_{2,0}^n,
  \end{equation*}
  and that the closure of a stratum $\Bun_{2,0}^n$ is the union
  \begin{equation*}
\overline{\Bun_{2,0}^n} = \Bun_{2,0}^{\geq n} :=\bigcup_{m\geq n} \Bun_{2,0}^m.
  \end{equation*}
  In particular, the point $E_0$ is dense in $\Bun_{2,0}$.
\end{ej}

\section{Taking quotients in geometry} \label{ss:quotients}
The jumping phenomenon implies that a space classifying all holomorphic vector bundles cannot be separated. The main reason why this happens is that we are trying to endow the set of isomorphism classes of holomorphic vector bundles with geometric structure by regarding it as a quotient of a space under the action of some ``geometric'' group. The non-separatedness of the space then arises from an easy fact in topology: if a topological group acts on a topological space, then non-closed orbits give rise to non-separated phenomena in the quotient space. We can illustrate this with an example.

\begin{ex}
  Consider the action of $\C^*$ on $\C^2$ defined as follows
  \begin{equation*}
t \cdot (z,w) = (t z, t^{-1} w).
  \end{equation*}
  The orbits of this action are the family of conics
  \begin{equation*}
O_a = \left\{(z,w)\in \C : zw=a \right\}, \text{ for } a \in \C^*,
  \end{equation*}
  plus the two punctured lines $O_{0_1}=\left\{ (z,0): z\in \C^* \right\}$ and $O_{0_2}=\left\{ (0,w): w\in \C^* \right\}$, and the fixed point $O_{0}=\left\{ (0,0) \right\}$. Note that the conics $C_a$ are closed, as well as the point $(0,0)$. On the other hand, the two lines $O_{0_1}$ and $O_{0_2}$ are not closed, and both have the point $(0,0)$ in their closure. The map $a\mapsto O_{a}$, for $a\in \C \cup \left\{0_1,0_2 \right\}$, determines a homeomorphism of the orbit set $\C^2/\C^*$, endowed with the quotient topology, with the complex line with three origins, which is a non-separated topological space. However, if we restrict to the closed orbits, the map $a\mapsto O_a$ determines a bijection of the set of closed orbits and the complex line $\C$.
  \end{ex}

  There are two ways of constructing ``nice'' quotients that are relevant from us. One is coming from algebraic geometry and the other from symplectic geometry. We closely follow Hoskins' notes \cite{Hoskins}.

  \subsection*{Quotients in algebraic geometry. Geometric invariant theory}
  Recall that an affine $\C$-scheme $Y$ of finite type is by definition the spectrum of a finitely generated $\C$-algebra $A$, that is $Y=\Spec A$. If $G$ is a complex algebraic group acting on $Y$, then there is a naturally induced $G$-action on $A$ and, if the group $G$ is \emph{reductive} (i.e. if it is the complexification of a compact Lie group), then by Nagata's theorem the invariant subalgebra $A^G$ is also finitely generated. The inclusion $A^G\hookrightarrow A$ determines a morphism of affine varieties
  \begin{equation*}
 Y \rightarrow Y \git G:= \Spec A^G,
  \end{equation*}
  called the \emph{affine GIT quotient}. The affine GIT quotient is a categorical quotient in the sense that it satisfies a natural universal property in the category of $\C$-schemes.

  We are also interested in considering actions of reductive groups on projective schemes. Recall that if $Y\subset \bbP^n$ is a projective variety, then we can consider its homogeneous coordinate ring, which is a finitely generated graded $\C$-algebra $R=\oplus_{r\geq 0} R_r$ with $R_0=\C$ and such that the generators lie in $R_1$. This algebra $R$ does not depend only on $X$, but also on the way it is embedded in $\bbP^n$. Conversely, the projective spectrum of such an algebra $R$ not only provides the projective scheme $Y=\Proj R$, but also a very ample line bundle $L=\sO(1)\rightarrow Y$. A pair $(Y,L)$ of a projective scheme with a very ample line bundle on it is called a \emph{polarized} projective scheme. Polarized projective schemes are thus in bijective correspondence with graded $\C$-algebras finitely generated in degree $1$.

  Let $Y$ be a projective scheme equipped with an action of a reductive group $G$. A \emph{linearization} of this action is a line bundle $L\rightarrow Y$ such that the action of $G$ lifts to $L$ in such a way that the projection is equivariant and the morphisms on the fibres $L_y \rightarrow L_{gy}$ are linear. Suppose that $L\rightarrow Y$ is an ample line bundle providing such a linearization, and consider the graded ring
  \begin{equation*}
R(Y,L) = \bigoplus_{r\geq 0} H^0(Y,L^{r}).
  \end{equation*}
  The group $G$ acts naturally on $R(Y,L)$ preserving the graded pieces, and we denote
  \begin{equation*}
Y\git_L G := \Proj R(Y,L)^G.
  \end{equation*}
Note that we still do not have a categorical quotient. Rather, what we have is a rational map $Y\dashrightarrow Y\git_L G$ induced by the inclusion $R(Y,L)^G\hookrightarrow R(Y,L)$, but undefined on the closed subscheme $V(R(Y,L)^G_+)\subset Y$. The points of this subscheme are said to be ($L$-)\emph{unstable}. This motivates the following definition.

\begin{defn}
  A point $y\in Y$ is ($L$-)\emph{semistable} if there exists $r>0$ and an invariant section $\sigma \in H^0(Y,L^{r})^G$ such that $\sigma(y)\neq 0$. The ($L$-)semistable points in $Y$ form the open subscheme
  \begin{equation*}
Y^{ss}_L = Y \setminus V(R(Y,L)^G_+).
  \end{equation*}
  The (projective) \emph{GIT quotient} of $Y$ by the action of $G$ with respect to $L$ is the natural morphism
  \begin{equation*}
Y^{ss}_L \longrightarrow Y\git_L G,
\end{equation*}
induced by the inclusion $R(Y,L)^G\hookrightarrow R(Y,L)$.
\end{defn}

It is a theorem of Mumford \cite{GIT} that the GIT quotient $Y^{ss}_L\rightarrow Y\git_L G$ is in fact a categorical quotient. In particular this implies that the $\C$-points of $Y\git_L G$ are in bijection with the $G$-orbits which are closed in $Y^{ss}_L$; if a point is in one of these orbits then we say that it is ($L$-)\emph{polystable}. These polystable points form a subset $Y^{ps}_L\subset Y^{ss}_L$, and we are saying that there is a bijection $Y^{ps}_L(\C)/G \cong Y\git_L G(\C)$. An open subset $Y^{s}_L\subset Y^{ss}_L$ is formed by ($L$-)\emph{stable} points, which are polystable points such that their orbit has dimension equal to the dimension of $G$. When restricted to $Y^s_L$, the GIT quotient is in fact a geometric quotient, in the topological sense.

\begin{ex} \label{ex:projective}
  Consider the group $G=\C^*$ acting on $Y=\bbP^n$ by
  \begin{equation*}
t \cdot (z_0:\dots:z_n) = (t^{-1}z_0:tz_1:\dots:tz_n).
  \end{equation*}
  The ample line bundle $L=\sO_{\bbP^n}(1)$ provides a linearization of this action, and we have
  \begin{equation*}
R(\bbP^n,\sO_{\bbP^n}(1))^{\C^*} = \bigoplus_{r\geq 0} \C[z_0,\dots,z_r]_r^{\C^*} = \C[z_0z_1,\dots,z_0z_n].
  \end{equation*}
  The semistable locus is $(\bbP^n)^{ss}=\bbP^n\setminus V(z_0z_1,\dots,z_0z_n)\cong \bbA^n \setminus \left\{ 0 \right\}$ and all semistable points are actually stable. The GIT quotient is then the natural quotient
  \begin{equation*}
(\bbP^n)^{ss}\cong \bbA^n \setminus \left\{ 0 \right\} \longrightarrow \bbP^n\git \C^* = \Proj \C[z_0z_1,\dots,z_0z_n] \cong (\bbA^n\setminus \left\{ 0 \right\})/\C^* = \bbP^{n-1}.
  \end{equation*}
\end{ex}

\begin{ex}
More generally, if $G$ is a reductive group acting linearly on a projective scheme $Y\subset \bbP^n$, we obtain a lift of this action to the affine cone $\tilde{Y}\subset \C^{n-1}$. Now, for any $y\in Y$, we can consider a non-zero lift $\tilde{y}\subset \tilde{Y}$, and we obtain the following topological criterion
\begin{enumerate}
\item $y$ is semistable if and only if $0\not \in \overline{G\cdot \tilde{y}}$; equivalently, $y$ is unstable if and only if $0 \in \overline{G\cdot \tilde{y}}$.
\item $y$ is polystable if and only if $G\cdot \tilde{y}$ is closed.
\item $y$ is stable if and only if $G\cdot \tilde{y}$ is closed and has dimension equal to the dimension of $G$.
\end{enumerate}

Suppose in particular that $G=\C^*$. The linear action of $\C^*$ on $V:=\C^{n-1}$ splits the vector space $V$ into a direct sum $V=\oplus_{i\in \Z} V_i$, where on each component $V_i$ the action is given as $t \cdot v=t^iv$. Now, for each $y\in Y$ we consider any non-zero lift $\tilde{y}\in V$ and the corresponding set of \emph{weights}
\begin{equation*}
P(y) = \left\{ i \in \Z : \text{ $\tilde{y}$ has a non-zero component in } V_i  \right\}.
\end{equation*}
Let $\mu(y)=\min P(y)$ denote the minimum of these weights.
\begin{enumerate}
\item If $\mu(y)>0$, then $\lim_{t\rightarrow 0} t\cdot \tilde{y}=0$, so $y$ is unstable.
  \item If $\mu(y)=0$ then we have two possible cases:
        \begin{enumerate}
          \item  $P(y)=\left\{ 0 \right\}$, in which case $\C^*\cdot \tilde{y}= \left\{  \tilde{y} \right\}$, and thus $y$ is (strictly) polystable,
        \item there are some $i\in P(y)$ with $i>0$, in which case the limit $\lim_{t\rightarrow 0}t \tilde{y}$ is the $V_0$-component of $\tilde{y}$, which is not in $G\cdot \tilde{y}$, and thus $y$ is strictly semistable.
        \end{enumerate}
\item if $\mu(y)<0$ then the orbit $\C^* \cdot \tilde{y}$ is closed and $1$-dimensional, so $y$ is stable.
\end{enumerate}
\end{ex}

The idea of the above example can be generalized. Consider an action of a reductive group $G$ on a projective scheme $Y$, with a linearization provided by an ample line bundle $L\rightarrow Y$. Given any $1$-parameter subgroup $\lambda:\C^*\rightarrow G$, for any point $y\in Y$ we can consider the limit $y_0 := \lim_{t\rightarrow 0} \lambda(t)\cdot y$. Since this $y_0$ is a fixed point of the induced $\C^*$-action, we obtain a $\C^*$-action on the fibre $L_{y_0}$, which is necessarily of the form $t \cdot s = t^n s$, for a unique $n\in \Z$. We denote this number by $\mu_L(y,\lambda)=n$, and call it the \emph{Hilbert-Mumford weight}.

\begin{thm}[Hilbert-Mumford criterion]
  A point $y\in Y$ is semistable (resp. stable) with respect to $L$ if and only if for all nontrivial $\lambda:\C^* \rightarrow G$, we have $\mu_L(y,\lambda) \leq 0$ (resp. $<0$).
\end{thm}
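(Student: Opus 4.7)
The plan is to pass to the affine cone via the linearization and combine the orbit-closure criterion from the previous example with the Cartan decomposition of the loop group $G(\C((t)))$. Replacing $L$ by a large tensor power if necessary, I get a $G$-equivariant embedding $Y\hookrightarrow\bbP(V^*)$ with $V=H^0(Y,L^{\otimes r})$, and for each $y\in Y$ I choose a nonzero lift $\tilde{y}\in V$ to the affine cone. For any one-parameter subgroup $\lambda:\C^*\to G$, the decomposition $V=\bigoplus_{n\in\Z}V_n$ into $\lambda$-weight spaces lets me expand $\tilde{y}=\sum_n\tilde{y}_n$, and a direct comparison with the definition at the limit $y_0=\lim_{t\to 0}\lambda(t)\cdot y$ identifies $\mu_L(y,\lambda)$ with $\min\{n:\tilde{y}_n\neq 0\}$. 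In particular, $\mu_L(y,\lambda)>0$ is equivalent to $\lim_{t\to 0}\lambda(t)\cdot\tilde{y}=0$, while $\mu_L(y,\lambda)=0$ is equivalent to the limit existing and being a nonzero $\lambda$-fixed vector.

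One direction is then formal. If some nontrivial $\lambda$ satisfies $\mu_L(y,\lambda)>0$, then $0\in\overline{G\cdot\tilde{y}}$, so $y$ is unstable. If $\mu_L(y,\lambda)=0$ for some nontrivial $\lambda$, let $\tilde{y}_\infty$ be the nonzero limit: either $\tilde{y}_\infty\notin G\cdot\tilde{y}$, so the orbit is not closed and $y$ fails to be polystable; or $\tilde{y}_\infty=g\cdot\tilde{y}$ for some $g\in G$, in which case $g^{-1}\lambda(\C^*)g\subset\mathrm{Stab}_G(\tilde{y})$ forces the stabilizer to be positive-dimensional, so $y$ is not stable. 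Hence semistability implies $\mu_L(y,\lambda)\leq 0$ and stability implies $\mu_L(y,\lambda)<0$ for every nontrivial $\lambda$.

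The hard direction requires extracting a destabilizing 1-PS from the hypothesis $0\in\overline{G\cdot\tilde{y}}$. Applying the valuative criterion of properness to the orbit map $G\to G\cdot\tilde{y}\subset V$, I obtain, possibly after a ramified base change, a morphism $\psi:\Spec\C((t))\to G$ such that $\psi(t)\cdot\tilde{y}$ extends to $\Spec\C[[t]]\to V$ with value $0$ at $t=0$. The crucial structural input is the Cartan decomposition of the loop group,
\[
G(\C((t)))=\bigsqcup_{\lambda}G(\C[[t]])\cdot\lambda(t)\cdot G(\C[[t]]),
\]
where $\lambda$ ranges over dominant cocharacters of a maximal torus of $G$. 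Writing $\psi(t)=k_1(t)\lambda(t)k_2(t)$ with $k_i\in G(\C[[t]])$ and decomposing $k_2(t)\cdot\tilde{y}=\sum_n\tilde{z}_n(t)$ into $\lambda$-weight components, the condition $\lim_{t\to 0}\lambda(t)\cdot k_2(t)\cdot\tilde{y}=k_1(0)^{-1}\cdot 0=0$ forces every nonpositive power of $t$ in the expansion to vanish, so $k_2(0)\cdot\tilde{y}$ has only strictly positive $\lambda$-weights. The conjugated 1-PS $\lambda':=k_2(0)^{-1}\lambda\, k_2(0)$ then satisfies $\lim_{t\to 0}\lambda'(t)\cdot\tilde{y}=0$, i.e.\ $\mu_L(y,\lambda')>0$, as required. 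The parallel argument with a nonzero limit in place of $0$ produces a 1-PS with $\mu_L=0$ when the orbit $G\cdot\tilde{y}$ is non-closed or has positive-dimensional stabilizer, completing the converse for stability.

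The main obstacle is the Cartan decomposition step: the valuative criterion only produces an arbitrary arc in the infinite-dimensional loop group $G(\C((t)))$, and reducing it to a genuine one-parameter subgroup of the finite-dimensional group $G$ itself is precisely where the reductivity of $G$ enters essentially. This is the structural content that makes the Hilbert--Mumford criterion work and that fails in the non-reductive setting.
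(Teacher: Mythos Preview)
The paper does not actually prove the Hilbert--Mumford criterion: it is stated as a foundational result from GIT (with implicit reference to Mumford's \cite{GIT}) and then immediately applied. The preceding example works out the $\C^*$-case explicitly and is offered as motivation, not as a proof. So there is no argument in the paper to compare yours against.

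That said, your sketch is the standard proof and is essentially correct. The easy direction is exactly as you describe. For the hard direction, the reduction of an arbitrary formal arc $\psi\in G(\C((t)))$ to a genuine one-parameter subgroup via the Cartan (Iwahori) decomposition is indeed the heart of the matter, and you have identified correctly that this is where reductivity enters. Two small points worth tightening: first, your phrase ``valuative criterion of properness to the orbit map'' is slightly off---the orbit map $G\to V$ is not proper; what you actually use is that $0\in\overline{G\cdot\tilde y}$ gives a formal arc in $V$ landing in the orbit at the generic point, and then you lift to $G$ after a finite base change because the fibers of $G\to G\cdot\tilde y$ are homogeneous under the stabilizer (this is where the ramified cover may appear). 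Second, the identification $\mu_L(y,\lambda)=\min\{n:\tilde y_n\neq 0\}$ depends on the sign convention for the linearization on $L$ versus the dual affine cone; the paper's own example uses this same identification, but in general one should check it against the definition of $\mu_L$ as the weight on the fibre $L_{y_0}$, where a duality can introduce a sign.
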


The Hilbert--Mumford criterion also gives us a way to characterize polystable points which are not stable. A polystable point is semistable, so we have $\mu_L(y,\lambda)\leq 0$ for all $\lambda:\C^* \rightarrow G$. Now, suppose that $\mu_L(y,\lambda)=0$ for some $\lambda$. Since the orbit $G\cdot y$ is closed, if we denote $y_0:=\lim_{t\rightarrow 0} \lambda(t)\cdot y$, there must be some $g\in G$ such that $g\cdot y = y_0$. Since $y_0$ is fixed under $\lambda(\C^*)$, the point $y$ is fixed under the 1-parameter subgroup $\lambda^g(\C^*):= g^{-1} \lambda(\C^*) g$.

\subsection*{Quotients in symplectic geometry}
Recall that a symplectic manifold is an even-dimensional (real) smooth manifold $Y$ equipped with a symplectic form $\omega \in \Omega^2(Y)$. This is a closed ($d\omega = 0$) and non-degenerate differential $2$-form. A symplectomorphism is a diffeomorphism preserving the symplectic form. Let $K$ be a Lie group acting on $Y$ by symplectomorphisms. We denote by $\fk$ the Lie algebra of $K$ and by $\fk^*$ its dual, which is naturally equipped with the coadjoint action.

A \emph{moment map} for the action of $K$ is a $K$-equivariant map $\mu:Y\rightarrow \fk^*$ such that, for every $a\in \fk^*$, we have
\begin{equation*}
d \mu_a = \omega(\vec{a},-).
\end{equation*}
Here, $\mu_a:Y\rightarrow \bbR$ denotes the map $\mu_a(x)=\langle \mu(x), a \rangle$, and $\vec{a}$ is the vector field on $Y$ defined as
\begin{equation*}
\vec{a}_x = \left. \frac{d}{dt}\right|_{t=0} \exp(t a) \cdot x.
\end{equation*}

\begin{ej} \label{ej:fubinistudy}
  Consider the unitary group $K=\U(n+1)$ acting on projective space $Y=\bbP^n$ through its standard action on $\C^{n+1}$. Recall that $\bbP^n$ is naturally symplectic, with the Fubini--Study form, which in complex coordinates $\bm{z}=(z_0,\dots,z_n)$ is written as
  \begin{equation*}
\omega_{[\bm{z}]} = \frac{i}{2} \left( \sum_j \frac{dz_j \wedge d\bar{z}_j}{\lVert \bm{z} \rVert^2} - \sum_{j,k} \frac{\bar{z}_j z_k dz_j \wedge d \bar{z}_k}{\lVert \bm{z} \rVert^4} \right).
  \end{equation*}
  This symplectic form is constructed from the standard Hermitian form $(-,-)$ on $\C^{n+1}$, so it is preserved under the $\U(n+1)$-action. \textit{Show that there is a moment map $\mu:\bbP^n \rightarrow \mathfrak{u}(n+1)^*$ for this action, satisfying}
    \begin{equation*}
\langle \mu([\bm{z}]) , a \rangle = \frac{\tr (\bm{z}^\dagger a \bm{z})}{ 2 i \lVert \bm{z} \rVert^2},
    \end{equation*}
    \textit{where $\bm{z}^\dagger$ denotes the conjugate transpose of $\bm{z}$.}
\end{ej}

When the action of $K$ admits a moment map, we can perform \emph{symplectic reduction}. Let $\eta \in \fk^*$ be a fixed point under the coadjoint action. We can then consider the ``level set'' $i:\mu^{-1}(\eta)\hookrightarrow Y$ and restrict the $K$-action to it. The \emph{symplectic quotient} is the natural quotient
\begin{equation*}
\pi: \mu^{-1}(\eta) \longrightarrow Y\git_\eta K:= \mu^{-1}(\eta)/K.
\end{equation*}

\begin{thm}[Marsden--Weinstein--Meyer]
  If the action of $K$ on $\mu^{-1}(\eta)$ is free and proper, then:
  \begin{enumerate}
\item the symplectic quotient $Y\git_\eta K$ is a smooth manifold of dimension $\dim Y - 2\dim K$,
\item there is a unique symplectic form $\bar{\omega}$ on $Y\git_\eta K$ such that $\pi^*\bar{\omega}=i^* \omega$.
  \end{enumerate}
\end{thm}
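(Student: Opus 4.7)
The plan is to prove both assertions by reducing them to a careful analysis of the differential of the moment map at points of the level set. First I would establish that $\mu^{-1}(\eta)$ is a smooth embedded submanifold of $Y$. The key identity, at a point $y\in\mu^{-1}(\eta)$, is
\begin{equation*}
\ker(d\mu_y) = (T_y(K\cdot y))^{\omega},
\end{equation*}
where the right hand side denotes the symplectic orthogonal with respect to $\omega$. This follows by unwinding the defining identity $d\mu_a = \omega(\vec{a},-)$ and using non-degeneracy of $\omega$. Taking annihilators in $\fk^*$ one also obtains $(\mathrm{image}\, d\mu_y)^{\circ} = \fk_y$, the Lie algebra of the stabilizer of $y$. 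Since the action on $\mu^{-1}(\eta)$ is assumed free, all stabilizers are trivial, hence $d\mu_y$ is surjective and $\eta$ is a regular value. The regular value theorem then gives that $\mu^{-1}(\eta)$ is a smooth submanifold of dimension $\dim Y - \dim K$.

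Next I would show the quotient is a smooth manifold. The coadjoint invariance of $\eta$ together with the equivariance of $\mu$ yields $\mu(g\cdot y) = \mathrm{Ad}^*(g)\mu(y) = \eta$, so the $K$-action preserves $\mu^{-1}(\eta)$. By assumption this restricted action is free and proper, so by the standard quotient manifold theorem the orbit space $Y\git_\eta K = \mu^{-1}(\eta)/K$ inherits a smooth manifold structure of dimension $(\dim Y - \dim K) - \dim K = \dim Y - 2\dim K$, with $\pi$ a smooth surjective submersion.

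For the second assertion, I would construct $\bar{\omega}$ as the unique $2$-form satisfying $\pi^*\bar{\omega} = i^*\omega$. This reduces to showing that the radical of $i^*\omega$ at each $y\in\mu^{-1}(\eta)$ is exactly $T_y(K\cdot y)$. Using $T_y\mu^{-1}(\eta) = \ker d\mu_y = (T_y(K\cdot y))^{\omega}$, a vector $v\in T_y\mu^{-1}(\eta)$ lies in the radical of $i^*\omega$ if and only if $v \in (T_y\mu^{-1}(\eta))^{\omega} = ((T_y(K\cdot y))^{\omega})^{\omega} = T_y(K\cdot y)$. Since the orbits are contained in $\mu^{-1}(\eta)$, the radical coincides with the vertical distribution of $\pi$, so a standard descent argument (via local slices) produces a smooth $\bar{\omega}$ on $Y\git_\eta K$ with $\pi^*\bar\omega = i^*\omega$. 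Uniqueness is immediate because $\pi$ is a surjective submersion, so $\pi^*$ is injective on forms.

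Finally, closedness follows by pulling back: $\pi^* d\bar{\omega} = d\pi^*\bar{\omega} = d i^*\omega = i^*d\omega = 0$, and injectivity of $\pi^*$ then forces $d\bar{\omega}=0$. Non-degeneracy is the radical computation above transported to the quotient. The main technical obstacle is the kernel computation $\ker d\mu_y = (T_y(K\cdot y))^{\omega}$: once it is in hand, freeness of the action upgrades it to surjectivity of $d\mu_y$, and the radical identification, the dimension count, and non-degeneracy of $\bar\omega$ all drop out as consequences. So writing that single identity carefully, and ensuring the smooth descent of $i^*\omega$ through $\pi$, is where the essential work lies.
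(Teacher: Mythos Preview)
Your proof is correct and follows the standard argument for the Marsden--Weinstein--Meyer theorem. However, the paper does not actually supply a proof of this result: it is stated as background in the section on quotients in symplectic geometry, consistent with the paper's expository aims (as flagged in the introduction, complete proofs of the main analytical theorems are omitted). There is therefore nothing in the paper to compare your argument against; what you have written is the classical route via the identity $\ker(d\mu_y) = (T_y(K\cdot y))^{\omega}$, and it is carried out correctly.
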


\begin{ej} \label{ej:fubinistudy2}
  Consider the action of $K=\U(1)$ on $Y=\bbP^n$ by
  \begin{equation*}
t\cdot (z_0:\dots:z_n) = (t^{-1}z_0:tz_1:\dots:tz_n).
  \end{equation*}
  We can regard this action as the composition of the action of Exercise \ref{ej:fubinistudy} with the homomorphism $\U(1)\rightarrow \U(n+1)$ given by $t \mapsto \diag(t^{-1},t,\dots,t)$. \textit{Show that this action admits the moment map}
  \begin{equation*}
\mu(z_0:\dots:z_n) = \frac{-\lvert z_0\rvert^2 + \lvert z_1\rvert^2 + \dots + \lvert z_n\rvert^2}{\lvert z_0\rvert^2 + \lvert z_1\rvert^2 + \dots + \lvert z_n\rvert^2}.
  \end{equation*}
 \textit{Conclude that $\mu^{-1}(0)$ is a $(2n-1)$-dimensional sphere and that the symplectic quotient $\mu^{-1}(0)/ \U(1)$ is isomorphic to $\bbP^{n-1}$.}
\end{ej}

The reader might have noticed that all the symplectic manifolds $(Y,\omega)$ that we deal with in this section are actually Kähler. This means that there exists a complex structure $J$ on $Y$ such that the bilinear form $g(-,-)=\omega(-,J-)$ is actually a Riemannian metric on $Y$. Equivalently, a Kähler manifold is a complex manifold $(Y,J)$ with a Hermitian metric $h$ on $Y$ (i.e. a Hermitian metric on $TY$) such that the associated $2$-form $\omega=\mathrm{Im}(h)$ is closed. The Fubini--Study form on $\bbP^n$ is obtained from the Fubini--Study Hermitian metric, which determines a Kähler structure on $\bbP^n$. As a consequence, all smooth projective varieties (or rather, their analytifications) are naturally Kähler manifolds. It turns out that symplectic quotients of Kähler manifolds actually inherit the Kähler structure.

\begin{prop}
Suppose $(Y,J,h)$ is a Kähler manifold with a Lie group $K$ acting on $Y$ preserving both the complex structure $J$ and the Hermitian metric $h$ and with moment map $\mu:Y\rightarrow \fk^*$. Let $\eta \in \fk^*$ be a fixed point under the coadjoint action and suppose that the action of $K$ on $\mu^{-1}(\eta)$ is free and proper. Then we obtain an induced complex structure $\bar{J}$ and an induced Hermitian metric $\bar{h}$ on the symplectic quotient $Y\git_\eta K = \mu^{-1}(\eta)/K$, with respect to which it is a Kähler manifold.
\end{prop}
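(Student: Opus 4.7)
My plan is to construct a horizontal distribution inside the level set $\mu^{-1}(\eta)$, show it is preserved by $J$, and use this to descend $J$, $g$ and $\omega$ to the quotient; the final step, proving integrability of the induced almost complex structure, is the only substantive difficulty.

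First, since $\eta$ is fixed by the coadjoint action, $K$ preserves $\mu^{-1}(\eta)$, and by freeness $T_x(K\cdot x) \subset \ker(d\mu)_x$ has dimension $\dim K$. At each $x \in \mu^{-1}(\eta)$ I would define the horizontal subspace
$$
H_x := T_x(K\cdot x)^{\perp_g} \cap \ker(d\mu)_x,
$$
the $g$-orthogonal complement of the orbit taken within the level set. The projection $\pi$ then restricts to an isomorphism $d\pi_x\colon H_x \xrightarrow{\sim} T_{[x]}(Y\git_\eta K)$, so it suffices to produce compatible $J$, $g$, $\omega$ on $H$ that are $K$-invariant.

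The key linear-algebra step is that $H_x$ is $J$-invariant. The defining property of $\mu$ gives $\ker(d\mu)_x = T_x(K\cdot x)^{\perp_\omega}$, while the Kähler identities $g(u,v) = \omega(u,Jv)$ and $\omega(Ju,Jv) = \omega(u,v)$ together yield $T_x(K\cdot x)^{\perp_g} = J\ker(d\mu)_x$. Therefore
$$
H_x = \ker(d\mu)_x \cap J\ker(d\mu)_x,
$$
which is manifestly preserved by $J$. Since $K$ acts by Kähler isometries and $\mu$ is equivariant, the distribution $H$ is $K$-invariant, as are the restrictions $J|_H$, $g|_H$ and $\omega|_H$; they descend to tensors $\bar J$, $\bar g$ and $\bar \omega$ on the quotient, with $\bar\omega$ coinciding with the Marsden--Weinstein--Meyer symplectic form and the Kähler identity $\bar g(\bar u,\bar v) = \bar\omega(\bar u,\bar J\bar v)$ holding fibrewise.

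The main obstacle is integrability of $\bar J$. The cleanest approach is via the complexification: under these hypotheses the $K$-action extends on a neighborhood of $\mu^{-1}(\eta)$ to a free holomorphic action of the complexification $K_\C$, and the inclusion $\mu^{-1}(\eta) \hookrightarrow K_\C \cdot \mu^{-1}(\eta)$ induces a diffeomorphism $Y\git_\eta K \cong (K_\C \cdot \mu^{-1}(\eta))/K_\C$, which is the differential-geometric content of the Kempf--Ness principle. The right-hand side is the quotient of a complex manifold by a free, proper, holomorphic action, hence inherits a complex structure, and one checks this structure agrees with $\bar J$ under the identification. Alternatively, one can invoke Newlander--Nirenberg directly by computing the Lie bracket of horizontal lifts of $(1,0)$-vectors on the quotient and using $d\omega = 0$ together with the moment map identity to kill the $(0,2)$-obstruction modulo vertical directions. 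Once $\bar J$ is integrable, setting $\bar h = \bar g - i\bar\omega$ produces the required Kähler structure.
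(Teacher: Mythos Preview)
The paper states this proposition without proof; it is part of the expository review of symplectic and K\"ahler quotients in Section~\ref{ss:quotients}, where the author refers to Hoskins' notes for details. So there is no paper proof to compare against, and your proposal stands on its own.

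Your argument is the standard one and is essentially correct. The identification $H_x = \ker(d\mu)_x \cap J\ker(d\mu)_x$ is the right linear-algebraic observation, and your derivation of $T_x(K\cdot x)^{\perp_g} = J\ker(d\mu)_x$ from the K\"ahler identity is sound. The descent of $\bar J$, $\bar g$, $\bar\omega$ then follows from $K$-invariance exactly as you say.

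One caution about your integrability argument: invoking the complexified action and the identification $\mu^{-1}(\eta)/K \cong (K_\C\cdot\mu^{-1}(\eta))/K_\C$ is morally the Kempf--Ness principle, which in the paper is stated \emph{after} this proposition and in the projective setting. If you want a self-contained proof here, you should either establish directly that the local $K_\C$-action near $\mu^{-1}(\eta)$ is free and proper (this uses that $J\vec a$ is $g$-orthogonal to the level set, so the $i\fk$-directions move transversally), or carry out the Nijenhuis computation you sketch. The latter is cleaner: horizontal lifts of $(1,0)$-vectors on the quotient are sections of $H^{1,0} = H \cap T^{1,0}Y$, and since $T^{1,0}Y$ is involutive on $Y$, the bracket of two such lifts is again $(1,0)$; one only needs to check that its horizontal projection represents the bracket downstairs, which follows because the vertical component lies in the complexified orbit direction. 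Either route closes the gap; as written, your second option is gestured at rather than executed.
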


  \subsection*{The Kempf-Ness theorem}
  Recall the action of $\C^*$ on $\bbP^n$ from Example \ref{ex:projective}. The restriction of this action to the unit circle $\U(1)\subset \C^*$ determines the symplectic action from Exercise \ref{ej:fubinistudy2}. Observe that the GIT quotient and the symplectic quotients coincide
  \begin{equation*}
\bbP^n\git \C^* \cong \mu^{-1}(0)/ \U(1) \cong \bbP^{n-1}.
  \end{equation*}
  It turns out that this is the general situation.

  Let $K$ be a compact Lie group and let $G=K^{\C}$ be its complexification, which is a complex reductive group. Suppose that $G$ acts linearly on a smooth projective variety $Y\subset \bbP^n$ through a representation $\rho:G\rightarrow \GL_{n+1}(\C)$. The analytification of $Y$ is a smooth Kähler manifold, with respect to the Fubini--Study metric; in an abuse of notation, we identify $Y$ with its analytification. The $G$-action induces an action of $K$ on $\bbP^n$ and, since $K$ is compact, we can choose coordinates on $\bbP^n$ so that $\rho$ restricts to a unitary representation $\rho:K \rightarrow \U(n+1)$, and thus the Fubini--Study metric on $Y$ is preserved by $K$. We can now define a moment map $\mu:Y\rightarrow \fk^*$ by composing the embedding $Y\hookrightarrow \bbP^n$ with the moment map $\bbP^n\rightarrow \mathfrak{u}(n+1)^*$ from Exercise \ref{ej:fubinistudy} and with the map $\mathfrak{u}(n+1)^* \rightarrow \fk^*$ induced by the representation $\rho:K\rightarrow \U(n+1)$.

  \begin{thm}[Kempf--Ness]
    For any point $y\in Y$, we have that
    \begin{enumerate}
\item $y$ is semistable if and only if $\overline{G\cdot y} \cap \mu^{-1}(0)\neq \varnothing$;
      \item $y$ is polystable if and only if $G\cdot y \cap \mu^{-1}(0)\neq \varnothing$ and, in that case, $G\cdot y \cap \mu^{-1}(0)=K\cdot y$. Therefore,
            \begin{equation*}
\mu^{-1}(0) \subset G \cdot \mu^{-1}(0) = Y^{ps} \subset Y^{ss}.
            \end{equation*}
    \end{enumerate}
            Moreover, the inclusion $\mu^{-1}(0)\subset Y^{ss}$ induces a biholomorphism
           \begin{equation*}
\mu^{-1}(0)/K \longrightarrow Y \git G,
           \end{equation*}
           identifying the symplectic quotient with (the analytification of) the GIT quotient.
  \end{thm}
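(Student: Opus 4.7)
The plan is to reduce the entire statement to the convex analysis of a single function, the Kempf--Ness potential, on the non-compact symmetric space $K\backslash G$. For a point $y\in Y$ with non-zero lift $\tilde y\in \C^{n+1}$, I would set
$$\psi_{\tilde y}:G\to \bbR,\qquad \psi_{\tilde y}(g) := \log\lVert g\cdot \tilde y\rVert^2.$$
Since $\rho(K)\subset \U(n+1)$, this is left $K$-invariant and descends to $\bar\psi_{\tilde y}$ on $K\backslash G$; via the Cartan decomposition $G=K\exp(i\fk)$, the quotient $K\backslash G$ is diffeomorphic to $i\fk$.

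Two facts will underpin everything. First, a direct computation starting from the moment map formula of Exercise \ref{ej:fubinistudy} yields the first variation
$$\frac{d}{dt}\bigg|_{t=0}\psi_{\tilde y}(\exp(it\xi)g) = 2\langle \mu(g\cdot y),\xi\rangle,\qquad \xi\in \fk,$$
so that the critical points of $\bar\psi_{\tilde y}$ correspond bijectively to $g\in G$ with $\mu(g\cdot y)=0$. Second, diagonalising the anti-Hermitian operator $\rho_*(\xi)$ with eigenvalues $i\beta_j$ one has $\lVert\exp(it\xi)\tilde y\rVert^2=\sum_j\lvert a_j\rvert^2 e^{-2t\beta_j}$, showing that $t\mapsto \bar\psi_{\tilde y}(\exp(it\xi))$ is convex, and strictly convex unless $\tilde y$ is a $\rho_*(\xi)$-eigenvector, in which case $\exp(i\xi)$ acts trivially on $[\tilde y]\in \bbP^n$.

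With these in hand, assertion~(1) follows from the chain of equivalences ``$y$ is semistable $\iff 0\notin \overline{G\cdot\tilde y}\iff \inf_g\lVert g\tilde y\rVert^2>0$''. Extracting a norm-minimising sequence, I obtain a limit $v_\infty\in \overline{G\cdot\tilde y}\setminus\{0\}$ that sits on the unique closed $G$-orbit in the closure, and the first variation forces $\mu([v_\infty])=0$. Assertion~(2) then follows because a polystable $y$ has closed orbit, so this minimiser lies in $G\cdot y$ itself; conversely, if $G\cdot y\cap \mu^{-1}(0)\neq\varnothing$ then the norm is attained on $G\cdot y$, which forces the orbit to be closed. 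For the uniqueness clause $G\cdot y\cap \mu^{-1}(0)=K\cdot y$: if $y$ and $\exp(i\xi)\cdot y$ both lie in $\mu^{-1}(0)$, then $\bar\psi_{\tilde y}(\exp(it\xi))$ has critical points at $t=0$ and $t=1$; convexity forces it constant on $[0,1]$, strict convexity then fails, and the eigenvector conclusion of the second step gives $\exp(i\xi)\cdot y=y$ in $\bbP^n$.

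Finally, (1)--(2) provide a continuous bijection $\mu^{-1}(0)/K\to Y\git G$, which is holomorphic since it is induced by the holomorphic inclusion $\mu^{-1}(0)\hookrightarrow Y^{ss}$. On the stable locus, where the $K$-action on $\mu^{-1}(0)\cap Y^s$ is free, the Marsden--Weinstein--Meyer theorem together with the preceding proposition on Kähler symplectic reduction identifies this map as a biholomorphism, which then extends across the non-stable polystable stratum by standard arguments on analytic Hilbert quotients. The main technical obstacle in this plan is the first half of the third paragraph: justifying rigorously that the norm-minimiser on $\overline{G\cdot\tilde y}$ lies on the (unique) closed $G$-orbit in the closure. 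This is non-trivial but well-known, and can be handled either via the gradient flow of $\psi_{\tilde y}$ or via the structure theory of affine $G$-orbit closures, which always contain a single closed orbit.
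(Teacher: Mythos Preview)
The paper does not prove this theorem; it is stated without proof as part of the expository review of GIT and symplectic reduction (consistent with the introductory disclaimer that the treatment of geometric invariant theory ``sketches the main ideas while omitting some technical details''). So there is nothing in the paper to compare your argument against.

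That said, your outline is the standard and correct Kempf--Ness argument: the log-norm potential on $K\backslash G$, its first variation giving the moment map, geodesic convexity from the eigenvalue expansion, and the identification of critical points with $\mu^{-1}(0)$. Your handling of (1) and (2) is sound, and you have correctly flagged the one genuinely delicate step (that a norm-minimising sequence limits onto the unique closed orbit in the closure).

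One phrasing issue worth tightening: the sentence ``holomorphic since it is induced by the holomorphic inclusion $\mu^{-1}(0)\hookrightarrow Y^{ss}$'' is misleading, because $\mu^{-1}(0)$ is only a real submanifold of $Y^{ss}$ and the inclusion is not holomorphic. The holomorphic structure on $\mu^{-1}(0)/K$ comes from the K\"ahler reduction proposition you invoke in the next sentence, not from any complex structure on $\mu^{-1}(0)$ itself; the point is that the composite $\mu^{-1}(0)\hookrightarrow Y^{ss}\to Y\git G$ is $K$-invariant and, after quotienting, intertwines the two complex structures. You clearly know this, but as written the two sentences pull in different directions.
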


\section{Algebraic construction of the moduli space} \label{ss:algebraicconstructionN}
Our first approach to the construction of the moduli space of holomorphic vector bundles of rank $r$ and degree $d$ on the compact Riemann surface $X$ is as a GIT quotient. In order to do so, we have to define (semi)stability conditions for vector bundles on $X$. We first state these conditions, and later will deduce them from the Hilbert--Mumford criterion. The moduli space will then be a projective variety $\cN_{r,d}$ obtained as a GIT quotient of a big space parametrizing all semistable bundles. The closed points of the moduli space will be in bijection with isomorphism classes of vector bundles which are polystable, and there will be an open subset $\cN_{r,d}^s\subset \cN_{r,d}$ parametrizing stable bundles. The fact that, when restricted to stable points, GIT quotients are geometric will imply that the space $\cN_{r,d}^{s}$ will satisfy a certain universal property, that of being a \emph{coarse moduli space}. We explain all these notions in the following.

\begin{defn}
	The \emph{slope} of a vector bundle $E$ on $X$ is the number
	\begin{equation*}
	\mu(E)= \deg E / \mathrm{rk}\ E.
	\end{equation*} 
A holomorphic vector bundle $\sE$ on $X$ is \emph{semistable} (resp. \emph{stable}) if and only if for every holomorphic subbundle $\sE'\subset \sE$, we have
 \begin{equation*}
\mu(\sE') \leq \mu(\sE) \text{ (resp. }<).
\end{equation*} 
We say that $\sE$ is \emph{polystable} if it is either stable or a direct sum of stable vector bundles of slope equal to $\mu(\sE)$.
\end{defn}

\begin{ej}
\textit{Show that semistability (resp. stability) for a vector bundle $\sE$ is equivalent to any of the following conditions:}
\begin{itemize}
  \item For every proper quotient bundle $\sE\rightarrow \sE'$, we have
        \begin{equation*}
\mu(\sE') \geq \mu(\sE) \text{ (resp. } \mu(\sE')> \mu(\sE)).
        \end{equation*}
  \item For every proper subsheaf $\sF\hookrightarrow \sE$, we have
        \begin{equation*}
\mu(\sF) \leq \mu(\sE) \text{ (resp. } \mu(\sF)< \mu(\sE)).
        \end{equation*}
  \item For every proper quotient sheaf $\sE\rightarrow \sF$, we have
        \begin{equation*}
\mu(\sF) \geq \mu(\sE) \text{ (resp. } \mu(\sF)> \mu(\sE)).
        \end{equation*}
\end{itemize}
\end{ej}

\begin{ej}
\textit{Prove the following:}
  \begin{itemize}
    \item Every holomorphic line bundle on $X$ is stable.
    \item If $\rk \sE$ and $\deg \sE$ are coprime, then $\sE$ is semistable if and only if it is stable.
    \item $\sE$ is stable if and only if its dual is.
          \item For any holomorphic line bundle $\sL$ on $X$, $\sE$ is (semi)stable if and only if $\sE \otimes \sL$ is.
  \end{itemize}
\end{ej}

We let $\Bun^{s}_{r,d}$ denote the set of isomorphism classes of stable holomorphic vector bundles of rank $r$ and degree $d$ on $X$, and let $\mathsf{Bun}^s_{r,d}$ denote the moduli problem for this set.

\begin{thm}[Seshadri]
There exists a projective variety $\mathcal{N}_{r,d}$, the \emph{moduli space of semistable vector bundles of rank $r$ and degree $d$ on $X$}, such that:
\begin{enumerate}
\item The set of closed points $\mathcal{N}_{r,d}(\C)$ is in natural bijection with the set of isomorphism classes of polystable holomorphic vector bundles of rank $r$ and degree $d$ on $X$.
\item There is a Zariski open subvariety $\mathcal{N}^s_{r,d}\subset \mathcal{N}_{r,d}$ which is a coarse moduli space for the moduli problem $\mathsf{Bun}^s_{r,d}$.
\item If $r$ and $d$ are coprime, then $\cN_{r,d}=\cN_{r,d}^s$ is a fine moduli space for $\mathsf{Bun}^s_{r,d}$. In particular, there is a \emph{universal vector bundle} $\mathscr{U}_{r,d}\rightarrow X \times \cN_{r,d}$ from which any flat family of stable vector bundles of rank $r$ and degree $d$ on $X$ arises as pullback.
\end{enumerate}
\end{thm}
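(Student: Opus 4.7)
The plan is to construct $\cN_{r,d}$ as a GIT quotient, following the approach of Seshadri and Gieseker. First one establishes \emph{boundedness}: for $n\gg 0$ (depending only on $r,d,g$), every semistable $\sE$ of rank $r$ and degree $d$ satisfies $H^1(X,\sE(n))=0$ and has $\sE(n)$ globally generated, where $\sE(n):=\sE\otimes\sO_X(nx_0)$. The $H^1$ vanishing comes from Serre duality together with the slope bound that semistability imposes on quotient sheaves; global generation follows since the remaining obstruction is again controlled by $H^1$ of a twist. Riemann--Roch then pins $h^0(X,\sE(n))$ to the constant value $N:=d+rn+r(1-g)$, and every such bundle appears as a quotient $q:\sO_X^N\twoheadrightarrow \sE(n)$ for which the induced map $\C^N\to H^0(X,\sE(n))$ is an isomorphism.

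Second, I would work with Grothendieck's Quot scheme $\mathcal{Q}:=\mathrm{Quot}(\sO_X^N;r,d+rn)$, which is projective over $\C$, and the locally closed subscheme $R\subset \mathcal{Q}$ of quotients $q$ that are locally free and induce an isomorphism $\C^N\xrightarrow{\sim} H^0(X,\sE(n))$. The group $\GL_N$ acts on $R$ by precomposition with automorphisms of $\sO_X^N$; two points are in the same orbit if and only if the underlying bundles are isomorphic, and scalars act trivially so the effective action is by $\PGL_N$. A Grothendieck--Plücker embedding of $\mathcal{Q}$ into a Grassmannian of quotients of $\bigwedge^r \sO_X^N$ (after a further twist) produces a $\PGL_N$-equivariant ample line bundle $L$ and hence a GIT linearization.

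The main technical step---and the principal obstacle---is to show that, for $n$ sufficiently large, the $L$-semistable locus $R^{ss}_L$ coincides with the locus of quotients whose underlying bundle is slope-semistable, and likewise for stability. This is verified through the Hilbert--Mumford criterion: a one-parameter subgroup $\lambda:\C^*\to \GL_N$ induces a weight decomposition $\C^N=\bigoplus_i V_i$, which via $q$ produces a filtration $0\subset \sE_1\subset \dots \subset \sE_k=\sE(n)$ by subsheaves. A direct calculation expresses $\mu_L(q,\lambda)$ as a signed sum of expressions $h^0(X,\sE_i(n))/\rk \sE_i$; for $n$ large these quantities are controlled by genuine slopes $\mu(\sE_i)-nx_0$-twist term, and one recovers the equivalence between vanishing of Hilbert--Mumford weights and slope (semi)stability of the $\sE_i$ as subbundles of $\sE$.

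Finally, Mumford's theorem supplies a projective GIT quotient $\cN_{r,d}:=R^{ss}_L\git_L \PGL_N$. Its closed points correspond to closed $\PGL_N$-orbits in $R^{ss}_L$, which by standard Jordan--Hölder arguments---taking associated graded objects of a Jordan--Hölder filtration and passing to the S-equivalence class---are in bijection with isomorphism classes of polystable bundles. On the stable locus $R^s$ the action of $\PGL_N$ has trivial stabilizers and admits a geometric quotient $\cN^s_{r,d}$; that it corepresents $\mathsf{Bun}^s_{r,d}$ follows because any flat family of stable bundles over $S$ yields, after suitable étale base change, a classifying morphism to $R^s$ well-defined up to the $\PGL_N$-action, hence a canonical morphism $S\to \cN^s_{r,d}$. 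When $\gcd(r,d)=1$, slope semistability equals slope stability so $\cN_{r,d}=\cN^s_{r,d}$; moreover the tautological quotient on $\mathcal{Q}\times X$, restricted to $R^s\times X$, descends along $R^s\to \cN_{r,d}$ to the sought universal family $\mathscr{U}_{r,d}$, the obstruction to descent being a Brauer-type class killed precisely by the coprimality of $r$ and $d$ (one exhibits a character of the stabilizer gerbe which trivializes the gerbe after tensoring the universal quotient with its pullback).
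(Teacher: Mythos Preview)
Your proposal is correct and follows essentially the same route as the paper's sketch: boundedness of the semistable family, realization inside a Quot scheme, a Plücker/determinant linearization, and identification of GIT (semi)stability with slope (semi)stability via the Hilbert--Mumford criterion applied to filtrations induced by one-parameter subgroups. The only cosmetic differences are that the paper sets things up with quotients of $\sO_X(-n)^N$ rather than $\sO_X^N$ (a harmless twist) and takes the acting group to be $\SL_N$ rather than $\PGL_N$; your proposal is also slightly more explicit about Jordan--H\"older/S-equivalence and the Brauer obstruction to descent of the universal family, which the paper omits.
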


We can give a rough outline of the proof, and in turn explain how the stability conditions arise from GIT. As we have already mentioned, the main idea for the construction of $\mathcal{N}_{r,d}$ is to obtain it as a GIT quotient.

\subsection*{Step 1: Bounded families}
Recall that we can regard $X$ as the analytification of a smooth complex projective curve. This amounts to find a relatively ample line bundle on $X$, that we denote by $\sO_X(1)$. As usual for any integer $n$, we denote $\sO_X(n)=\sO_X(1)^{\otimes n}$ and, for every holomorphic vector bundle $\sE$ on $X$, we put $\sE(n)=\sE \otimes \sO_X(n)$. The \emph{Hilbert polynomial of $\sE$} is given by
\begin{equation*}
n \mapsto P_{\sE}(n):= \chi(\sE(n))= d + r(n+1-g).
\end{equation*}
For any such $\sE$, there exists some integer $n_{\sE}>0$, called the \emph{bound of $\sE$}, such that, for every $n\geq n_{\sE}$, the bundle $\sE(n)$ is generated by global sections and $H^1(X,\sE(n))=0$. In particular, this implies that, for $n\geq n_{\sE}$, we have $P_{\sE}(n)=h^0(\sE(n))$.
A family $F$ of holomorphic vector bundles on $X$ is \emph{bounded} if the set $\left\{ n_{\sE}: \sE \in F \right\}$ is bounded above. An upper bound for this set is called a \emph{bound of $F$}.

If we want to construct a moduli space for a family $F$ as the GIT quotient of some projective variety, the family $F$ ought to be bounded. Luckily for us, the family $F_{r,d}^{ss}$ of semistable holomorphic vector bundles on $X$ with rank $r$ and degree $d$ is in fact bounded. Let us take any bound $n$ of this family and consider
\begin{equation*}
N:= P_{\sE}(n)= h^0(\sE(n)) = d + r(n+1-g),
\end{equation*}
where $\sE$ is any element of $F_{r,d}^{ss}$.

\subsection*{Step 2: The Quot scheme}
Since $F_{r,d}^{ss}$ is bounded by $n$, for every $\sE \in F_{r,d}^{ss}$ the bundle $\sE(n)$ is generated by global sections so we can put it as a quotient $\sO_X^N \rightarrow \sE(n) \rightarrow 0$. Equivalently, $\sE$ arises as a quotient $\sV:=\sO_X(-n)^N \overset{q}{\rightarrow} \sE \rightarrow 0$. Consider the polynomial $P(m)=d + r(m+1-g)$ and let $Q^P_{\sV}$ denote the set of quotients $\sV \overset{q}{\rightarrow} \sF \rightarrow 0$ such that the resulting coherent sheaf $\sF$ has Hilbert polynomial $P_{\sF}=P$. The set $Q^P_{\sV}$ determines a moduli problem, that we denote by $\mathsf{Quot}^P_{\sV}$.
We refer the reader to \cite{Nitsure_Quot} for a proof of the following fact.

\begin{thm}[Grothendieck]
The moduli problem $\mathsf{Quot}^P_{\sV}$ admits a fine moduli space. This space is a projective variety $\mathrm{Quot}^P_{\sV}$ called the \emph{Quot scheme}.
\end{thm}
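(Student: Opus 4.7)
The plan is to realize $\mathrm{Quot}^P_{\sV}$ as a closed subscheme of an appropriate Grassmannian, following Grothendieck's classical construction. First, I would establish a boundedness result: there exists an integer $m_0$ such that for every quotient $[\sV \twoheadrightarrow \sF] \in Q^P_{\sV}$ with kernel $\sG$, and every $m \geq m_0$, the sheaves $\sV(m)$, $\sF(m)$ and $\sG(m)$ are generated by global sections and have vanishing $H^1$. On a curve this is tractable because $P$ determines both $\rk \sF$ and $\deg \sF$, so one is dealing with a family of coherent sheaves with fixed numerical invariants; standard estimates (Serre vanishing together with Castelnuovo--Mumford regularity) then yield a uniform $m_0$.

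Fix such an $m$ and set $V := H^0(X, \sV(m))$. Applying $H^0(X, - \otimes \sO_X(m))$ to a quotient $[\sV \twoheadrightarrow \sF]$ produces a surjection $V \twoheadrightarrow H^0(X,\sF(m))$ onto a space of dimension $P(m)$. Extending to families $\pi_T: T\times X \to T$, the fibrewise vanishing of $H^1$ guarantees, by cohomology and base change, that $\pi_{T*}(\sF \otimes \pi_X^*\sO_X(m))$ is locally free of rank $P(m)$ and commutes with base change. This yields a natural transformation
\[
\Psi: \mathsf{Quot}^P_{\sV} \longrightarrow \underline{\mathrm{Gr}}(V, P(m))
\]
into the functor represented by the Grassmannian $G := \mathrm{Gr}(V, P(m))$ of $P(m)$-dimensional quotients of $V$.

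The central step is to show that $\Psi$ is representable by a closed immersion. Given a $T$-point of $G$, namely a locally free rank-$P(m)$ quotient $V \otimes \sO_T \twoheadrightarrow W$, I would form on $T\times X$ the composite $\sV \boxtimes \sO_T \to \pi_T^*W \otimes \pi_X^*\sO_X(-m)$ and let $\sF_T$ denote its image. The locus in $T$ where $\sF_T$ is flat over $T$ with Hilbert polynomial $P$, and where the adjoint map $V \otimes \sO_T \to \pi_{T*}(\sF_T \otimes \pi_X^*\sO_X(m))$ recovers the prescribed surjection onto $W$, is a closed subscheme of $T$, by Grothendieck's flattening stratification and semicontinuity of fibre dimensions. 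Applying this to the tautological quotient on $G$ itself defines $\mathrm{Quot}^P_{\sV}$ as a closed, hence projective, subscheme of $G$; the restriction of the universal quotient to $\mathrm{Quot}^P_{\sV} \times X$ is the required universal family, so by construction it represents $\mathsf{Quot}^P_{\sV}$.

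The main obstacle is precisely the closed immersion step. Verifying that the conditions ``$\sF_T$ is $T$-flat with Hilbert polynomial $P$'' and ``the adjoint map recovers $W$'' genuinely cut out a closed subscheme of $T$ requires the full machinery of flattening stratifications, plus a careful analysis to exclude limit quotients with smaller (or torsion-concentrated) Hilbert polynomial. Boundedness, while essentially elementary on a curve, is in higher dimensions the deepest input and is where Mumford's notion of $m$-regularity becomes indispensable.
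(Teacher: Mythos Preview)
The paper does not give its own proof of this theorem; it simply states the result and refers the reader to Nitsure's exposition \cite{Nitsure_Quot}. Your outline is precisely the classical Grothendieck construction one finds there (boundedness via $m$-regularity, embedding in a Grassmannian by taking $H^0$ of a sufficiently high twist, then cutting out the Quot scheme as a closed subscheme via flattening stratification), so there is nothing to compare against and your approach is the standard one.
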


\subsection*{Step 3: The GIT quotient}
We indentify now the open subscheme $Y\subset \mathrm{Quot}^P_{\sV}$ determined by quotients $\sV \overset{q}{\rightarrow} \sE \rightarrow 0$ such that $\sE$ is locally free. In particular, (the set of closed points of) this scheme $Y$ contains all the elements of the family $F_{r,d}^{ss}$. Base change on $\sV$ induces an action of $G=\SL_N(\C)$ on $Y$, and the isomorphism class of a bundle $\sE$ in $Y$ depends only on its $G$-orbit on $Y$. An ample linearization $\sL_m \rightarrow Y$ for this $G$-action is determined by any $m\geq n$ and by an standard ``Plücker embedding'' of $Y$ in a certain Grassmannian. Equivalently, $\sL_m \rightarrow Y$ is the ``determinant bundle'' whose fibre over a quotient $\sV \overset{q}{\rightarrow} \sE \rightarrow 0$ is the vector space \[\sL_{m,q}=\det H^0(X,\sE(m))^\vee \otimes \det H^1(X,\sE(m)).\]
If we take $m$ large enough, we can assume that $\sL_{m,q}=\det H^0(X,\sE(m))^\vee$.
The moduli space $\cN_{r,d}$ is finally obtained as the GIT quotient
\begin{equation*}
\cN_{r,d} := Y \git_{\sL_m} G.
\end{equation*}

\subsection*{Step 4: Stability conditions from the Hilbert--Mumford criterion}
Consider now a $1$-parameter subgroup $\lambda:\C^* \rightarrow G=\SL_N(\C)$. This $\lambda$ has some associated weights $a_1>\dots >a_s$, and determines a weight decomposition of $V:=\C^N$ as
\begin{equation*}
V = \oplus_{i=1}^s V_{i}.
\end{equation*}
We denote $N_i=\dim_\C V_i$, so $\sum_{i=1}^s N_i = N$, and remark the fact that, for $\lambda$ to determine a $1$-parameter subgroup of $\SL_N(\C)$ we must have $\sum_{i=1}^s N_i a_i=0$.
We can write the weight decomposition as a filtration $0\subset F_1 \subset \dots \subset F_{s-1} \subset F_s=V$, by putting $F_k  = \oplus_{i=1}^k V_i$.
Each of these $F_i$ determine a subbundle $\sE_i = \mathrm{im}(F_i \otimes \sO_X(-n)\rightarrow \sE) \subset \sE$. Hence, we obtain a filtration \[0\subset \sE_1 \subset \dots \subset \sE_{s-1}\subset \sE_s=\sE.\]
The corresponding graded pieces are denoted by $\sG_i = \sE_i/\sE_{i-1}$.

The $1$-parameter subgroup acts with weight $a_i$ on each of the cohomology spaces $H^k(X,\sG_{i}(m))$, so it is not hard to show that the Hilbert--Mumford weight is the number
\begin{equation} \label{eq:HilbertMumford}
\mu_{\sL_m}(\sE,\lambda) = -\sum_{i=1}^s a_i \chi(\sG_{i}(m)) = \sum_{i=1}^{s-1}(a_{i+1}-a_i) \left(\chi(\sE_i(m)) - \frac{\dim_\C F_i}{N} \chi(\sE(m))\right).
\end{equation}

\begin{ej}
\textit{Show the second equality in the formula above}. \textbf{Hint}: Use the relations $\dim_\C F_i = N_1 + \dots + N_i$, $N=\sum_{i=1}^sN_i$, $\chi(\sG_i)=\chi(\sE_i)-\chi(\sE_{i-1})$ and $\sum_{i=1}^s N_i a_i = 0$.
\end{ej}

\begin{prop}
  An element $\sV \overset{q}{\rightarrow} \sE \rightarrow 0$ of $Y$ is ($\sL_m$-)semistable if and only if for every subspace $0\neq V' \subsetneq V$, we have the inequality
  \begin{equation*}
\frac{\dim_\C V'}{\chi(\sE'(m))} \leq \frac{N}{\chi(\sE(m))},
  \end{equation*}
  where $\sE'=\mathrm{im}(V' \otimes \sO_X(-n)\rightarrow \sE)$.
\end{prop}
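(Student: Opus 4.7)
The plan is to apply the Hilbert--Mumford criterion directly, exploiting the explicit formula \eqref{eq:HilbertMumford} for the weight $\mu_{\sL_m}(\sE,\lambda)$. The key observation is that arbitrary 1-parameter subgroups can be tested against (or built up from) the rank-$2$ weight decompositions coming from a single subspace $V'\subset V$, so both directions reduce to bookkeeping on \eqref{eq:HilbertMumford}.

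\textbf{Direction $(\Rightarrow)$.} Suppose $q$ is $\sL_m$-semistable. Given a non-trivial proper subspace $V'\subsetneq V$, I would pick a linear complement $V = V'\oplus V''$ and define the 1-parameter subgroup $\lambda:\C^*\rightarrow \SL_N(\C)$ acting by weight $\dim_\C V''$ on $V'$ and by weight $-\dim_\C V'$ on $V''$; by construction the total weight vanishes, so $\lambda$ is indeed valued in $\SL_N$. The associated filtration has $s=2$ and $F_1=V'$, so the associated subbundle is $\sE_1=\sE'$. Plugging $s=2$ into \eqref{eq:HilbertMumford} gives
\begin{equation*}
\mu_{\sL_m}(\sE,\lambda) \;=\; -N\left(\chi(\sE'(m)) - \frac{\dim_\C V'}{N}\chi(\sE(m))\right),
\end{equation*}
and the Hilbert--Mumford inequality $\mu_{\sL_m}(\sE,\lambda)\leq 0$ rearranges (for $m$ large enough that $\chi(\sE'(m))>0$ and $\chi(\sE(m))>0$) into the stated inequality.

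\textbf{Direction $(\Leftarrow)$.} Conversely, assume the inequality holds for every $0\neq V'\subsetneq V$. Take any non-trivial $\lambda:\C^*\rightarrow \SL_N(\C)$ with ordered weights $a_1>\dots>a_s$ and associated filtration $0\subset F_1\subset\dots\subset F_s=V$. Applying the hypothesis to each $V'=F_i$ and cross-multiplying yields
\begin{equation*}
\chi(\sE_i(m)) - \frac{\dim_\C F_i}{N}\chi(\sE(m)) \;\geq\; 0, \qquad i=1,\dots,s-1.
\end{equation*}
Since $a_{i+1}-a_i<0$, every summand on the right-hand side of \eqref{eq:HilbertMumford} is non-positive, so $\mu_{\sL_m}(\sE,\lambda)\leq 0$. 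By the Hilbert--Mumford criterion, $q$ is $\sL_m$-semistable.

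The principal delicacy is ensuring that, for $m\geq n$ large enough, $\chi(\sF(m))>0$ for every non-zero subsheaf $\sF\subset \sE$ appearing in the argument, so one can freely pass between the two equivalent formulations of the inequality; this follows from the boundedness of the family and Serre vanishing, and is in essence the reason the bound $n$ was introduced in Step 1 above. Once this is granted, the rest is straightforward algebraic manipulation of \eqref{eq:HilbertMumford}.
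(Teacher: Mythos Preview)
Your proof is correct and follows essentially the same approach as the paper's: both directions rest on the formula \eqref{eq:HilbertMumford}, reducing the general 1-parameter subgroup to the two-step filtration $0\subset V'\subset V$. The only cosmetic differences are that the paper proves the contrapositive of your $(\Rightarrow)$ direction and leaves the weights $a_1>a_2$ unspecified, whereas you give explicit values; your closing remark on the positivity of $\chi(\sE'(m))$ is well-placed, as the paper defers this point to the paragraph following the proof.
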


\begin{proof}
We can conveniently rewrite the inequality as
  \begin{equation*}
\chi(\sE'(m))-\frac{\dim_\C V'}{N}\chi(\sE(m)) \geq 0.
  \end{equation*}
  In particular, if this inequality is satisfied for every $V'$, then every term of the form $\chi(\sE_i(m)) - \frac{\dim_\C F_i}{N} \chi(\sE(m))$ in equation \eqref{eq:HilbertMumford} is $\geq 0$ and, since the $a_{i+1}-a_i$ are negative, we have $\mu_{\sL_m}(\sE,\lambda)\leq 0$. Conversely, if there is a $V'$ which does not satisfy the inequality, we can construct a $1$-parameter subgroup $\lambda$ determining the filtration $0\subset V' \subset V$, with weights $a_1>a_2$, for which we immediately see that
\begin{equation*}
\mu_{\sL_m}(\sE,\lambda) = (a_{2}-a_1) \left(\chi(\sE'(m)) - \frac{\dim_\C V'}{N} \chi(\sE(m))\right) > 0,
\end{equation*}
so $\sV \overset{q}{\rightarrow} \sE \rightarrow 0$ is unstable.
\end{proof}

The key now is to note that we can take $m$ to be arbitrarily large. Indeed, if we take $m$ large enough, for every proper subbundle $\sE'\subset \sE$ determined by a subspace $V'\subset V$ the numbers $\chi(\sE'(m))$ and $\chi(\sE(m))$ are positive, and thus we can multiply in the inequality from the previous proposition to obtain an inequality
\begin{equation*}
N' r  m + N' (d + r(1-g)) \leq N r' m + N(d' + r' (1-g)),
\end{equation*}
where we are denoting $r'=\mathrm{rank}(\sE')$, $d'=\deg(\sE')$ and $N'=\dim_\C V'$. We also denote by $\mu'=r'/d'$ the slope of $\sE'$. Since this equality is satisfied for $m$ arbitrarily large, we have an inequality on the leading terms
\begin{equation*}
N'/r' \leq N/r.
\end{equation*}

Finally, we remark that we can also take the upper bound $n$ to be arbitrarily large. If we do that, then we can assume that $N'=\chi(\sE'(n))= r'n + d' + r'(1-g)$, while by definition we have $N=rn+d+r(1-g)$. Therefore, the above inequality becomes
\begin{equation*}
n + \mu' + (1-g) \leq n + \mu + (1-g),
\end{equation*}
from where we deduce that $\mu' \leq \mu$. We conclude the following.

\begin{thm}[Seshadri]
An element $\sV \overset{q}{\rightarrow} \sE \rightarrow 0$ of $Y$ is ($\sL_m$-)semistable (resp. stable, polystable) if and only if the vector bundle $\sE$ is semistable (resp. stable, polystable).
\end{thm}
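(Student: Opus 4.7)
The plan is to exploit the Hilbert--Mumford weight formula \eqref{eq:HilbertMumford} together with the proposition just above, which already characterizes $\sL_m$-semistability by a family of Hilbert polynomial inequalities indexed by subspaces $V' \subset V$. The semistable case amounts to showing that these GIT inequalities are equivalent, in the asymptotic regime $m, n \to \infty$, to the slope inequalities $\mu(\sE') \leq \mu(\sE)$ indexed by subbundles of $\sE$. The stable and polystable cases then follow by sharpening the inequalities and by invoking Mumford's characterization of polystable points as those with closed orbit.

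The ``$\Rightarrow$'' direction is essentially the asymptotic argument already sketched in the excerpt: given a proper subbundle $\sE' \subset \sE$, I would take $n$ larger than the bounds of both $\sE$ and $\sE'$ and set $V' = H^0(X, \sE'(n)) \subset V$; then $\sE' = \mathrm{im}(V' \otimes \sO_X(-n) \to \sE)$ and $\dim V' = \chi(\sE'(n))$, so the inequality of the proposition combined with letting first $m \to \infty$ and then $n \to \infty$ yields $\mu(\sE') \leq \mu(\sE)$. For ``$\Leftarrow$'', given an arbitrary subspace $V' \subset V$, I would pass to the (possibly non-locally-free) subsheaf $\sE' = \mathrm{im}(V' \otimes \sO_X(-n) \to \sE)$ and its saturation $\tilde{\sE}' \subset \sE$, which has the same rank as $\sE'$ and satisfies $\deg(\tilde{\sE}') \geq \deg(\sE')$; hence $\mu(\sE') \leq \mu(\tilde{\sE}') \leq \mu(\sE)$ by slope-semistability applied to the subbundle $\tilde{\sE}'$. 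To convert this back into the Hilbert polynomial inequality of the proposition, uniformly in $V'$, I would invoke Grothendieck's boundedness lemma: the family of coherent subsheaves of $\sE$ with bounded rank and slope bounded above is itself bounded, so $n$ can be chosen large enough, uniformly in the subsheaf, to guarantee $h^1(X,\sE'(n)) = 0$ and $\dim V' \leq h^0(\sE'(n)) = \chi(\sE'(n))$, and one then concludes by the same asymptotic expansion in $m$.

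The stable case proceeds identically but with strict inequalities throughout. For the polystable case, I would use the characterization that GIT-polystable points are those whose $G$-orbit is closed in $Y^{\mathrm{ss}}$. If $\sE$ is strictly semistable with a subbundle $\sE_1 \subset \sE$ satisfying $\mu(\sE_1) = \mu(\sE)$, the $1$-parameter subgroup of $\SL_N(\C)$ associated with the filtration $0 \subset H^0(X, \sE_1(n)) \subset V$ has vanishing Hilbert--Mumford weight by \eqref{eq:HilbertMumford}, and its $t \to 0$ limit is the split quotient $\sE_1 \oplus (\sE/\sE_1)$, which therefore lies in the closure of the $G$-orbit of $\sE$. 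Iterating with a Jordan--H\"older filtration $0 = \sE_0 \subset \sE_1 \subset \cdots \subset \sE_s = \sE$ shows that the associated graded $\mathrm{gr}(\sE) = \bigoplus_i \sE_i/\sE_{i-1}$ always lies in the orbit closure of $\sE$. Hence closed orbits correspond exactly to $\sE \cong \mathrm{gr}(\sE)$, which matches the slope-polystability condition.

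The main obstacle is the uniform control required in the converse direction: while the slope inequality for subbundles follows from a straightforward subspace choice and an asymptotic expansion in $m$ and $n$, deducing the Hilbert polynomial inequality for \emph{every} subspace $V'$ simultaneously requires Grothendieck's boundedness lemma for families of coherent subsheaves with controlled slope and rank. This boundedness is the essential non-trivial ingredient beyond the explicit GIT weight calculation of the excerpt. A secondary delicate point is verifying that the Jordan--H\"older associated graded is indeed the unique polystable representative in the orbit closure, which requires a more careful analysis of the possible limits of one-parameter subgroups acting on the Quot scheme.
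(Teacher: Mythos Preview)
Your proposal is correct and aligns with the paper's approach for the forward semistable direction: both use the proposition characterizing $\sL_m$-semistability via Hilbert-polynomial inequalities and then pass to leading coefficients by letting $m\to\infty$ and $n\to\infty$ to extract the slope inequality $\mu'\leq\mu$.

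The paper, being expository, stops there and simply states the theorem after this one-direction sketch; it does not prove the converse or address the stable and polystable cases at all. Your outline goes substantially further: you supply the converse via saturation of the image subsheaf and Grothendieck boundedness (so that $n$ can be chosen uniformly over all $V'$), and you handle polystability through the Jordan--H\"older degeneration argument showing $\mathrm{gr}(\sE)$ lies in the orbit closure. These are the standard ingredients of a complete proof (as in Seshadri's or Newstead's treatments) and are exactly the ``missing pieces'' the paper elides. Your identification of boundedness as the non-trivial input for the converse is spot on.
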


\section{The moduli space as a symplectic quotient}
The moduli space of holomorphic vector bundles can also be constructed as the symplectic reduction of some infinite-dimensional complex vector space under the action of an infinite-dimensional Lie group. Although dealing with the analytical technicalities is beyond the scope of this paper, we remark that the theory of symplectic and Kähler quotients developed in Section \ref{ss:quotients} can be generalized to an infinite-dimensional setting. In order to do so precisely, one needs to construct $L^2$-completions of the spaces we deal with in this section, and then find estimates that provide the regularity of the solutions obtained. As we say, in this paper we will ignore many of these technicalities, and refer to Kobayashi's book \cite{Kobayashi} for more details.

Let us start by consdering a smooth vector bundle $E$ of rank $r$ and degree $d$. We denote $\mu=\mu(E)=d/r$. We denote by $\cA_E$ the space of connections on  $E$. This is an affine space modelled over the (infinite-dimensional) complex vector space $\Omega^1(X,\End E)$. If $H$ is a Hermitian metric on $E$, we can consider the subspace $\cA_{E,H}\subset \cA_E$ of connections which are $H$-unitary. This is an affine subspace, modelled by the vector space $\Omega^1(X,\mathfrak{u}_H E)$, where $\mathfrak{u}_H E$ is the subspace of endomorphisms of $E$ which are skew-Hermitian (that is $f^\dagger = -f$) with respect to the metric $H$. The space $\Omega^1(X,\fu_H E)$ admits a non-degenerate skew-symmetric form
\begin{equation*}
\omega(A, B) = -\int_X \mathrm{tr}(A\wedge B),
\end{equation*}
which endows it with the structure of an infinite dimensional symplectic manifold.

The space  $\cA_{E,H}$ is admits a symplectic action by the \emph{unitary gauge group} $\cG_{E,H}=\Omega^0(X,U_H E)$, where $U_H E$ is the subgroup of  $H$-unitary automorphisms.
This is an infinite-dimensional Lie group, whose Lie algebra is the infinite dimensional vector space
\begin{equation*}
\Lie \cG_{E,H} = \Omega^0(X,\fu_H E).
\end{equation*}
The dual Lie algebra is isomorphic to $\Lie \cG_{E,H}^*=\Omega^2(X,\fu_H E)$ with the duality pairing induced by integration and the Killing form, that is
\begin{equation*}
(a,\alpha) = \int_X \mathrm{tr}(a \alpha),
\end{equation*}
for $a\in \Omega^0(X,\fu_H E)$ and $\alpha \in \Omega^2(X,\fu_H E)$.

\begin{ej}
Consider the map
\begin{align*}
\mu:\cA_{E,H}  \rightarrow  \Omega^2(X,\mathfrak{u}_H E),\ \nabla  \mapsto F_\nabla
\end{align*}
sending each unitary connection to its curvature. \textit{Show that this map is a moment map for the action of $\cG_{E,H}$ on $\cA_{E,H}$.} \textbf{Hint}: For $a\in \Omega^0(X,\fu_H E)$, compute
\begin{equation*}
\vec{a}_\nabla = \left. \frac{d}{dt} \right|_{t=0} \exp(ta) \nabla \exp(-ta) = -\nabla a
\end{equation*}
and, for $A\in T_\nabla \cA_{E,H}= \Omega^1(X,\fu_H E)$, compute
\begin{equation*}
d_\nabla \mu (A) = \left. \frac{d}{dt} \right|_{t=0} \mu(\nabla + tA) = -\nabla A.
\end{equation*}
Conclude that
\begin{equation*}
\langle d_\nabla \mu(A), a \rangle = \omega(A,\vec{a}).
\end{equation*}
\end{ej}

Let us fix now a volume form $\omega_X \in\Omega^2(X)$, with $\int_X \omega_X = 1$. The $\fu_H E$-valued $2$-form
\begin{equation*}
\alpha_E := -2\pi i \mu \id_E \omega_X
\end{equation*}
is invariant under the coadjoint action of $\cG_{E,H}$. Therefore, we can consider the preimage $\mu^{-1}(\alpha_E)$. A unitary connection $\nabla$ lies in $\mu^{-1}(\alpha_E)$ if and only if it is a connection of constant central curvature. Recall that this means that
\begin{equation*}
F_\nabla = c \id_{\cE} \omega_X,
\end{equation*}
for some constant $c\in \C^*$.

\begin{ej}
\textit{Using Chern-Weil theory show that, for any such $\nabla$, the constant $c$ must be equal to $-2\pi i \mu$.}
\end{ej}

Recall that, if we fix two points $x_0,x_1 \in X$, the holonomy of a connection of constant central curvature determines a representation
\begin{equation*}
\rho: \pi_1(X\setminus \left\{ x_1 \right\},x_0) \longrightarrow \U(r)
\end{equation*}
mapping the class of a contractible loop in $X$ around $x_1$ and based in $x_0$ to the element $\exp(-2\pi i \mu)$. We conclude the following.

\begin{thm}
The symplectic quotient $\mu^{-1}(\alpha_E)/\cG_{E,H}$ is in natural bijection with the \emph{$d$-twisted $\U(r)$-character variety} (see Section \ref{ss:charactervarieties})
\begin{align*}
  \frac{\left\{(A_1,\dots,A_g, B_1,\dots, B_g, Z) \in \U(r)^{2g+1}: \prod_{i=1}^g [A_i,B_i]=Z, Z=e^{-\tfrac{2\pi id}{r}}I_r \right\}}{\U(r)},
\end{align*}
where $\U(r)$ acts by conjugation. We denote this character variety by $\mathcal{X}^d_{\U(r)}$.
\end{thm}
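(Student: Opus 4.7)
The plan is to invoke the equivalence between constant central curvature connections and representations of $\pi_1(X\setminus\{x_1\},x_0)$ stated at the end of the ``Flat bundles and local systems'' section, and to track (i) the unitarity of the connection, (ii) the standard presentation of the fundamental group, and (iii) the action of the unitary gauge group on the representation side.

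First I would fix a unitary trivialization of the fibre $E_{x_0}$, giving an identification $\U(E_{x_0})\cong \U(r)$. Given $\nabla\in\mu^{-1}(\alpha_E)$, the cited theorem, applied with $c=-2\pi i\mu=-2\pi i d/r$, produces a monodromy representation $\rho_\nabla:\pi_1(X\setminus\{x_1\},x_0)\to \GL_r(\C)$ sending a contractible loop around $x_1$ to $e^{-2\pi i d/r} I_r$. Since parallel transport by a unitary connection preserves the Hermitian pairing on fibres, the image of $\rho_\nabla$ actually lies in $\U(r)$. Using the standard presentation
\[
\pi_1(X\setminus\{x_1\},x_0)=\langle \alpha_1,\beta_1,\dots,\alpha_g,\beta_g,\gamma\mid \textstyle\prod_i[\alpha_i,\beta_i]=\gamma\rangle,
\]
with $\gamma$ the loop around $x_1$, the tuple $(A_i,B_i,Z):=(\rho_\nabla(\alpha_i),\rho_\nabla(\beta_i),\rho_\nabla(\gamma))\in \U(r)^{2g+1}$ satisfies $\prod_i[A_i,B_i]=Z$ and $Z=e^{-2\pi i d/r}I_r$, i.e.\ it is an element of the pre-quotient of $\mathcal{X}^d_{\U(r)}$. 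A unitary gauge transformation $g\in\cG_{E,H}$ conjugates $\nabla$ and thus conjugates the tuple by $g(x_0)\in \U(r)$, so the assignment descends to a well defined map $\Phi:\mu^{-1}(\alpha_E)/\cG_{E,H}\to \mathcal{X}^d_{\U(r)}$; injectivity of $\Phi$ is immediate from the equivalence of categories, since a $\U(r)$-conjugation of tuples lifts uniquely to a morphism of bundles with constant central curvature connection which is unitary at the basepoint and hence globally unitary.

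The step I expect to be the main obstacle is surjectivity. Given a tuple $(A_i,B_i,Z)$ satisfying the relations, the associated representation $\rho$ verifies the monodromy condition at $x_1$, so the equivalence of categories produces a smooth complex bundle $E'$ with a connection $\nabla'$ of constant central curvature whose holonomy recovers $(A_i,B_i,Z)$. I would first match the smooth type of $E'$ with $E$: the rank is $r$ by construction, and by Chern--Weil
\[
\deg E'=\int_X c_1(E')=\tfrac{i}{2\pi}\int_X \tr F_{\nabla'}=\tfrac{i}{2\pi}\int_X(-2\pi i d\,\omega_X)=d,
\]
so $E'\cong E$ as smooth bundles. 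For unitarity, the local-system construction from a $\U(r)$-valued representation equips $E'$ with a canonical parallel Hermitian metric $H'$, with respect to which $\nabla'$ is unitary. Since the complex gauge group $\cG_E^{\C}$ acts transitively on the space of Hermitian metrics, one can transport $(\nabla',H')$ to a pair $(\tilde\nabla,H)$ on the fixed Hermitian bundle $(E,H)$ with $\tilde\nabla\in\mu^{-1}(\alpha_E)$, producing the desired preimage of the tuple under $\Phi$.
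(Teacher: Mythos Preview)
Your proposal is correct and follows essentially the same approach as the paper: the paper simply recalls the holonomy equivalence for connections of constant central curvature (together with the exercise that unitary connections have unitary holonomy) and then states the theorem with ``We conclude the following,'' giving no further argument. Your write-up supplies the details the paper omits --- the surjectivity argument via the Chern--Weil degree computation and the transport of the Hermitian metric, and the injectivity argument via the observation that a morphism of flat unitary bundles which is unitary at the basepoint is unitary everywhere --- so it is a fleshed-out version of the paper's intended one-line proof rather than a different route.
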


\section{Deformation theory} \label{ss:deformation}
One can show that the natural bijection $\mu^{-1}(\alpha_E)/\cG_{E,H} \cong \mathcal{X}^d_{\U(r)}$ can in fact be upgraded to an homeomorphism, which restricts to a diffeomorphism on the smooth points. We explain the main idea behind this fact by constructing the tangent spaces of both spaces. We also compare these deformations with deformations of holomorphic vector bundles.

\subsection*{Deformations of representations}
To understand the tangent space to the character variety $\mathcal{X}^d_{\U(r)}$ we can consider the more general case where $\rho:\Pi \rightarrow G$ is a homomorphism from a discrete finitely presented group $\Pi$ to a Lie group $G$.
An element of the tangent space $T_\rho \Hom(\Pi,G)$ is obtained by considering a smooth family of representations $\rho_t$, $t\in \bbR$, with $\rho_0=\rho$ and differentiating at $t=0$. This yields a map $\dot{\rho}: \Pi \rightarrow \fg$, where $\fg$ is the Lie algebra of $G$. The homomorphism condition on $\rho$ induces the following property on $\dot{\rho}$:
\begin{equation*}
\dot{\rho} (\sigma_1 \sigma_2) = \dot{\rho}(\sigma_1) + \Ad_{\rho(\sigma_1)} (\dot{\rho}(\sigma_2)).
\end{equation*}
This means that $\dot{\rho}$ is a $1$-cocycle $Z^1_\rho(\Pi,\fg)$ for the \emph{group cohomology theory} associated with the representation of $\Pi$ on $\fg$ defined by $\sigma \mapsto \Ad_{\rho(\sigma)}$.
We can also compute, for $v \in \fg$
\begin{equation*}
\frac{d}{dt}|_{t=0} \exp(t v) \rho(\sigma) \exp(-tv) = \Ad_{\rho(\sigma)}(v)- v.
\end{equation*}
This determines a map $d: \fg \rightarrow Z^1_\rho (\Pi,\fg)$, by putting $d(v)(\sigma) = \Ad_{\rho(\sigma)}(v)- v$. The cokernel of this map is the \emph{group cohomology group} $H^1_\rho(\Pi,\fg)$. We can then consider the quotient space \[\cX_{\Pi,G} = \Hom(\Pi,G)/G\] by the conjugation action of $G$. Note that the centre $Z_G$ of $G$ acts trivially, so we actually have $\cX_{\Pi,G}=\Hom(\Pi,G)/G^{\ad}$, for $G^{\ad}=G/Z_G$.

The ``expected dimension'' of $\cX_{\Pi,G}$ is then
\begin{align*}
\dim'  \cX_{\Pi,G} = \dim \Hom(\Pi,G) - \dim G^{\ad}=\dim \Hom(\Pi,G) - \dim G + \dim Z_G.
\end{align*}
On the other hand, the ``expected tangent space'' of $\cX_{\Pi,G}$ at the equivalence class of a representation $\rho$ is the group cohomology space
\begin{equation*}
T_{\rho} \cX_{\Pi,G} = H^1_\rho(\Pi,\fg).
\end{equation*}
We compute now the dimension of $H^1_\rho(\Pi,\fg)$. The presentation of $\Pi$ determines a resolution of $\Z$ as a $\Z[\Pi]$ module that induces the formula
\begin{equation*}
\sum_{i=0}^\infty (-1)^i \dim H^i_\rho(\Pi,\fg) = (1-m(\Pi)) \dim G,
\end{equation*}
where $m(\Pi)$ is the difference between the number of generators and the number of relations of $\Pi$.

For our particular choice of group $\Pi=\pi_1(X \setminus \left\{ x_1 \right\},x_0)$, if $\rho$ maps the class of a contractible loop in $X$ around $x_1$ to a central element of $G$, then this class acts trivially on $\fg$ through the adjoint action, so we can just take $\Pi=\pi_1(X,x_0)$. Since $X$ is a $K(\Pi,1)$, the group cohomology $H_\rho^*(\Pi,\fg)$ actually coincides with the ordinary local-system valued cohomology $H^*(X,\fg_\rho)$, where $\fg_\rho$ is the local system on $X$ induced by the representation $\Ad \circ \rho:\pi_1(X)\rightarrow \GL(\fg)$. Therefore, we have $H^i_\rho(\Pi,\fg)=0$ for $i\geq 3$ and
\begin{equation*}
\dim H^1_\rho(\Pi,\fg) = \dim H^0_\rho(\Pi,\fg) + \dim H^2_\rho (\Pi,\fg) + 2(g-1) \dim G.
\end{equation*}
Since the adjoint representation is self-dual, Poincaré duality gives an isomorphism
\begin{equation*}
H^0(X,\fg_\rho) \cong H^2(X,\fg_\rho)
\end{equation*}
so in fact we have
\begin{equation*}
\dim H^1_\rho(\Pi,\fg) = 2[\dim H^0_\rho(\Pi,\fg) + (g-1) \dim G].
\end{equation*}
The space $H^0_\rho(\Pi,\fg)$ is the subset of elements $z \in \fg$ such that $z=\Ad_{\rho(\sigma)}(z)$ for every $\sigma \in \Pi$. This space is the infinitesimal stabilizer $\mathfrak{z}_\rho$, i.e. the Lie algebra of the stabilizer $G_\rho$ of the representation $\rho$. We conclude
\begin{equation*}
\dim T_\rho \cX_{\Pi,G} = 2[ \dim \mathfrak{z}_\rho + (g-1) \dim G].
\end{equation*}

The dimension of $\Hom(\Pi,G)$ is the rank as a $G$-module of the kernel of the map
\begin{align*}
  G^{2g}  \longrightarrow G,\ (A_1,\dots,A_g,B_1,\dots,B_g)   \longmapsto \prod_{i=1}^g [A_i,B_i].
\end{align*}
The rank of this map is actually $\dim G^{\ad}=\dim G - \dim Z_G$, so
\begin{equation*}
\dim \Hom(\Pi,G) = (2g-1) \dim G + \dim Z_G,
\end{equation*}
and thus the expected dimension of $\cX_{\Pi,G}$ is
\begin{equation*}
\dim' \cX_{\Pi,G}=2[\dim Z_G + (g-1)\dim G ].
\end{equation*}
For $\cX_{\Pi,G}$  to be smooth at a point $[\rho]$, the dimension of $H^1_\rho(\Pi,\fg)$ must coincide with the expected dimension $\dim' \cX_{\Pi,G}$. We say that a representation is \emph{infinitesimally simple} if the infinitesimal stabilizer $\mathfrak{z}_\rho$ is isomorphic to the centre $\mathfrak{z}\subset \fg$ of the Lie algebra. For such a representation, we have
\begin{equation*}
\dim T_\rho \cX_{\Pi,G} = 2 [\dim \mathfrak{z} + (g-1) \dim G] = \dim' \cX_{\Pi,G}.
\end{equation*}
In particular, we say that $\rho$ is \emph{simple} if $G_\rho=Z(G)$. Near a simple class $[\rho]$, we obtain a local model for $\cX_{\Pi,G}$ as the space $H^1_\rho(\Pi,\fg)$.

The real dimension of $\U(r)$ is equal to the complex dimension of $\GL_r(\C)$, which is equal to $r^2$. Moreover, $\U(r)$ has the $1$-dimensional center $\U(1)$. We obtain a first formula for the dimension of $\cX_{\U(r)}$
\begin{equation*}
\dim \cX_{\U(r)} = 2[(g-1)r^2 + 1].
\end{equation*}

\subsection*{Deformations of flat unitary connections}
We consider now a connection $\nabla \in \mu^{-1}(\alpha_E)$ and study the tangent space at $[\nabla]$, the gauge equivalence class of $A$, of the space $\mu^{-1}(\alpha_E)/\cG_{E,H}$. We do this by considering the \emph{gauge complex}
\begin{center}
  \begin{tikzcd}
0 \rar & \Omega^0(X, \fu_H E) \rar{\nabla } & \Omega^1(X,\fu_H E) \rar{\nabla } & \Omega^2(X,\fu_H E) \rar & 0,
  \end{tikzcd}
\end{center}
and let $H^i_\nabla$ be its cohomology groups. For an infinitesimal deformation in the direction $A\in \Omega^1(X,\fu_H E)$ to stay tangent to $\mu^{-1}(\alpha_E)$, we must have $\nabla A = 0$, so the tangent space of $\mu^{-1}(\alpha_E)$ is identified  with the space of $1$-cocycles of the above complex. Moreover the first map of the complex encodes the infinitesimal action of $\cG_{E,H}$, so we can precisely identify the tangent space $T_\nabla (\mu^{-1}(\alpha_E)/\cG_{E,H})$ with the cohomology group $H^1_\nabla$.
Using the basic theory of elliptic complexes, one can show that in fact $H^0_\nabla \cong H^2_\nabla$ are $1$-dimensional precisely when $\nabla$ is an irreducible connection (and thus corresponds to a simple representation). Moreover, one can use the Atiyah--Singer index formula to compute
\begin{equation*}
\dim H^1_\nabla = 2[(g-1)r^2 + 1].
\end{equation*}
See \cite{AtiyahBott} for more details.

We can conclude the following.

\begin{corol}
The open dense subspace $\mu^{-1}(\alpha_E)^s/\cG_{E,H}\subset \mu^{-1}(\alpha_E)/\cG_{E,H}$ parametrizing gauge equivalence classes of connections of constant central curvature which are irreducible is a Kähler manifold of complex dimension $(g-1)r^2 +1$, diffeomorphic to the open subspace $(\cX^d_{\U(r)})^s\subset (\cX^d_{\U(r)})^s$ of the twisted character variety parametrizing conjugacy classes of simple representations.
\end{corol}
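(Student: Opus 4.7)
The plan is to combine the (infinite-dimensional) Kähler quotient theorem with the holonomy correspondence already established, using the deformation theory of Section \ref{ss:deformation} to pin down the dimension.

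First, I would endow $\cA_{E,H}$ with a Kähler structure compatible with the symplectic form $\omega$. Fixing a conformal metric on $X$, the Hodge star $*$ acts on $\Omega^1(X,\fu_H E)$ as an almost-complex structure $J$ with $J^2=-\id$, compatible with $\omega$, and turning the affine space $\cA_{E,H}$ into a (formal) Kähler manifold on which $\cG_{E,H}$ acts by Kähler isometries. Granting the $L^2$-completions and elliptic regularity needed to make the formal picture rigorous (as in \cite{AtiyahBott,Kobayashi}), the infinite-dimensional analogue of the Marsden--Weinstein--Meyer theorem applies to the moment map $\mu(\nabla)=F_\nabla$ at the central value $\alpha_E$, provided the action is locally free there.

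Second, I would verify this local freeness on the irreducible locus. The stabilizer of $\nabla$ in $\cG_{E,H}$ consists of unitary automorphisms of $E$ parallel for $\nabla$; when $\nabla$ is irreducible this reduces to the centre $\U(1)\cdot \id_E$, which acts trivially on $\cA_{E,H}$. Hence the quotient $\cG_{E,H}/\U(1)$ acts freely on $\mu^{-1}(\alpha_E)^s$, and the Kähler quotient theorem produces a smooth Kähler structure on $\mu^{-1}(\alpha_E)^s/\cG_{E,H}$. The complex dimension follows directly from the deformation complex analysis of Section \ref{ss:deformation}: the (real) tangent space is $H^1_\nabla$, and the stated dimension $\dim_{\bbR}H^1_\nabla = 2[(g-1)r^2+1]$ gives complex dimension $(g-1)r^2+1$.

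Third, I would construct the diffeomorphism with $(\cX^d_{\U(r)})^s$ via holonomy. The theorem ending the section on connections of constant central curvature yields a natural bijection between gauge equivalence classes of such connections (with $c=-2\pi i\mu$) and conjugacy classes of representations in $\cX^d_{\U(r)}$; irreducibility of $\nabla$ corresponds precisely to the representation having no invariant proper subspace, i.e. to simplicity in the sense of Section \ref{ss:deformation}. Continuity of holonomy in both directions is standard, and the deformation-theoretic identification $H^1_\nabla \cong H^1_\rho(\Pi,\fu(r))$ (via integration of parallel $\fu_H E$-valued $1$-cocycles along cycles) matches tangent spaces, upgrading the continuous bijection to a diffeomorphism on the open smooth loci.

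The main obstacle is analytic rather than conceptual: rigorously implementing the Kähler reduction in infinite dimensions requires $L^2$-Sobolev completions of $\cA_{E,H}$ and $\cG_{E,H}$, a slice theorem for the gauge action at irreducible connections, and elliptic regularity to show that the resulting quotient structure is genuinely smooth in the $C^\infty$ category. Following the expository convention of this paper, I would invoke these results from \cite{AtiyahBott,Kobayashi,DonaldsonNS,Uhlenbeck} rather than prove them.
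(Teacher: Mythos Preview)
Your proposal is correct and follows exactly the route the paper intends: the corollary is stated without a separate proof, as it is meant to be read as an immediate consequence of the Kähler quotient framework set up in Section \ref{ss:quotients} and the symplectic reduction of $\cA_{E,H}$, together with the holonomy bijection and the dimension computations $\dim H^1_\nabla = \dim H^1_\rho(\Pi,\fu(r)) = 2[(g-1)r^2+1]$ just established in Section \ref{ss:deformation}. Your only addition is making explicit the complex structure on $\cA_{E,H}$ via the Hodge star, which the paper leaves implicit here but uses in the same way later for Higgs bundles.
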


\subsection*{Deformations of holomorphic structures}
Recall that the space $\cC_E$ of holomorphic structures on $E$ is an affine space modelled by the vector space $\Omega^{0,1}(X,\End E)$, so we can identify this vector space with the tangent space of a holomorphic structure $\delbar_{\sE}$. The action of the complex gauge group $\cG_E^\C=\Omega^0(X,\Aut E)$ induces the infinitesimal action
\begin{align*}
  \Omega^0(X,\End E) & \longrightarrow \Omega^{0,1}(X,\End E)  \\
  \alpha  & \longmapsto \delbar_{\sE} \alpha.
\end{align*}
This allows us to identify the (holomorphic) tangent space to the moduli space $\cN_{r,d}$ at the class $[\sE]$ of a polystable holomorphic bundle as
\begin{equation*}
\bm{T}_{[\sE]} \cN_{r,d} = H^1(X, \End \sE).
\end{equation*}
We can compute this cohomology group using Riemann--Roch
\begin{equation*}
\dim H^0(X,\End \sE)- \dim H^1(X,\End \sE) = (1-g)\mathrm{rk}(\End \sE) + \deg (\End \sE).
\end{equation*}
Now, note that the rank of $\End \sE$ is $r^2$ and that $\End \sE$ has trivial degree. Hence,
\begin{equation*}
\dim_{\C} \bm{T}_{[\sE]} \cN_{r,d} = (g-1)r^2 + \dim H^0(X,\End \sE).
\end{equation*}

\begin{ej} \label{ej:stablesimple}
A holomorphic vector bundle $\sE$ is \emph{simple} if and only if \[H^0(X,\End \sE)\cong \C.\] \textit{Show that a polystable holomorphic vector bundle $\sE$ is simple if and only if it is stable}.
\end{ej}

\begin{corol}
The Zariski open subvariety $\cN^s_{r,d}\subset \cN_{r,d}$ is a smooth complex quasi-projective variety of (complex) dimension $(g-1)r^2+1$.
\end{corol}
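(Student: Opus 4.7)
The plan is to combine three ingredients already present in the text: the tangent space identification $\bm{T}_{[\sE]}\cN_{r,d} = H^1(X,\End \sE)$, the Riemann--Roch computation carried out just above the corollary, and Exercise \ref{ej:stablesimple} characterizing stability via simplicity. Quasi-projectivity will be essentially automatic from the GIT construction; the real content lies in the smoothness and dimension statement.

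First I would note that quasi-projectivity is immediate: by Seshadri's theorem, $\cN_{r,d}$ is a projective variety, and $\cN^s_{r,d}$ is a Zariski open subvariety of it, hence quasi-projective. For the dimension, fix $[\sE] \in \cN^s_{r,d}$. Exercise \ref{ej:stablesimple} gives $H^0(X,\End \sE)\cong \C$ since $\sE$ is stable (hence simple). Substituting into the Riemann--Roch formula
\begin{equation*}
\dim H^1(X,\End \sE) = (g-1)r^2 + \dim H^0(X,\End \sE)
\end{equation*}
yields $\dim_\C \bm{T}_{[\sE]}\cN_{r,d} = (g-1)r^2 + 1$, and this dimension is constant on $\cN^s_{r,d}$.

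For smoothness, the point I would argue is that the local deformation functor of a stable bundle is unobstructed. Obstructions to deforming $\sE$ live in $H^2(X,\End \sE)$, which vanishes because $X$ is a smooth projective curve (coherent cohomology on a one-dimensional base is concentrated in degrees $\leq 1$). Consequently the formal deformation space of $\sE$ is smooth, of dimension equal to $\dim H^1(X,\End \sE) = (g-1)r^2+1$. Together with the constancy of the tangent dimension along the stable locus, this forces $\cN^s_{r,d}$ to be smooth of the claimed dimension.

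The main obstacle I expect is rigorously bridging the infinitesimal and the schematic pictures, i.e.\ identifying the formal deformation space of $\sE$ with the completed local ring of the scheme $\cN^s_{r,d}$ at $[\sE]$. In the GIT construction of Section \ref{ss:algebraicconstructionN}, $\cN^s_{r,d}$ arises as a \emph{geometric} quotient of the open subscheme $Y^s\subset Y$ of stable points in the Quot scheme by $\SL_N(\C)$. One would handle this via Luna's étale slice theorem: étale-locally around a stable orbit, $\cN^s_{r,d}$ is modelled on a slice transverse to the orbit, quotiented by the (finite) stabilizer, which for a simple stable bundle reduces to the centre of $\GL_r(\C)$ acting trivially on the normal directions. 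The slice itself can be identified with a Kuranishi model for deformations of $\sE$, and the vanishing of $H^2(X,\End \sE)$ makes this slice smooth. Alternatively, when $\gcd(r,d)=1$ one can appeal directly to the existence of the universal family $\mathscr{U}_{r,d}$, which provides a concrete local description and makes smoothness more transparent.
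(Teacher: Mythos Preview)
Your proposal is correct and follows essentially the same route as the paper: the corollary is stated immediately after the tangent space identification, the Riemann--Roch computation, and Exercise \ref{ej:stablesimple}, and is meant as a direct consequence of these three ingredients. You have spelled out more than the paper does---in particular the vanishing of $H^2(X,\End \sE)$ for unobstructedness and the Luna slice argument for passing from the formal to the schematic picture---but this is additional detail rather than a different approach.
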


\section{The theorem of Narasimhan--Seshadri} \label{ss:Narasimhan-Seshadri}

\subsection*{Hermitian-Einstein metrics}
Recall that there is a unique unitary connection $\nabla_H$, the Chern connection, on a Hermitian holomorphic vector bundle $(\sE,H)$ such that $\delbar_{\sE}$ is recovered as $\nabla_H^{0,1}$.
\begin{defn}
A \emph{Hermitian--Einstein} metric (HE metric) on a holomorphic vector bundle $\sE$ is a Hermitian metric  $H$ on $\sE$ such that its Chern connection $\nabla_H$ has constant central curvature; that is, such that
\begin{equation*}
F_H = -2\pi i \mu(\sE) \id_{\sE} \omega_X,
\end{equation*} 
for $F_H=F_{\nabla_H}$.
\end{defn}

\begin{prop} \label{prop:HEimpliesPS}
If $\sE$ admits a HE metric then it is polystable.
\end{prop}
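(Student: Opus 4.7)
The plan is to use a Chern--Weil type argument on subbundles via the second fundamental form, which is the standard route to the easy direction of the Hitchin--Kobayashi correspondence.

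First, let $\sF \subset \sE$ be a holomorphic subsheaf. On a Riemann surface the saturation $\sF^{\text{sat}}$ is a subbundle with $\mu(\sF^{\text{sat}}) \geq \mu(\sF)$, so I can assume from the start that $\sF$ is a holomorphic subbundle of rank $r'$. Using the Hermitian metric $H$, I get a smooth orthogonal decomposition of the underlying smooth bundle $E = F \oplus F^\perp$, with orthogonal projection $\pi : E \to F$ and inclusion $\iota : F \hookrightarrow E$. The crucial point is that $F^\perp$ is smooth but generally not holomorphic: obstruction to holomorphicity is measured by the second fundamental form $\beta := \delbar_{\sE}|_F \bmod F \in \Omega^{0,1}(X,\Hom(F,F^\perp))$.

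Next I would derive the standard formula decomposing the Chern connection of $(\sE,H)$ in the splitting $E = F \oplus F^\perp$, expressing $\nabla_H$ in block form with the Chern connections of $(\sF,H|_F)$ and $(F^\perp,H|_{F^\perp})$ on the diagonal and $\beta,-\beta^*$ off the diagonal (here $\beta^*$ is the $(1,0)$-form adjoint). Computing $\nabla_H^2$ and comparing to the curvature of the Chern connection $\nabla_F$ on $(\sF,H|_F)$ yields
\begin{equation*}
F_{\nabla_F} \;=\; \pi \circ F_H \circ \iota \;-\; \beta^* \wedge \beta.
\end{equation*}
Taking traces, using the HE condition $F_H = -2\pi i\,\mu(\sE) \id_\sE\,\omega_X$, and integrating with the Chern--Weil formula $\deg \sF = \tfrac{i}{2\pi} \int_X \tr(F_{\nabla_F})$, I obtain
\begin{equation*}
\deg \sF \;=\; \mu(\sE)\, r' \;-\; \frac{i}{2\pi}\int_X \tr(\beta^* \wedge \beta).
\end{equation*}
The last term is manifestly non-negative because in a local holomorphic coordinate $z$ we have $\tr(\beta^* \wedge \beta) = \tr(b^*b)\, dz \wedge d\bar z$ with $\tr(b^*b) \geq 0$, and $\tfrac{i}{2\pi} dz \wedge d\bar z$ is a positive $(1,1)$-form. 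Dividing by $r'$ gives $\mu(\sF) \leq \mu(\sE)$, which is semistability.

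For polystability I look at the equality case $\mu(\sF) = \mu(\sE)$: then the non-negative integral must vanish, forcing $\beta \equiv 0$ pointwise. But $\beta$ is precisely the obstruction to $F^\perp$ being a holomorphic subbundle, so $\beta = 0$ means $\sE \cong \sF \oplus \sF^\perp$ as \emph{holomorphic} bundles. The block-diagonal form of $\nabla_H$ together with the HE equation implies that the restricted metrics are HE metrics on both $\sF$ and $\sF^\perp$, with the same slope as $\sE$. I then iterate this splitting procedure: since $\rk \sE$ is finite, the process terminates and decomposes $\sE$ as a direct sum of holomorphic subbundles each admitting a HE metric and admitting no proper subbundle of equal slope, i.e.\ each being stable by the semistability argument just run. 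This exhibits $\sE$ as a polystable bundle.

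The main technical step is establishing the curvature formula relating $F_{\nabla_F}$ to $F_H$ via the second fundamental form; everything else is bookkeeping (saturation reduction, sign check for the Chern--Weil integral, and the termination of the iteration). In particular, the only non-trivial inequality — and the entire content of ``HE implies semistable'' — is the positivity of $\tfrac{i}{2\pi}\int_X \tr(\beta^*\wedge \beta)$, which encodes the Kähler identity on $X$.
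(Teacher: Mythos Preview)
Your approach is essentially identical to the paper's: both write the Chern connection in block form relative to a holomorphic subbundle and its $H$-orthogonal complement, and extract the slope inequality from the non-negativity of $\lVert\beta\rVert^2$; your treatment of the equality case via iterated holomorphic splitting is in fact more explicit than the paper's, which stops at ``equality iff $\beta=0$''.

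One correction worth flagging: your definition $\beta := \delbar_{\sE}|_F \bmod F$ is identically zero, precisely because $F$ is a \emph{holomorphic} subbundle and hence preserved by $\delbar_{\sE}$. The second fundamental form going $F\to F^\perp$ is of type $(1,0)$ (it comes from $\partial^H_{\sE}$, not $\delbar_{\sE}$); equivalently, as the paper does, one may take $\beta\in\Omega^{0,1}(X,\Hom(F^\perp,F))$, the extension class. With that fix your curvature formula and the positivity computation are correct as written.
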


\begin{proof}
Suppose that $\sE'\subset \sE$ is a holomorphic subbundle of $\sE$ and consider the quotient $\sE''=\sE/\sE'$. We can write
 \begin{equation*}
\nabla_H = 
\begin{pmatrix}
	\nabla' & \beta \\
	-\beta^\dagger & \nabla''
\end{pmatrix}.
\end{equation*} 
Here, $\nabla'$ and $\nabla''$ are the restriction and the projection of $\nabla_H$ to $\sE'$ and  $\sE''$, respectively, while $\beta\in \Omega^{0,1}(X,\mathrm{Hom}(\sE'',\sE'))$ is a representative of the class of $\sE$ as extension of $\sE''$ by $\sE'$. In particular, if $\beta=0$, then  $\sE=\sE'\oplus \sE''$. The form $\beta^\dagger \in \Omega^{1,0}(X,\mathrm{Hom}(\sE',\sE''))$ is just the conjugate transpose of $\beta$.

Now, the top left element of  $F_H$ is  $F_{\nabla'}-\beta \wedge \beta^\dagger$. Taking traces, integrating and multiplying by $\tfrac{i}{2\pi}$, we obtain
\begin{equation*}
c\frac{i}{2\pi} \mathrm{rk}\ \sE'  = \frac{i}{2\pi} \int_X \mathrm{tr}\ F_{\nabla'} + \lVert \beta \rVert^2.
\end{equation*} 
From here, we get
\begin{equation*}
\mu(\sE) = \mu(\sE') + C\lVert \beta \rVert^2,
\end{equation*} 
for some constant $C>0$. Therefore,  $\mu(\sE)\geq \mu(\sE')$, with equality if and only if $\beta=0$.
\end{proof}

The converse of the above proposition is the celebrated theorem of Narasimhan--Seshadri \cite{NarasimhanSeshadri}, as interpreted by Atiyah-Bott \cite{AtiyahBott}. A direct proof in these terms was provided by Donaldson \cite{DonaldsonNS}.

\begin{thm}[Narasimhan--Seshadri]
Every polystable holomorphic vector bundle admits a HE metric.
\end{thm}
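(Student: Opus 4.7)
The plan is to follow Donaldson's analytic argument \cite{DonaldsonNS}, which realizes the theorem as an infinite-dimensional instance of the Kempf--Ness correspondence from Section \ref{ss:quotients}. Fix a reference Hermitian metric $H_0$ on the underlying smooth bundle $E$, and write any other metric on $\sE$ as $H = H_0 \cdot h$ for $h$ a positive self-adjoint endomorphism. Under the Chern correspondence, the $\cG_E^\C$-orbit of $\delbar_{\sE}$ in $\cC_E$ maps to an orbit of Chern connections in $\cA_{E,H_0}$, and a HE metric on $\sE$ corresponds to an orbit representative lying in the moment map level set $\mu^{-1}(\alpha_E)$. What remains to be shown is therefore the infinite-dimensional analog of the Kempf--Ness statement: every polystable $\cG_E^\C$-orbit meets $\mu^{-1}(\alpha_E)$.

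The analytic machinery for this is the \emph{Donaldson functional} $M(H_0, H)$, a real-valued functional on the space of Hermitian metrics whose differential in a self-adjoint direction $\eta$ is proportional to $\int_X \tr\bigl(\eta \cdot (F_H + 2\pi i \mu(\sE)\id_E \omega_X)\bigr)$, so that its critical points are exactly the HE metrics. Playing the role of the geodesic convexity of $\log \lVert g\cdot v \rVert^2$ in the finite-dimensional Kempf--Ness picture, the key structural fact is that $M$ is convex along the symmetric-space geodesics of the space of Hermitian metrics, and strictly convex transverse to the infinitesimal automorphisms of $\sE$. Combined with Exercise \ref{ej:stablesimple}, this already yields uniqueness of the HE metric on a stable bundle up to positive scalar multiplication.

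To produce a critical point I would next study the \emph{Donaldson heat flow}, the negative gradient flow of $M$, with initial condition $H_0$. Short-time existence is standard parabolic theory for this quasi-linear system; long-time existence on $[0,\infty)$ follows from the monotonicity $\partial_t M \leq 0$ together with a maximum-principle bound on $\det h_t$. The aim is then to show that $H_t$ converges, as $t\to\infty$, to a smooth HE metric on $\sE$.

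The hard part is this final convergence, which is exactly where the polystability hypothesis enters. Assume first that $\sE$ is stable; one must rule out degeneration of $h_t$. The strategy is to argue by contradiction: if the flow failed to converge, Uhlenbeck's weak compactness theorem would produce a weak limit of the Chern connections whose holomorphic structure is weakly semistable and contains a non-trivial coherent subsheaf $\sF \subset \sE$ with $\mu(\sF) \geq \mu(\sE)$, contradicting stability after saturating $\sF$. In the genuinely polystable case $\sE = \bigoplus_i \sE_i$ with $\mu(\sE_i) = \mu(\sE)$, one applies the stable case on each summand and takes the orthogonal direct sum of the resulting HE metrics---the equal-slope condition ensures the combined connection still has constant central curvature. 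The delicate analytic input---Uhlenbeck compactness and the construction of the destabilizing sheaf from a non-converging sequence---is the principal technical obstacle, and is where we would defer to \cite{Kobayashi} for the full argument.
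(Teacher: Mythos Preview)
The paper does not prove this theorem: it states it immediately after Proposition \ref{prop:HEimpliesPS} and explicitly defers to \cite{NarasimhanSeshadri,AtiyahBott,DonaldsonNS}, and the introductory note on proofs singles out Narasimhan--Seshadri as one of the results whose analytic proof is deliberately omitted. So there is no ``paper's own proof'' to compare against; what the paper does offer is the Kempf--Ness reformulation in Section \ref{ss:Narasimhan-Seshadri}, which is exactly the framework you invoke.

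Your outline is the correct Donaldson strategy and is consistent with what the paper gestures at. A couple of points where the sketch is looser than you may realize: long-time existence of the heat flow is not a consequence of $\partial_t M\le 0$ and a $\det h_t$ bound alone---one needs a genuine $C^0$ estimate on $h_t$ itself (not just its determinant), obtained via a maximum principle for $\tr h_t$ or $|\Lambda F_{H_t}|$, and then parabolic regularity to bootstrap. Also, the phrase ``whose holomorphic structure is weakly semistable'' is not quite the mechanism: the weak Uhlenbeck limit need not be semistable; rather, the failure of the original sequence to converge strongly produces a nonzero $L^2_1$ endomorphism (from the rescaled $h_t$) whose kernel, after saturation, is the destabilizing subsheaf. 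These are the ``delicate analytic inputs'' you already flag as deferred to \cite{Kobayashi}, so the sketch is honest about its gaps; just be aware that the contradiction step is subtler than extracting a semistable limit.
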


\subsection*{Narasimhan--Seshadri as infinite dimensional Kempf--Ness}
It is important to remark that the content of the original result of Narasimhan and Seshadri is not only the existence of Hermitian--Einstein metrics, but also the identification of two moduli spaces: the moduli space of semistable vector bundles and the twisted character variety of unitary representations. Let us explain this statement.

The theorem of Narasimhan--Seshadri can be interpreted in a more ``dynamical'' way. We start by consdering a smooth vector bundle $E$ of rank $r$ and degree $d$, and fix a Hermitian metric $H$ on $E$. Recall that we denoted by $\cC_E$ the space of holomorphic structures on $E$, and by $\cA_{E,H}$ the space of unitary connections on $(E,H)$. There is a natural map
\begin{align*}
\cC_E  \longrightarrow \cA_{E,H}, \delbar_{\sE} \longmapsto \nabla_{\sE}=\nabla_{(\sE,H)},
\end{align*}
sending a holomorphic structure $\delbar_{\sE}$ to the Chern connection of the Hermitian holomorphic bundle $(\sE,H)$. The complex gauge group $\cG^\C_E = \Omega^0(X,\Aut E)$ acts on $\cC_E$ and, through this map, on the space $\cA_{E,H}$. Given an element $g\in \cG^\C_E$, the connection $\nabla_{\sE}$ is mapped to $g\cdot \nabla_{\sE}:=\nabla_{(\sE,g^* H)}$, the Chern connection for $\sE$ equipped with the Hermitian metric $H$ transformed by $g$. Therefore, the existence of a Hermitian--Einstein metric $H_0$ on a holomorphic vector bundle $\sE$ can be interpreted as the existence of a unitary connection of constant central curvature $\nabla_0=\nabla_{(\sE,H_0)}$ in the $\cG^\C_E$-orbit of $\nabla_{\sE}$. The theorem of Narasimhan--Seshadri can then be reformulated as follows.

\begin{thm}[Narasimhan--Seshadri]
Let $E$ be a smooth complex vector bundle of rank $r$ and degree $d$ on $X$, and fix a Hermitian metric $H$ on $E$.
For any holomorphic vector bundle $\sE$ with underlying smooth vector bundle $E$, consider the unitary connection $\nabla_{\sE}$, defined as the Chern connection of $(\sE,H)$.
We have that
    \begin{enumerate}
\item $\sE$ is semistable if and only if $\overline{\cG^\C_E \cdot \nabla_{\sE}} \cap \mu^{-1}(\alpha_E)\neq \varnothing$;
\item $\sE$ is polystable if and only if $\cG^\C_E\cdot \nabla_{\sE} \cap \mu^{-1}(\alpha_E)\neq \varnothing$ and, in that case, $\cG^\C_E\cdot \nabla_{\sE} \cap \mu^{-1}(\alpha_E)=\cG_{E,H} \cdot \nabla_{\sE}$. Therefore,
            \begin{equation*}
\mu^{-1}(\alpha_E) \subset \cG^\C_E \cdot \mu^{-1}(\alpha_E) = \cC_E^{ps} \subset \cC_E^{ss}.
            \end{equation*}
    \end{enumerate}
Moreover, the inclusion $\mu^{-1}(\alpha_E)\subset \cC_E^{ps}$ induces a homeomorphism
           \begin{equation*}
\cX^d_{\U(r)}\cong \mu^{-1}(\alpha_E)/\cG_{E,H} \longrightarrow \cC_E^{ps}/\cG_E^\C \cong \cN_{r,d},
           \end{equation*}
identifying the $d$-twisted $\U(r)$-character variety with (the analytification of) the moduli space of semistable holomorphic vector bundles of rank $r$ and degree $d$.
  \end{thm}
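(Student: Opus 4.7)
The plan is to deduce this from the existence part of Narasimhan--Seshadri together with Proposition \ref{prop:HEimpliesPS}, via the following dictionary. Every Hermitian metric on $E$ is of the form $g^* H$ for some $g \in \cG_E^\C$, unique up to right multiplication by $\cG_{E,H}$, and by construction of the $\cG_E^\C$-action on $\cA_{E,H}$ one has $g \cdot \nabla_{\sE} = \nabla_{(\sE, g^*H)}$. Consequently $g\cdot \nabla_{\sE} \in \mu^{-1}(\alpha_E)$ if and only if $g^*H$ is a HE metric on $\sE$. Statement (2) then becomes a direct translation of the Narasimhan--Seshadri biconditional, and the equality $\cG^\C_E\cdot \nabla_{\sE} \cap \mu^{-1}(\alpha_E) = \cG_{E,H} \cdot \nabla_{\sE}$ reduces to uniqueness of HE metrics up to $\cG_{E,H}$. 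I would prove this uniqueness by decomposing the polystable $\sE$ as $\bigoplus_i \sE_i \otimes V_i$ with distinct stable factors $\sE_i$; Exercise \ref{ej:stablesimple} yields $\Aut(\sE) = \prod_i \GL(V_i)$, and forcing such an automorphism to intertwine two HE metrics makes it block-unitary, hence to land in $\cG_{E,H}$.

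For part (1), I would pass through the polystable case via Jordan--Hölder. Given a semistable $\sE$, choose a Jordan--Hölder filtration with stable quotients $\sG_i$ of slope $\mu(\sE)$, and observe that $\mathrm{gr}(\sE) = \bigoplus_i \sG_i$ is polystable and lies in the closure $\overline{\cG_E^\C \cdot \delbar_{\sE}}$: a $\C^*$-subgroup of $\cG_E^\C$ scaling the graded pieces of a $C^\infty$-splitting of the filtration degenerates $\delbar_{\sE}$ to $\delbar_{\mathrm{gr}(\sE)}$ as $t\to 0$. By part (2) the orbit of $\mathrm{gr}(\sE)$ already meets $\mu^{-1}(\alpha_E)$, which by continuity of the Chern-connection map carries over to the closure of $\cG_E^\C \cdot \nabla_{\sE}$. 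For the converse, $\mu^{-1}(\alpha_E) \subset \cC_E^{ss}$ by Proposition \ref{prop:HEimpliesPS}, and the semistable locus is an open $\cG_E^\C$-invariant subset of $\cC_E$ (instability being a closed condition on the Harder--Narasimhan type), so any $\sE$ whose orbit closure intersects $\mu^{-1}(\alpha_E)$ must already be semistable.

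The final homeomorphism then follows formally from part (2), which identifies $\cG_{E,H}$-orbits in $\mu^{-1}(\alpha_E)$ with polystable $\cG_E^\C$-orbits in $\cC_E^{ss}$; both quotients are thereby the set of polystable isomorphism classes, matching $\cX_{\U(r)}^d$ via holonomy and $\cN_{r,d}$ via GIT. The main obstacle is genuinely analytic and is hidden inside the existence half of Narasimhan--Seshadri itself: proving that every polystable bundle admits a HE metric, and that this metric depends continuously on the polystable representative, requires Uhlenbeck compactness, elliptic regularity on Sobolev completions of $\cA_{E,H}$ and $\cC_E$, and Donaldson's Yang--Mills flow argument. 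The algebraic scaffolding above is, by contrast, a formal consequence of this input combined with Jordan--Hölder and the structure of $\Aut$ of a polystable bundle.
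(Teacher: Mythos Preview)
The paper does not give a proof of this theorem. As the introduction explicitly states, the main theorems of non-abelian Hodge theory---including Narasimhan--Seshadri---are presented without proof, with the analytic machinery (Uhlenbeck compactness, elliptic regularity, Donaldson's flow) deferred to the references. The ``dynamical'' formulation here is simply offered as a restatement of the preceding existence theorem, not as a result with its own argument in the text.

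Your sketch is therefore not competing with anything in the paper; it is supplying what the paper deliberately omits. The outline is sound: the dictionary $g\cdot\nabla_{\sE}=\nabla_{(\sE,g^*H)}$ is exactly what the paper sets up in the paragraph preceding the theorem, so part~(2) is indeed a translation of the existence statement plus Proposition~\ref{prop:HEimpliesPS}. For part~(1), degenerating to the associated graded via a $\C^*$ in $\cG_E^\C$ and invoking openness of the semistable locus is the standard route. One small point: the equality $\cG_E^\C\cdot\nabla_{\sE}\cap\mu^{-1}(\alpha_E)=\cG_{E,H}\cdot\nabla_{\sE}$ as written only makes sense once $\nabla_{\sE}$ itself lies in $\mu^{-1}(\alpha_E)$; what is really meant (and what your uniqueness argument addresses) is that the intersection is a single $\cG_{E,H}$-orbit. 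Your uniqueness sketch via $\Aut(\sE)=\prod_i\GL(V_i)$ is correct in spirit but hides a step: you still need that on each stable summand the HE metric is unique up to scale, which is a short but separate argument (e.g.\ via the maximum principle applied to $\log\tr h$ for the relative endomorphism $h$). You are right that the genuine difficulty is entirely in the existence direction, and your final paragraph identifies this accurately.
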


\subsection*{Moduli of (twisted) $\SL_r$-bundles and $\PGL_r$-bundles}
Consider the special unitary group $\SU(r)\subset \U(r)$ of unitary matrices of determinant $1$. The character variety $\cX_{\SU(r)}=\Hom(\pi_1(X,x_0),\SU(r))/\SU(r)$ and its twisted versions $\cX_{\SU(r)}^d$ (defined as in Section \ref{ss:charactervarieties}) also admit an interpretation in terms of holomorphic vector bundles. The \emph{moduli space of semistable holomorphic $\SL_r(\C)$-vector bundles} is by definition the preimage
\begin{equation*}
\check{\cN}_r=\cN(\SL_r):=\det{}^{-1}(\sO_X)
\end{equation*}
of the map $\det:\cN_{r,0}\rightarrow \Jac(X)$. More generally, for any integer $d$ we can fix a degree $d$ holomorphic line bundle $\xi$ and define the \emph{moduli space of semistable holomorphic $d$-twisted $\SL_r(\C)$-vector bundles} $\check{\cN}_{r,d}=\cN_d(\SL_r)$ as the preimage of $\xi$ under the map $\det: \cN_{r,d}\rightarrow \Pic^d(X)$. The theorem of Narasimhan--Seshadri identifies
\begin{equation*}
\cX^d_{\SU(r)} \cong \check{\cN}_{r,d}.
\end{equation*}
In particular, the complex dimension of $\check{\cN}_{r,d}$ is
\begin{equation*}
\dim_\C \check{\cN}_{r,d} = (r^2-1)(g-1).
\end{equation*}

There is an easy way to recover the space $\cN_{r,d}$ from $\check{\cN}_{r,d}$. We consider the finite group
\begin{equation*}
\Gamma_r = \Jac(X)[r] \cong H^1(X,\Z_r) \cong (\Z_r)^{2g}
\end{equation*}
of order $r$ points of the Jacobian of $X$. The group $\Gamma_r$ acts on $\check{\cN}_{r,d}$ and on $\Pic^d(X)$ by tensorization. The space $\cN_{r,d}$ is then recovered as
\begin{equation*}
\cN_{r,d} = (\check{\cN}_{r,d} \times \Pic^d(X) )/\Gamma_r.
\end{equation*}

We can also consider the projective unitary group \[\PU(r)=\U(r)/\left\{ \zeta I_r: \zeta \in \U(1) \right\}= \SU(r)/\left\{ \zeta I_r: \zeta^r=1 \right\},\] and the character variety $\cX_{\PU(r)}=\Hom(\pi_1(X,x_0),\PU(r))/\PU(r)$. This character variety is determined by the relation
\begin{equation*}
[A_1,B_1]\dots [A_g,B_g] = I_r
\end{equation*}
in $\PU(r)$, which translates to the relation
\begin{equation*}
[A_1,B_1]\dots [A_g,B_g] = \zeta I_r, \text{ for some } \zeta\in \U(1) \text{ with } \zeta^r=1,
\end{equation*}
in $\U(r)$. The choice of $r$-th root of unity determines $r$ connected components on $\cX_{\PU(r)}$, that we label as $\cX_{\PU(r)}^0,\dots,\cX_{\PU(r)}^{r-1}$. Note that we can identify, for each $d=0,\dots,r-1$,
\begin{equation*}
\cX_{\PU(r)}^d = \cX_{\SU(r)}^d / (\Z_r)^{2g},
\end{equation*}
where the finite group $(\Z_r)^{2g}$ acts naturally by multiplication.
The theorem of Narasimhan--Seshadri identifies the character variety
\begin{equation*}
\cX^d_{\PU(r)} \cong \hat{\cN}_{r,d}
\end{equation*}
with the \emph{moduli space of semistable holomorphic $d$-twisted $\PGL_r(\C)$-vector bundles}, defined as the quotient
\begin{equation*}
\hat{\cN}_{r,d}=\cN_d(\PGL_r):= \check{\cN}_{r,d} / \Gamma_r.
\end{equation*}

\begin{rmk}
We remark that the subspace $\check{\cN}_{r,d}^s \subset \check{\cN}_{r,d}$ is in fact smooth, but the space $\hat{\cN}_{r,d}^s$ is not. However, it has a nice structure, being a quotient of the smooth manifold $\check{\cN}_{r,d}^s$ by the finite group $\Gamma_r$, the moduli space $\hat{\cN}_{r,d}^s$ is naturally an \emph{orbifold}.
\end{rmk}

\begin{rmk}
  More generally, we could consider a complex reductive group $G$ and study the classification of holomorphic principal $G$-bundles. This problem was studied by Ramanathan \cite{Ramanathan}, who gave explicit (semi)stability conditions for holomorphic principal bundles and proved a result analogous to the theorem of Narasimhan--Seshadri. Namely, Ramanathan's theorem identifies the moduli space $\cN(G)$ of semistable holomorphic principal $G$-bundles with the character variety parametrizing conjugacy classes of representations $\pi_1(X,x_0)\rightarrow K$, where $K\subset G$ is a maximal compact subgroup.
\end{rmk}

\section{More details about stacks} \label{ss:stacks}
A natural way to think about a stack is as a ``groupoid valued sheaf''. We can understand this as a ``functor'' from a ``geometric category'' (for us, generally this geometric category is either complex analytic spaces or $\C$-schemes, equipped with the analytic or étale topologies, respectively) to the (2-)category of groupoids. The word functor in this context can be made precise in terms of higher categories. Moreover, if we want to think about a ``sheaf'', then we need to impose some ``gluing'' (aka \emph{descent}) conditions. Thinking in these terms allows one to consider a notion of space that not only contains points, but also ``automorphism groups'' (aka \emph{inertia groups}) attached to each point. We shall illustrate these ideas with examples.

Any group $G$ can be regarded as a groupoid: namely, we can consider the category with one object and whose morphisms are given by the elements of $G$. If $G$ is a group in the geometric category (i.e., for us, a complex Lie group or a group $\C$-scheme), then we can consider the stack $\bbB G$ which maps any complex space $S$ to the groupoid of principal $G$-bundles on $S$. Note that if $S$ is just a point, or more general a space where all principal $G$-bundles are trivial, then $\bbB G(S)$ is just the groupoid $G$. Indeed, $\bbB G(S)$ only has one object, the trivial bundle, but this trivial bundle has a whole $G$ worth of automorphisms.

Another interesting stack to consider is the stack $\mathbf{Pic}_X$, mapping any complex space $S$ to the groupoid of line bundles on $X\times S$. Again, the $\C$-points $\mathbf{Pic}_X(\C)$ form a groupoid whose objects are simply the $\C$-points of the moduli space $\Pic(X)$ (i.e. the isomorphism classes of line bundles on $X$), but each point comes equipped with a whole $\C^*$ worth of automorphisms. In fact the decomposition
\begin{equation*}
\mathbf{Pic}_X = \Pic(X) \times \bbB \C^*
\end{equation*}
is global: since there is a universal line bundle $\mathscr{U}\rightarrow \Pic(X) \times X$, for each $S$ we can identify the objects of $\mathbf{Pic}_X(S)$ with the morphisms $S\rightarrow \Pic(X)$, and the automorphisms of an object of $\mathbf{Pic}_X(S)$ are given by maps $S\rightarrow \C^*$.

More generally, we can consider the stack $\mathbf{Bun}_{r,d}$, mapping any complex space $S$ to the groupoid of families of vector bundles of rank $r$ and degree $d$ on $X$ parametrized by $S$, with morphisms given by equivalence, or the more manageable stack $\mathbf{Bun}_{r,d}^{s}$ of stable vector bundles. Recall that stable bundles are simple (Exercise \ref{ej:stablesimple}), so the $\C$-points $\mathbf{Bun}_{r,d}^{s}(\C)$ form a groupoid whose objects are the $\C$-points of the moduli space $\cN_{r,d}^s$, but each point comes equipped with a whole $\C^*$ worth of automorphisms. This determines a map $\mathbf{Bun}_{r,d}^s \rightarrow \cN_{r,d}^s$. However, unlike in the case line bundles we cannot generally split $\mathbf{Bun}_{r,d}^s$ as $\cN_{r,d}^s \times \bbB \C^*$, even if $r$ and $d$ were coprime. This is because, even though in the case of $r$ and $d$ coprime there is a universal family $\mathscr{U}\rightarrow \cN_{r,d}^s=\cN_{r,d}$, it follows from our definition of equivalence of families that this universal family does not determine a vector bundle, but rather just a $\PGL_r(\C)$-bundle. We denote this bundle by $\bbP \bbE \rightarrow \cN_{r,d}$.

The structure of $\mathbf{Bun}_{r,d}^s \rightarrow \cN^s_{r,d}$ is that of a gerbe. If $H$ is a commutative complex group, we say that a map $\mathbf{M}\rightarrow M$ from a stack to a scheme is an \emph{$H$-gerbe} if
\begin{enumerate}
  \item it is a \emph{$BH$-torsor}, meaning that for every $S\rightarrow M$, the set of objects of the groupoid $\mathbf{M}(S)$ is $\Hom(S,M)$, and there is a transitive action of the group $H(S)$ on the set of automorphisms of each object of $\mathbf{M}(S)$ and
  \item it is locally trivial, meaning that we can cover $M$ by opens $U$ such that $\mathbf{M}(U)$ is non-empty and it is isomorphic to $M(U) \times \bbB H(U)$.
\end{enumerate}
A \emph{trivialization} or \emph{splitting} of a gerbe is an isomorphism $\mathbf{M} \cong M \times \bbB H$.

\begin{ej}
Show that $H$-gerbes are classified by the \v{C}ech cohomology group $H^2(X,H)$. \textbf{Hint}: Trivialize on local opens and show that on double intersections you have ``transition bundles''. Now, compare these transition bundles on triple intersections to obtain a \v{C}ech $2$-cocycle with values on $H$.
\end{ej}

We have shown that $\mathbf{Pic}_X\rightarrow \Pic(X)$ is a trivial $\C^*$-gerbe but that $\mathbf{Bun}_{r,d}^s \rightarrow \cN^s_{r,d}$ is a $\C^*$-gerbe which in general does not split. In fact, the element of $H^2(\cN^s_{r,d},\C^*)$ determined by this gerbe coincides with the image of the class $[\bbP \bbE] \in H^1(\cN^s_{r,d},\PGL_r(\C))$ through the map induced by the short exact sequence \[1\rightarrow \C^* \rightarrow \GL_r(\C) \rightarrow \PGL_r(\C) \rightarrow 1.\]
We can also consider the stack $\mathbf{Bun}_{\SL_r,d}^s$ of $d$-twisted $\SL_r$-vector bundles, which is a $\mu_r$-gerbe over the moduli space $\check{\cN}^s_{r,d}$, for $\mu_r\subset \C^*$ the group of $r$-th roots of unity.

\chapter{Non-abelian Hodge theory} \label{sec:naht}
\section{Character varieties and the Betti moduli space} \label{ss:charactervarieties}
In the previous chapter, we studied the moduli space of holomorphic vector bundles on a compact Riemann surface $X$ and showed how it is related to the (twisted) character variety parametrizing unitary representations of the fundamental group of $X$, via the theorem of Narasimhan--Seshadri. We are now interested in considering the space of \textit{all} linear representations of the fundamental group, not only those which are unitary. In other words, we want to classify homomorphisms $\rho:\pi_1(X,x_0)\rightarrow \GL_r(\C)$ up to conjugacy. This leads naturally to the algebraic theory of character varieties.

Let $\Pi=\langle s_1,\dots,s_p : r_1(s_1,\dots,s_p)=1, \dots,r_q(s_1,\dots,s_q)=1 \rangle$ be a finitely presented group. The \emph{$\GL_r$-representation variety} $\mathcal{R}_{\Pi,\GL_r}$ (over $\C$) associated with $\Pi$ is the affine variety representing the functor sending any  $\C$-algebra $A$ to the set
\begin{align*}
	\mathcal{R}_{\Pi,\GL_r} & (A)=\Hom(\Pi,\GL_r(A)) \\ &=\left\{S_1, \dots, S_p \in \GL_r(A): r_1(S_1,\dots,S_p)=I_r,\dots, r_q(S_1,\dots,S_p)=I_r\right\}.
\end{align*}
The group $\GL_r$ acts on  $\mathcal{R}_{\Pi,\GL_r}$ by conjugation and the affine GIT quotient
\begin{equation*}
\mathcal{X}_{\Pi,\GL_r} = \mathcal{R}_{\Pi,\GL_r} \git \GL_r = \Spec(\C[\mathcal{R}_{\Pi,\GL_r}]^{\GL_r})
\end{equation*}
is called the \emph{$\GL_r$-character variety} (over $\C$) associated with $\Pi$.
More generally, if we fix a generator $s_i \in \Pi$ and a conjugacy class $c \subset \GL_r$, we can also consider the subvariety $\mathcal{R}^{c,s_i}_{\Pi,\GL_r}\subset \mathcal{R}_{\Pi,\GL_r}$ representing the functor
 \begin{equation*}
A \mapsto \mathcal{R}^{c,s_i}_{\Pi,\GL_r}(A) = \left\{\rho:\Pi \rightarrow \GL_r(A): \rho(s_i) \in c\right\} ,
\end{equation*}
and the corresponding GIT quotient
\begin{equation*}
\mathcal{X}_{\Pi,\GL_r}^{c,s_i}= \mathcal{R}_{\Pi,\GL_r}^{c,s_i}\git \GL_r.
\end{equation*}

Recall that one of the important properties of the affine GIT quotient is that the closed points of $\mathcal{X}_{\Pi,\GL_r}^{c,s_i}$ correspond to the closed $\GL_r(\C)$ orbits in $\mathcal{R}_{\Pi,\GL_r}^{c,s_i}(\C)$. Now, these orbits are precisely the orbits of the semisimple representations. A representation $\rho:\Pi \rightarrow \GL_r(\C)$ is \emph{semisimple} if and only if it decomposes as a direct sum of simple representations. Therefore, if we consider the subset $\mathcal{R}^{c,s_i}_{\Pi,\GL_r}(\C)^+ \subset\mathcal{R}^{c,s_i}_{\Pi,\GL_r}(\C)$ consisting of semisimple representations, we have
\begin{equation*}
\mathcal{X}^{c,s_i}_{\Pi,\GL_r}(\C) = \mathcal{R}^{c,s_i}_{\Pi,\GL_r}(\C)^+/\GL_r(\C).
\end{equation*}

Let us consider now our compact Riemann surface $X$, with two marked points $x_0$ and $x_1$, and let us take
\begin{equation*}
\Pi=\pi_1(X\setminus \left\{x_1\right\}, x_0)=\left\langle a_1,\dots,a_g,b_1,\dots,b_g,z : \prod_{i=1}^g [a_i,b_i]=z \right\rangle.
\end{equation*}
For any integer $d$, we let  $c_d\subset \GL_r$ denote the conjugacy class of the matrix  $e^{-\frac{2\pi i d}{r}}I_r$. We define the \emph{Betti moduli space} $\mathcal{M}^B_{r,d}$ of $X$ as
\begin{equation*}
\mathcal{M}_{r,d}^B= \mathcal{X}_{\Pi,\GL_r}^{c_d,z}.
\end{equation*}
In particular, for $d=0$, we obtain the character variety
\begin{equation*}
\mathcal{M}_{r,d}^B= \mathcal{X}_{\pi_1(X,x_0),\GL_r}.
\end{equation*}

Recall from Section \ref{ss:deformation} that we can consider the Zariski open subset $\cM_{r,d}^{B,s} \subset \cM_{r,d}^{B}$ consisting of simple representations. It follows from our discussions there that $\cM_{r,d}^{B,s}$ is a smooth complex algebraic variety, of complex dimension
\begin{equation*}
\dim_\C \cM_{r,d}^{B,s} = 2[(g-1)r^2 +1].
\end{equation*}

\section{The de Rham moduli space} \label{ss:deRham}
\subsection*{Algebraic construction}
Recall that the Riemann--Hilbert correspondence relates representations of the fundamental group with holomorphic bundles with holomorphic connection. This motivates the study of the moduli space of such pairs, that is usually called the \emph{de Rham moduli space}.

\begin{defn}
A holomorphic vector bundle with meromorphic connection $(\sV,\bm{D})$ on $X$ is \emph{semistable} (resp. \emph{stable}) if and only if for every $\bm{D}$-invariant holomorphic subbundle $\sV'\subset \sV$ (that is, $\bm{D}(\sV')\subset \sV' \otimes \bm{\Omega}^1_X$), we have
 \begin{equation*}
\mu(\sV') \leq \mu(\sV) \text{ (resp. } <).
\end{equation*}
We say that $(\sV,\bm{D})$ is \emph{polystable} if it is either stable or a direct sum of stable pairs with of slope equal to $\mu(\sV)$.
\end{defn}

We let $\mathrm{Conn}^{s}_{r,d}$ denote the set of isomorphism classes of stable holomorphic vector bundles with meromorphic connection of rank $r$ and degree $d$ on $X$, and let $\mathsf{Conn}^s_{r,d}$ denote the moduli problem for this set.

\begin{thm}[Simpson]
There exists a quasiprojective variety $\mathcal{M}^{\dR}_{r,d}$, the \emph{de Rham moduli space of rank $r$ and degree $d$ on $X$}, such that:
\begin{enumerate}
\item The set of closed points $\mathcal{M}^{\dR}_{r,d}(\C)$ is in natural bijection with the set of isomorphism classes of polystable holomorphic vector bundles with meromorphic connection of rank $r$ and degree $d$ on $X$.
\item There is a Zariski open subvariety $\mathcal{M}^{\dR,s}_{r,d}\subset \mathcal{M}^{\dR}_{r,d}$ which is a coarse moduli space for the moduli problem $\mathsf{Conn}^s_{r,d}$.
\item If $r$ and $d$ are coprime, then $\mathcal{M}^{\dR}_{r,d}=\mathcal{M}^{\dR,s}_{r,d}$ is a fine moduli space for $\mathsf{Conn}^s_{r,d}$. In particular, there is a \emph{universal vector bundle with connection} $\mathscr{U}_{r,d}\rightarrow \mathcal{M}^{\dR}_{r,d}$ from which any flat family of vector bundles with connection of rank $r$ and degree $d$ on $X$ arises as pullback.
\end{enumerate}
\end{thm}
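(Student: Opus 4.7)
The plan is to mirror the GIT construction of $\cN_{r,d}$ from Section \ref{ss:algebraicconstructionN}, now parametrizing pairs $(\sV,\bm{D})$ rather than bare bundles, following Simpson's approach to moduli of $\Lambda$-modules (with $\Lambda$ the sheaf of algebraic differential operators on $X$ with simple poles at $x_1$, and prescribed residue scalar determined by $d$). The four steps are: (i) boundedness of the underlying bundles, (ii) construction of a parameter scheme, (iii) a GIT quotient with stability matching, and (iv) verification of the universal property.

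\emph{Boundedness and parameter scheme.} If $(\sV,\bm{D})$ is semistable and $\sV' \subset \sV$ is \emph{any} subbundle (not necessarily $\bm{D}$-invariant), the composition
\[
\sV' \hookrightarrow \sV \xrightarrow{\bm{D}} \sV \otimes \bm{\Omega}^1_X(* x_1) \twoheadrightarrow (\sV/\sV') \otimes \bm{\Omega}^1_X(* x_1)
\]
is $\sO_X$-linear (the Leibniz terms die on the quotient) and vanishes precisely when $\sV'$ is $\bm{D}$-invariant. Hence $\mu(\sV') \leq \mu(\sV) + C$ for a constant $C$ depending only on $\deg \bm{\Omega}^1_X(x_1)$; combined with the slope inequality along $\bm{D}$-invariant subsheaves, this bounds the Harder--Narasimhan slopes of $\sV$ uniformly in $(\sV,\bm{D})$, yielding boundedness. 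Choosing $n \gg 0$, each such $\sV$ appears as a locally free quotient $\sO_X(-n)^N \twoheadrightarrow \sV$ parametrized by the open subscheme $Y \subset \mathrm{Quot}^P_{\sO_X(-n)^N}$ of Section \ref{ss:algebraicconstructionN}. Over $Y$ one then constructs, as a Grothendieck relative Hom scheme, a quasiprojective scheme $R \to Y$ whose fibre at $[\sV]$ is the affine space of meromorphic connections on $\sV$ with simple pole and the prescribed residue at $x_1$. This $R$ carries a natural action of $G = \SL_N(\C)$ by base change on $\sO_X(-n)^N$, and two points lie in the same orbit if and only if the associated pairs are isomorphic.

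\emph{GIT and stability matching.} Pulling back the determinant line bundle $\sL_m \to Y$ of Section \ref{ss:algebraicconstructionN} linearizes this action, and we set $\cM^{\dR}_{r,d} := R \git_{\sL_m} G$. Each $1$-parameter subgroup $\lambda$ induces a filtration $\sV_\bullet$ of $\sV$; when $\sV_\bullet$ is $\bm{D}$-invariant, the weight computation of Section \ref{ss:algebraicconstructionN} applies verbatim and, taking $m,n \to \infty$, yields exactly the slope inequality for $\bm{D}$-invariant subsheaves. The principal obstacle is the case when $\sV_\bullet$ is \emph{not} $\bm{D}$-invariant: then $\lim_{t\to 0} \lambda(t)\cdot(\sV,\bm{D})$ fails to exist in $R$ because $\bm{D}$ acquires an unbounded component in the affine fibre direction of $R \to Y$. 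One must show that this forces the Hilbert--Mumford weight to be strictly positive, so that only $\bm{D}$-invariant filtrations can destabilize, reproducing precisely the stability condition of the excerpt. This is the delicate heart of the proof and requires a careful choice of linearization that properly accounts for the connection data along the affine fibres.

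\emph{Universal property.} Quasiprojectivity of $\cM^{\dR}_{r,d}$ and the identification of its closed points with isomorphism classes of polystable pairs then follow from Mumford's general GIT theory. On the stable locus, orbits are closed with stabilizer equal to the centre $\Gm \subset \GL_r(\C)$, so $\cM^{\dR,s}_{r,d}$ is a geometric quotient and hence a coarse moduli space for $\mathsf{Conn}^s_{r,d}$. When $\gcd(r,d)=1$, every semistable pair is stable (hence simple), and a standard descent argument along the induced $\PGL_N$-torsor $R^s \to \cM^{\dR,s}_{r,d}$ promotes the tautological pair on $R \times X$ to a universal pair on $\cM^{\dR}_{r,d} \times X$, giving a fine moduli space.
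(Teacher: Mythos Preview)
The paper does not actually prove this theorem: immediately after the statement it says ``The proof of this result can be found in \cite{SimpsonI,SimpsonII}, and follows a very similar argument to Seshadri's construction of the moduli space of vector bundles, that we explained in Section \ref{ss:algebraicconstructionN}.'' So there is nothing to compare against beyond that one-line pointer. Your sketch is in the intended spirit, but it has a structural gap that is worth naming.

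The difficulty is precisely where you flag it, but your proposed resolution does not work as written. Your parameter scheme $R\to Y$ is an \emph{affine} bundle; the line bundle $\sL_m$ pulled back from $Y$ is trivial along those affine fibres, hence not ample on $R$, and the associated Hilbert--Mumford weight is blind to the connection. You then say that for a non-$\bm{D}$-invariant filtration the limit $\lim_{t\to 0}\lambda(t)\cdot(\sV,\bm{D})$ does not exist in $R$, and that this should force the weight to be strictly positive ``so that only $\bm{D}$-invariant filtrations can destabilize''. Two problems: first, the Hilbert--Mumford weight $\mu_{\sL_m}(-,\lambda)$ is defined via the action on the fibre at the limit point, so if the limit does not exist the criterion is not even formulated; second, with the sign convention used in the paper, $\mu>0$ means the one-parameter subgroup \emph{does} destabilize, which is the opposite of what you want. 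What you actually need is that non-invariant filtrations can be discarded from the test, and that requires more than choosing a linearization on $R$.

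Simpson's construction sidesteps this by \emph{not} building $R$ as an affine bundle over the ordinary Quot scheme. He parametrizes $\Lambda$-modules directly: one fixes a large $\Lambda$-module $\Lambda\otimes\sO_X(-n)^N$ and takes the Quot scheme of its $\Lambda$-module quotients with fixed Hilbert polynomial. This is a genuine projective scheme on which the connection data is encoded in the scheme structure itself, the determinant linearization is ample, and a one-parameter subgroup automatically produces a $\bm{D}$-invariant filtration because the quotient maps are $\Lambda$-linear. The Hilbert--Mumford analysis of Section \ref{ss:algebraicconstructionN} then goes through verbatim and yields exactly the slope condition on $\bm{D}$-invariant subsheaves. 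Your boundedness step and your Step (iv) are fine; it is Step (iii) that needs this different parameter scheme.
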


The proof of this result can be found in \cite{SimpsonI,SimpsonII}, and follows a very similar argument to Seshadri's construction of the moduli space of vector bundles, that we explained in  Section \ref{ss:algebraicconstructionN}. Simpson also proved that the Riemann--Hilbert correspondence can be upgraded to a complex-analytic isomorphism of moduli spaces. Since taking holonomy is essentially transcendental, this isomorphism is not algebraic.

\begin{thm}[Simpson]
  Let $(\sV,\bm{D})$ be a holomorphic vector bundle with meromorphic connection of rank $r$ and degree $d$ on $X$, and let \[\rho:\Pi=\pi_1(X\setminus \left\{ x_1 \right\},x_0)\rightarrow \GL_r(\C)\] be the corresponding representation. Then, the pair $(\sV,\bm{D})$ is polystable if and only if the representation $\rho$ is semisimple. This determines a natural map $\mathcal{R}^{c_d,z}_{\Pi,\GL_r}(\C)\rightarrow \mathrm{Conn}^{ps}_{r,d}$ which is equivariant with respect to the conjugation $\GL_r(\C)$-action on the left, and in turn descends to a complex analytic isomorphism
  \begin{equation*}
\mathrm{RH}: \mathcal{M}^B_{r,d} \longrightarrow \mathcal{M}^{\dR}_{r,d}.
  \end{equation*}

\end{thm}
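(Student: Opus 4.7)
The plan is to construct the map $\mathrm{RH}$ by composing the Riemann--Hilbert correspondence from Proposition~\ref{prop:holconnflatconn} (in its meromorphic form) with the holonomy construction, then verify that polystability on one side matches semisimplicity on the other, and finally upgrade the resulting bijection between closed points to a biholomorphism by working in holomorphic families.

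First, I would apply the meromorphic Riemann--Hilbert correspondence established at the end of the previous chapter. Given $(\sV,\bm{D})$ of rank $r$ and degree $d$ with $\mathrm{res}_{x_1}\bm{D} = -\tfrac{d}{r}I_r$, the operator $D = \bm{D} + \delbar_{\sV}$ is a connection on the underlying smooth bundle $E$ of constant central curvature $F_D = -2\pi i \tfrac{d}{r}\id_E \omega_X$, flat on $X\setminus\{x_1\}$, whose holonomy produces a representation $\rho \in \mathcal{R}^{c_d,z}_{\Pi,\GL_r}(\C)$. Because this construction is canonical up to choice of trivializing frame at $x_0$, the resulting map $\mathcal{R}^{c_d,z}_{\Pi,\GL_r}(\C)\to \mathrm{Conn}^{ps}_{r,d}$ is $\GL_r(\C)$-equivariant.

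Second, for the polystability--semisimplicity correspondence, I would use the fact that a $\bm{D}$-invariant holomorphic subbundle $\sV'\subset \sV$ corresponds bijectively, via the Riemann--Hilbert functor, to a $\rho$-invariant subspace $W\subset \C^r$. Moreover the restricted connection $\bm{D}|_{\sV'}$ has residue $-\tfrac{d}{r}I_{r'}$ with $r'=\rk\sV'$, so by Chern--Weil $\deg \sV' = r'd/r$ and hence $\mu(\sV')=\mu(\sV)$. The slope inequality is therefore automatically an equality on every invariant subbundle, so stability of $(\sV,\bm{D})$ is equivalent to irreducibility of $\rho$, and polystability is equivalent to $\rho$ being a direct sum of irreducibles, i.e.\ semisimple.

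Third, to promote the bijection on closed points to a complex analytic isomorphism I would work in families. For a holomorphic family of representations parametrized by an analytic space $S$, the associated bundle $\tilde{X}'\times_\rho \C^r$ on $X\setminus\{x_1\}$, together with the extension across $x_1$ canonically determined by the prescribed residue, yields a family of polystable pairs over $S\times X$ depending holomorphically on $S$. Conversely, for a holomorphic family of meromorphic connections over $S$, the holonomy depends holomorphically on $s\in S$ by the classical theorem on holomorphic dependence of solutions to linear ODEs on holomorphic parameters. These two constructions are mutually inverse on isomorphism classes, so by the (co)representability of $\mathcal{M}^B_{r,d}$ and $\mathcal{M}^{\dR}_{r,d}$ they descend to mutually inverse complex analytic morphisms.

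The hard part is the families argument near the singular point $x_1$: one must verify that the extension of the flat bundle across $x_1$ via the prescribed residue is jointly holomorphic in $S$, and that the resulting locally defined analytic map glues into a globally defined morphism of coarse moduli spaces (which is delicate outside the stable locus, where only polystable orbits are identified). This is precisely the technical content of Simpson's construction in \cite{SimpsonI,SimpsonII}, and I would closely follow his strategy, passing through the moduli stacks and using universal corepresentability to assemble the local pieces. In the coprime case $(r,d)=1$ the existence of universal families on both sides shortcuts the argument, since one can transport the universal object directly under the correspondence.
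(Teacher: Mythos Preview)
The paper does not supply its own proof of this theorem: it is stated as a result of Simpson, with a pointer to \cite{SimpsonI,SimpsonII}, and the paper explicitly warns in the introduction that complete proofs of the main analytic theorems of non-abelian Hodge theory are omitted. So there is no in-paper argument to compare your proposal against.

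That said, your sketch is a reasonable outline of how one would prove the statement, and it correctly isolates the three ingredients: the categorical Riemann--Hilbert equivalence already set up in the paper, the matching of polystability with semisimplicity, and the passage to holomorphic families. Your second step is essentially right but could be sharpened: what makes the slope equality automatic is the residue--degree relation for meromorphic connections (namely $\deg\sV'=-\tr(\mathrm{res}_{x_1}\bm{D}|_{\sV'})$), not Chern--Weil per se, and you should observe that when $\gcd(r,d)>1$ the number $r'd/r$ need not be an integer, so proper $\bm{D}$-invariant subbundles either fail to exist or, when they do, have slope exactly $\mu(\sV)$; in either case semistability is vacuous and polystability reduces to complete reducibility, which is semisimplicity on the representation side. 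Your third step is the genuinely hard one, and you are right to flag the extension across $x_1$ and the descent to coarse moduli as the technical content that Simpson handles; since the residue here is scalar, the Deligne-type extension is unambiguous, which is what makes the families argument tractable in this setting.
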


\begin{rmk}
One way to check that the Riemann--Hilbert isomorphism $\mathrm{RH}: \mathcal{M}^B_{r,d} \rightarrow \mathcal{M}^{\dR}_{r,d}$ is not algebraic is by comparing the mixed Hodge structures of both spaces. Indeed, the Betti moduli space is an affine variety and as such it has a balanced Hodge structure, while the Hodge structure of the de Rham moduli space is pure. See \cite{HauselMSNAHT} for more details.
\end{rmk}

\subsection*{As a symplectic quotient}
Let us consider a smooth vector bundle $E$ of rank $r$ and degree $d$ and, as usual, write $\mu=\mu(E)=d/r$. Recall that we considered the space $\cA_E$ of connections on $E$. A Hermitian metric $H$ on $E$ induces a ``Cartan decomposition''
\begin{equation*}
\End E = \mathfrak{u}_H E \oplus i \mathfrak{u}_H E,
\end{equation*}
since every endomorphism can be decomposed in its Hermitian and skew-Hermitian parts. Therefore, if $D$ is a connection on $E$, then we can write
\begin{equation*}
D= \nabla + i\Phi,
\end{equation*}
where $\nabla$ is a  $H$-unitary connection on $E$ and $\Phi \in \Omega^{1}(X,i \mathfrak{u}_H E)$. Therefore, we obtain an splitting
\begin{equation*}
\cA_E = \cA_{E,H} \oplus \Omega^1(X,i \fu_H E).
\end{equation*}
This splitting determines a Kähler structure on the vector space $\cA_E$. More precisely, this complex structure $I_{\dR}$ acts on a given point $(A,\Psi) \in \Omega^1(X,\fu_H E) \oplus \Omega^1(X,i \fu_H E)$, as
\begin{equation*}
I_{\dR}(A,\Psi) = (-\Psi, A),
\end{equation*}
and, for a pair of points $(A_1,\Psi_1)$ and $(A_2,\Psi_2)$, the symplectic structure $\omega_{\dR}$ is given by
\begin{equation*}
\omega_{\dR}((A_1,\Psi_1),(A_2,\Psi_2)) = \int_X \mathrm{tr} (\Psi_1 \wedge * A_2 - A_1 \wedge * \Psi_2),
\end{equation*}
where $*$ denotes the Hodge star operator on the Riemann surface $X$ (with respect to the prescribed Kähler form $\omega_X$).

\begin{ej}
The unitary gauge group $\cG_{E,H}$ acts by gauge transformations on the space of all connections $\cA_E$.
\textit{Show that this action admits the moment map}
\begin{equation*}
\mu_{\dR}: \cA_E = \cA_{E,H} \oplus \Omega^1(X,i \fu_H E) \longrightarrow \Omega^2(X,\fu_H E), \ D=\nabla + i\Phi \mapsto \nabla * \Phi,
\end{equation*}
\textit{with respect to the symplectic structure $\omega_{\dR}$.}
\end{ej}

The action of the unitary gauge group preserves the curvature, and thus it can be restricted to an action on the subspace $\cF_E\subset \cA_E$ of connections of constant central curvature. We can then consider the symplectic reduction \[(\cM_{r,d},I_{\dR},\omega_{\dR}):= \mu_{\dR}|_{\cF_E}^{-1}(0)/\cG_{E,H},\] which inherits a Kähler structure from $(\cA_E,I_{\dR},\omega_{\dR})$. In general this is an analytic space which is not smooth, but the dense open subset $\cM_{r,d}^s\subset \cM_{r,d}$ of irreducible connections is indeed a Kähler manifold. As we explain in the next section, we can identify this space with (the analytification of) the de Rham moduli space.

\subsection*{Harmonic metrics and the Corlette--Donaldson theorem}
Let $(\sV,\bm{D})$ be a holomorphic vector bundle with meromorphic connection. Recall that, if $H$ is a Hermitian metric on $\sV$, then there is a unique unitary connection $\nabla_H$, the Chern connection, such that $\delbar_{\sV}=\nabla_H^{0,1}$. We define the operator $\partial^{H}_{\sV}:= \nabla_H^{1,0}$.

\begin{ej}
\textit{Following a similar argument, prove that there is a unique unitary connection $\nabla_H'$ such that $\bm{D}=(\nabla'_H)^{1,0}$}.
\end{ej}

We denote $\bm{D}^{\dagger_H}= (\nabla_H')^{0,1}$. We can then consider the operator
\begin{equation*}
\delbar_{\sE}:= \tfrac{1}{2}(\delbar_{\sV} + \bm{D}^{\dagger_H})
\end{equation*}
and the $(1,0)$-form
\begin{equation*}
\varphi : = \tfrac{1}{2} (\bm{D}- \partial_{\sV}^H ).
\end{equation*}
By definition, the \emph{pseudocurvature} of the metric $H$ (with respect to the pair $(\sV,\bm{D})$) is the $(1,1)$-form
\begin{equation*}
G_H = \delbar_{\sE} \varphi.
\end{equation*}

\begin{defn}
A Hermitian metric $H$ on a holomorphic vector bundle with meromorphic connection $(\sV,\bm{D})$ is \emph{harmonic} if $G_H=0$.

A  \emph{Higgs bundle}	is a pair $(\cE,\varphi)$ consisting of a holomorphic vector bundle $\cE$ over $X$ a and holomorphic $\End \cE$-valued $(1,0)$-form $\varphi \in H^{1,0}(X,\End \cE)$. (Equivalently, $\varphi$ is a holomorphic ``twisted endomorphism'' $\varphi:\cE \rightarrow \cE \otimes \bm{\Omega}^1_X$).
\end{defn}

It is clear now that a harmonic metric on a pair $(\sV,\bm{D})$ determines a Higgs bundle. The question then is when does a pair $(\sV,\bm{D})$ admit a harmonic metric. The answer is the Corlette--Donaldson theorem.

\begin{thm}[Corlette--Donaldson]
A holomorphic vector bundle with meromorphic connection $(\sV,\bm{D})$ admits a harmonic metric if and only if it is polystable.
\end{thm}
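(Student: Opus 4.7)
The plan is to prove the two implications separately, in direct analogy with the Narasimhan--Seshadri theorem of Section~\ref{ss:Narasimhan-Seshadri}. The forward direction, ``harmonic implies polystable'', should be handled by a Chern--Weil computation modeled on the proof of Proposition~\ref{prop:HEimpliesPS}. The reverse direction is the deep analytic content of the statement, and I would carry it out via Corlette's variational method for equivariant harmonic maps into the symmetric space $\GL_r(\C)/\U(r)$.

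For the forward direction, I would take a nontrivial $\bm{D}$-invariant holomorphic subbundle $\sV'\subset \sV$ and form the smooth $H$-orthogonal splitting $\sV = \sV' \oplus (\sV')^{\perp}$. With respect to this splitting, the holomorphicity of $\sV'$ kills the off-diagonal piece of $\delbar_{\sV}$ and the $\bm{D}$-invariance of $\sV'$ kills the off-diagonal piece of $\bm{D}$. Consequently the off-diagonal piece of $\varphi = \tfrac{1}{2}(\bm{D}-\partial_{\sV}^{H})$ vanishes, while the off-diagonal piece of $\delbar_{\sE} = \tfrac{1}{2}(\delbar_{\sV}+\bm{D}^{\dagger_H})$ is a second fundamental form $\beta$. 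Writing the Chern curvature of $(\sV',H|_{\sV'})$ in block form, using the harmonicity $G_H = \delbar_{\sE}\varphi = 0$ to express its trace in terms of $\beta$ and $\beta^{\dagger}$, integrating against $\omega_X$, and invoking Chern--Weil theory, one arrives at an identity of the shape
\begin{equation*}
\mu(\sV') = \mu(\sV) - C\lVert \beta \rVert^2, \qquad C>0.
\end{equation*}
This gives $\mu(\sV')\leq \mu(\sV)$, with equality only when $\beta = 0$, i.e.\ only when $\sV'$ is an $H$-orthogonal direct summand compatible with $\bm{D}$. Iterating produces the polystable decomposition.

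For the reverse direction, I would pass through Proposition~\ref{prop:holconnflatconn} (and its meromorphic analogue) to reinterpret $(\sV,\bm{D})$ as a smooth bundle with a (meromorphic) connection of constant central curvature, whose monodromy is a representation $\rho:\pi_1(X,x_0)\to \GL_r(\C)$. A Hermitian metric on the underlying smooth bundle is then the same datum as a $\rho$-equivariant smooth map $h:\tilde X\to \GL_r(\C)/\U(r)$ from the universal cover of $X$ to the symmetric space of positive definite Hermitian forms, and the vanishing of the pseudocurvature $G_H$ is equivalent to $h$ being a harmonic map in the classical sense. The plan is then to produce a harmonic metric by minimizing the energy
\begin{equation*}
E(h) = \int_X \lvert dh\rvert^2\, \omega_X
\end{equation*}
on the space of $\rho$-equivariant maps, and upgrading a minimizer to a smooth harmonic metric by elliptic regularity on the Euler--Lagrange equation. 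By Riemann--Hilbert, polystability of $(\sV,\bm{D})$ translates into semisimplicity of $\rho$, which is the precise hypothesis ensuring that the minimization problem admits a solution.

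The main obstacle is establishing coercivity (``reductivity'' in Corlette's original terminology) of the energy functional on the space of $\rho$-equivariant maps modulo the isometries of the target. For semisimple $\rho$ one expects any minimizing sequence to converge, after translation by target isometries, to a smooth equivariant harmonic map; for non-semisimple $\rho$ the energy can be driven arbitrarily low by sliding off along a parabolic subgroup of $\GL_r(\C)$, yielding only a harmonic reduction on a proper $\bm{D}$-invariant subbundle rather than on $\sV$ itself---consistent with the nonexistence of harmonic metrics on strictly semistable but non-polystable pairs. Ruling out this escape scenario in the semisimple case is the core analytic problem, requiring the nonpositive sectional curvature of the target (for a Bochner estimate on the Hessian of $E$), Uhlenbeck-type $L^p$ bounds, and a careful dichotomy argument isolating destabilizing parabolic directions. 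Equivalently, one could flow the metric along $\partial_t H_t = -H_t G_{H_t}$ and prove long-time existence and convergence, with analogous technical difficulties. Either route rests on analytical machinery beyond the scope of this paper, but the geometric architecture of the proof is the one sketched above.
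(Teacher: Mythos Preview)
The paper does not prove this theorem; it is one of the analytic results explicitly left unproved (see the note on proofs in the introduction), so there is no argument in the paper to compare against. Your reverse direction is an accurate outline of Corlette's original variational approach \cite{Corlette}: the translation to $\rho$-equivariant maps $\tilde X\to\GL_r(\C)/\U(r)$, the identification of harmonicity of the metric with harmonicity of the map, the role of semisimplicity in ruling out escape along parabolic directions, and the alternative heat-flow formulation of Donaldson are all correctly stated.

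Your forward direction, however, has a genuine gap. The block claims are not right: holomorphicity and $\bm D$-invariance of $\sV'$ kill only the $(2,1)$ entries of $\delbar_{\sV}$ and of $\bm D$, not both off-diagonal entries. Working out $\varphi=\tfrac12(\bm D-\partial_{\sV}^H)$ and $\delbar_{\sE}=\tfrac12(\delbar_{\sV}+\bm D^{\dagger_H})$ in this splitting, one finds that neither has vanishing off-diagonal part (the $(2,1)$ block of $\varphi$ picks up $\tfrac12\alpha^{\dagger}$ from $\partial_{\sV}^H$, and the $(2,1)$ block of $\delbar_{\sE}$ picks up $-\tfrac12\gamma^{\dagger}$ from $\bm D^{\dagger_H}$). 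More conceptually, the slope inequality you are aiming for is vacuous in this setting: since $D=\bm D+\delbar_{\sV}$ has constant central curvature $\alpha_E$ (Proposition~\ref{prop:holconnflatconn} and the exercise after it), the restriction $D|_{\sV'}$ has curvature $-2\pi i\,\mu(\sV)\,\id_{\sV'}\,\omega_X$, and Chern--Weil gives $\mu(\sV')=\mu(\sV)$ for \emph{every} $\bm D$-invariant holomorphic subbundle, with no harmonic hypothesis at all. So semistability is automatic, and the content of ``harmonic implies polystable'' is not a slope inequality but the assertion that the $H$-orthogonal complement $(\sV')^{\perp}$ is again $\bm D$-invariant and holomorphic. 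This is where the harmonic equations $\nabla\Phi=\nabla*\Phi=0$ actually enter: one applies a Bochner-type argument to the $H$-orthogonal projection $\pi$ onto $\sV'$ (a bounded $D$-subharmonic section of $\End E$ in the sense of Corlette) to force $D\pi=0$, i.e.\ $\sV=\sV'\oplus(\sV')^{\perp}$ as a direct sum of flat subbundles.
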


\begin{ej}
  It is convenient to rewrite the Corlette--Donaldson theorem in terms of flat bundles, through the correspondence $(\sV,\bm{D})\mapsto (E,\bm{D}+\delbar_{\sV})$ from Proposition \ref{prop:holconnflatconn}. Let $(E,D)$ be a flat bundle determined by a pair $(\sV,\bm{D})$, and let us fix a Hermitian metric $H$ on $E$. Consider now the decomposition $D=\nabla + i\Phi$ induced by the splitting $\End E = \fu_H E \oplus i \fu_H H$. \textit{Show that}
\begin{equation*}
\nabla = \nabla_{(\sE,H)} = \delbar_{\sE} + \partial_{\sE}^{H} \text{ and } \Phi = \varphi - \varphi^{\dagger_H},
\end{equation*}
\textit{where $\partial_{\sE}^{H} = \tfrac{1}{2}(\bm{D} + \partial_{\sV}^{H})$ and $\varphi^{\dagger_H}=\tfrac{1}{2}(\delbar_{\sV}-\bm{D}^{\dagger_H})$. Show then that}
\begin{equation*}
G_H = \delbar_{\sE} \varphi = -\nabla * \Phi + i \nabla\Phi
\end{equation*}
\textit{and that}
\begin{equation*}
F_D = F_\nabla + i\nabla \Phi - \Phi \wedge \Phi =  \alpha_E.
\end{equation*}
Using the decomposition $\End E = \fu_H E \oplus i \fu_H E$, \textit{conclude that}
\begin{equation*}
\nabla\Phi = 0, \text{ so then } G_H = -\nabla* \Phi
\end{equation*}
\textit{and}
\begin{equation*}
F_D = F_\nabla + [\varphi,\varphi^{\dagger_H}] =  \alpha_E.
\end{equation*}
We can then rewrite the Corlette--Donaldson theorem as follows.
\end{ej}

\begin{thm}[Corlette--Donaldson]
  A bundle with connection $(E,D)$ with constant central curvature determines a semisimple representation if and only if it admits a Hermitian metric $H$ such that
  \begin{equation*}
\nabla * \Phi = 0,
  \end{equation*}
where $D=\nabla + \Phi$ is the natural decomposition induced by the Cartan splitting $\End E = \fu_H E \oplus i \fu_H E$.
\end{thm}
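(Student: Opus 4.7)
The plan is to reduce this statement to the Corlette--Donaldson theorem just stated, by translating across the correspondence of Proposition~\ref{prop:holconnflatconn} between flat bundles and holomorphic bundles with connection (extended to the meromorphic, constant-central-curvature setting in the exercise preceding Section~\ref{ss:charactervarieties}) and by invoking the identities assembled in the exercise immediately preceding the theorem.

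First I would produce, starting from $(E,D)$ with $F_D = \alpha_E$, the holomorphic bundle with meromorphic connection $(\sV,\bm{D})$ given by $\sV = (E,D^{0,1})$ and $\bm{D} = D^{1,0}$, with a simple pole at $x_1$ of residue $-\tfrac{ic}{2\pi}I_r$ for $c = -2\pi i\mu$. By Simpson's complex-analytic Riemann--Hilbert isomorphism $\mathrm{RH}:\cM^B_{r,d}\rightarrow \cM^{\dR}_{r,d}$, the holonomy representation of $(E,D)$ is semisimple if and only if $(\sV,\bm{D})$ is polystable.

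Next I would fix any Hermitian metric $H$ on $E$, use the Cartan decomposition to split $D = \nabla + \Phi$ with $\nabla$ unitary and $\Phi$ taking values in $i\mathfrak{u}_H E$, and appeal to the preceding exercise. Its key output is that the constant-central-curvature hypothesis separates the identity $F_D = F_\nabla + i\nabla\Phi - \Phi\wedge\Phi = \alpha_E$ into its skew-Hermitian and Hermitian parts, yielding $\nabla\Phi = 0$ and hence collapsing the pseudocurvature to $G_H = -\nabla * \Phi$. Consequently $H$ is a harmonic metric for $(\sV,\bm{D})$ in the sense of the definition above exactly when $\nabla * \Phi = 0$. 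Applying the Corlette--Donaldson theorem to $(\sV,\bm{D})$ then closes the chain of equivalences.

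The substantive analytic content has already been black-boxed in the preceding theorem: the existence of harmonic metrics on polystable pairs. The only step that genuinely needs care is the Cartan-component bookkeeping in the exercise, where one must use the reality of $\mu$ to see that $\alpha_E$ lies in $\Omega^2(X,\fu_H E)$, so that the Hermitian part of the equation $F_D = \alpha_E$ truly forces $\nabla\Phi = 0$ rather than a more complicated relation; everything else in the reduction is formal manipulation of the correspondence $(\sV,\bm{D})\leftrightarrow (E,D)$.
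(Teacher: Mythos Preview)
Your proposal is correct and follows essentially the same route the paper intends: the restated theorem is presented as an immediate consequence of the preceding exercise (which shows $G_H=-\nabla*\Phi$ once $\nabla\Phi=0$ is extracted from the constant-central-curvature hypothesis via the Cartan splitting) together with Simpson's identification of polystability of $(\sV,\bm{D})$ with semisimplicity of the holonomy representation, applied to the original form of Corlette--Donaldson. Your remark about needing $\alpha_E\in\Omega^2(X,\fu_HE)$ for the decomposition argument is exactly the point the exercise is driving at.
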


In a similar way as we did for vector bundles, we can also give a ``dynamical'' interpretation of the Corlette--Donaldson theorem, where instead of fixing the holomorphic structure and finding a canonical metric, we fix the metric and act through the complex gauge group to reach a solution of the moment map equation.

\begin{thm}[Corlette--Donaldson, Simpson]
Let $E$ be a smooth complex vector bundle of rank $r$ and degree $d$ on $X$, and fix a Hermitian metric $H$ on $E$.
For any holomorphic vector bundle with meromorphic connection $(\sV,\bm{D})$ with underlying smooth vector bundle $E$, consider the flat connection $D=\bm{D}+\delbar_{\sV}$, which splits as a sum $D=\nabla+\Phi$ of a unitary connection $\nabla$ and a $1$-form $\Phi\in \Omega^1(X,i\fu_H E)$. The complex gauge group $\cG^{\C}_E$ acts on the pair $(\nabla,\Phi)$.
We have that
    \begin{enumerate}
\item $(\sV,\bm{D})$ is semistable if and only if $\overline{\cG^\C_E \cdot (\nabla,\Phi)} \cap \mu_{\dR}^{-1}(0)\neq \varnothing$;
\item $(\sV,\bm{D})$ is polystable if and only if $\cG^\C_E\cdot (\nabla,\Phi) \cap \mu_{\dR}^{-1}(0)\neq \varnothing$ and, in that case, $\cG^\C_E\cdot (\nabla,\Phi) \cap \mu_{\dR}^{-1}(0)=\cG_{E,H} \cdot (\nabla,\Phi)$. Therefore,
            \begin{equation*}
\mu_{\dR}^{-1}(0) \subset \cG^\C_E \cdot \mu_{\dR}^{-1}(0) = \cD_E^{ps} \subset \cD_E^{ss},
            \end{equation*}
    \end{enumerate}
    where $\cD_E$ denotes the set of holomorphic vector bundles with meromorphic connection with underlying vector bundle $E$, and the superscripts $^{ps}$ and $^{ss}$ stand for taking subsets of polystable and semistable pairs, respectively.

Moreover, the inclusion $\mu_{\dR}^{-1}(0)\subset \cD_E^{ps}$ induces a complex analytic isomorphism
           \begin{equation*}
(\cM_{r,d},I_{\dR},\omega_{dR})=\mu_{\dR}^{-1}(0)/\cG_{E,H} \longrightarrow \cD_E^{ps}/\cG_E^\C \cong \cM^{\dR}_{r,d},
           \end{equation*}
           identifying the Kähler manifold $(\cM^s_{r,d},I_{\dR},\omega_{dR})$ with the (stable part of) de Rham moduli space of rank $r$ and degree $d$.
\end{thm}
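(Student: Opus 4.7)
The plan is to adapt the infinite-dimensional Kempf--Ness paradigm used for the Narasimhan--Seshadri theorem to the present setting, with the Hermitian--Einstein equation replaced by the harmonic metric equation. The preceding exercise identifies the moment map equation $\nabla * \Phi = 0$ with the vanishing of the pseudocurvature $G_H = \delbar_{\sE}\varphi$ of the Hermitian metric $H$, so finding a zero of $\mu_{\dR}$ inside the $\cG^\C_E$-orbit of $(\nabla, \Phi)$ amounts to finding a harmonic metric on the bundle with meromorphic connection $(\sV, \bm{D})$ whose Cartan decomposition recovers $(\nabla, \Phi)$.

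The first step is the ``easy'' implication, modelled on Proposition \ref{prop:HEimpliesPS}: if $(\sV, \bm{D})$ admits a harmonic metric then it is polystable. Given a $\bm{D}$-invariant holomorphic subbundle $\sV' \subset \sV$, the orthogonal splitting $\sV = \sV' \oplus (\sV')^{\perp}$ with respect to the harmonic metric writes $\nabla$ and $\varphi$ in block form, with off-diagonal pieces $\beta$ measuring the failure of $\sV'$ to be $\nabla$- and $\varphi$-invariant. Computing the top-left block of the identity $F_\nabla + [\varphi, \varphi^{\dagger_H}] = \alpha_E$, taking traces, integrating over $X$ and applying Chern--Weil yields an inequality of the shape $\mu(\sV') \leq \mu(\sV) - C \lVert \beta \rVert^2$ with $C > 0$, forcing $\mu(\sV') \leq \mu(\sV)$ with equality only when $\beta = 0$; in that case $\sV'$ splits off as a direct summand of $(\sV,\bm{D})$, giving polystability. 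For the semistability version, one shows by a limiting argument that if only the orbit closure meets $\mu_{\dR}^{-1}(0)$ then the same slope inequality still holds weakly, ruling out destabilising subbundles.

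The main obstacle, and the true analytic content of the theorem, is the converse: every polystable $(\sV, \bm{D})$ admits a harmonic metric. I would only sketch this, as the required analysis lies outside the scope of the paper. The two classical routes are Donaldson's downward gradient flow for a suitable energy functional on the symmetric space of Hermitian metrics on $E$, and Corlette's equivariant harmonic map approach on the universal cover of $X$. Both rely on $L^p$-elliptic regularity, Uhlenbeck-type compactness for sequences of connections with bounded curvature, and the use of (poly)stability to prevent the flow from running off to infinity: in the unstable case, a blow-up analysis of the limit produces a destabilising subbundle via a slope-rescaling argument paralleling Donaldson's proof of the Narasimhan--Seshadri theorem.

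Granted existence, uniqueness of the harmonic metric up to unitary gauge on a simple pair follows from the strict convexity of Donaldson's functional along geodesics $H_t = H_0 e^{ts}$ in the space of positive self-adjoint endomorphisms, together with the fact that a parallel, $\varphi$-commuting endomorphism of a stable pair must be scalar; the polystable case reduces to the stable one summand by summand. This establishes the bijection $\mu_{\dR}^{-1}(0)/\cG_{E,H} \leftrightarrow \cD_E^{ps}/\cG^\C_E$ on points, and a standard slice argument at irreducible points upgrades it to a homeomorphism restricting to a biholomorphism on the smooth locus, where the tangent space is computed by the deformation complex of Section \ref{ss:deformation}. Finally, comparison with the algebraic moduli space $\cM^{\dR}_{r,d}$ uses Simpson's construction: the natural map from the analytic quotient to the GIT quotient is bijective on closed points and a local isomorphism at stable points, hence a complex analytic isomorphism globally.
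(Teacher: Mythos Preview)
The paper does not prove this theorem. As stated explicitly in the introduction (see the paragraph ``A note on proofs and technical details''), the paper is expository and does \emph{not} provide proofs of the main theorems of non-abelian Hodge theory, including the Corlette--Donaldson theorem; the result is simply stated and attributed to the original references \cite{Corlette,Donaldson_Harmonic,SimpsonI,SimpsonII}.

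Your sketch is a reasonable outline of the proof as it appears in those references: the ``easy'' direction via a curvature computation analogous to Proposition~\ref{prop:HEimpliesPS}, and the hard existence direction via either Donaldson's flow or Corlette's equivariant harmonic maps, with Uhlenbeck compactness and stability used to control degeneration. Since the paper contains no proof to compare against, there is nothing further to say about alignment; your proposal goes well beyond what the paper itself attempts.
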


\section{The moduli space of Higgs bundles}
\subsection*{Algebraic construction}
We have shown how a holomorphic bundle with meromorphic connection endowed with a harmonic metric determines a Higgs bundle $(\sE,\varphi)$. It is then interesting to study the moduli space of Higgs bundles, also called the \emph{Dolbeault moduli space}. The stability theory and the construction of the moduli space completely parallels the cases we have already studied.

\begin{defn}
A Higgs bundle $(\sE,\varphi)$ on $X$ is \emph{semistable} (resp. \emph{stable}) if and only if for every $\varphi$-invariant holomorphic subbundle $\sE'\subset \sE$ (that is, $\varphi(\sE')\subset \sE' \otimes \bm{\Omega}^1_X$), we have
 \begin{equation*}
\mu(\sE') \leq \mu(\sE) \text{ (resp. } <).
\end{equation*}
We say that $(\sE,\bm{D})$ is \emph{polystable} if it is either stable or a direct sum of stable pairs with of slope equal to $\mu(\sE)$.
\end{defn}

We let $\mathrm{Higgs}^{s}_{r,d}$ denote the set of isomorphism classes of stable Higgs bundles of rank $r$ and degree $d$ on $X$, and let $\mathsf{Higgs}^s_{r,d}$ denote the moduli problem for this set.

\begin{thm}[Nitsure]
There exists a quasiprojective variety $\mathcal{M}^{\Dol}_{r,d}$, the \emph{Dolbeault moduli space of rank $r$ and degree $d$ on $X$}, such that:
\begin{enumerate}
\item The set of closed points $\mathcal{M}^{\Dol}_{r,d}(\C)$ is in natural bijection with the set of isomorphism classes of polystable Higgs bundles of rank $r$ and degree $d$ on $X$.
\item There is a Zariski open subvariety $\mathcal{M}^{\Dol,s}_{r,d}\subset \mathcal{M}^{\Dol}_{r,d}$ which is a coarse moduli space for the moduli problem $\mathsf{Higgs}^s_{r,d}$.
\item If $r$ and $d$ are coprime, then $\mathcal{M}^{\Dol}_{r,d}=\mathcal{M}^{\Dol,s}_{r,d}$ is a fine moduli space for $\mathsf{Higgs}^s_{r,d}$. In particular, there is a \emph{universal Higgs bundle} $\mathscr{U}_{r,d}\rightarrow \mathcal{M}^{\Dol}_{r,d}$ from which any flat family of Higgs bundles of rank $r$ and degree $d$ on $X$ arises as pullback.
\end{enumerate}
\end{thm}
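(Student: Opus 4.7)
The plan is to follow the same strategy as in Seshadri's construction of $\cN_{r,d}$ sketched in Section \ref{ss:algebraicconstructionN}, adapted to the Higgs bundle setting by parametrizing the Higgs field as additional data over a Quot scheme. The key new ingredient, due to Nitsure, is to set up a GIT problem whose closed orbits correspond to polystable pairs $(\sE,\varphi)$ rather than just polystable bundles.

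The first step is to establish that the family of semistable Higgs bundles of rank $r$ and degree $d$ on $X$ is bounded. This is more delicate than in the case of bundles, because the Higgs stability condition only requires testing against $\varphi$-invariant subbundles; one shows nevertheless that the Harder--Narasimhan slopes of a semistable Higgs bundle cannot be too large, essentially because a maximal destabilising subbundle of the underlying $\sE$ must be mapped by $\varphi$ into a bundle of bounded slope, yielding a uniform bound on the instability of $\sE$ as a plain vector bundle. Once boundedness is secured, one fixes $n \gg 0$ large enough that $\sE(n)$ is globally generated with vanishing $H^1$ for every semistable Higgs bundle, and proceeds exactly as in Step 2 of Section \ref{ss:algebraicconstructionN}, working with the Quot scheme $\mathrm{Quot}^P_{\sV}$ of quotients $\sV = \sO_X(-n)^N \twoheadrightarrow \sE$ whose Hilbert polynomial matches $P(m)=d + r(m+1-g)$. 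Let $Y_0 \subset \mathrm{Quot}^P_{\sV}$ denote the open subscheme of locally free quotients.

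Next, one parametrizes the Higgs field. Over $Y_0 \times X$ there is a universal quotient sheaf, whose endomorphism bundle twisted by $\bm{\Omega}^1_X$ has fibrewise cohomology $H^0(X, \End \sE \otimes \bm{\Omega}^1_X)$. After passing, if necessary, to a further open subscheme where this cohomology has constant dimension (or by cohomology and base change), the direct image defines a coherent sheaf $\mathcal{H}$ on $Y_0$, and we let $Y = \mathbf{V}(\mathcal{H}^\vee) \to Y_0$ be the corresponding geometric total space. A closed point of $Y$ is precisely a triple $(\sV \overset{q}{\twoheadrightarrow} \sE, \varphi)$ with $\varphi \in H^0(X, \End \sE \otimes \bm{\Omega}^1_X)$. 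Base change on $\sV$ gives an action of $G = \SL_N(\C)$ on $Y$ whose orbits record isomorphism classes of Higgs bundles, and one constructs an ample $G$-linearization $\sL_m$ on $Y$ (for $m$ large) analogous to Seshadri's determinant bundle, but incorporating $\varphi$ through the natural projective embedding of $Y$ over $\mathrm{Quot}^P_{\sV}$. One then sets
\begin{equation*}
\cM^{\Dol}_{r,d} := Y \git_{\sL_m} G
\end{equation*}
and checks it is quasiprojective (projectivity fails because $Y\to Y_0$ is an affine bundle, so the quotient inherits a proper Hitchin-type map rather than properness itself).

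The final step is to verify that GIT (semi/poly/)stability on $Y$ matches the Higgs bundle (semi/poly/)stability from the definition above. This is done by the Hilbert--Mumford criterion: any $1$-parameter subgroup $\lambda:\C^*\to G$ induces a weight filtration $0\subset F_1\subset \dots \subset F_s=V$, hence a filtration $0 \subset \sE_1 \subset \dots \subset \sE_s = \sE$ by subsheaves. The new feature is that, for the limit $\lim_{t\to 0}\lambda(t)\cdot(\sE,\varphi)$ to exist in $Y$, the Higgs field $\varphi$ must preserve this filtration, so only $\varphi$-invariant filtrations contribute. With this observation the computation of $\mu_{\sL_m}((\sE,\varphi),\lambda)$ proceeds as in formula \eqref{eq:HilbertMumford}, and taking $n$ and $m$ arbitrarily large turns the resulting inequality into the slope inequality $\mu(\sE')\leq \mu(\sE)$ for every $\varphi$-invariant $\sE'\subset \sE$. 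The coarse moduli property for $\cM^{\Dol,s}_{r,d}$ and the fine moduli property in the coprime case then follow from the general theory of GIT quotients: stable orbits are closed with trivial stabilizer modulo the centre, and a universal family is obtained from the universal quotient on $\mathrm{Quot}^P_{\sV}$ together with the tautological Higgs field on $Y$, after twisting by the descent line bundle that trivialises the residual $\C^*$-gerbe exactly as for $\mathscr{U}_{r,d}\to \cN_{r,d}$.

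The main obstacle is the boundedness of semistable Higgs bundles: this does not follow from boundedness of semistable vector bundles because a semistable Higgs bundle may have a very unstable underlying $\sE$. Nitsure's solution is to bound the slopes of the pieces of the Harder--Narasimhan filtration of $\sE$ inductively using the existence of $\varphi$, an argument which is technically the heart of the construction and for which we refer to \cite{SimpsonII}.
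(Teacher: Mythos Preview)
The paper does not prove this theorem: it is stated without proof, consistent with the expository character announced in the introduction (``A note on proofs and technical details''), and the reader is implicitly referred to the literature. Your sketch is a correct outline of Nitsure's GIT construction and follows exactly the template the paper laid out for Seshadri's theorem in Section~\ref{ss:algebraicconstructionN}; indeed, for the parallel de Rham case the paper says only that the proof ``follows a very similar argument to Seshadri's construction,'' which is precisely what you have spelled out here for Higgs bundles. Your identification of boundedness of semistable Higgs bundles as the key new technical input is right on target.

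One small remark: the primary reference for this construction is Nitsure's own paper rather than \cite{SimpsonII}, though Simpson does give a more general treatment.
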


\subsection*{As a symplectic quotient}
Let us consider a smooth vector bundle $E$ of rank $r$ and degree $d$ and, as usual, write $\mu=\mu(E)=d/r$. We fix a Hermitian metric $H$ on $E$. Recall that any connection $D$ on $E$, can be split as
\begin{equation*}
D= \nabla + i\Phi,
\end{equation*}
where $\nabla$ is a  $H$-unitary connection on $E$ and $\Phi \in \Omega^{1}(X,i \mathfrak{u}_H E)$, and thus we obtain an splitting
\begin{equation*}
\cA_E = \cA_{E,H} \oplus \Omega^1(X,i \fu_H E).
\end{equation*}
We now consider a Kähler structure on $\cA_E$ which is different from the one we considered in Section \ref{ss:deRham}. Given a point $(A,\Psi) \in \Omega^1(X,\fu_H E) \oplus \Omega^1(X,i \fu_H E)$, we define
\begin{equation*}
I_{\Dol}(A,\Psi) = (*A, -*\Phi),
\end{equation*}
and, for a pair of points $(A_1,\Psi_1)$ and $(A_2,\Psi_2)$, the symplectic structure $\omega_{\dR}$ is given by
\begin{equation*}
\omega_{\Dol}((A_1,\Psi_1),(A_2,\Psi_2)) = \int_X \mathrm{tr} (-A_1 \wedge A_2 + \Psi_1 \wedge \Psi_2).
\end{equation*}

\begin{rmk}
In some sense, we could understand that, while the complex structure $I_{\dR}$ was induced by the natural complex structure on the space $\End E$ (which is in turn induced by the complex structure on $\GL_r(\C)$), the complex structure $I_{\Dol}$ is induced by the complex structure on $X$.
\end{rmk}

\begin{ej}
  Let us consider the subspace $\cH_E\subset \cA_E$ of connections $D=\nabla + i\Phi$ such that $\nabla\Phi=\nabla* \Phi = 0$.
The unitary gauge group $\cG_{E,H}$ acts by gauge transformations on the space of $\cH_E$.
\textit{Show that this action admits the moment map}
\begin{align*}
  \mu_{\Dol}: \cA_E = \cA_{E,H} \oplus \Omega^1(X,i \fu_H E) & \longrightarrow \Omega^2(X,\fu_H E) \\
 D=\nabla + i\Phi   & \longmapsto F_\nabla + \Phi \wedge \Phi,
\end{align*}
\textit{with respect to the symplectic structure $\omega_{\Dol}$.}
\end{ej}

We can then consider the symplectic reduction \[(\cM_{r,d},I_{\Dol},\omega_{\Dol}):= \mu_{\Dol}|_{\cH_E}^{-1}(0)/\cG_{E,H},\] which inherits a Kähler structure from $(\cA_E,I_{\Dol},\omega_{\Dol})$. As we explain in the next section, we can identify this space with (the analytification of) the Dolbeault moduli space.

\subsection*{Hermitian--Einstein--Higgs metrics and the Hitchin--Simpson theorem}
Let $(\sE,\varphi)$ be a Higgs bundle with underlying smooth bundle $E$. If $H$ is a Hermitian metric on $\sE$, we can construct a connection on $E$ from $(\sE,\varphi)$ by putting
\begin{equation*}
D = \partial_{\sE}^{H} + \delbar_{\sE} + \varphi - \varphi^{\dagger_H}.
\end{equation*}
Since $\varphi$ is holomorphic, we have $\delbar_{\sE} \varphi = 0$, and thus the curvature of $D$ is equal to
\begin{equation*}
F_D = F_H + [\varphi,\varphi^{\dagger_H}],
\end{equation*}
where $F_H$ is the curvature of the Chern connection $\nabla_H = \partial_{\sE}^{H} + \delbar_{\sE}$. Note that $D$ is not necessarily a connection with constant central curvature. We are interested in finding Hermitian metrics for which that is in fact the case.

\begin{defn}
A \emph{Hermitian-Einstein-Higgs} metric (HEH metric) on a Higgs bundle $(\sE,\varphi)$ is a Hermitian metric $H$ on $\sE$ such that
\begin{equation*}
F_H + [\varphi,\varphi^{\dagger_H}] = \alpha_E.
\end{equation*}
\end{defn}

\begin{ej}
\textit{Verify that the equation $F_H + [\varphi,\varphi^{\dagger_H}] = \alpha_E$ is equivalent to the equation $F_H - \Phi \wedge \Phi = \alpha_E$, for $\Phi=\varphi - \varphi^{\dagger_H}$.}
\end{ej}

\begin{thm}[Hitchin--Simpson]
A Higgs bundle admits an HEH metric if and only if it is polystable.
\end{thm}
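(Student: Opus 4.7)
The easy direction $(\Rightarrow)$ is a direct adaptation of Proposition~\ref{prop:HEimpliesPS}, with a single extra Higgs contribution. Given an HEH metric $H$ on $(\sE,\varphi)$ and a $\varphi$-invariant holomorphic subbundle $\sE'\subset\sE$, the plan is to use $H$ to split $\sE=\sE'\oplus\sE''$ smoothly and orthogonally. In block form the Chern connection acquires the usual second fundamental form $\beta\in\Omega^{0,1}(X,\Hom(\sE'',\sE'))$, while $\varphi$-invariance forces $\varphi$ to be upper-triangular with diagonal blocks $\varphi_1,\varphi_2$ and off-diagonal entry $\gamma\in\Omega^{1,0}(X,\Hom(\sE'',\sE'))$. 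A direct block computation identifies the upper-left component of $F_H+[\varphi,\varphi^{\dagger_H}]=-2\pi i\mu(\sE)\id_{\sE}\omega_X$ as
\[
F_{\nabla'}-\beta\wedge\beta^{\dagger_H}+[\varphi_1,\varphi_1^{\dagger_H}]+\gamma\wedge\gamma^{\dagger_H}=-2\pi i\mu(\sE)\id_{\sE'}\omega_X.
\]
Since the commutator is traceless, Chern--Weil and integration give
\[
\mu(\sE)=\mu(\sE')+\tfrac{1}{\rk\sE'}\left(\lVert\beta\rVert^2+\lVert\gamma\rVert^2\right),
\]
so $\mu(\sE')\leq\mu(\sE)$, with equality iff $\beta=\gamma=0$. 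Equality provides a $\varphi$-invariant orthogonal holomorphic splitting on which $H$ restricts to HEH metrics on each summand, and an induction on rank finishes the polystability argument.

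For the hard direction $(\Leftarrow)$ the plan is to run the infinite-dimensional Kempf--Ness argument on the Dolbeault setup, parallel to the gauge-theoretic proof of Narasimhan--Seshadri recalled in Section~\ref{ss:Narasimhan-Seshadri}. After first decomposing a polystable Higgs bundle into stable summands of the same slope, one reduces to the stable case. Given a stable $(\sE,\varphi)$ with reference metric $H_0$, we look for $H=H_0 e^s$ (with $s$ self-adjoint for $H_0$) solving the HEH equation $F_H+[\varphi,\varphi^{\dagger_H}]=\alpha_E$. The variational principle is a Higgs-adapted Donaldson functional $\sD(H_0,H)$, built from the Bott--Chern secondary class plus a Higgs-energy correction, whose Euler--Lagrange equation is precisely the HEH equation and which is convex along geodesic rays $H_t=H_0 e^{ts}$ in the space of Hermitian metrics. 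Existence of a smooth minimizer is thus equivalent to existence of an HEH metric.

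The hard step, and essentially the entire analytic content of Hitchin--Simpson, is the a priori estimate that turns stability into coercivity of $\sD$. Concretely, if $\sD$ fails to be coercive along a minimizing sequence $H_n=H_0 e^{s_n}$ with $\lVert s_n\rVert_{L^2}\to\infty$, the plan is to renormalize $u_n=s_n/\lVert s_n\rVert_{L^2}$, extract an Uhlenbeck-type weak limit $u_\infty$, and show via its spectral decomposition that there is a weakly holomorphic, $\varphi$-invariant subsheaf $\sF\subset\sE$ with $\mu(\sF)\geq\mu(\sE)$, contradicting stability. The genuinely new step compared to the vector-bundle proof is the $\varphi$-invariance of $\sF$, which I expect to require uniform $L^2$ control on the commutator $[s_n,\varphi]$ along the sequence; this is the part I expect to be the main obstacle. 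Once coercivity is in place, standard elliptic regularity upgrades the minimizer to a smooth HEH solution. An equivalent alternative is the continuity method along $(\sE,t\varphi)$ for $t\in[0,1]$: openness follows from the implicit function theorem applied to the elliptic linearization of the HEH equation, closedness reduces to the same $C^0$ bound on $s$, and at $t=0$ the Narasimhan--Seshadri theorem supplies the initial HE metric. Either way, the stability-to-estimate implication is the delicate analytic heart of the theorem, for which we refer to Hitchin~\cite{Hitchin_SelfDuality} and Simpson~\cite{Simpson_Variations}.
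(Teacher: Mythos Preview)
The paper does not actually prove this theorem: it leaves the easy direction as an exercise (``Emulate the proof of Proposition~\ref{prop:HEimpliesPS}\ldots'') and simply states the hard direction, consistent with the disclaimer in the introduction that the Hitchin--Simpson theorem is not proved in the text. Your easy direction is exactly the solution the exercise is asking for, and the block computation with the extra $\gamma\wedge\gamma^{\dagger_H}$ term (and the observation that $\tr[\varphi_1,\varphi_1^{\dagger_H}]=0$) is correct. Your sketch of the hard direction via the Donaldson functional and the destabilizing-subsheaf argument is the standard Simpson approach and already goes beyond what the paper offers; since the paper explicitly defers this to \cite{Hitchin_SelfDuality,Simpson_Variations}, there is nothing to compare against.
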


\begin{ej}
Emulate the proof of Proposition \ref{prop:HEimpliesPS} to show that if a Higgs bundle admits an HEH metric then it is polystable.
\end{ej}

As in the previous situations, we can give a ``dynamical'' interpretation of this result if we fix a Hermitian metric and try to find solutions of the moment map equation.

\begin{thm}[Hitchin--Simpson]
Let $E$ be a smooth complex vector bundle of rank $r$ and degree $d$ on $X$, and fix a Hermitian metric $H$ on $E$.
For any Higgs bundle $(\sE,\varphi)$ with underlying smooth vector bundle $E$, consider the pair $(\nabla,\Phi)$, where $\nabla=\partial_{\sE}^{H} + \delbar_{\sE}$ is the Chern connection and $\Phi=\varphi-\varphi^{\dagger_H}$. The complex gauge group $\cG^{\C}_E$ acts on the pair $(\nabla,\Phi)$.
We have that
    \begin{enumerate}
\item $(\sE,\varphi)$ is semistable if and only if $\overline{\cG^\C_E \cdot (\nabla,\Phi)} \cap \mu_{\Dol}^{-1}(\alpha_E)\neq \varnothing$;
\item $(\sE,\varphi)$ is polystable if and only if $\cG^\C_E\cdot (\nabla,\Phi) \cap \mu_{\Dol}^{-1}(\alpha_E)\neq \varnothing$ and, in that case, $\cG^\C_E\cdot (\nabla,\Phi) \cap \mu_{\Dol}^{-1}(\alpha_E)=\cG_{E,H} \cdot (\nabla,\Phi)$. Therefore,
            \begin{equation*}
\mu_{\Dol}^{-1}(\alpha_E) \subset \cG^\C_E \cdot \mu_{\Dol}^{-1}(\alpha_E) = \cS_E^{ps} \subset \cS_E^{ss},
            \end{equation*}
    \end{enumerate}
    where $\cS_E$ denotes the set of Higgs bundles with underlying vector bundle $E$, and the superscripts $^{ps}$ and $^{ss}$ stand for taking subsets of polystable and semistable pairs, respectively.

Moreover, the inclusion $\mu_{\Dol}^{-1}(\alpha_E)\subset \cS_E^{ps}$ induces a complex analytic isomorphism
           \begin{equation*}
(\cM_{r,d},I_{\Dol},\omega_{\Dol})=\mu_{\Dol}^{-1}(\alpha_E)/\cG_{E,H} \longrightarrow \cS_E^{ps}/\cG_E^\C \cong \cM^{\Dol}_{r,d},
           \end{equation*}
           identifying the Kähler manifold $(\cM^s_{r,d},I_{\Dol},\omega_{\Dol})$ with the Dolbeault moduli space of rank $r$ and degree $d$.
\end{thm}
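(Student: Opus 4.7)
The plan is to carry out the infinite-dimensional Kempf--Ness argument in parallel with the Narasimhan--Seshadri case from Section \ref{ss:Narasimhan-Seshadri}, with the key analytic input now replaced by the Hitchin--Simpson existence theorem for Hermitian--Einstein--Higgs metrics. First I would verify the dictionary between the moment map condition and the HEH equation. Given $(\sE,\varphi)$ and $H$, the pair $(\nabla,\Phi)$ of this theorem satisfies $\mu_{\Dol}(\nabla,\Phi)=F_\nabla+\Phi\wedge\Phi$, so that membership in $\mu_{\Dol}^{-1}(\alpha_E)$ is literally the HEH condition $F_H+[\varphi,\varphi^{\dagger_H}]=\alpha_E$ (using the exercise identifying $F_H-\Phi\wedge\Phi$ with $F_H+[\varphi,\varphi^{\dagger_H}]$). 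Next, a complex gauge transformation $g\in\cG^\C_E$ acts on $(\delbar_{\sE},\varphi)$ by conjugation, producing a Higgs bundle isomorphic to $(\sE,\varphi)$ whose Chern connection is computed with respect to the transformed metric $g^*H$. Hence finding a point in $\cG^\C_E\cdot(\nabla,\Phi)\cap\mu_{\Dol}^{-1}(\alpha_E)$ is tantamount to finding an HEH metric on $(\sE,\varphi)$.

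With this dictionary in hand, part (2) of the theorem reduces to the Hitchin--Simpson theorem as stated above, together with the easy converse (the exercise just below that theorem, whose proof mimics Proposition \ref{prop:HEimpliesPS}). The refinement $\cG^\C_E\cdot(\nabla,\Phi)\cap\mu_{\Dol}^{-1}(\alpha_E)=\cG_{E,H}\cdot(\nabla,\Phi)$ would be obtained from uniqueness of the HEH metric: if $H_1$ and $H_2$ are two HEH metrics on a polystable $(\sE,\varphi)$, the self-adjoint endomorphism $s=H_1^{-1}H_2$ can be shown to be covariantly constant and to commute with $\varphi$ by a standard Weitzenb\"ock/maximum principle argument on $\log\tr(s)$, so on each stable summand $s$ is a positive multiple of the identity; this forces $H_1$ and $H_2$ to differ by a unitary automorphism, i.e.\ by an element of $\cG_{E,H}$.

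For part (1), I would handle the two directions separately. If $(\sE,\varphi)$ is strictly semistable, its Jordan--H\"older filtration has a polystable associated graded $\mathrm{gr}(\sE,\varphi)$, which by Hitchin--Simpson carries an HEH metric; scaling the extension classes by $t\in\C^*$ and letting $t\to 0$ produces a one-parameter family inside $\cG^\C_E\cdot(\nabla,\Phi)$ whose limit lies in $\mu_{\Dol}^{-1}(\alpha_E)$, so $\overline{\cG^\C_E\cdot(\nabla,\Phi)}\cap\mu_{\Dol}^{-1}(\alpha_E)\neq\varnothing$. Conversely, if this closure meets $\mu_{\Dol}^{-1}(\alpha_E)$, then any $\varphi$-invariant holomorphic subbundle $\sE'\subset\sE$ yields a subsolution of the HEH equation on the limit, and the argument of Proposition \ref{prop:HEimpliesPS}, suitably modified to include the contribution of the Higgs field (the $[\varphi,\varphi^{\dagger_H}]$-term contributes non-negatively, since $\sE'$ is $\varphi$-invariant), gives $\mu(\sE')\leq\mu(\sE)$.

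Finally, the identification of the symplectic quotient with $\cM^{\Dol}_{r,d}$ follows from (1) and (2): the map $\mu_{\Dol}^{-1}(\alpha_E)/\cG_{E,H}\to\cS_E^{ps}/\cG^\C_E$ is a bijection on underlying sets, and on the stable locus it is a diffeomorphism whose differential interchanges the complex structures by comparing tangent spaces to the first hypercohomology of the Higgs complex
\begin{equation*}
\End\sE\xrightarrow{\,[\varphi,-]\,}\End\sE\otimes\bm{\Omega}^1_X,
\end{equation*}
exactly as in the deformation-theoretic discussion of Section \ref{ss:deformation}; a gluing argument using Kuranishi slices then upgrades this to a biholomorphism. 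The main obstacle is, of course, the Hitchin--Simpson existence theorem itself, whose proof requires Donaldson-style heat flow on the space of metrics, elliptic regularity for the non-linear HEH PDE, and Uhlenbeck-type compactness for sequences of connections with bounded curvature; these analytic inputs are precisely the technicalities the paper has chosen to black-box, so in this expository setting I would cite \cite{Hitchin_SelfDuality,Simpson_Variations} and proceed.
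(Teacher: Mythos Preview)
Your proposal is correct and in fact considerably more detailed than the paper's own treatment: consistent with its announced expository policy, the paper states this theorem without proof, merely introducing it as the ``dynamical interpretation'' of the HEH existence theorem and leaving the analytic content to the references \cite{Hitchin_SelfDuality,Simpson_Variations}. Your sketch --- establishing the moment-map/HEH dictionary, invoking Hitchin--Simpson for part (2), arguing uniqueness of the HEH metric for the refinement, using the Jordan--H\"older filtration for part (1), and comparing deformation complexes for the biholomorphism --- is exactly the argument one would give and correctly identifies where the hard analysis is black-boxed.
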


\subsection*{A bit more about Higgs bundles}
\begin{ej}[Some examples of Higgs bundles] \label{ej:Examples_Higgs}
\textit{Can you think of any ``trivial'' or easy examples of Higgs bundles. Are they stable?}

A less trivial example is obtained if we consider any holomorphic line bundle $\sL$ over $X$ and take $\sE=\sL \otimes \bm{\Omega}^1_X \oplus \sL$. For any pair of sections $(a_1,a_2)\in H^0(X,\bm{\Omega}^1_X) \oplus H^0(X,(\bm{\Omega}^1_X)^{\otimes 2})$, we can equip $\sE$ with the Higgs field
 \begin{equation*}
\varphi =
\begin{pmatrix}
a_1 & a_2 \\
1 & a_1
\end{pmatrix}.
\end{equation*}

\textit{Show that, despite the fact that $\sE$ is not stable nor polystable, the Higgs bundle $(\sE,\varphi)$ is indeed stable.}
\end{ej}

\begin{ej}[Spin structures, and some more examples]
A \emph{spin structure} or \emph{theta-characteristic} on $X$ is a holomorphic line bundle $\sL$ on $X$ such that $\sL^{\otimes 2}\cong \bm{\Omega}^1_X$.

\textit{Show that the set of spin structures on $X$ up to equivalence is a torsor under the cohomology group $H^1(X,\Z_2)$. Therefore, there are exactly $2^{2g}$ equivalent spin structures on a genus $g$ surface. Why do you think these are called spin structures?}

Fix a spin structure $\sL$ on $X$ and consider the holomorphic vector bundle
\begin{equation*}
\sE=\sL \oplus \sL^{-1}.
\end{equation*}
For any $a_2 \in H^0(X,(\bm{\Omega}^1_X)^{\otimes 2})$, we can equip $\sE$ with the Higgs field
\begin{equation*}
\varphi =
\begin{pmatrix}
0 & a_2 \\
1 & 0
\end{pmatrix}.
\end{equation*}
In particular, note that $\det \sE = \sO_X$ and that  $\mathrm{tr}(\varphi)=0$. This is what is called an $\SL_2$-Higgs bundle.
\end{ej}

\begin{ej}[Uniformization à la Hitchin] \label{ej:hitchin_unif}
The Hitchin--Simpson theorem is so strong that it implies the uniformization theorem. Let us explore this in detail. We start by fixing a Riemannian metric $g=u(z,\bar{z}) dz d\bar{z}$ compatible with the complex structure of $X$  (that is, compatible with the conformal structure). The Levi-Civita connection associated to this metric can be regarded as a $\U(1)$-connection on the line bundle $\bm{\Omega}^1_X$. The curvature $F_0$ of the metric $g$ is the curvature of the induced $\U(1)$-connection on the tangent bundle $(\bm{\Omega}^1_X)^{\otimes -1}$.

Let us now fix a spin structure $\sL$ on $X$ with the induced $\U(1)$-connection. In turn we obtain a connection (reducible to $\U(1)$) on the vector bundle $\sE=\cL \oplus \cL^{-1}$, with curvature
\begin{equation*}
F =
\begin{pmatrix}
-\tfrac{1}{2} F_0 & 0 \\
0 &\tfrac{1}{2} F_0
\end{pmatrix}.
\end{equation*}

Consider the Higgs field
 \begin{equation*}
\varphi =
\begin{pmatrix}
0 & 0 \\
1 & 0
\end{pmatrix}.
\end{equation*}
We already now that this Higgs bundle $(\sE,\varphi)$ is stable (it is a particular case of Exercise \ref{ej:Examples_Higgs}). The moment map equation then becomes
\begin{equation*}
\begin{pmatrix}
-\tfrac{1}{2} F_0 & 0 \\
0 &\tfrac{1}{2} F_0
\end{pmatrix}=
\begin{pmatrix}
1 & 0 \\
0 & -1
\end{pmatrix}u dz d\bar{z}
.
\end{equation*}
Therefore, we obtain the equation
\begin{equation*}
F_0 = -2 g.
\end{equation*}
\textit{Verify that this means precisely that $g$ has constant curvature equal to  $-4$. Conclude from here the uniformization theorem.}
\end{ej}

\section{The Hitchin moduli space}
\subsection*{Recap: The non-abelian Hodge correspondence}
Let us pause for a second to summarize the main statements of what we have done so far. Let $E$ be a smooth vector bundle on $X$ of rank $r$ and degree $d$.

\begin{enumerate}
\item If $(\sE,\varphi)$ is a polystable Higgs bundle on $X$, with underlying smooth vector bundle $E$, by the Hitchin--Simpson theorem we can find a HEH metric $H$ on it, and obtain a flat connection
\begin{equation*}
D= \partial_{\sE}^H +\delbar_{\sE}+ \varphi - \varphi^\dagger.
\end{equation*}
In turn, the operators $\delbar_{\sV}=\delbar_{\sE} + \varphi^{\dagger_H}$ and $\bm{D}=\partial_{\sE}^H + \varphi$ determine a holomorphic vector bundle with meromorphic connection $(\sV,\bm{D})$. This defines a map
        \begin{align*}
          \cM_{r,d}^{\Dol} & \longrightarrow  \cM_{r,d}^{\dR} \\
           (\sE,\varphi) & \longmapsto (\sV,\bm{D}).
        \end{align*}

  \item Conversely, if $(\sV,\bm{D})$ is a polystable holomorphic vector bundle with meromorphic connection, with underlying smooth vector bundle $E$, by the Corlette--Donaldson theorem we can find a harmonic metric $H$ on it, and obtain a Higgs bundle $(\sE,\varphi)$ by putting $\delbar_{\sE}= \tfrac{1}{2}(\delbar_{\sV}+ \bm{D}^{\dagger_H})$ and $\varphi=\tfrac{1}{2}(\bm{D}-\delbar_{\sV}^H)$. This determines a map
        \begin{align*}
          \cM_{r,d}^{\dR} & \longrightarrow  \cM_{r,d}^{\Dol} \\
           (\sV,\bm{D}) & \longmapsto (\sE,\varphi).
        \end{align*}
\end{enumerate}
The above determines a bijection, and in fact a real analytic isomorphism between (the analytifications of) the Dolbeault moduli space $\cM^{\Dol}_{r,d}$ and the de Rham moduli space $\cM^{\dR}_{r,d}$. We are using $\cM_{r,d}$ to denote the underlying real space to any of these two complex spaces. What we have found is two different complex structures $I_{\Dol}$ and $I_{\dR}$ on $\cM_{r,d}$ \textit{which are not isomorphic}.

\begin{ej}[Abelian Hodge theory]
  We emphasized the fact that the de Rham and Dolbeault moduli spaces are not isomorphic as complex spaces. Let us see that this is the case even in the simplest situation: $r=1$ and $d=0$. Since $\GL_1(\C)=\C^*$ is an abelian group, the Betti moduli space $\cM_{1,0}^B$ is just the representation variety
  \begin{equation*}
\cM_{1,0}^B = \Hom(\pi_1(X,x_0),\C^*) = (\C^*)^{2g}
  \end{equation*}
On the other hand, since every line bundle is stable, we just have
\begin{equation*}
\cM_{1,0}^{\Dol} = \bm{T}^* \Jac(X) = \Jac(X) \times H^{1,0}(X) \cong \Jac(X) \times \C^{g}.
\end{equation*}
\textit{Write an explicit diffeomorphism between $\cM_{1,0}^B$ and $\cM_{1,0}^{\Dol}$. Are these two manifolds complex-analytically isomorphic?} \textbf{Hint}: For the diffeomorphism, recall that $\Jac(X)$ is an abelian variety of dimension $g$, and thus isomorphic to $\C^g/\Lambda$, for some lattice $\Lambda$.
\end{ej}

\begin{rmk}
  We can also consider the Abelian de Rham moduli space, $\cM_{1,0}^{\dR}$, which is a non-trivial affine bundle over $\Jac(X)$, and it is complex-analytically isomorphic to $(\C^*)^{2g}$. We can show that this isomorphism is not algebraic by comparing the \emph{mixed Hodge polynomials} of both spaces (see \cite[Section 2.2]{HauselMSNAHT} for the definition of the mixed Hodge polynomial). Indeed, one can easily check that
  \begin{equation*}
H(x,y,t;\cM_{1,0}^{\dR}) = H(x,y,t;\cM_{1,0}^{\Dol}) = (1+xt)^g(1+yt)^g,
  \end{equation*}
  and
  \begin{equation*}
H(x,y,t;\cM_{1,0}^{B}) = (1+xyt)^{2g}.
  \end{equation*}

\end{rmk}

\subsection*{Hyperkähler manifolds and hyperkähler quotients}
The fact that we have found two different complex structures on $\cM_{r,d}$ is hinting towards a more general setting in which this space should be studied, and from which the different complex structures will arise. This is the setting of hyperkähler geometry.

\begin{defn}
  A \emph{hyperkähler manifold} is a tuple $(M,g,I_1,I_2,I_3)$, formed by a smooth manifold $M$, a Riemannian metric $g$, and three complex structures $I_1$, $I_2$ and $I_3$ such that
  \begin{enumerate}
\item the metric $g$ is Kähler with respect to the three structures $I_1$, $I_2$ and $I_3$ (that is, the $2$-forms $\omega_i$ defined as $g=\omega_i(-,I_i-)$ are closed),
\item the complex structures satisfy the quaternionic relation $I_1I_2=I_3$.
  \end{enumerate}
\end{defn}

\begin{ej}
  The $4$-dimensional Euclidean space can be naturally endowed with a hyperkähler structure, by regarding it as the space of quaternions
  \begin{equation*}
\mathbb{H} = \left\{\bm{q}= x_0+x_1\bm{i} + x_2\bm{j} + x_3\bm{k} : (x_0,x_1,x_2,x_3)\in \bbR^4 \right\}.
  \end{equation*}
  The metric $g$ is the Euclidean metric, which in this terms is the quaternion inner product
  \begin{equation*}
(\bm{q},\bm{q}') = \bm{q} \bar{\bm{q}}' = x_0 x_0' + x_1 x_1' + x_2 x_2' + x_3 x_3'.
  \end{equation*}
The complex structures $I_1$, $I_2$ and $I_3$ are given by multiplication by $\bm{i}$, $\bm{j}$ and $\bm{k}$, respectively.
\textit{Compute the corresponding symplectic forms $\omega_1$, $\omega_2$ and $\omega_3$}.
\end{ej}

We can also construct \emph{hyperkähler quotients}. Let $(M,g,I_1,I_2,I_3)$ be a hyperkähler manifold endowed with the action of a Lie group $K$, which acts by isometries and preserving the hyperkähler structure. A \emph{hyperkähler moment map} for the $K$-action is a map
\begin{equation*}
\bm{\mu}=(\mu_1,\mu_2,\mu_3): M \longrightarrow \fk^* \otimes \bbR^3
\end{equation*}
such that each $\mu_i$ is a moment map for the $K$-action with respect to $\omega_i$.

\begin{thm}
Let $(M,g,I_1,I_2,I_3)$ be a hyperkähler manifold endowed with the action of a Lie group $K$, acting by isometries and preserving the hyperkähler structure, and with hyperkähler moment map $\bm{\mu}:M\rightarrow \fk^* \otimes \bbR^3$. Suppose as well that the action of $K$ on $\bm{\mu}^{-1}(0)$ is free and proper. Then, the \emph{hyperkähler quotient}
\begin{equation*}
M \hkgit K := \bm{\mu}^{-1}(0)/K
\end{equation*}
is a smooth manifold of dimension $\dim M - 4 \dim K$, and the quotient metric determines a hyperkähler structure on it.
\end{thm}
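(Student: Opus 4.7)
The plan is to imitate Marsden--Weinstein--Meyer, upgraded to accommodate the quaternionic structure. I would proceed in three stages: show that $\bm{\mu}^{-1}(0)$ is a smooth submanifold of the expected codimension; descend to the smooth quotient $N := \bm{\mu}^{-1}(0)/K$; and verify that the restricted data define a hyperkähler structure on $N$.

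For the first stage, fix $p\in\bm{\mu}^{-1}(0)$ and write $\fk_p = T_p(K\cdot p)\subset T_pM$. The moment map identity $\langle d_p\mu_i(\xi),a\rangle = \omega_i(\vec{a}_p,\xi) = g(I_i\vec{a}_p,\xi)$ identifies $\ker d_p\mu_i$ with $(I_i\fk_p)^\perp$, so $\ker d_p\bm{\mu} = (I_1\fk_p + I_2\fk_p + I_3\fk_p)^\perp$. Surjectivity of $d_p\bm{\mu}$ is therefore equivalent to the three subspaces $I_i\fk_p$ being linearly independent of total dimension $3\dim K$. In fact, at the point $p\in\bm{\mu}^{-1}(0)$ the four subspaces $\fk_p, I_1\fk_p, I_2\fk_p, I_3\fk_p$ turn out to be mutually $g$-orthogonal: equivariance of $\mu_i$ combined with $\mu_i(p)=0$ forces $\omega_i(\vec{a}_p,\vec{b}_p) = g(I_i\vec{a}_p,\vec{b}_p) = 0$ for all $a,b\in\fk$, and $g$-orthogonality of $I_i\fk_p$ and $I_j\fk_p$ for $i\neq j$ follows from this via the quaternion relations $I_iI_j = \pm I_k$. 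Combined with the injectivity of $a\mapsto \vec{a}_p$ coming from freeness, this gives $\bm{\mu}^{-1}(0)$ the structure of a smooth submanifold of codimension $3\dim K$.

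For the second stage, since $K$ acts freely and properly on the smooth submanifold $\bm{\mu}^{-1}(0)$, the slice theorem yields a smooth quotient $N$ of dimension $\dim M - 3\dim K - \dim K = \dim M - 4\dim K$. Its tangent space at $[p]$ is canonically identified with the horizontal subspace
\[
H_p := T_p\bm{\mu}^{-1}(0) \cap \fk_p^\perp = \bigl(\fk_p \oplus I_1\fk_p \oplus I_2\fk_p \oplus I_3\fk_p\bigr)^\perp.
\]
Each $I_i$ permutes the four summands of the direct sum on the right (up to sign), hence preserves their span, and since $I_i$ is a $g$-isometry it also preserves the orthogonal complement $H_p$. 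Using $K$-invariance of $g$ and of the $I_i$, the restricted data descend to well-defined tensors $\bar g$ and $\bar I_1, \bar I_2, \bar I_3$ on $N$, satisfying $\bar I_1 \bar I_2 = \bar I_3$.

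The final stage is to verify that $(\bar g, \bar I_1, \bar I_2, \bar I_3)$ is genuinely hyperkähler. The cleanest route uses the classical observation that, for each cyclic permutation $(i,j,k)$ of $(1,2,3)$, the combination $\mu^\C_i := \mu_j + \sqrt{-1}\,\mu_k$ is $I_i$-holomorphic as a map $M\to \fk^*\otimes\C$. Hence $(\mu^\C_i)^{-1}(0)$ is an $I_i$-complex (in fact Kähler) submanifold of $M$, and the equality $\bm{\mu}^{-1}(0) = (\mu^\C_i)^{-1}(0) \cap \mu_i^{-1}(0)$ presents $N$ as the Kähler reduction of this complex submanifold by $K$ along the real moment map $\mu_i$. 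Applying the Kähler reduction theorem from Section \ref{ss:quotients} separately for each $i$ then shows that $(\bar g, \bar I_i)$ is Kähler on $N$ for every $i$, which together with the quaternion relations already established is precisely a hyperkähler structure. I expect the main obstacle to be the transversality/independence argument in the first stage, since the remaining two stages are quaternionic enhancements of standard Kähler reduction.
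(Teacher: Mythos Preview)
The paper does not actually prove this theorem; it simply refers the reader to Neitzke's notes \cite[Theorem 3.49]{Neitzke}. Your argument is the standard one (originating with Hitchin--Karlhede--Lindstr\"om--Ro\v{c}ek): the mutual $g$-orthogonality of $\fk_p$, $I_1\fk_p$, $I_2\fk_p$, $I_3\fk_p$ on the zero level set yields regularity of $\bm{\mu}^{-1}(0)$ and the correct dimension count, and the holomorphicity of $\mu_j+\sqrt{-1}\,\mu_k$ with respect to $I_i$ reduces the integrability and closedness questions to three applications of ordinary K\"ahler reduction. This is exactly what one finds in Neitzke's proof, so your proposal is correct and aligned with the cited reference.

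One small point worth tightening: in Stage~3 you invoke smoothness of $(\mu_j+\sqrt{-1}\,\mu_k)^{-1}(0)$ as an $I_i$-complex submanifold, but the surjectivity of $d(\mu_j+\sqrt{-1}\,\mu_k)$ has only been established at points of $\bm{\mu}^{-1}(0)$, not on the whole of $(\mu_j+\sqrt{-1}\,\mu_k)^{-1}(0)$. This is harmless since the construction is local near $\bm{\mu}^{-1}(0)$, but it is worth saying explicitly that you only need the complex submanifold structure in a neighbourhood of the triple zero locus.
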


The interested reader can consult Neitzke's notes \cite[Theorem 3.49]{Neitzke} for a proof.

\subsection*{The Hitchin equations}
Let $E$ be a smooth complex vector bundle on $X$, with rank $r$ and degree $d$, and let $H$ be a Hermitian metric on $E$. Let $(\nabla,\Phi)$ be a pair formed by a $H$-unitary connection $\nabla$ on $E$ and a $\fu_H E$-valued $1$-form $\Phi\in \Omega^1(X,\fu_H E)$.

\begin{defn}
  The \emph{Hitchin equations} for such a pair $(\nabla,\Phi)$ are the equations
  \begin{equation*}
\begin{cases}
  F_\nabla + \Phi \wedge \Phi = \alpha_E \\
  \nabla \Phi = \nabla * \Phi = 0.
\end{cases}
  \end{equation*}

\end{defn}

The metric $H$ identifies the space of pairs $(\nabla,\Phi)$ with the space $\cA_E$ of connections on $E$. Moreover, it determines a hyperkähler structure on $\cA_E$. For a pair $(A_1,\Psi_1)$, $(A_2,\Psi_2)$ of elements of $\Omega^1(X,\fu_H E) \oplus \Omega^1(X,i\fu_H E)$, the Riemannian metric $g$ is defined as
\begin{equation*}
g((A_1,\Psi_1), (A_2,\Psi_2)) = -\int_X \mathrm{tr} (A_1 \wedge * A_2 + \Psi_1 \wedge * \Psi_2).
\end{equation*}
The complex structures are given by
\begin{equation*}
  \begin{cases}
I_1(A,\Psi)=I_{\Dol}(A,\Psi) = (*A,-*\Psi),\\ I_2(A,\Psi)=I_{\dR}(A,\Psi)=(-\Psi,A),\\ I_3(A,\Psi)=(-*\Psi,-*A).
  \end{cases}
\end{equation*}
One can easily check that the corresponding symplectic structures are then given by
\begin{equation*}
\begin{cases}
\omega_1((A_1,\Psi_1), (A_2,\Psi_2))= \int_X \mathrm{tr}(-A_1\wedge A_2 + \Psi_1 \wedge \Psi_2), \\
\omega_2((A_1,\Psi_1), (A_2,\Psi_2))= \int_X \mathrm{tr}(\Psi_1\wedge * A_2 - A_1 \wedge * \Psi_2), \\
\omega_3((A_1,\Psi_1), (A_2,\Psi_2))= \int_X \mathrm{tr}(\Psi_1\wedge A_2 + A_1 \wedge \Psi_2).
\end{cases}
\end{equation*}

\begin{ej}
Recall that the unitary gauge group $\cG_{E,H}$ acts on $\cA_E$ by gauge transformations. \textit{Show that the map}
\begin{align*}
  \bm{\mu}: \cA_E = \cA_{E,H} \oplus i\Omega^1(X,\fu_H E) & \longrightarrow  \Omega^2(X,\fu_H E) \otimes \bbR^3 \\
D=\nabla + i \Phi    & \longmapsto (F_\nabla + \Phi \wedge \Phi, -\nabla * \Phi, \nabla \Phi)
\end{align*}
\textit{is a hyperkähler moment map for the $\cG_{E,H}$-action, with respect to the hyperkähler structure $(g,I_1,I_2,I_3)$}.
\end{ej}

\begin{defn}
  The \emph{Hitchin moduli space} is the hyperkähler space
  \begin{equation*}
(\cM_{r,d},g,I_1,I_2,I_3):= \mu^{-1}(0) \hkgit \cG_{E,H},
  \end{equation*}
which classifies solutions to the Hitchin equations up to unitary gauge.
\end{defn}

We can now restate the non-abelian Hodge correspondence in the following way.

\begin{thm}[Corlette, Donaldson, Hitchin, Simpson]
  We have the following equivalences.
  \begin{itemize}
\item The Hitchin moduli space $\cM_{r,d}$ equipped with the complex structure $I_1$ is complex-analytically isomorphic to (the analytification of) the Dolbeault moduli space $\cM_{r,d}^{\Dol}$, classifying polystable Higgs bundles of rank $r$ and degree $d$ on $X$.
\item The Hitchin moduli space $\cM_{r,d}$ equipped with the complex structure $I_2$ is complex-analytically isomorphic to (the analytification of) the de Rham moduli space $\cM_{r,d}^{\dR}$, classifying polystable holomorphic bundles with meromorphic connection of rank $r$ and degree $d$ on $X$. In turn, by the Riemann--Hilbert correspondence, it is complex-analytically isomorphic to (the analytification of) the Betti moduli space $\cM_{r,d}^B$, which is a twisted character variety classifying certain semisimple representations of the fundamental group $\pi_1(X\setminus \left\{ x_1 \right\},x_0)$.
  \end{itemize}
  In particular, all these different complex spaces have the same underlying real space $\cM_{r,d}$.
\end{thm}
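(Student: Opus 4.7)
The plan is to assemble three previously established theorems---Hitchin--Simpson, Corlette--Donaldson, and Simpson's Riemann--Hilbert correspondence---into one statement, by reading the hyperkähler structure on $\cM_{r,d}$ through each of the complex structures $I_1$ and $I_2$. The central observation is that the three real Hitchin equations are precisely the vanishing of the hyperkähler moment map $\bm{\mu}$ (after a central shift by $\alpha_E$ in the first component, which is fixed by the coadjoint $\cG_{E,H}$-action), so the hyperkähler quotient of the previous exercise is exactly the space of gauge-equivalence classes of solutions $(\nabla,\Phi)$.

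For $I_1$, I would observe that the combination $\mu_2 + i\mu_3 = -\nabla *\Phi + i\nabla\Phi$ is the holomorphic moment map for the $I_1$-holomorphic action of the complexified gauge group $\cG_E^\C$. By the computation already carried out in the Corlette--Donaldson exercise, this expression equals the pseudocurvature $G_H = \delbar_{\sE}\varphi$, so its vanishing is exactly the integrability of the Higgs field, making $(\sE,\varphi)$ a genuine Higgs bundle. The remaining real equation $\mu_1 = \alpha_E$ is then the Hermitian--Einstein--Higgs condition. Invoking Hitchin--Simpson in its Kempf--Ness-style form (as stated in the text) yields the complex-analytic isomorphism $\cM_{r,d}\cong \cM_{r,d}^{\Dol}$ with respect to $I_1$.

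For $I_2$, the packaging of the moment maps is different: the real part of the Cartan decomposition of $F_D = F_\nabla + i\nabla\Phi - \Phi\wedge\Phi$ shows that $\mu_1 = \alpha_E$ together with $\mu_3 = \nabla\Phi = 0$ is exactly the flatness condition $F_D = \alpha_E$ for $D = \nabla + i\Phi$, while the remaining equation $\mu_2 = -\nabla*\Phi = 0$ is the harmonicity of $H$. Corlette--Donaldson, again in its Kempf--Ness form, then gives $\cM_{r,d}\cong \cM_{r,d}^{\dR}$, and Simpson's Riemann--Hilbert theorem (already stated as a complex-analytic isomorphism) supplies the further identification $\cM_{r,d}^{\dR}\cong \cM_{r,d}^B$.

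The main subtlety is verifying that each identification is genuinely complex-analytic with respect to the descended complex structure $\bar I_i$, and not merely a diffeomorphism. For each $i$, this reduces to three points: (a) the equations $\bm{\mu} = (\alpha_E,0,0)$ cut out an $I_i$-complex submanifold of $\cA_E$; (b) the $\cG_{E,H}$-action preserves $I_i$, so the complex structure descends to the quotient; and (c) the natural assignment from $(\nabla,\Phi)$ to the Higgs bundle $(\sE,\varphi)$ (respectively to the bundle with connection $(\sV,\bm{D})$) is $I_i$-holomorphic on $\cA_E$. Point (c) is the substantive content and follows by inspection of the definitions $\delbar_{\sE} = \nabla^{0,1}$ and $\varphi = \tfrac12(\bm{D}-\partial_{\sV}^H)$ in the Dolbeault case, and $\delbar_{\sV} = \tfrac12(\delbar_{\sE}+\bm{D}^{\dagger_H})$, $\bm{D} = \partial_{\sE}^H + \varphi$ in the de Rham case: each is built from $I_i$-holomorphic combinations of $(\nabla,\Phi)$.
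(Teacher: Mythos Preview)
Your approach is correct and is precisely what the paper does implicitly: this theorem is stated in the paper without a separate proof, as a synthesis of the Hitchin--Simpson and Corlette--Donaldson theorems already given in their Kempf--Ness (``dynamical'') formulations, together with Simpson's Riemann--Hilbert isomorphism. You correctly identify how the three Hitchin equations are the components of the hyperk\"ahler moment map, and how for each $I_i$ two of the three equations carve out the relevant $I_i$-complex locus ($\cH_E$ for $I_1$, $\cF_E$ for $I_2$) while the third is the K\"ahler moment-map equation whose Kempf--Ness partner is the (poly)stability condition.

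One correction: your point (a) as written is false. The full real level set $\bm{\mu}^{-1}(\alpha_E,0,0)$ is \emph{not} an $I_i$-complex submanifold of $\cA_E$; a real moment-map level set essentially never is. What you actually need---and correctly use in the earlier paragraphs---is that for each $i$ the level set of the $I_i$-\emph{holomorphic} moment map (e.g.\ $\{\mu_2+i\mu_3=0\}=\cH_E$ for $I_1$, or $\{F_D=\alpha_E\}=\cF_E$ for $I_2$) is $I_i$-complex, and the hyperk\"ahler quotient then agrees with the K\"ahler quotient of that complex locus at the remaining real level $\mu_i$. The descended complex structure $\bar I_i$ comes from this two-step picture, not from (a) as you stated it. Also, your formulas in (c) mix the Dolbeault and de Rham dictionaries; the clean statement is that $(\nabla,\Phi)\mapsto(\nabla^{0,1},\Phi^{1,0})=(\delbar_{\sE},\varphi)$ is $I_1$-linear and $(\nabla,\Phi)\mapsto D=\nabla+i\Phi$ is $I_2$-linear, which is immediate from the definitions of $I_1,I_2$.
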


\subsection*{The twistor family}
\begin{ej}
A hyperkähler manifold comes with a whole sphere of complex structures. Indeed, let $(M,g,I_1,I_2,I_3)$ be a hyperkähler manifold and let $\bm{u}=(u_1,u_2,u_3) \in S^2$ be a unit vector in $\bbR^3$. Set
  \begin{equation*}
I_{\bm{u}} = u_1 I_1 + u_2 I_2 + u_3 I_3 \text{ and } \omega_{\bm{u}} = u_1 \omega_1 + u_2 \omega_2 + u_3 \omega_3.
  \end{equation*}
\textit{Show that $(M,g,I_{\bm{u}})$ is a Kähler manifold, with Kähler form $\omega_{\bm{u}}$.} \textit{Moreover, show that, if $\bm{\mu}$ is a hyperkähler moment map then, for each $\bm{u}\in S^2$, the map $\mu_{\bm{u}}=\bm{\mu}\cdot \bm{u}:M\rightarrow \fk^*$ is a moment map for $\omega_{\bm{u}}$.}
\end{ej}

We observe that, if $(M,g,I_1,I_2,I_3)$ is a hyperkähler manifold, then we can consider the holomorphic $2$-form $\Omega_1=\omega_2 + i \omega_3 \in \Omega^{2,0}(M,I_1)$, which endows $(M,I_1)$ with the structure of a \emph{holomorphic symplectic manifold}. More generally, we can do the same for any $\bm{u}\in S^2$. Indeed, we consider the vector space $V=\mathrm{span}(\omega_1,\omega_2,\omega_3) \subset \Omega^2(M,\C)$ and, for every $\bm{u}\in S^2$, consider the line
\begin{equation*}
L_{\bm{u}} = V \cap \Omega^{2,0}(M,I_{\bm{u}}),
\end{equation*}
which is generated by some $2$-form $\Omega_{\bm{u}}$ which is holomorphic with respect to $I_{\bm{u}}$. This determines a line bundle $L\rightarrow S^2$, called the \emph{twistor family} of holomorphic symplectic forms on $(M,g,I_1,I_2,I_3)$. Moreover, it can be shown \cite[Lemma 3.18]{Neitzke} that the dependence on $\bm{u}$ is holomorphic. More precisely, this means that, if we equip $S^2$ with its standard complex structure, then $L$ determines a holomorphic line bundle over $\bbP^1$, which is canonically isomorphic to $\sO_{\bbP^1}(-2)$.

We can also construct the \emph{twistor space}. This is the manifold
\begin{equation*}
Z= M \times S^2,
\end{equation*}
equipped with the almost complex structure
\begin{equation*}
I(-,\bm{u}) = I_{\bm{u}} \oplus I_{\bbP^1}.
\end{equation*}
This complex structure is in fact integrable, so $Z$ is a complex manifold. It has the following properties:
\begin{enumerate}
\item The projection $\mathrm{pr}_2: Z \rightarrow \bbP^1$ is holomorphic.
\item It carries a twisted fibrewise holomorphic symplectic form $\Omega \in \Omega^{2,0}_{\text{fibre}}(Z)\otimes \mathrm{pr}_2^* \sO_{\bbP^1}(2)$, where $\Omega^{2,0}_{\text{fibre}}:=\wedge^2(T^{1,0}_{\text{vert}}Z)^*$.
\item It carries a real structure $\sigma:Z\rightarrow Z$ covering the real structure on $\bbP^1$ given by $z\mapsto z^{-1}$ and such that $\sigma^*\Omega = \bar{\Omega}$.
\end{enumerate}

\begin{defn}
The \emph{Hodge moduli space} $\cM^{\Hod}_{r,d}$ of rank $r$ and degree $d$ associated with $X$ is the twistor space of the Hitchin moduli space $(\cM_{r,d},g,I_1,I_2,I_3)$.
\end{defn}

This twistor space $\cM^{\Hod}_{r,d}$ is a complex space with a holomorphic fibration $\cM^{\Hod}_{r,d}\rightarrow \bbP^1$. The fibre over $z=0$ is the Dolbeault moduli space $(\cM_{r,d},g,I_1)$, while the fibre over $z=1$ is the de Rham moduli space $(\cM_{r,d},g,I_2)$. The existence of the real structure on twistor space implies that the fibre over $z=\infty$ is the Dolbeault moduli space with its ``opposite'' complex structure $(\cM_{r,d},g,\bar{I}_1)$.

A natural question then is what the complex structure $I_3$ and all the other complex structures correspond to. That is, given a general $\lambda \in \bbP^1$, we want to know what kind of objects does the fibre of $\lambda$ in $\cM^{\Hod}_{r,d}$ parametrize. The answer is in $\lambda$-\emph{connections}.

\begin{defn}
Let $\sV$ be a holomorphic vector bundle. A \emph{holomorphic $\lambda$-connection} $\bm{D}^{(\lambda)}$ on $\sV$ is a $\C$-linear morphism of sheaves
\begin{equation*}
\bm{D}^{(\lambda)}: \sV \longrightarrow \sV \otimes \bm{\Omega}^1_X
\end{equation*}
satisfying the \emph{$\lambda$-twisted holomorphic Leibniz rule}
\begin{equation*}
\bm{D}^{(\lambda)}(fs) = f \bm{D}^{(\lambda)} s +  \lambda  \partial_X f \otimes s,
\end{equation*}
for any local sections $f\in \sO_X(U)$ and $s\in \sV(U)$.
\end{defn}

Note that, for $\lambda=1$, a $\lambda$-connection is just a connection, and for $\lambda=0$, it determines a Higgs bundle. Moreover, for any $\lambda \in \C^*$, if $\bm{D}^{(\lambda)}$ is a $\lambda$-connection, then the operator
\begin{equation*}
\bm{D} = \lambda^{-1} \bm{D}^{(\lambda)}
\end{equation*}
is a connection.
The definition generalizes automatically to meromorphic $\lambda$-connections, which allows us to consider underlying bundles $E$ with $\deg E\neq 0$.  The algebraic and symplectic constructions of the moduli space of holomorphic vector bundles with $\lambda$-connection is completely parallel to the cases we already studied.

\begin{ej}
\textit{Show that, for $\lambda\in \bbP^1$, the fibre over $\lambda$ of the Hodge moduli space is complex-analytically isomorphic to (the analytification of) the moduli space of holomorphic vector bundles with $\lambda$-connection.}
\end{ej}

We remark that, in fact, the map $\bm{D}^{(\lambda)}\mapsto \bm{D}$, for $\lambda \in \C^*$, determines an isomorphism between the moduli space of holomorphic vector bundles with $\lambda$-connection and the de Rham moduli space. However, this map is not defined for $\lambda=0$. Indeed, over $\lambda=0$ we get the moduli space of Higgs bundles, which has a different, non-isomorphic, complex structure.

\subsection*{Moduli of (twisted) $\SL_r$-Higgs bundles and $\PGL_r$-Higgs bundles}
Recall that we defined the Betti moduli space $\cM^{\Bt}_{r,d}$ as the twisted character variety $\cX_{\Pi,\GL_r}^{c_d,z}$ parametrizing classes of linear representations of the fundamental group $\Pi=\pi_1(X\setminus \left\{ x_1 \right\},x_0)$ sending the class $z$ of a loop around $x_1$ contractible in $X$ to the conjugacy class $c_d$ of the matrix $e^{-2\pi i d/r}I_r$. In a similar way as we did for vector bundles in Section \ref{ss:Narasimhan-Seshadri}, we can also consider $\SL_r$ and $\PGL_r$ versions of $\cM^{\Bt}_{r,d}$. The \emph{$d$-twisted $\SL_r$-Betti moduli space} is the twisted character variety
\begin{equation*}
\check{\cM}^{\Bt}_{r,d} = \cM^{\Bt}_d(\SL_r) := \cX_{\Pi,\SL_r}^{c_d,z}
\end{equation*}
parametrizing classes of homomorphisms $\Pi\rightarrow \SL_r(\C)$ mapping $z$ to $c_d$ (regarded as a class in $\SL_r(\C)$). In particular, the complex dimension of $\check{\cM}^{\Bt}_{r,d}$ is
\begin{equation*}
\dim_{\C}\check{\cM}^{\Bt}_{r,d} = 2(r^2-1)(g-1).
\end{equation*}

The $\PGL_r$-character variety
\begin{equation*}
\cX_{\Pi,\PGL_r} = \Hom(\Pi,\PGL_r(\C)) \git \PGL_r(\C)
\end{equation*}
has $r$ connected components (by the same argument as for the $\PU(r)$-character variety). We label these components as $\cX^0_{\Pi,\PGL_r},\dots,\cX^{r-1}_{\Pi,\PGL_r}$, and, for each $d=0,\dots,r-1$, define the \emph{$d$-twisted $\PGL_r$-Betti moduli space} as
\begin{equation*}
\hat{\cM}^{\Bt}_{r,d}= \cM^{\Bt}_{d}(\PGL_r) := \cX^d_{\Pi,\PGL_r}.
\end{equation*}
Note that
\begin{equation*}
\hat{\cM}^{\Bt}_{r,d} = \check{\cM}^{\Bt}_{r,d} / (\Z_r)^{2g}.
\end{equation*}

From the Dolbeault side, the situation is very similar to the case of vector bundles. For any integer $d$, we fix a degree $d$ holomorphic line bundle $\xi$ and consider the map
\begin{align*}
  \cM^{\Dol}_{r,d} & \longrightarrow \bm{T}^* \Pic^d(X) \cong \Pic^d(X) \times H^0(X,\bm{\Omega}^1_X)  \\
 (\sE,\varphi)   & \longmapsto (\det \sE, \tr \varphi).
\end{align*}
We define the \emph{$d$-twisted $\SL_r(\C)$-Dolbeault moduli space} $\check{\cM}^{\Dol}_{r,d}=\cM^{\Dol}_{d}(\SL_r)$ or \emph{moduli space of semistable holomorphic $d$-twisted $\SL_r(\C)$-Higgs bundles} as the preimage of $(\xi,0)$ by the above map. Note that the finite group $\Gamma_r=\Jac(X)[r]$ acts by tensorization on $\cM^{\Dol}_{r,d}$ and on $\Pic^d(X)$, and in turn on $\bm{T}^* \Pic^d(X)$. The moduli space $\cM^{\Dol}_{r,d}$ can then be recovered from $\check{\cM}^{\Dol}_{r,d}$ as
\begin{equation*}
\cM^{\Dol}_{r,d} = (\check{\cM}^{\Dol}_{r,d} \times \bm{T}^* \Pic^d(X))/\Gamma_r.
\end{equation*}
The \emph{$d$-twisted $\PGL_r(\C)$-Dolbeault moduli space} is by definition the quotient
\begin{equation*}
\hat{\cM}^{\Dol}_{r,d}=\cM^{\Dol}_d(\PGL_r):= \check{\cM}^{\Dol}_{r,d}/\Gamma_r.
\end{equation*}
Non-abelian Hodge theory provides real-analytic isomorphisms
\begin{equation*}
\check{\cM}^{\Bt}_{r,d} \cong_{\bbR} \check{\cM}^{\Dol}_{r,d} \ \text{ and } \ \hat{\cM}^{\Bt}_{r,d} \cong_{\bbR} \hat{\cM}^{\Dol}_{r,d}.
\end{equation*}

\begin{rmk}
As in the case of vector bundles, we remark that, when $r$ and $d$ are coprime, the subspace $\check{\cM}^s_{r,d}\subset \check{\cM}_{r,d}$ of stable Higgs bundles (or, equivalently, of irreducible connections) is smooth, but $\hat{\cM}_{r,d}^s$ is not, it is an orbifold, as it arises as a quotient of $\check{\cM}^s_{r,d}$ by the finite group $\Gamma_r$.
\end{rmk}

\begin{rmk}
If $G$ is a complex reductive group, with Lie algebra $\fg$, by a \emph{$G$-Higgs bundle} we mean a pair $(\mathscr{P},\varphi)$ formed by a holomorphic principal $G$-bundle $\mathscr{P}$ and a holomorphic section $\varphi \in H^0(X,\mathrm{Ad}(\mathscr{P}) \otimes \bm{\Omega}^1_X)$. Here, $\mathrm{Ad}(\mathscr{P})=\mathscr{P} \times^{G,\mathrm{Ad}} \mathfrak{g}$ denotes the \emph{adjoint vector bundle}, that is, the bundle of Lie algebras associated to $\mathscr{P}$ and the adjoint action of $G$ on $\mathfrak{g}$. These $G$-Higgs bundles already appear in one of Hitchin's the fundational papers of the theory \cite{Hitchin_Systems}. One can consider (semi)stability conditions and construct a moduli space of $G$-Higgs bundles $\cM(G)$. Non-abelian Hodge theory can be extended to this setting, and it  real-analytically identifies the  moduli space $\cM(G)$ with the character variety $\cX_{\pi_1(X,x_0),G}$ of representations of the fundamental group in $G$. This was originally observed by Simpson \cite{Simpson_HiggsLocal}, who gave an argument using Tannakian categories. Explicit stability conditions and an explicit proof of the analogue of the Hitchin--Simpson theorem for $G$-Higgs bundles can be found in \cite{OscarGothenMundet}.
\end{rmk}

\chapter{The Hitchin system} \label{sec:hitchinsystem}
\section{Integrable systems}
Let $(M,\omega)$ be a finite-dimensional symplectic manifold and consider some functions $f_1,\dots,f_n:M\rightarrow \bbR$. Associated with these functions we have the corresponding \emph{Hamiltonian vector fields} $\bm{v}^{f_1},\dots,\bm{v}^{f_n} \in \Omega^0(M,TM)$, defined by the property
\begin{equation*}
df_i = i_{\bm{v}^{f_i}} \omega.
\end{equation*}
The Lie bracket of vector fields induces the structure of a Lie algebra in the space $\Omega^0(M,TM)$, and the vectors $\bm{v}^{f_1},\dots,\bm{v}^{f_n}$ generate a Lie subalgebra $\mathfrak{h}$ of it. The functions $f_i$ define a map
\begin{align*}
  \mu:M & \longrightarrow \mathfrak{h}^* \\
m    & \longmapsto \sum_{i=1}^n f_i(n) \nu_i,
\end{align*}
where $\left\{ \nu_1,\dots,\nu_n \right\}\subset \mathfrak{h}^*$ is the dual basis of $\left\{ \bm{v}^{f_1},\dots,\bm{v}^{f_n} \right\}$. If these vectors integrate to determine the action of a Lie group $H$ on $M$, with $\Lie H = \mathfrak{h}$, then the map $\mu$ defined above is a moment map for this action.

A special case of this situation is when $n=\tfrac{1}{2} \dim M$ and the vector fields $\bm{v}^{f_i}$ pairwise commute, so the Lie algebra $\mathfrak{h}$ they generate is actually abelian. If the map $\mu$ is proper, then its regular fibres are $n$-dimensional tori. Moreover, these tori are Lagrangian (i.e. the symplectic form $\omega$ vanishes on them), and in tubular neighbourhoods of these tori one can find the so-called \emph{action-angle coordinates}, on which the flows of the vector fields $\bm{v}^{f_i}$ are linear. In that case, we say that the functions $f_1,\dots,f_n:M\rightarrow \bbR$ determine a \emph{completely integrable system}.

The same notion can be generalized to the realm of complex algebraic geometry. In this case we fix a smooth quasi-projective variety $M$ over $\C$, equipped with a holomorphic symplectic form $\Omega\in \Omega^{2,0}(M)$. For such a space, an integral subvariety $N \subset M$ is said to be \emph{Lagrangian} if for a generic point $n\in N$, the symplectic form $\Omega$ vanishes at $T_nN \subset T_nM$. An \emph{algebraically completely integrable system} is then a proper flat morphism $f:M\rightarrow B$, where $B$ is an affine space over $\C$, such that, over the complement $B\setminus \Delta$ of some proper closed subvariety $\Delta\subset B$, it is a Lagrangian fibration whose fibres are isomorphic to abelian varieties. The algebraic counterpart of action-angle coordinates is given by a polarization on the fibres, which allows to explictly solve the equations of movement in terms of theta-functions.

\section{The Hitchin map}
Let $\cM=\cM^\Dol_{r,d}=(\cM_{r,d},I_1,\omega_1)$ be the Dolbeault moduli space parametrizing Higgs bundles on $X$ of rank $r$ and degree $d$. This moduli space is a quasi-projective variety, and it admits the holomorphic symplectic form $\Omega=\Omega_1=\omega_2+i\omega_3$. Hitchin \cite{Hitchin_Systems} found an algebraically completely integrable system on the algebraic symplectic manifold $(\cM,\Omega)$.

The Hitchin map is defined by considering the ``characteristic polynomial'' of a Higgs bundle. What we mean by this is that, if $(\sE,\varphi)$ is a Higgs bundle of rank $r$, then we can, at least formally, consider the polynomial
\begin{equation*}
\det(t - \varphi) = t^r + b_1(\varphi) t^{r-1} + \dots + b_{r-1}(\varphi) t + b_r(\varphi).
\end{equation*}
The coefficient $b_i(\varphi)$ is naturally a section of the line bundle $(\bm{\Omega}^1_X)^{\otimes i}$. Thus, we define the \emph{Hitchin base} as the vector space
\begin{equation*}
\cB = \cB_{r,d} = \bigoplus_{i=1}^r H^0(X,(\bm{\Omega}^1_X)^{\otimes i}).
\end{equation*}
The \emph{Hitchin map} is then defined as
\begin{align*}
  f: \cM & \longrightarrow \cB \\
(\sE,\varphi)    & \longmapsto (b_1(\varphi),\dots,b_r(\varphi)).
\end{align*}

\begin{thm}[Hitchin]
The map $f:\cM \rightarrow \cB$ determines an algebraically integrable system on the space $(\cM,\Omega)$.
\end{thm}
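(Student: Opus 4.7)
The plan is to verify the three conditions needed for $f:\cM \to \cB$ to be an algebraically completely integrable system: (i) that $\dim \cB = \tfrac{1}{2}\dim \cM$, (ii) that $f$ is proper, and (iii) that the generic fibres of $f$ are Lagrangian abelian varieties. For the dimension count, Riemann--Roch gives $h^0(X,(\bm{\Omega}^1_X)^{\otimes i}) = (2i-1)(g-1)$ for $i\geq 2$ and $g$ for $i=1$, so
\begin{equation*}
\dim_\C \cB = g + \sum_{i=2}^{r} (2i-1)(g-1) = g + (g-1)(r^2-1) = 1 + r^2(g-1),
\end{equation*}
which matches half of $\dim_\C \cM_{r,d}^s$ as computed in Section \ref{ss:deformation}. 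This is consistent with the fibres being Lagrangian.

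For properness, I would invoke Nitsure's argument: the characteristic coefficients $b_i(\varphi)$ control the possible Harder--Narasimhan polygons of any $\varphi$-invariant subsheaf, so along a one-parameter family over a punctured disk with convergent Hitchin image, the slopes of subsheaves are uniformly bounded, and semistability plus GIT boundedness extends the family across the puncture. For the Lagrangian property, the cleanest route is via the spectral correspondence. For $b\in\cB$, let $X_b\subset \mathrm{Tot}(\bm{\Omega}^1_X)$ be the spectral curve cut out by $\lambda^r + b_1\lambda^{r-1} + \cdots + b_r = 0$, where $\lambda$ is the tautological section, and let $\pi_b:X_b\to X$ be the natural $r$-sheeted cover. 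By a Bertini-type argument, the open subset $\cB^\circ \subset \cB$ over which $X_b$ is smooth and irreducible is non-empty; for $b\in \cB^\circ$, the Beauville--Narasimhan--Ramanan correspondence identifies $f^{-1}(b)$ with a torsor over $\mathrm{Pic}^{d'}(X_b)$ via $(\sE,\varphi)\mapsto \ker(\lambda\,\mathrm{id} - \pi_b^*\varphi)$, inverse to pushforward $\sL \mapsto (\pi_{b,*}\sL, \pi_{b,*}(\lambda\cdot))$. By Riemann--Hurwitz the genus of $X_b$ is exactly $1 + r^2(g-1)$, so $\mathrm{Pic}^{d'}(X_b)$ is an abelian variety of the expected dimension.

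Finally I would verify that these fibres are Lagrangian, equivalently that the Hitchin Hamiltonians Poisson-commute. The natural approach is to compute that any two pullbacks $h_{\beta_1}, h_{\beta_2}$ of linear functions $\beta_1,\beta_2$ on $\cB$ satisfy $\{h_{\beta_1},h_{\beta_2}\}=0$ with respect to $\Omega$; since the characteristic coefficients are $\mathrm{Ad}$-invariant functions on $\End \sE\otimes \bm{\Omega}^1_X$, this follows from an infinite-dimensional Adler--Kostant--Symes-type argument on the pre-quotient, or, at the level of spectral data, from the observation that the spectral curve $X_b$ depends only on $b$, so the Hamiltonian flows preserve $f^{-1}(b)$. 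The main obstacle will be the careful verification of the spectral correspondence on the full fibre (not just its smooth part) and the matching of dimensions via the genus of $X_b$; one must also check that the polarization of $\mathrm{Pic}(X_b)$ coming from its theta divisor provides the algebraic structure making $f$ genuinely algebraic-integrable, rather than merely symplectic-integrable. I would refer to Hitchin's original paper \cite{Hitchin_Systems} and to Nitsure's construction for these technical points.
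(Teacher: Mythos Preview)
Your proposal is correct and follows the same route the paper takes: the dimension count via Riemann--Roch, the Bertini argument for generic smoothness of the spectral curve, the genus computation $g(Y_b)=r^2(g-1)+1$ via Riemann--Hurwitz, and the identification of generic fibres with $\Pic^{\delta}(Y_b)$ through the spectral (BNR) correspondence are exactly what the paper develops in the section immediately following the theorem, partly as exercises. The paper, being expository, does not actually give a self-contained proof here; in particular it does not spell out properness (your appeal to Nitsure is the standard reference) nor the Lagrangian/Poisson-commutation verification, so your sketch is in fact slightly more complete than the paper's own treatment.
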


\begin{ej}
\textit{Compute the dimension of $\cB$}. Note that it is indeed half the dimension of $\cM$.  \textbf{Hint}: Use Riemann--Roch.
\end{ej}

The Hitchin map can also be defined in terms of the moduli stack $\mathbf{Higgs}_{r,d}$ sending any test $\C$-scheme $S$ to the groupoid of families of Higgs bundles of rank $r$ and degree $d$ parametrized by $S$, with equivalence. In particular it restricts of the substack $\mathbf{Higgs}^s_{r,d}$ of stable Higgs bundles. The image is still the space $\cB$, and we write
\begin{equation*}
\bm{f}: \mathbf{Higgs}_{r,d} \longrightarrow \cB.
\end{equation*}

\section{The spectral correspondence}
An explicit description of the fibres of $f:\cM \rightarrow \cB$ can be provided in terms of ``spectral data''. This should be thought of as a ``global'' analogue of diagonalizing a matrix.

\begin{ej}
Let $E$ be a complex vector space and $\varphi \in \End E$ an endomorphism of it. \textit{Show that $\varphi$ endows $E$ with the structure of a $\C[t]$-module}. If we regard this $\C[t]$-module as a sheaf over $\bbA^1$, \textit{what is the support of this sheaf?}
\end{ej}

Let $(\sE,\varphi)$ be a Higgs bundle. We can rewrite the map $\varphi:\sE\rightarrow \sE\otimes \bm{\Omega}^1_X$ as a map $(\bm{\Omega}^1_X)^{-1}\rightarrow \End \sE$, which naturally endows $\sE$ with the structure of a module over the sheaf $(\bm{\Omega}^1_X)^{-1}$, and naturally over its symmetric algebra $\mathrm{Sym}^*(\bm{\Omega}^1_X)^{-1}$. The relative spectrum of this algebra is the total space $p:\bm{T}^* X\rightarrow X$ of the holomorphic cotangent bundle of $X$. Note that this space comes equipped with a tautological section $\tau \in \Omega^1(\bm{T}^* X,p^* \bm{\Omega}_X^1)$.

This way, Higgs bundle determines a $\sO_{\bm{T}^*X}$-module, $\sF$. This sheaf is supported on a dimension $1$ subspace $Y_\varphi \subset \bm{T}^* X$, called the \emph{spectral curve} of $(\sE,\varphi)$. The natural projection $p:\bm{T}^* X\rightarrow X$ restricts to a finite flat morphism $\pi:Y_\varphi \rightarrow X$. Generically, one should think about $Y_\varphi$ as parametrizing the eigenvalues of $\varphi$ over $X$. When restricted to $Y_{\varphi}$, the sheaf $\sF$ determines a coherent sheaf $\sL \rightarrow Y_{\varphi}$ of generic rank $1$, and the Higgs bundle $(\sE,\varphi)$ is recovered as
\begin{equation*}
(\sE,\varphi) = (\pi_* \sL, \pi_* (\tau|_{Y_\varphi})).
\end{equation*}
Since $\sE = \pi_* \sL$ is by assumption locally free, and the map $\pi$ is flat, the sheaf $\sL$ must be torsion-free. In particular, if $Y_{\varphi}$ is smooth, then $\sL$ is a line bundle.

\begin{figure}[h]
\centering
\includegraphics[width=0.7\textwidth]{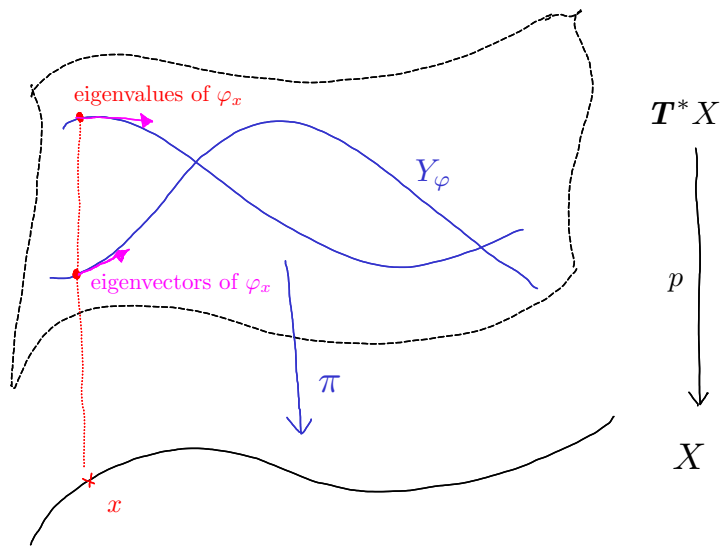}
\caption{The spectral curve}
\end{figure}

The Cayley--Hamilton theorem implies that the curve $Y_\varphi$ in fact only depends on the ``characteristic polynomial'' of $\varphi$, that is, it only depends on its image $b=(b_1,\dots,b_r)=f(\sE,\varphi)$ by the Hitchin map. More precisely, the spectral curve $Y_\varphi=Y_b$ is the zero-locus of the section
\begin{equation*}
\sigma_b = \tau ^r + p^* b_1 \tau ^{r-1} + \dots + p^* b_{r-1} \tau + p^* b_r \in H^0(\bm{T}^* X,p^* \bm{\Omega}_X^1).
\end{equation*}
We have found an equivalence of categories between
\begin{itemize}
\item Higgs bundles $(\sE,\varphi)$ with $f(\sE,\varphi)=b$,
\item torsion-free sheaves of generic rank $1$ on $Y_b$.
\end{itemize}

\begin{ej}
\textit{Show that, if $Y_b$ is irreducible, then, for any torsion free sheaf $\sL$ on $Y_b$ of generic rank $1$, the corresponding Higgs bundle $(\sE,\varphi)$ is stable}. \textbf{Hint}: What would happen if there was a $\varphi$-invariant holomorphic subbundle $\sE'\subset \sE$?
\end{ej}

Moreover, if $Y_b$ is smooth, then isomorphism classes of Higgs bundles $(\sE,\varphi)$ with $f(\sE,\varphi)=b$ are in bijection with the Picard group $\Pic(Y_b)$.
Note that $\Pic(Y_b)$ has many components, labelled by the degree of the line bundle. However, by the Grothendieck--Riemann--Roch formula
\begin{equation*}
\deg \pi_* \sL = \deg \sL + (1-g(Y_b)) - r(1-g),
\end{equation*}
where $g(Y_b)$ is the genus of $Y_b$, prescribing the degree of $\sE=\pi_* \sL$ also fixes the degree of $\sL$. We denote this degree by $\delta=\deg \sL$. We have found an isomorphism
\begin{equation*}
f^{-1}(b) \cong \Pic^{\delta}(Y_b).
\end{equation*}

\begin{ej}
  We can explicitly compute the genus of $Y_b$ in several steps.
  \begin{enumerate}
    \item \textit{Show that the ramification divisor $R\subset Y_b$ is the zero locus of a section of $p^* (\bm{\Omega}^1_X)^{\otimes r(r-1)}$ and thus}
          \begin{equation*}
\deg R = \deg((\bm{\Omega}^1_X)^{\otimes r(r-1)}) = 2r(r-1)(g-1).
          \end{equation*}
    \item \textit{Use the Riemann--Hurwitz formula to compute the genus of $Y_b$}.
  \end{enumerate}
You should get $g(Y_b)=r^2(g-1)+1$. \textit{Did you expect this number?}
\end{ej}

\begin{ej}
\textit{Prove that, when $g\geq 2$, for a generic element $b\in \cB$, the spectral curve $Y_b$ is smooth}. \textbf{Hint}: Use Bertini's theorem and the fact that $(\bm{\Omega}^1_X)^{\otimes r}$ is base-point free.
\end{ej}

Thus, we conclude that the subset of elements $b \in \cB$ such that $Y_b$ is smooth is Zariski open. We denote this subset by $\cB^\diamond \subset \cB$.
We have shown that the Hitchin fibres $f^{-1}(b)$ with $b\in \cB^\diamond$ are canonically isomorphic to the abelian varieties $\Pic^\delta(Y_b)$. Another way to interpret this result is that the Hitchin fibres $f^{-1}(b)$ are torsors under the action of the Jacobian $P_b:=\Jac(Y_b)$ by tensorization. We can put all these together to define a family of abelian varieties $P\rightarrow \cB^\diamond$, and conclude that the restriction $f:\cM|_{\cB^\diamond} \rightarrow \cB^\diamond$ has the structure of a $P$-torsor.

The spectral correspondence, being an equivalence of categories, not only provides a description of the fibres of $f:\cM\rightarrow \cB$ but also of the fibres of the stacky map $\bm{f}:\mathbf{Higgs}_{r,d}\rightarrow \cB$. First, we observe that since smooth spectral curves are in particular irreducible, over $\cB^\diamond$, the map $\bm{f}$ coincides with its restriction to the stable locus $\mathbf{Higgs}_{r,d}^s$. The fibre over a point $b \in \cB^\diamond$ is the Picard stack
\begin{equation*}
\bm{f}^{-1}(b) \cong \mathbf{Pic}^\delta_{Y_b}.
\end{equation*}
In particular, this implies that the stacky fibre $\bm{f}^{-1}(b)$ is a torsor under the stack
\begin{equation*}
\cP_b := \mathbf{Pic}^0_{Y_b} \cong \Jac(Y_b) \times \bbB \C^*.
\end{equation*}
Again, we can see this as a family $\cP \rightarrow \cB^{\diamond}$, and conclude that the restriction $\bm{f}:\mathbf{Higgs}_{r,d}|_{\cB^\diamond}\rightarrow \cB^\diamond$ has the structure of a $\cP$-torsor.

\begin{rmk}
The stack $\cP \rightarrow \cB^{\diamond}$ can be also understood as a moduli stack of $J$-torsors, where $J\rightarrow \cB^{\diamond}$ is some group scheme (namely, it is the multiplicative group on each spectral curve). This endows the Hitchin fibration with the structure of a $J$-gerbe and thus in the literature it is not uncommon to find statements of the sort ``the Hitchin fibration is a gerbe'' or "$\mathbf{Higgs}$ is a gerbe''. It is important to distiguish this gerby structure of the Hitchin map from the fact that the map $\mathbf{Higgs}^s_{r,d}\rightarrow \cM^s$ is naturally a $\C^*$-gerbe. Thus we warn the readers to beware of the appearence of the word ``gerbe'' in different places in the literature, as these could refer to different gerbes.
\end{rmk}

\begin{rmk}
It is also important to remark that this $\Pic$-action in fact extends beyond $\cB^{\diamond}$. Indeed, since for every $b\in \cB$ the spectral cover $\pi:Y_b \rightarrow X$ is flat, if $(\sE,\varphi)$ is a Higgs bundle induced from a coherent sheaf $\sL$ on $Y_b$ then, for every locally free sheaf $\sM$ of degree $0$ on $Y_b$, the tensor product $\sL \otimes \sM$ induces another Higgs bundle on $X$ of the same rank and degree.
\end{rmk}

\section{The Hitchin map for $G$-Higgs bundles. $\SL_n$ vs $\PGL_n$}
In his paper \cite{Hitchin_Systems}, Hitchin already defined his celebrated fibration for $G$-Higgs bundles, where $G$ is an arbitrary complex reductive group with Lie algebra $\mathfrak{g}$. The Hitchin map is ultimately modelled in Chevalley's restriction map. This is the map of affine schemes
\begin{equation*}
\mathfrak{g} \rightarrow \mathfrak{g}\git G,
\end{equation*}
induced by the inclusion $\C[\mathfrak{g}]^G \hookrightarrow \C[\mathfrak{g}]$, where $G$ acts on $\mathfrak{g}$ through the adjoint action. \emph{Chevalley's restriction theorem} asserts that the invariant ring $\C[\mathfrak{g}]^G$ is in fact a polynomial ring, isomorphic to the invariant ring $\C[\mathfrak{t}]^W$, where $\mathfrak{t}\subset \mathfrak{g}$ is the Lie algebra of a maximal torus $T\subset G$ and $W=N_G(T)/T$ is the corresponding \emph{Weyl group}. Let us pick generators $b_1,\dots,b_r$, where $r$ is the rank of $G$, of the invariant ring $\C[\mathfrak{t}]^W$, and let us write $d_i=\deg b_i$. The \emph{Hitchin base for $G$-Higgs bundles} is then defined as
\begin{equation*}
\cB(G) = \bigoplus_{i=1}^r H^0(X,(\bm{\Omega}^1_X)^{\otimes d_i}).
\end{equation*}
Note that the vector bundle $\oplus_{i=1}^r (\bm{\Omega}^1_X)^{\otimes d_i}\cong \mathrm{Fr}(\bm{\Omega}^1_X)\times^{\C^*} (\mathfrak{g}\git G)$ is the associated vector bundle to the frame bundle of $\bm{\Omega}^1_X$ and the $\C^*$-action on $\mathfrak{g}\git G$ induced from the homothecy $\C^*$-action on $\mathfrak{g}$. The \emph{Hitchin map for $G$-Higgs bundles} is the map
\begin{align*}
   \cM(G) & \longrightarrow \cB(G) \\
 (\mathscr{P},\varphi)   & \longmapsto (b_1(\varphi),\dots,b_r(\varphi)).
\end{align*}

In the case of $\mathfrak{g}=\mathfrak{gl}_r(\C)$, the Lie algebra $\mathfrak{t}$ is the affine space $\C^r$ and the Weyl group $W=\mathfrak{S}_r$ is the symmetric group. The invariant polynomials are then the elementary symmetric polynomials, which precisely describe the coefficients of the characteristic polynomial of a square $r\times r$ matrix. For \[\mathfrak{g}=\mathfrak{sl}_r(\C)=\left\{ A\in \mathfrak{gl}_r(\C): \tr A = 0 \right\},\] which is the Lie algebra of both $\SL_r(\C)$ and $\PGL_r(\C)$, the Lie algebra \[\mathfrak{t}=\left\{ (t_1,\dots,t_r) \in \C^r: t_1+\cdots+t_r=0 \right\}\] is isomorphic to $\C^{r-1}$ and the Weyl group $W=\mathfrak{S}_{r-1}$ is the symmetric group acting by permutation on the coordinates of $\C^{r-1}$. The invariant polynomials are the elementary symmetric polynomials in $r$ variables except for \[b_1(t_1,\dots,t_r)=t_1+\cdots+ t_r\] which vanishes automatically on $\mathfrak{t}$. Therefore, we identify the Hitchin base for $\SL_r(\C)$-Higgs bundles (and for $\PGL_r(\C)$-Higgs bundles) as
\begin{equation*}
\cB_0:=\cB(\SL_r(\C))=\cB(\PGL_r(\C)) = \bigoplus_{i=2}^r H^0(X,(\bm{\Omega}^1_X)^{\otimes i}) \subset \cB.
\end{equation*}
Using this framework, we can define the \emph{Hitchin map for the $d$-twisted $\SL_r(\C)$-Dolbeault moduli space} as
\begin{align*}
   \check{f}:\check{\cM}^{\Dol}_{r,d} & \longrightarrow \cB_0 \\
 (\mathscr{E},\varphi)   & \longmapsto (b_2(\varphi),\dots,b_r(\varphi)).
\end{align*}
This map is clearly $\Gamma_r$-equivariant, for $\Gamma_r=\Jac(X)[r]$ acting on $\check{\cM}^{\Dol}$ by tensorization, so it descends to a map
\begin{equation*}
\hat{f}: \hat{\cM}^{\Dol}_{r,d} \longrightarrow \cB_0.
\end{equation*}

Consider an element $b\in \cB_0$ (and in turn of $\cB$). Associated to it we have a spectral cover $\pi:Y_b\rightarrow X$. We assume that $Y_b$ is smooth, and thus that the Hitchin fibre $f^{-1}(b)$ is identified with the abelian variety $\Pic^{\delta}(Y_b)$. The fibre $\check{f}^{-1}(b)$ is then the subspace of $\Pic^{\delta}(Y_b)$ formed by isomorphism classes of holomorphic line bundles $\sL$ on $\Pic^{\delta}(Y_b)$ that admit a trivialization
\begin{equation*}
\sO_X \overset{\sim}{\longrightarrow} \det \pi_* \sL.
\end{equation*}
The fibre $\hat{f}^{-1}(b)$ is simply the quotient $\check{f}^{-1}(b)/\Gamma_r$. In terms of spectral data, an element $\sM$ of the group $\Gamma$ acts on $\check{f}^{-1}(b)$ as $\sL \mapsto \sL \otimes \pi^* \sM$, since we have a natural isomorphism
\begin{equation*}
\pi_* (\sL \otimes \pi^*\sM) \overset{\sim}{\rightarrow} \pi_* \sL \otimes \sM.
\end{equation*}

We have the following duality result, that we prove in the next section.

\begin{thm}[Hausel--Thaddeus \cite{HauselThaddeus}] \label{thm:duality}
If $b\in \cB_0$ is an element such that the spectral curve $Y_b$ is smooth, then the Hitchin fibres $\check{f}^{-1}(b)$ and $\hat{f}^{-1}(b)$ are dual abelian varieties.
\end{thm}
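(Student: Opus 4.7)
The plan is to reduce the claimed duality of Hitchin fibers to the classical duality between a Prym-type abelian variety and a quotient of the total Jacobian, which in turn follows by dualizing the norm exact sequence and invoking the autoduality of Jacobians. First I would identify the underlying abelian varieties on each side. Using the standard formula $\det \pi_* \sL \cong \Nm_\pi(\sL) \otimes \det \pi_* \sO_{Y_b}$, the condition $\det \pi_* \sL \cong \sO_X$ is equivalent to fixing $\Nm_\pi \sL$ to a specific line bundle on $X$, so $\check{f}^{-1}(b)$ is a torsor under $P_b := \ker \Nm_\pi$ (replacing it by its identity component if necessary). The action of $\sM \in \Gamma_r$ on $\check{f}^{-1}(b)$ by $\sL \mapsto \sL \otimes \pi^*\sM$ lands in $P_b$, because $\Nm_\pi(\pi^*\sM) = \sM^{\otimes r} = \sO_X$, so $\hat{f}^{-1}(b)$ is a torsor under $P_b/\pi^*\Gamma_r$.

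Next I would invoke the autoduality of Jacobians: under the principal polarizations, $\widehat{\Jac(X)} \cong \Jac(X)$ and $\widehat{\Jac(Y_b)} \cong \Jac(Y_b)$, and the morphisms $\pi^*$ and $\Nm_\pi$ become dual to each other. Dualizing the short exact sequence
\begin{equation*}
0 \longrightarrow P_b \longrightarrow \Jac(Y_b) \xrightarrow{\Nm_\pi} \Jac(X) \longrightarrow 0
\end{equation*}
yields
\begin{equation*}
0 \longrightarrow \Jac(X) \xrightarrow{\pi^*} \Jac(Y_b) \longrightarrow \widehat{P_b} \longrightarrow 0,
\end{equation*}
identifying $\widehat{P_b} \cong \Jac(Y_b)/\pi^*\Jac(X)$. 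I would then check that the natural map $P_b/\pi^*\Gamma_r \to \Jac(Y_b)/\pi^*\Jac(X)$ is an isomorphism: surjectivity uses $P_b + \pi^*\Jac(X) = \Jac(Y_b)$, which follows from $\Nm_\pi(\pi^*\Jac(X)) = r \cdot \Jac(X) = \Jac(X)$; the kernel is $P_b \cap \pi^*\Jac(X) = \pi^*(\Jac(X)[r]) = \pi^*\Gamma_r$, again using $\Nm_\pi \circ \pi^* = r \cdot \id$. Chaining these isomorphisms gives $\widehat{P_b} \cong P_b/\pi^*\Gamma_r$, so the abelian varieties acting on the two fibers are indeed dual.

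The hard part will be upgrading this pointwise duality of \emph{underlying} abelian varieties to a genuine duality of \emph{torsors}, which is what a mirror-symmetry statement really asks for. One needs to produce a Poincaré-type line bundle on $\check{f}^{-1}(b) \times_{\cB_0} \hat{f}^{-1}(b)$ realizing the duality compatibly as $b$ varies; the obstruction to splitting either torsor is encoded in a natural gerbe, and identifying these gerbes with the $\mu_r$-gerbe $\mathbf{Higgs}_{\SL_r,d}^s \to \check{\cM}^{\Dol,s}_{r,d}$ and its $\PGL_r$-counterpart is the real content of the Hausel--Thaddeus picture (and what feeds into the topological mirror symmetry statement later in the chapter). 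A secondary, more routine concern is verifying that $\ker \Nm_\pi$ is connected—or, failing that, passing to its identity component $P_b^0$ and tracking the component group—and that the principal polarizations do identify $\pi^*$ and $\Nm_\pi$ as duals, which is classical via the projection formula and the Abel--Jacobi definition of the theta polarization.
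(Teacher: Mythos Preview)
Your proposal is correct and follows essentially the same route as the paper: identify the fibres with $\check{P}_b=\ker\Nm_\pi$ and $\hat{P}_b=\check{P}_b/\pi^*\Gamma_r$ via the determinant formula, then dualize the norm sequence using that $\Nm_\pi$ and $\pi^*$ are dual under the autoduality of Jacobians to get $\widehat{\check{P}_b}\cong\Jac(Y_b)/\pi^*\Jac(X)\cong \check{P}_b/\pi^*\Gamma_r$. Two small points: the paper actually provides explicit \emph{isomorphisms} $\check{f}^{-1}(b)\cong\check{P}_b$ (by tensoring with $\pi^*(\bm{\Omega}^1_X)^{\otimes(r-1)/2}$) rather than just torsor structures, so your ``hard part'' about upgrading torsors to a genuine duality is not needed for this theorem as stated---that content belongs to the later gerby duality statement; and your connectedness worry is resolved in the paper by showing $\pi^*$ is injective when $g\geq 2$, which forces $\ker\Nm_\pi$ to be connected.
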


\section{Norm maps and Prym varieties}
\begin{ej}
  Let $\pi:Y\rightarrow X$ be a ramified cover of compact Riemann surfaces. Associated with $\pi$ we have the \emph{Norm map}
  \begin{align*}
    \Nm_\pi: \Pic(Y) & \longrightarrow \Pic(X) \\
      \sO_Y(\sum_i n_i y_i)& \longmapsto \sO_X(\sum_i n_i \pi(y_i)).
  \end{align*}
  Note that it preserves the degree.
\textit{Show that, for any holomorphic line bundle $\sL\rightarrow Y$, we have the formula}
\begin{equation*}
\det \pi_* \sL \cong \det \pi_* \sO_Y \otimes \Nm_\pi \sL.
\end{equation*}
In particular, note that
\begin{equation*}
\Nm_\pi \pi^* \sL \cong \sL^{r},
\end{equation*}
where $r=\deg \pi$, since $\pi_*(\pi^* \sL) \cong \sL \otimes \pi_* \sO_{Y_b}$.
\end{ej}

\begin{defn}
  The Prym variety associated with a ramified cover $\pi:Y\rightarrow X$ of compact Riemann surface is the neutral connected component of the kernel of the norm map
  \begin{equation*}
P(\pi:Y\rightarrow X) = (\ker \Nm_\pi)^0.
  \end{equation*}
\end{defn}

\begin{ej}
Recall that the Jacobian of a compact Riemann surface is naturally self-dual. \textit{Show that, under this self-duality, the dual of the norm map $\Nm_\pi:\Jac(Y)\rightarrow \Jac(X)$ is the pull-back $\pi^*:\Jac(X) \rightarrow \Jac(Y)$.} \textbf{Hint}: Consider the \emph{Abel-Jacobi map}
\begin{align*}
  \mathrm{AJ}_X : X & \longrightarrow \Pic^1(X) \\
  x  & \longmapsto \sO_X(x),
\end{align*}
and similarly $\mathrm{AJ}_Y:Y\rightarrow \Pic^1(Y)$, show that the following diagram commutes
\begin{center}
  \begin{tikzcd}
   Y \ar{r}{\mathrm{AJ}_Y} \ar{d}{\pi} & \Pic^1(Y) \ar{d}{\Nm_\pi} \\
   X \ar{r}{\mathrm{AJ}_X} & \Pic^1(X),
  \end{tikzcd}
\end{center}
and apply the functor $\Jac(-)$ to the diagram.
\end{ej}

\begin{ej}
Let $a:A_1 \rightarrow A_2$ be a homomorphism of abelian varieties and let $\hat{a}:\hat{A}_2 \rightarrow \hat{A}_1$ be the dual map. \textit{Show that if $a$ is injective, then $\hat{a}$ has connected fibres.} More generally, suppose that $K=\ker a$ is a finite group. \textit{Show that the group of connected components of $\ker \hat{a}$ is equal to the character group $\hat{K}=\Hom(K,\C^*)$.} \textbf{Hint}: Consider the isogeny $A_1\rightarrow A_1/K$.
\end{ej}

\begin{ej}
\textit{Show that if $g\geq 2$ and $\pi:Y_b\rightarrow X$ is a spectral cover associated with $b\in \cB_0$ and such that $Y_b$ is smooth, then the map $\pi^*:\Jac(X)\rightarrow \Jac(Y_b)$ is injective. Conclude that}
\begin{equation*}
\check{P}_b := P(\pi:Y_b \rightarrow X) = \ker \Nm_\pi.
\end{equation*}
\textit{Moreover, conclude that the dual of the Prym variety $\check{P}_b$ is the variety}
\begin{equation*}
\hat{P}_b= \check{P}_b/\pi^* \Gamma_r,
\end{equation*}
where $\Gamma_r = \Jac(X)[r]$ is the group of $r$-torsion points of $\Jac(X)$, for $r=\deg \pi$.

\textbf{Hint}: For the first statement, let $\sL$ be a nontrivial bundle with $\pi^*\sL$ trivial. Then $\pi_*(\pi^* \sL)\cong \pi_*\sO_{Y_b}$. But we also have $\pi_*(\pi^* \sL) \cong \sL \otimes \pi_* \sO_{Y_b}$. Use that
\begin{equation*}
\pi_*(\sO_{Y_b}) \cong \sO_X \oplus (\bm{\Omega}^1_X)^{\otimes -1} \oplus (\bm{\Omega}^1_X)^{\otimes -2} \oplus \dots \oplus (\bm{\Omega}^1_X)^{\otimes -r+1},
\end{equation*}
to conclude that either $\sL$ is trivial or $\deg \bm{\Omega}^1_X = 0$, reaching a contradiction. For the second statement, dualize the map $\Nm_\pi:\Jac(Y_b)\rightarrow \Jac(X)$ to find that the dual of $\check{P}_b$ is the abelian variety $\hat{P}_b=\Jac(Y_b)/\pi^* \Jac(X)$. Construct a polarization $\rho:\check{P}_b \rightarrow \hat{P}_b$ by composing the inclusion $\check{P}_b \hookrightarrow \Jac(Y_b)$ with the projection $\Jac(Y_b)\rightarrow \hat{P}_b$, and show that $\rho(\check{P}_b)\subset \check{P}_b$, so we can write
\begin{equation*}
\hat{P}_b \cong \check{P}_b / \check{P}_b \cap \pi^* \Jac(X) \cong \check{P}_b / \pi^* \Gamma_r.
\end{equation*}
\end{ej}

The duality theorem of Hausel--Thaddeus, Theorem \ref{thm:duality}, now follows from the following.

\begin{prop}
  Let $\pi:Y_b\rightarrow X$ be a spectral cover associated with $b\in \cB_0$ and such that $Y_b$ is smooth. There is a natural isomorphism
  \begin{equation*}
\check{P}_b \overset{\sim}{\longrightarrow} \check{f}^{-1}(b).
  \end{equation*}
  In turn, there is a natural isomorphism
  \begin{equation*}
\hat{P}_b \overset{\sim}{\longrightarrow} \hat{f}^{-1}(b).
  \end{equation*}
\end{prop}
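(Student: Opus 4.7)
The plan is to use the spectral correspondence to reduce the statement to an assertion about line bundles on $Y_b$, and then interpret the $\SL_r$-determinant constraint via the norm map $\Nm_\pi$.

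First, the spectral correspondence identifies $f^{-1}(b)$ canonically with $\Pic^{\delta}(Y_b)$, where $\delta$ is determined by Grothendieck--Riemann--Roch from the prescribed degree $d$ of $\pi_*\sL$. The defining condition $\det\pi_*\sL\cong \xi$ for the twisted $\SL_r$-moduli, combined with the formula $\det\pi_*\sL\cong\det\pi_*\sO_{Y_b}\otimes \Nm_\pi(\sL)$ from the preceding exercise, rewrites the constraint as $\Nm_\pi(\sL)\cong \eta$ with $\eta := \xi\otimes(\det\pi_*\sO_{Y_b})^{-1}\in \Pic^d(X)$. Hence $\check{f}^{-1}(b) = \Nm_\pi^{-1}(\eta)\subset \Pic^{\delta}(Y_b)$.

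Next, I would exhibit this fiber as a $\check{P}_b$-torsor and trivialize it. Surjectivity of $\Nm_\pi$ (which follows from surjectivity of $\pi$ via the Abel--Jacobi diagram) ensures $\Nm_\pi^{-1}(\eta)$ is non-empty, and it is tautologically a torsor under $\ker\Nm_\pi$. By the exercise preceding the proposition, for $g\geq 2$ the map $\pi^*\colon\Jac(X)\to\Jac(Y_b)$ is injective, so $\ker\Nm_\pi$ is connected and equals the Prym variety $\check{P}_b$. Picking any base point $\sL_0\in \check{f}^{-1}(b)$ then gives an isomorphism $\check{P}_b\overset{\sim}{\to}\check{f}^{-1}(b)$, $\sM\mapsto\sL_0\otimes\sM$. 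A natural choice of $\sL_0$ comes from the companion-matrix Higgs bundle $(\sE_0,\varphi_0)$ with underlying bundle $\bigoplus_{i=0}^{r-1}(\bm{\Omega}^1_X)^{-i}\otimes\sN$, after adjusting $\sN$ so that $\det\sE_0\cong\xi$.

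The $\hat{P}_b$-version then follows by $\Gamma_r$-equivariance. Under the trivialization above, the action $\sL\mapsto\sL\otimes\pi^*\sM$ of $\Gamma_r$ on $\check{f}^{-1}(b)$ corresponds, via the projection formula $\pi_*(\sL\otimes\pi^*\sM)\cong\pi_*\sL\otimes\sM$, to translation by $\pi^*\sM\in\check{P}_b$. Passing to quotients yields $\hat{P}_b=\check{P}_b/\pi^*\Gamma_r\overset{\sim}{\to}\check{f}^{-1}(b)/\Gamma_r=\hat{f}^{-1}(b)$, which together with the preceding exercise recovers the duality of Theorem \ref{thm:duality}. The main obstacle is producing a truly canonical base point $\sL_0$: the companion-matrix construction requires choosing an $\sN$ with $\sN^r\cong\xi\otimes(\bm{\Omega}^1_X)^{r(r-1)/2}$, which is determined only up to an element of $\Gamma_r$. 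Fortunately this $\Gamma_r$-ambiguity matches exactly the quotient taken when passing from $\check{P}_b$ to $\hat{P}_b$, so although the first isomorphism is canonical only as a torsor, the second isomorphism $\hat{P}_b\cong\hat{f}^{-1}(b)$ is genuinely canonical.
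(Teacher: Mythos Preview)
Your argument is correct and follows essentially the same route as the paper: both reduce the $\SL_r$-constraint to the norm condition via the formula $\det\pi_*\sL\cong\det\pi_*\sO_{Y_b}\otimes\Nm_\pi\sL$, identify $\check{f}^{-1}(b)$ with a fibre of $\Nm_\pi$, and then translate by a chosen base point to land in $\check{P}_b=\ker\Nm_\pi$.

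The only substantive difference is in the choice of base point. You invoke the companion-matrix (Hitchin section) construction, which forces you to pick an $r$-th root and hence confront a $\Gamma_r$-ambiguity. The paper instead observes directly that (in the untwisted case $\xi=\sO_X$) one has $\det\pi_*\sO_{Y_b}\cong(\bm{\Omega}^1_X)^{r(r-1)/2}=\Nm_\pi\bigl(\pi^*(\bm{\Omega}^1_X)^{(r-1)/2}\bigr)$, so the single line bundle $\sM=\pi^*(\bm{\Omega}^1_X)^{(r-1)/2}$ serves as an explicit base point and the translation $\sL\mapsto\sL\otimes\sM$ is the isomorphism. This is cleaner when $r$ is odd; for even $r$ one still needs a square root of $\bm{\Omega}^1_X$, so your more careful discussion of the $\Gamma_r$-ambiguity is in fact the honest account in general, and your observation that this ambiguity disappears upon passing to $\hat{P}_b$ is a nice point that the paper leaves implicit.
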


\begin{proof}
Consider a holomorphic line bundle $\sL\rightarrow Y_b$. We observe that the line bundle $\det \pi_* \sL$ is trivial if and only if $\Nm_\pi \sL$ is isomorphic to \[\det \pi_* \sO_Y \cong (\bm{\Omega}^1_X)^{\otimes n(n-1)/2}\cong \Nm_\pi(\pi^*(\bm{\Omega}^1_X)^{\otimes (n-1)/2}).\]
Therefore, if we denote $\sM =\pi^*(\bm{\Omega}^1_X)^{\otimes (n-1)/2}$, we conclude that $\sL \otimes \sM^{-1}$ must lie in the Prym variety $\check{P}_b$. The map
\begin{align*}
  \check{P}_b & \longrightarrow \check{f}^{-1}(b)  \\
  \sL  & \longmapsto \sL \otimes \sM,
\end{align*}
gives the desired isomorphism.
\end{proof}

In terms of torsors, what we have proven is that the fibres $\check{f}^{-1}(b)$ and $\hat{f}^{-1}(b)$ are torsors under the dual abelian varieties $\check{P}_b$ and $\hat{P}_b$. Globally, we can consider the open subset $\cB^{\diamond}_0\subset \cB$ of elements $b$ with $Y_b$ smooth, and consider dual families of abelian varieties $\check{P}\rightarrow \cB^{\diamond}_0$ and $\hat{P}\rightarrow \cB^\diamond_0$. The Hitchin fibrations $\check{f}:\check{\cM}\rightarrow \cB^{\diamond}_0$ and $\hat{f}:\hat{\cM}\rightarrow \cB^{\diamond}_0$ are torsors under $\check{P}$ and $\hat{P}$, respectively.

As in the $\GL_n$ case, we can also consider a version $\check{\bm{f}}: \mathbf{Higgs}_d(\SL_r) \rightarrow \cB^\diamond_0$ whose fibres are torsors under some stacky versions of the Prym variety. More precisely, note that the norm map can be naturally extended to the Picard stacks, to yield a map
\begin{equation*}
\mathbf{Nm}_\pi: \mathbf{Pic}_{Y_b} \rightarrow \mathbf{Pic}_X.
\end{equation*}
At the level of automorphism groups, this map determines the map $\C^* \rightarrow \C^*$, $z \mapsto z^r$.
The corresponding \emph{Prym stack} is then the neutral connected component of the kernel of this map
\begin{equation*}
 \check{\cP}_b:= \bm{P}(\pi:Y_b \rightarrow X) = (\ker \mathbf{Nm}_\pi)^0.
\end{equation*}
The points of $\check{\cP}_b$ coincide with the points of $\check{P}_b$, but the automorphism groups are naturally isomorphic to the group of roots of unity $\mu_r$. That is,
\begin{equation*}
\check{\cP}_b \cong \check{P}_b \times \bbB \mu_r.
\end{equation*}
This determines a family $\check{\cP}\rightarrow \cB^\diamond_0$, and the Hitchin fibration $\check{\bm{f}}: \mathbf{Higgs}_d(\SL_r) \rightarrow \cB^\diamond_0$ has the structure of a $\check{\cP}$-torsor.
On the other hand, since $\PGL_r(\C)$ has no center, the stack $\mathbf{Higgs}_d^s(\PGL_r)$ of stable $\PGL_r(\C)$-Higgs bundles is just the moduli space $\hat{\cM}$ and thus the restriction $\hat{\bm{f}}:\mathbf{Higgs}_d^s(\PGL_r)\rightarrow \cB^{\diamond}_0$ is just the map $\hat{f}:\hat{\cM}\rightarrow \cB^{\diamond}_0$.

The duality of abelian varieties can be extended to determine a duality for stacks of this form, by defining the \emph{dual stack} of $\check{\cP}_b$ as
\begin{equation*}
\hat{\cP}_b := \Hom(\check{P}_b \times \bbB \mu_r, \bbB \C^*) \cong \hat{P}_b \times \Z_r.
\end{equation*}
Note that under this notion of duality, the group of automorphisms is exchanged with the group of connected components and vice-versa. Therefore, the dual of a stacky fibre $\hat{\bm{f}}^{-1}(b)$ is the whole fibre of $b$ through the map $\hat{\bm{f}}: \mathbf{Higgs}^s(\PGL_r)\rightarrow \cB^{\diamond}_0$, without fixing $d$, not just one connected component.

Hausel and Thaddeus \cite{HauselThaddeus} give an interpretation of this. They consider the trivial $\mu_r$-gerbe $\check{\beta}=\check{\cP}_b\rightarrow \check{P}_b$ and show that, for each $e$ coprime with $r$, the duality explained above identifies the component $\hat{P}_b^e$ with the set of trivializations of the gerbe $\check{\beta}^{\otimes e}$ over $\check{P}_b$. More generally, we have the following.

\begin{thm}[Gerby duality of Hausel--Thaddeus]
  Let $d$ and $e$ be two numbers coprime with $r$ and consider the moduli spaces $\cM_d(\SL_r)$ and $\cM_e(\PGL_r)$ which are endowed with $\mu_r$-gerbes \[\check{\beta}:\mathbf{Higgs}_d(\SL_r) \rightarrow \cM_d(\SL_r)\] and \[\hat{\beta}: \mathbf{Higgs}_e(\SL_r)/\Gamma_r \rightarrow \cM_e(\PGL_r).\] Consider the Hitchin fibrations $\check{f}_d:\cM_d(\SL_r) \rightarrow \cB_0$ and $\hat{f}_e: \cM_e(\PGL_r)\rightarrow \cB_0$.
  For any $b\in \cB_0^\diamond$ we have the following equivalences:
\begin{itemize}
\item The fibre $\hat{f}_e^{-1}(b)$ is identified with the set of trivializations of the gerbe $\check{\beta}^{\otimes e}$ over the fibre $\check{f}_d^{-1}(b)$.
\item The fibre $\check{f}_d^{-1}(b)$ is identified with the set of trivializations of the gerbe $\hat{\beta}^{\otimes d}$ over the fibre $\hat{f}_e^{-1}(b)$.
\end{itemize}
\end{thm}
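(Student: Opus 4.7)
The plan is to verify both identifications fibre-by-fibre over the open locus $\cB_0^\diamond \subset \cB_0$ of smooth spectral curves, reducing the statement to a Cartier-duality calculation between the stacky Prym varieties. The two bullets are related by the involution exchanging $\SL_r \leftrightarrow \PGL_r$ and $d \leftrightarrow e$, so it suffices to prove the first; the second follows by swapping the roles of $\check{\cP}_b$ and $\hat{\cP}_b$.

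First I would fix $b \in \cB_0^\diamond$ so that the spectral cover $\pi:Y_b \to X$ is smooth. The stacky spectral correspondence identifies $\check{\bm{f}}_d^{-1}(b)$ and the stacky enhancement of $\hat{f}_e^{-1}(b)$ as torsors under the dual group stacks $\check{\cP}_b = \check{P}_b \times \bbB \mu_r$ and $\hat{\cP}_b = \hat{P}_b \times \Z_r$, respectively. After rigidifying with the basepoint $\sM = \pi^*(\bm{\Omega}_X^1)^{\otimes (r-1)/2}$ (invoking an auxiliary spin structure if $r$ is even), the restriction of $\check{\beta}$ to $\check{f}_d^{-1}(b) \cong \check{P}_b$ becomes the canonical trivial $\mu_r$-gerbe $\check{\cP}_b \to \check{P}_b$, and symmetrically for $\hat{\beta}$.

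Next I would invoke Cartier duality for commutative group stacks. Tautologically, $\hat{\cP}_b \cong \Hom(\check{\cP}_b, \bbB \C^*)$, and a point of the $e$-th component $\hat{P}_b^e$ corresponds to a character of $\check{\cP}_b$ whose restriction to the inertia group $\mu_r$ is the $e$-th power character $\zeta \mapsto \zeta^e$. Unwinding, such a character is the same data as a line bundle on $\check{P}_b$ viewed as a $\mu_r$-equivariant object of inertial weight $e$, which in turn is exactly a $\C^*$-trivialization of the $\mu_r$-gerbe $\check{\beta}^{\otimes e}$ over $\check{P}_b$. This yields the fibrewise identification
\begin{equation*}
\hat{f}_e^{-1}(b) \;\cong\; \hat{P}_b^e \;\cong\; \{\text{trivializations of } \check{\beta}^{\otimes e} \text{ over } \check{f}_d^{-1}(b)\},
\end{equation*}
and the argument for the second bullet is identical after exchanging the two sides.

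Finally, I would globalize: the Prym stacks assemble into Cartier-dual families $\check{\cP}, \hat{\cP} \to \cB_0^\diamond$, and the gerbes $\check{\beta}, \hat{\beta}$ are compatible with this family structure through the relative spectral correspondence, so the pointwise identification glues into the stated equivalence. The main obstacle will be the bookkeeping in the Cartier-duality step: one must carefully distinguish the globally nontrivial $\mu_r$-gerbe $\check{\beta}$ on $\cM_d(\SL_r)$ from its (necessarily trivial) restriction to a smooth Hitchin fibre, and match the inertial weight $e$ simultaneously with the component label of $\hat{\cP}_b$ and with the exponent of $\check{\beta}$. Ultimately this reduces to the self-duality of the $1$-motive $[\pi^*\Gamma_r \to \check{P}_b]$ together with the Weil pairing on $\pi^*\Gamma_r \subset \check{P}_b[r]$, which is the technical core of Hausel--Thaddeus' argument.
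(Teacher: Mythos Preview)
Your proposal is correct and follows essentially the same approach as the paper. The paper does not give a formal proof of this theorem but rather sketches the argument in the paragraphs preceding the statement: it identifies the stacky Hitchin fibres as torsors under $\check{\cP}_b = \check{P}_b \times \bbB\mu_r$ and $\hat{\cP}_b = \hat{P}_b \times \Z_r$, invokes the Cartier duality $\hat{\cP}_b \cong \Hom(\check{\cP}_b,\bbB\C^*)$, and then observes (citing Hausel--Thaddeus) that the component $\hat{P}_b^e$ is identified with trivializations of $\check{\beta}^{\otimes e}$ over $\check{P}_b$ --- which is exactly the Cartier-duality unwinding you carry out in your second paragraph.
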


\section{Mirror symmetry}
\subsection*{Calabi--Yau manifolds}
A \emph{Calabi--Yau manifold} $(M,\omega,\nu)$ is a Kähler manifold $(M,\omega)$ equipped with a trivialization of its canonical line bundle $\bm{\Omega}^n_M$ (i.e. a holomorphic $n$-form $\nu \in \Omega^{n,0}(M)$), where $n$ is the complex dimension of $M$. \emph{Mirror symmetry} is a general theoretical framework, motivated from Physics, which predicts the existence of a ``mirror partner'' $\check{M}$ to a Calabi--Yau manifold $M$. This is another Calabi--Yau manifold with exchanged deformation spaces of the complex and Kähler structures. An important part of this framework are two auxiliary ``$B$-fields'' $B$ on $M$ and $\check{B}$ on $\check{M}$, which Hitchin \cite{Hitchin_SpecialLagrangian} interprets as $\U(1)$ gerbes on $M$ and $\check{M}$, respectively. More precisely, the mirror partner $\check{M}$ of a Calabi--Yau manifold is not necessarily a manifold, but rather could be a Calabi--Yau \emph{orbifold}. For our purposes, this shall be simply a quotient stack $[\tilde{M}/\Gamma]$, where $\tilde{M}$ is a Calabi--Yau manifold and $\Gamma$ is a finite group acting on $\tilde{M}$ by biholomorphisms.

\subsection*{SYZ mirror symmetry}
Strominger, Yau and Zaslow \cite{SYZ} proposed a setting for the construction of mirror partners. A submanifold $L$ of a Calabi--Yau manifold $(M,\omega,\nu)$ is \emph{special Lagrangian} if $\omega|_L=0$ and $\mathrm{Im}\ \nu|_L = 0$. Two Calabi--Yau orbifolds of complex dimension $n$ equipped with $B$-fields $(M,B)$ and $(\check{M},\check{B})$ are \emph{SYZ mirror partners} if there exists an orbifold $A$ of real dimension $n$ and smooth surjections $f:M\rightarrow A$ and $\check{f}:\check{M}\rightarrow A$ such that, for every $a\in A$ which is a regular value of $f$ and $\check{f}$, the fibres $L_a=f^{-1}(a)$ and $\check{L}_a=\check{f}^{-1}(a)$ special Lagrangian tori which are dual in the sense that there are the following equivalences, depending smoothly on $a$:
\begin{itemize}
\item The fibre $L_a$ is identified with the set of trivializations of the gerbe $\check{B}$ over the fibre $\check{L}_a$.
\item The fibre $\check{L}_a$ is identified with the set of trivializations of the gerbe $B$ over the fibre $L_a$.
\end{itemize}

Hyperkähler manifolds are a particularly special case of Calabi--Yau manifolds. Indeed, recall that if $(M,g,I_1,I_2,I_3)$ is a hyperkähler manifold of real dimension $4n$ with Kähler forms $\omega_1$, $\omega_2$ and $\omega_3$, then it is holomorphic symplectic with respect to each of the $I_i$ by taking $\Omega_i=\omega_j + i \omega_k$, and in particular it is Calabi--Yau by putting $\nu_i=\Omega_i^n \in \Omega^{2n,0}(M,I_i)$.

\begin{ej}
Let $(M,g,I_1,I_2,I_3)$ be a hyperkähler manifold. \textit{Show that if a submanifold $L\subset M$ is a complex Lagrangian submanifold of $(M,I_1,\Omega_1)$ then it is an special Lagrangian submanifold of $(M,I_2,\nu_2)$}.
\end{ej}

The result of Hausel--Thaddeus \cite{HauselThaddeus} can then be reinterpreted in these terms as follows.

\begin{thm}[SYZ mirror symmetry for Hitchin fibrations]
  Let $d$ and $e$ be two numbers coprime with $r$.
The de Rham moduli space $\cM^{\dR}_d(\SL_r)$, equipped with the Calabi--Yau structure $\check{\nu}_{\dR}=\Omega_{\dR}^{\dim_\R \cM^{\dR}_d(\SL_r)/4}$ and with the $B$-field $\check{\beta}^{\otimes e}$ and the de Rham moduli space $\cM^{\dR}_e(\PGL_r)$, equipped with the Calabi--Yau structure $\hat{\nu}_{\dR}=\Omega_{\dR}^{\dim_\R \cM^{\dR}_e(\PGL_r)/4}$ and with the $B$-field $\hat{\beta}^{\otimes d}$ are SYZ mirror partners, with respect to the Hitchin fibrations $\check{f}_d:\cM_d(\SL_r) \rightarrow \cB_0$ and $\hat{f}_e: \cM_e(\PGL_r)\rightarrow \cB_0$.
\end{thm}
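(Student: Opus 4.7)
The plan is to verify the four ingredients demanded by the definition of SYZ mirror partners, using the hyperkähler structure on the Hitchin moduli space together with the gerby duality theorem of Hausel--Thaddeus stated just above.

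First I would set up the Calabi--Yau data. The underlying real manifolds of $\cM^{\dR}_d(\SL_r)$ and $\cM^{\dR}_e(\PGL_r)$ carry hyperkähler structures $(g,I_1,I_2,I_3)$ with $I_2=I_{\dR}$, so with respect to $I_{\dR}$ they are holomorphic symplectic with form $\Omega_{\dR}=\Omega_2=\omega_3+i\omega_1$. Taking the appropriate top wedge power yields the claimed trivializations $\check{\nu}_{\dR}$ and $\hat{\nu}_{\dR}$ of the canonical bundles; in the $\PGL_r$ case this descends to the orbifold quotient $\check{\cM}^{\dR}_e/\Gamma_r$ because the $\Gamma_r$-action preserves the hyperkähler structure.

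Next, I would identify the common base $A$ with the Hitchin base $\cB_0$. The real dimension of $\cB_0$ is $2(r^2-1)(g-1)$, which is exactly half the real dimension of $\check{\cM}^{\dR}_d(\SL_r)$ and of $\hat{\cM}^{\dR}_e(\PGL_r)$, matching the SYZ requirement that $\dim_\bbR A=n$, where $2n$ is the real dimension of the Calabi--Yau. The surjections $f$ and $\check{f}$ from the definition will be taken to be the Hitchin fibrations $\check{f}_d$ and $\hat{f}_e$, and I would restrict attention to the regular locus $\cB_0^{\diamond}\subset \cB_0$, where the spectral curves are smooth.

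The heart of the argument is to show that the generic fibres $\check{f}_d^{-1}(b)$ and $\hat{f}_e^{-1}(b)$ are special Lagrangian tori with respect to the de Rham Calabi--Yau structures, and then that they are dual in the prescribed gerbe-theoretic sense. For the first point, by the spectral correspondence combined with the Prym description, the generic fibres are torsors under the abelian varieties $\check{P}_b$ and $\hat{P}_b$, hence smooth tori. The Hitchin map is holomorphic with respect to $I_1=I_{\Dol}$ and defines an algebraically completely integrable system on $(\cM,\Omega_1)$, so its regular fibres are complex Lagrangian submanifolds with respect to $\Omega_1=\omega_2+i\omega_3$; by the exercise just before the theorem, such a submanifold is automatically special Lagrangian with respect to $(I_2,\nu_2)=(I_{\dR},\nu_{\dR})$. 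This supplies both the torus and the special Lagrangian conditions simultaneously, in both the $\SL_r$ and $\PGL_r$ cases.

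For the fibrewise duality of the gerbes, I would invoke directly the ``Gerby duality of Hausel--Thaddeus'' theorem stated immediately before: it gives exactly the two bullet-pointed identifications required by the SYZ definition, namely that $\hat{f}_e^{-1}(b)$ parametrises trivializations of $\check{\beta}^{\otimes e}$ over $\check{f}_d^{-1}(b)$, and symmetrically. Smooth dependence on $b\in \cB_0^\diamond$ follows because both the Prym family $\check{P}\to \cB_0^\diamond$ and its dual $\hat{P}\to \cB_0^\diamond$ are families of abelian varieties varying algebraically with $b$, and the gerbes $\check{\beta},\hat{\beta}$ are globally defined. The main obstacle I expect is the careful verification of this last compatibility for the $\PGL_r$-side, because $\hat{\cM}^{\dR}_e$ is only an orbifold: one has to ensure that the $\Gamma_r$-quotient is compatible with the special Lagrangian condition and that the gerbe $\hat{\beta}^{\otimes d}$ really produces the required trivialization set precisely equal to $\check{f}_d^{-1}(b)$, rather than a connected component or a quotient of it, which is exactly the content encoded in the coprimality assumptions on $d$ and $e$ with $r$.
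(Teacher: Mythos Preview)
Your proposal is correct and follows precisely the route the paper intends: the theorem is presented there as a direct reinterpretation of the Gerby duality of Hausel--Thaddeus, combined with the preceding exercise that complex Lagrangians in $(M,I_1,\Omega_1)$ are special Lagrangian in $(M,I_2,\nu_2)$, and the paper gives no further argument beyond that. Your write-up simply spells out the details that the paper leaves implicit.
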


\subsection*{$E$-polynomials}
Calabi--Yau mirror partners which are compact are expected to satisfy the identity $h^{p,q}(\check{M})=h^{\dim M - p,q}(M)$ between their Hodge numbers. On the other hand, compact hyperkähler manifolds satisfy the identity $h^{p,q}(M)=h^{\dim M - p,q}(M)$. Therefore, compact mirror partners which are hyperkähler satisfy $h^{p,q}(\check{M})=h^{p,q}(M)$. Hausel and Thaddeus were motivated by the conjecture that this equality of Hodge numbers could in fact hold for the moduli spaces $\cM^{\dR}_d(\SL_r)$ and $\cM^{\dR}_e(\PGL_r)$. To make their conjecture precise, we need to consider a certain invariant encoding the information about the Hodge numbers, the $E$-polynomial.

If $M$ is a complex algebraic variety with pure Hodge structure, we define its \emph{$E$-polynomial} as
\begin{equation*}
E(M;u,v) = \sum_{i,j} (-1)^{i+j} h^{i,j}(M) u^i v^j.
\end{equation*}
More generally, if $M$ is any complex algebraic variety, we can consider the weight filtration on its compactly-supported cohomology $H^*_c(M)$ induced by its mixed Hodge structure, and define
\begin{equation*}
E(M;u,v) = \sum_{i,j} (-1)^{i+j} h^{i,j}(\mathrm{Gr}^W_{p+q} H^i_c(M))u^pv^q.
\end{equation*}

For an orbifold equipped with a gerbe, we can also defined an invariant, which is a slight modification of the usual $E$-polynomial. Let $[M/\Gamma]$ be an orbifold obtained as the stacky quotient of a complex algebraic manifold $M$ by a finite group $\Gamma$. For any element $\gamma \in \Gamma$, we denote by $M^\gamma\subset M$ the subspace of fixed points of $\gamma$. Suppose also that $B \rightarrow [M/\Gamma]$ is a $\C^*$-gerbe. Equivalently, $B\rightarrow M$ is a $\Gamma$-equivariant $\C^*$-gerbe. Explicitly, this means that there is a character $\kappa:\Gamma \rightarrow \C^*$ such that the associated $2$-cocycle $b_{ijk}$ satisfies the condition
\begin{equation*}
b_{ijk}(\gamma \cdot m) = \kappa(\gamma) b_{ijk}(m),
\end{equation*}
for every $\gamma \in \Gamma$ and every $m\in M$.
In particular, if we restrict $B$ to the fixed points $M^\gamma$ of some nontrivial element $\gamma \in \Gamma$, then $$b_{ijk}(m)=b_{ijk}(\gamma \cdot m)=\kappa(\gamma) b_{ijk}(m),$$ so $b_{ijk}$ is trivial, and thus it comes from some line bundle $L_{B,\gamma}\rightarrow M^\gamma$. This line bundle is equivariant under the action of the centralizer $C_\gamma \subset \Gamma$. Moreover, if the gerbe $B$ was induced from a finite subgroup $A\subset \C^+$, then the line bundle $L_{B,\gamma}$ is in fact a local system. In that case, we can consider cohomology with local coefficients on $L_{B,\gamma}$, and the corresponding $E$-polynomial $E(M^\gamma/C_\gamma,L_{B,\gamma};u,v)$. Finally, we define the \emph{fermionic shift} of $\gamma \in \Gamma$ as $F(\gamma)=\sum w_i$, where $\gamma$ acts on $TM|_{M^\gamma}$ with eigenvalues $e^{2\pi i w_i}$, $w_i\in [0,1)$. The \emph{stringy $E$-polynomial of $[M/\Gamma]$ with respect to the gerbe $B$} is then defined as
\begin{equation*}
E_{\mathrm{st}}(M,B;u,v) = \sum_{[\gamma] \in [\Gamma]} E(M^\gamma/C_\gamma,L_{B,\gamma};u,v)(uv)^{F(\gamma)},
\end{equation*}
where $[\gamma]$ runs through the set $[\Gamma]$ of conjugacy classes of $\Gamma$.

\subsection*{Topological mirror symmetry}
For moduli spaces of Higgs bundles, Hausel and Thaddeus conjectured the following.

\begin{thm}[Topological mirror symmetry]
  For $d$ and $e$ coprime with $r$, we have
  \begin{equation*}
E(\cM_d^{\Dol}(\SL_r); u,v) = E_{\mathrm{st}}(\cM^{\Dol}_e(\PGL_r), \hat{\beta}^{\otimes d}; u,v).
  \end{equation*}
\end{thm}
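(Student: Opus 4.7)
The overall strategy is to reduce the identity to a fibrewise comparison along the Hitchin map. Both $\cM^{\Dol}_d(\SL_r)$ and $\cM^{\Dol}_e(\PGL_r)$ fibre properly over the common Hitchin base $\cB_0$, and over the generic locus $\cB_0^\diamond$ the fibres are torsors under the dual Prym varieties $\check{P}_b$ and $\hat{P}_b$. The gerby duality already established identifies the $\PGL_r$ side with trivializations of the $\mu_r$-gerbe $\check{\beta}^{\otimes e}$ over the $\SL_r$ side, and vice versa. The plan is to package this fibrewise duality into a global equality of cohomological invariants, and then to handle the discriminant locus with extra care.

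First, I would rewrite the stringy $E$-polynomial on the $\PGL_r$ side in a form suited to the duality. Writing $\cM^{\Dol}_e(\PGL_r)=\check{\cM}^{\Dol}_{r,e}/\Gamma_r$, the contribution indexed by $[\gamma]\in\Gamma_r$ is concentrated on the locus fixed by tensoring with a chosen $r$-torsion line bundle $\sM_\gamma$. These fixed loci are moduli of ``endoscopic'' Higgs bundles obtained by pushforward from an étale cyclic cover $X_\gamma\rightarrow X$, and the twisting local system $L_{\hat{\beta}^{\otimes d},\gamma}$ is precisely the restriction of $\hat{\beta}^{\otimes d}$ to this fixed locus. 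The fermionic shift $(uv)^{F(\gamma)}$ accounts for the normal bundle weights of the fixed locus and should match the degree shift of pushforward along $X_\gamma\rightarrow X$. The main step is then to verify, fibre by fibre over $b\in \cB_0^\diamond$, that the trivializations of $\check{\beta}^{\otimes e}$ on $\check{f}_d^{-1}(b)$ distribute over $[\gamma]$-components exactly as the fixed-point loci on the $\PGL_r$ side. This is where the Fourier transform for dual abelian varieties enters: characters of the finite group $\Gamma_r$ acting on $\hat{P}_b$ are exchanged with connected components of $\check{P}_b$, and the gerbe classes determine the twisting character.

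The cleanest route to the global equality passes through $p$-adic integration in the style of Groechenig--Wyss--Ziegler. After spreading $X$ out over a ring of finite type over $\Z$ one compares point counts fibre-by-fibre; the gerbes $\check{\beta}^{\otimes e}$ and $\hat{\beta}^{\otimes d}$ define characters on the groupoids of points of the Prym torsors, and Fourier inversion on dual Pryms exchanges the two expressions, the fermionic shift appearing as the expected Jacobian factor in the change of variables. Over $\cB_0^\diamond$ this is essentially a classical computation on dual abelian varieties.

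The main obstacle is the behaviour over the discriminant $\cB_0\setminus \cB_0^\diamond$, where the spectral curves acquire singularities and the fibres are no longer abelian varieties but torsors under compactified Jacobians. Extending the fibrewise duality requires control of these compactified Prym stacks, boundedness of their volumes, and a careful matching of the ``stringy'' orbifold contributions on the $\PGL_r$ side with the contribution of singular compactified Jacobians on the $\SL_r$ side. A robust way to organise this is to invoke Ngô's support theorem for the Hitchin fibration, which pins down the summands of $\R\check{f}_*\C$ and $\R\hat{f}_*\C$ in terms of what happens on the elliptic locus; combined with the $p$-adic orbital-integral identity above, this should yield the required global equality and close the proof.
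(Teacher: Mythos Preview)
Your proposal outlines the Groechenig--Wyss--Ziegler $p$-adic integration strategy (with a nod to Maulik--Shen via support theorems) for the general $r$ statement. This is a correct high-level roadmap for the general theorem, but it is not a proof: the genuinely hard steps---extending the fibrewise Fourier duality across the discriminant, controlling compactified Prym stacks, and invoking the support theorem---are named rather than carried out.

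The paper takes a completely different route. It does \emph{not} prove the general theorem at all: it cites \cite{GWZ_Mirror} and \cite{MaulikShen_Endoscopic} for general $r$ and only reproduces the $r=2$, $d=1$ case explicitly in Section~\ref{ss:topologicalmirrorproof}. That argument is a direct computation with no fibrewise duality, no $p$-adic points, and no support theorem. On the $\SL_2$ side, the paper uses Hitchin's Bialynicki--Birula stratification of $\check{\cM}$ by the $\C^*$-action: the isotypic piece $H^*_c(\check{\cM})_\kappa$ is computed from the explicit description of the fixed components $F_k$ as $\Gamma$-covers of symmetric products $S^{\bar k}X$, together with Macdonald's formulas and the Hodge type of $H^1(X,\C_\gamma)$. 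On the $\PGL_2$ side, the fixed locus $\check{\cM}^\gamma$ is identified (via Narasimhan--Ramanan) with a torsor under $\bm{T}^*P_\gamma$ for the Prym of the double cover $X_\gamma\to X$, and its $E$-polynomial twisted by $L_{\hat\beta,\gamma}$ is computed by hand. The fermionic shift is read off as half the codimension. Both sides are then matched as explicit polynomials.

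So your approach is the ``modern'' structural one and the paper's is the original explicit low-rank verification of Hausel--Thaddeus. Yours has the virtue of addressing all $r$ and explaining \emph{why} the identity holds (duality of Pryms exchanges $\Gamma$-isotypic pieces with endoscopic fixed loci), but as written it is a sketch with the analytic and sheaf-theoretic work deferred. The paper's approach is elementary and self-contained but entirely special to rank $2$.
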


Hausel--Thaddeus proved this statement for $r=2$ and $3$ in their original paper, and conjectured it for general $r$. The general proof is due to Groechenig, Wyss and Ziegler \cite{GWZ_Mirror}, using a method of $p$-adic integration. A different proof using an Ngô-style support theorem was provided by Maulik and Shen \cite{MaulikShen_Endoscopic}. We reproduce the argument for rank $2$ in Section \ref{ss:topologicalmirrorproof}.

The above formula of $E$-polynomials can be slightly unraveled if we consider the action of $\Gamma=\Gamma_r$ on $\cM_d^\Dol(\SL_r)$ and thus on its compactly-supported cohomology $H_c^*(\cM_d^\Dol(\SL_r))$. We obtain a decomposition in terms of the group of characters $\hat{\Gamma}=\Hom(\Gamma,\C^*)$,
\begin{equation*}
H^*_c(\cM_d^\Dol(\SL_r)) = \bigoplus_{\kappa \in \hat{\Gamma}} H^*_c(\cM_d^\Dol(\SL_r))_\kappa.
\end{equation*}
We can thus decompose
\begin{equation*}
E(\cM_d^\Dol(\SL_r);u,v) = \sum_{\kappa \in \hat{\Gamma}} E_\kappa(\cM_d^\Dol(\SL_r);u,v),
\end{equation*}
where
\begin{equation*}
E_\kappa(\cM_d^\Dol(\SL_r);u,v) = \sum_{i,j} (-1)^{i+j} h^{i,j}(\mathrm{Gr}^W_{p+q} H^i_c(\cM_d^\Dol(\SL_r)))_\kappa u^pv^q.
\end{equation*}

We obtain the equality
\begin{equation*}
\sum_{\kappa \in \hat{\Gamma}} E_\kappa(\cM_d^\Dol(\SL_r);u,v) = \sum_{\gamma \in \Gamma} E(\cM_e^\Dol(\SL_r)^\gamma/\Gamma,L_{\hat{\beta}^{\otimes d},\gamma};u,v)(uv)^{F(\gamma)}.
\end{equation*}
Note that, since $\Gamma$ is commutative, we have $[\gamma]=\gamma$ and $C_\gamma=\Gamma$ for every $\gamma \in \Gamma$. We remark that there are the same number of terms on each side of the equality. In fact, there is a canonical way to identify $\Gamma=\Jac(X)[r]\cong H^1(X,\Z_r)$ with $\hat{\Gamma}$ through the \emph{Weil pairing}, which is the canonical pairing
\begin{equation*}
w: H^1(X,\Z_r) \times H^1(X,\Z_r) \longrightarrow H^2(X,\Z_r)=\Z_r
\end{equation*}
naturally induced by Poincaré duality. This pairing induces a canonical isomorphism $w:\hat{\Gamma}\rightarrow \Gamma$. The topological mirror symmetry formula is then unraveled as the formula
\begin{equation}
E_\kappa(\cM_d^\Dol(\SL_r);u,v) = E(\cM_e^\Dol(\SL_r)^\gamma/\Gamma,L_{\hat{\beta}^{\otimes d},\gamma};u,v)(uv)^{F(\gamma)}
\end{equation}
for each $\kappa\in \hat{\Gamma}$, with $\gamma=w(\kappa)$. Note that the equality is trivially satisfied for $\kappa=0$.

\section{Langlands duality}
The groups $\SL_r(\C)$ and $\PGL_r(\C)$ are examples of Langlands dual groups. Recall that a complex semisimple group $G$ is determined by a semisimple Lie algebra $\fg$ and by its centre $Z_G$ and its fundamental group $\pi_1(G)$, which are finite abelian groups. The data $(\fg,Z_G,\pi_1(G))$ can be dualized in the following way. The dual semisimple Lie algebra $\fg^*$ is the semisimple Lie algebra determined by the dual root system, for example, the dual of $\mathfrak{sl}_r(\C)$ is $\mathfrak{sl}_r(\C)$, but the dual of $\mathfrak{so}_{2r+1}(\C)$ is $\mathfrak{sp}_{2r}(\C)$. The \emph{Cartier duals} of the groups $Z_G$ and $\pi_1(G)$ are their sets of characters $\hat{Z}_G:=\Hom(Z_G,\C^*)$ and $\hat{\pi}_1(G):=\Hom(\pi_1(G),\C^*)$. Note that these are also finite groups. For example, the Cartier dual of a cyclic group $\Z_r=\Z/r\Z$ is the group of roots of unity $\mu_r \subset \C^*$ (which is of course isomorphic to $\Z_r$, since $\C$ is algebraically closed). Therefore, the data $(\fg^*,\hat{\pi}_1(G),\hat{Z}_G)$ determines another semisimple group $G^\vee$ with $Z_{G^\vee}=\hat{\pi}_1(G)$ and $\pi_1(G^\vee)=\hat{Z}_G$. This group $G^\vee$ is called the \emph{Langlands dual} of $G$.

The semisimple Lie algebra $\fg$ is equipped the \emph{Killing form}, which induces an isomorphism $\fg \git G \rightarrow \fg^* \git G^\vee$. This induces an isomorphism of the Hitchin bases $\cB(G)\rightarrow \cB(G^\vee)$ and thus we can consider the corresponding Hitchin fibrations $\mathbf{Higgs}_G\rightarrow \cB(G)$ and $\mathbf{Higgs}_{G^\vee}\rightarrow \cB(G^\vee)$ as mapping over the same space.

Donagi and Gaitsgory \cite{DonagiGaitsgory} proved a generalization of the spectral correspondence for arbitrary reductive groups. More precisely, they showed that there is a Zariski open subspace $\cB(G)^\diamond \subset \cB(G)$ over which $\mathbf{Higgs}_G$ is a torsor under a stack $\cP(G)\rightarrow \cB(G)$, which is a family of stacks of the form
\begin{equation*}
\cP_b(G) = \bbB Z(G) \times P_b(G) \times \pi_1(G),
\end{equation*}
where $P_b(G)$ is a certain abelian variety associated with the \emph{cameral cover} $\pi_b:\tilde{X}_b \rightarrow X$ obtained as a pullback of the natural quotient map $\mathfrak{t}\rightarrow \mathfrak{t}/W$. Later, Donagi and Pantev \cite{DonagiPantev} showed that the varieties $P_b(G)$ and $P_b(G^\vee)$ are dual abelian varieties, and thus
\begin{align*}
\Hom(\cP(G),\bbB\C^*) &= \bbB \hat{\pi}_1(G) \times \hat{P}(G) \times \hat{Z}(G) \\ &= \bbB Z(G^\vee) \times P(G^\vee) \times \pi_1(G^\vee) = \cP(G^\vee).
\end{align*}

\begin{rmk}
  We can say a few more words in the case where $G$ is simply connected. In that case, what Donagi and Gaitsgory proved is that $\cP_b(G)$ is the stack $\mathbf{Bun}^W_T(\tilde{X}_b)$ of (strongly) $W$-equivariant $T$-bundles on $\tilde{X}_b$, where $T\subset G$ is a maximal torus. In this case, it is clear that
  \begin{equation*}
\mathbf{Bun}^W_T(\tilde{X}_b) = (\bbB T \times (\Jac(\tilde{X}_b)\otimes \Lambda) \times \Lambda )^W = \bbB Z(G) \times (\Jac(\tilde{X}_b)\otimes \Lambda)^{W,0},
  \end{equation*}
  where $\Lambda=\Hom(\C^*,T)$ is the cocharacter lattice of $T$. The dual of this stack is
  \begin{equation*}
\hat{\cP}_b(G) = (\Jac(\tilde{X}_b)\otimes \Lambda)_{W} \times \pi_1(G^\vee).
  \end{equation*}
The remaining step is proving that $P_b(G^\vee)$ coincides with $(\Jac(\tilde{X}_b)\otimes \Lambda)_{W}$. It suffices to provide an isomorphism of the singular homology groups \[H_1(P_b(G^\vee)) \cong H_1((\Jac(\tilde{X}_b)\otimes \Lambda)_{W}) \cong H^1(\tilde{X}_b,\Lambda)_{W,\text{torsion free}}.\]
Donagi and Pantev manage to prove the above by using Poincaré duality and describing the homology $P_b(G^\vee)$ in terms of the local system $(\pi_{b,*}^0 \Lambda^\vee)^W$, where $\pi_b^0$ is the restriction of the cameral cover to its unramified locus.
\end{rmk}

The \emph{Fourier--Mukai transform}\footnote{We refer the reader to Huybrechts book \cite{Huybrechts} for an introduction to Fourier--Mukai transforms.} relates the derived categories of coherent sheaves on an abelian variety $P$ and on its dual $\check{P}$. All these notions can be generalized to stacks $\cP$ as above, and one can obtain an equivalence
\begin{equation*}
\mathrm{FM}: D^b(\mathrm{Coh}(\cP(G))/\cB^\diamond(G)) \longrightarrow D^b(\mathrm{Coh}(\cP(G^\vee))/\cB^\diamond(G^\vee)).
\end{equation*}
A (conjectural) extension of this isomorphism beyond the $\diamond$-locus would yield an equivalence
\begin{equation}\label{eq:Dolbgeolanglands}
D^b(\mathrm{Coh}(\mathbf{Higgs}_G/\cB(G))) \longrightarrow D^b(\mathrm{Coh}(\mathbf{Higgs}_{G^\vee}/\cB(G^\vee))).
\end{equation}
Donagi and Pantev \cite{DonagiPantev} interpreted this as a deformation or ``classical limit'' of the \emph{geometric Langlands program}, which roughly predicts an equivalence
\begin{equation} \label{eq:geolanglands}
D^b(\mathrm{DMod}(\mathbf{Bun}_G)) \longrightarrow D^b(\mathrm{Coh}(\mathbf{Conn}_{G^\vee})),
\end{equation}
between the derived category of $D$-\emph{modules} on $\mathbf{Bun}_G$ and the derived category of coherent sheaves on the stack of holomorphic $G^\vee$-connections.

\begin{rmk}
It is appropriate to mention that the geometric Langlands conjecture might no longer be a conjecture. A proof of (a refined and corrected version of) the equivalence \eqref{eq:geolanglands} above has been recently made public \cite{ProofGLC}. The statement \eqref{eq:Dolbgeolanglands} in ``Dolbeault terms'' remains conjectural to this day.
\end{rmk}

To close this circle of ideas, we also mention that a ``physical interpretation'' of the geometric Langlands program was provided by Kapustin and Witten \cite{KapustinWitten}. In particular, they relate the geometric Langlands equivalence with homological mirror symmetry for the de Rham moduli space $\cM^{\dR}_G$. More precisely, if $(M,\omega,\nu)$ and $(\check{M},\check{\omega},\check{\nu})$ are mirror Calabi--Yau partners Kontsevich's \emph{homological mirror symmetry} \cite{KontsevichHMS} predicts a derived equivalence
\begin{equation*}
D^b(\mathrm{Coh}(M)) \rightarrow \mathrm{Fuk}(\check{M},\check{\omega})
\end{equation*}
between the derived category of coherent sheaves on $M$ (which is determined by the holomorphic structure of $M$) and the \emph{Fukaya category} of $\check{M}$, a certain category related with the Lagrangian submanifolds on $(\check{M},\check{\omega})$, and thus determined by the symplectic structure of $(\check{M},\check{\omega})$. One could roughly interpret the results of Donagi--Pantev as the fact that $\cM^\dR(G)$ and $\cM^{\dR}(G^\vee)$ are SYZ mirror partners (again, very roughly, since these might not be orbifolds). Homological mirror symmetry would then predict an equivalence
\begin{equation*}
D^b(\mathrm{Coh}(\cM(G^\vee),I_{\dR})) \rightarrow \mathrm{Fuk}(\cM(G),\omega_{\dR}).
\end{equation*}
Kapustin and Witten gave a physical interpretation of the Fukaya category $\mathrm{Fuk}(\cM(G),\omega_{\dR})$ as the category of $D$-modules on $\mathbf{Bun}_{G}$, recovering in this way the geometric Langlands equivalence.
More generally, Kapustin and Witten proposed some form of ``hyperkähler enhanced'' mirror symmetry, which takes into account the whole hyperkähler structure of $\cM(G)$. Finding and relating submanifolds of $\cM(G)$ which enter this hyperkähler enhanced mirror framework --for example ``BAA branes'', which are supported on holomorphic Lagrangian submanifolds of $\cM^\Dol(G)$, or ``BBB branes'', supported on hyperkähler submanifolds-- remains a very active topic of research to this day. For more details, we refer the reader to \cite{HauselEnhanced}.

\chapter{Global topology in low rank} \label{sec:global}
\section{Poincaré polynomials}
The main upshot of non-abelian Hodge theory is that these three: Betti, de Rham and Dolbeault moduli spaces have the same underlying topology. Therefore, if we want to understand some topological properties of the character variety, it might just be convenient to study Higgs bundles, and vice-versa. We illustrate this by reviewing some celebrated computations of Betti numbers in the cases of rank $r=2$ and $d=1$. Thus, in this chapter we denote $\cN=\cN_{2,1}$, $\check{\cN}=\cN_{1}(\SL_2)$ and $\hat{\cN}=\cN_1(\PGL_2)$ for the moduli spaces of vector bundles, and $\cM=\cM^\Dol_{2,1}$, $\check{\cM}=\cM^\Dol_{1}(\SL_2)$ and $\hat{\cM}=\cM^\Dol_1(\PGL_2)$ for the corresponding moduli spaces of Higgs bundles. We also denote $\Gamma=\Gamma_2=\Jac(X)[2]$.

For any graded $\C$-algebra $A=\bigoplus_{i} A_i$, we define its \emph{Poincaré series}, as the formal sum
\begin{equation*}
P_t(A) = \sum_i \dim_{\C}(A_i) t^i.
\end{equation*}
In particular, if $M$ is a smooth manifold, we define its \emph{Poincaré polynomial} as the Poincaré series of its $\C$-valued singular cohomology, that is
\begin{equation*}
P_t(M):= P_t(H^*(M,\C)) = \sum_i b_i t^i,
\end{equation*}
where the coefficients $b_i=b_i(M)=\dim_{\C} H^i(M,\C)$ are the \emph{Betti numbers} of $M$.

\begin{ej}
\textit{Prove that the Poincaré polynomial of a circle is} \[P_t(S^1)=(t+1).\] \textit{In turn, show that the Poincaré polynomial of a $k$-dimensional torus $T^k$ is \[P_t(T^k)=(t+1)^k.\]}
\end{ej}

\begin{thm}[Harder--Narasimhan]
  The Poincaré polynomial of $\check{\cN}$ is
  \begin{equation} \label{eq:poincN}
P_t(\check{\cN}) = \frac{(1+t^3)^{2g}-t^{2g}(1+t)^{2g}}{(1-t^2)(1-t^4)}.
  \end{equation}
\end{thm}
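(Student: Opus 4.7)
The plan is to reproduce Atiyah and Bott's equivariant Morse-theoretic argument. Fix a smooth Hermitian rank $2$ vector bundle $E$ on $X$ of degree $1$ together with a holomorphic structure of degree $1$ on $\det E$, denoted $\xi$. Let $\cC$ be the affine space of holomorphic structures on $E$ inducing $\xi$ on the determinant, and let $\cG$ be the associated $\SU(2)$ gauge group. Since $\gcd(2,1)=1$, every semistable bundle is stable and the $\cG$-action on $\cC^{ss}$ has only the finite centre $\{\pm I\}$ as stabiliser, so rationally $P_t(\check{\cN}) = P_t^{\cG}(\cC^{ss})$.

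The Harder--Narasimhan type stratifies
\[
\cC = \cC^{ss} \sqcup \bigsqcup_{n \geq 1} \cC^n,
\]
where $\cC^n$ is the locally closed stratum of holomorphic structures whose maximal destabilising sub-line-bundle has degree $n$. The crucial analytic input, due to Atiyah and Bott, is that this stratification is $\cG$-equivariantly perfect: the equivariant normal bundle to each $\cC^n$ admits a $\C^*$ of automorphisms (the scaling of the sub-line-bundle in the HN filtration) acting with strictly positive weights, so its equivariant Euler class is not a zero divisor in $H^*_{\cG}(\cC^n)$, the Thom--Gysin long exact sequences split, and we obtain
\[
P_t^{\cG}(\cC) = P_t^{\cG}(\cC^{ss}) + \sum_{n\geq 1} t^{2d_n}\, P_t^{\cG}(\cC^n),
\]
where $d_n$ is the complex codimension of $\cC^n$ in $\cC$.

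The remaining steps are computational. First, $\cC$ is contractible, so $P_t^{\cG}(\cC) = P_t(B\cG)$, and from $H^*(B\SU(2); \Q) = \Q[c_2]$ with $\lvert c_2 \rvert = 4$ one obtains by the Atiyah--Bott formula for the cohomology of the mapping space $\Map(X, B\SU(2))$
\[
P_t(B\cG) = \frac{(1+t^3)^{2g}}{(1-t^2)(1-t^4)}.
\]
Second, each stratum $\cC^n$ is $\cG$-equivariantly homotopy equivalent to the space of split filtrations $L \oplus \xi L^{-1}$ with $L \in \Pic^n(X)$, whose equivariant cohomology is that of the classifying space of the $\U(1)$-gauge group of $L$, giving $P_t^{\cG}(\cC^n) = (1+t)^{2g}/(1-t^2)$. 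Third, by Riemann--Roch the normal bundle to $\cC^n$ has complex rank $d_n = h^1(X, L^{-2}\xi) = g + 2n - 2$. Summing the geometric series $\sum_{n\geq 1} t^{2(g+2n-2)} = t^{2g}/(1-t^4)$ and substituting into the perfection identity produces
\[
P_t^{\cG}(\cC^{ss}) = \frac{(1+t^3)^{2g} - t^{2g}(1+t)^{2g}}{(1-t^2)(1-t^4)},
\]
which is the stated formula.

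The main obstacle is the equivariant perfection of the Harder--Narasimhan stratification. This is the core of Atiyah and Bott's analytic work: one must verify that the stratification is the Morse stratification for the Yang--Mills functional (the $L^2$-norm-square of the curvature) on the unitary connections, and that the strata satisfy the equivariant cohomological conditions needed to split the Thom--Gysin sequences. Once this is in place, the rest of the argument reduces to the book-keeping above.
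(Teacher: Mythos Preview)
Your proposal is correct and follows essentially the same Atiyah--Bott argument as the paper: perfection of the Harder--Narasimhan stratification, computation of $P_t(B\cG)$, of the equivariant Poincar\'e series of each unstable stratum, and of the codimensions, followed by summing the geometric series. The only difference is that you work directly in the fixed-determinant setting with the $\SU(2)$ gauge group, whereas the paper runs the argument for the full $\GL_2$ gauge group to obtain $P_t(\cN)$ first and then extracts $P_t(\check{\cN})$ via the relation $\cN=(\check{\cN}\times\Pic^1(X))/\Gamma$ together with the (nontrivial) fact that $\Gamma$ acts trivially on $H^*(\check{\cN})$; your route is slightly more direct and avoids this last step.
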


\begin{thm}[Hitchin]
The Poincaré polynomial of $\check{\cM}$ is
\begin{align} \label{eq:poincMthm}
  P_t&(\check{\cM})= \sum_{i=1}^{6g-6} b_i t^i= \\
  &\frac{(1+t^3)^{2g}}{(1-t^2)(1-t^4)} - \frac{t^{4g-4}}{4(1-t^2)(1-t^4)} \left[(1+t^2)^2(1+t)^{2g} - (1+t)^4(1-t)^{2g} \right] \\
  &-(g-1) t^{4g-3} \frac{(1+t)^{2g-2}}{(1-t)} + 2^{2g-1} t^{4g-4}[(1+t)^{2g-2}-(1-t)^{2g-2}].
\end{align}

\end{thm}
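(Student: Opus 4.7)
The plan is to exploit the natural Hamiltonian $S^1$-action $e^{i\theta}\cdot(\sE,\varphi) = (\sE,e^{i\theta}\varphi)$ on $\check{\cM}$ and its associated moment map
\[
\mu(\sE,\varphi) = \tfrac{1}{2}\lVert\varphi\rVert_{L^2}^2,
\]
computed with respect to the canonical HEH metric. Hitchin showed directly that $\mu$ is proper and Morse-Bott; by Frankel's theorem on Hamiltonian Kähler actions (equivalently, by the Atiyah--Bott equivariant perfection argument) it is perfect over $\C$, so that
\[
P_t(\check{\cM}) = \sum_F t^{\mathrm{ind}(F)} P_t(F),
\]
where $F$ ranges over the connected components of the critical set $\check{\cM}^{S^1}$.

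First I would classify the critical submanifolds. A stable $\SL_2$-Higgs bundle is $\C^*$-fixed iff either $\varphi = 0$ (yielding $F_0 = \check{\cN}$, with $\mathrm{ind}(F_0) = 0$) or $\sE$ splits holomorphically as $\sL_1 \oplus \sL_2$ with $\sL_1 \sL_2 \cong \xi$ and $\varphi$ is strictly off-diagonal with entry $a \in H^0(X,\bm{\Omega}^1_X\otimes\sL_1^{-1}\sL_2)\setminus\{0\}$. Setting $d := \deg\sL_1$, stability forces $d \geq 1$, while $\deg(\bm{\Omega}^1_X\otimes\sL_1^{-1}\sL_2) = 2g-1-2d \geq 0$ forces $d \leq g-1$. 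This gives
\[
\check{\cM}^{S^1} = \check{\cN} \sqcup F_1 \sqcup \dots \sqcup F_{g-1}.
\]
The map $(\sL_1,[a])\mapsto \mathrm{div}(a)$ realizes $F_d$ as a $2^{2g}$-fold Galois cover of the symmetric product $X^{(2g-1-2d)}$, the ambiguity coming from the $2^{2g}$ square roots $\sL_1$ of $\bm{\Omega}^1_X\otimes\xi\otimes\sO(-\mathrm{div}(a))$. The Poincaré polynomial of each $F_d$ can then be assembled from Macdonald's classical formula
\[
\sum_{n\geq 0} P_t(X^{(n)}) s^n = \frac{(1+st)^{2g}}{(1-s)(1-st^2)}
\]
together with a decomposition of $H^*(F_d)$ into isotypic components under the $\Jac(X)[2]\cong(\Z/2)^{2g}$-action.

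Next I would compute the Morse indices by a weight analysis on the deformation complex. The Zariski tangent space to $\check{\cM}$ at a critical point is the hypercohomology $\bbH^1$ of $[\End_0\sE\xrightarrow{[\varphi,-]}\End_0\sE\otimes\bm{\Omega}^1_X]$; the residual $\C^*$-automorphism $\mathrm{diag}(t^{-1/2},t^{1/2})$ splits $\End_0 \sE = \sO_X \oplus \sL_1\sL_2^{-1} \oplus \sL_1^{-1}\sL_2$ into weight spaces of weights $0$, $-1$, $+1$, and the index equals twice the dimension of the strictly negative-weight part of $\bbH^1$. Riemann--Roch and Serre duality on each factor then yield a closed linear formula for $\mathrm{ind}(F_d)$ in $d$ and $g$.

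Finally I would substitute everything into
\[
P_t(\check{\cM}) = P_t(\check{\cN}) + \sum_{d=1}^{g-1} t^{\mathrm{ind}(F_d)}\, P_t(F_d),
\]
use the Harder--Narasimhan formula \eqref{eq:poincN} for $P_t(\check{\cN})$ and Macdonald's formula for the remaining terms, and simplify. I expect this final step to be the main obstacle: turning the finite sum into the compact three-term shape \eqref{eq:poincMthm} requires residue-type reorganizations of Macdonald's generating function together with non-trivial binomial identities. The peculiar combination $(1+t)^{2g-2}\pm(1-t)^{2g-2}$ with coefficient $2^{2g-1}$ in the last summand reflects the sign/parity decomposition of the $(\Z/2)^{2g}$-isotypic pieces of the covering $F_d \to X^{(2g-1-2d)}$, while the middle summand with coefficient $(g-1)t^{4g-3}$ absorbs the ``boundary'' contributions from $d$ close to $g-1$, where Brill--Noether forces excess dimension in $H^0(\bm{\Omega}^1_X\otimes\sL_1^{-1}\sL_2)$.
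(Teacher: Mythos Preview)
Your proposal is correct and follows essentially the same route as the paper: the $S^1$-moment-map Morse picture you describe is exactly equivalent to the paper's Bialynicki--Birula stratification for the $\C^*$-action (the paper explicitly remarks that Hitchin's original argument used the Morse function $f(\sE,\varphi)=2i\int_X\tr(\varphi\varphi^{\dagger_H})$), and your identification of the fixed-point components $F_d$ as $2^{2g}$-covers of symmetric products, together with the Macdonald/isotypic decomposition and the Harder--Narasimhan formula for $P_t(\check\cN)$, matches the paper's computation exactly. The only methodological difference is in computing the indices: you propose a direct weight analysis on the deformation complex, whereas the paper uses the general fact that for a holomorphic symplectic variety with $\lambda^*\Omega=\lambda\Omega$ one has $i(m_0)=\dim_\C\check\cM$, giving $\mathrm{codim}_\bbR U_k=\dim_\C\check\cM-\dim_\bbR F_k=2(g+2k-2)$ immediately; both yield the same numbers, and the paper likewise leaves the final algebraic simplification to the stated closed form as an exercise.
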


The Poincaré polynomial of $\check{\cN}$ was computed by Harder and Narasimhan originally using a purely algebraic method, via the Weil conjectures. Several years later, Atiyah and Bott gave a new computation in terms of the $\cG$-equivariant cohomology of the Harder--Narasimhan strata of the space $\cC_E$ of holomorphic structures on a smooth complex vector bundle $E$ of rank $2$ and degree $1$. The computation of the Poincaré polynomial of $\check{\cM}$ is from Hitchin's paper \cite{Hitchin_SelfDuality}. There, he uses a stratification of the moduli space $\check{\cM}$ induced by a $\U(1)$-action, of which the fixed point subspaces can be explicitly described. We dedicate the rest of this chapter to give a short review of the main arguments behind the computations of $P_t(\check{\cN})$ and $P_t(\check{\cM})$ made by Atiyah--Bott and Hitchin, respectively. As a consequence, we also obtain the proof of the topological mirror symmetry conjecture of Hausel--Thaddeus for this particular case.

\section{Equivariant cohomology and stratified spaces}
In order to explain the computation of the Poincaré polynomial of $\check{\cN}$, we first need to give a short review of equivariant cohomology and the theory of stratifications.

Recall that, associated with any topological group $G$, there exists a universal principal $G$-bundle $EG\rightarrow BG$. If $M$ is a topological space with a $G$-action, then we can consider the space $M_G=(M\times EG)/G$. The \emph{$G$-equivariant cohomology of $M$} is by definition
\begin{equation*}
H_G^*(M) = H^*(M_G).
\end{equation*}
The free action of $G$ on $EG$ induces a fibration $M\rightarrow M_G \rightarrow BG$ so, if $M$ is contractible, we have $H^*_G(M)=H^*(BG)$, which is not trivial in general. Moreover, if $G$ acts freely on $M$, then we also have a fibration $EG\rightarrow M_G \rightarrow M/G$ and, since $EG$ is contractible, $H^*_G(M)=H^*(M/G)$. The \emph{$G$-equivariant Poincaré polynomial} $GP_t(M)$ is defined as the Poincaré series of $H^*_G(M)$.

Consider now a (possibly, infinite-dimensional) manifold $M$. By a \emph{$G$-invariant stratification} of $M$ we mean a set $\left\{ M_i: i\in I \right\}$, indexed by a partially ordered set $I$ with minimal element $0\in I$, of locally closed submanifolds $M_i \subset M$ such that
\begin{equation*}
M = \bigcup_{i\in I} M_i
\end{equation*}
and
\begin{equation*}
\overline{M}_i = \bigcup_{j\geq i} M_j.
\end{equation*}
We also assume that $M_0 \neq \varnothing$, and thus it is the unique open stratum. We also make two extra assumptions
\begin{enumerate}
  \item For every finite subset $I' \subset I$, the set of minimal elements in $I\setminus I'$ is finite and nonempty.
  \item For every $k>0$, the set $\left\{ i \in I: \mathrm{codim}(M_i)<k \right\}$ is finite.
\end{enumerate}

For any subset $J\subset I$, the union of strata $M_J = \bigcup_{j \in J} M_j$ is open if and only if for every $j\in J$, we have that $j'\in J$ for all $j' \leq j$. If $J$ satisfies that property, we say that it is open. If $J$ is open and $i$ is a minimal element of $I\setminus J$, then $J \cup \left\{ i \right\}$ is also open, so the stratum $M_i=M_{J \cup \left\{ i \right\}} \setminus M_J$ is closed. The Thom isomorphism then yields
\begin{equation*}
H^{*-k_i}(M_i) = H^*(M_{J \cup \left\{ i \right\}},M_J),
\end{equation*}
for $k_i=\mathrm{codim}(M_i)$. Hence, we get a long exact sequence
\begin{center}
  \begin{tikzcd}
\cdots  \rar & H_G^{q-k_i}(M_i) \rar & H^q_G(M_{J\cup \left\{i\right\}}) \rar & H^q_G(M_J) \rar & \cdots.
  \end{tikzcd}
\end{center}
The stratification is said to be \emph{perfect} if, for every $q$ and every $i\in I$, the map $H^q_G(M_{J\cup \left\{i\right\}}) \rightarrow H^q_G(M_J)$ is surjective. In that case, the above long exact sequence splits into short exact sequences and we can calculate the $G$-equivariant Poincaré polynomial of $M$ as
\begin{equation*}
GP_t(M) = \sum_{i \in I} t^{k_i} GP_t(M_i).
\end{equation*}

\section{The Harder--Narasimhan stratification} \label{ss:HarderNarasimhan}
Recall from Exercise \ref{ej:stratificationP1} that the space of vector bundles on $\bbP^1$ has a ``weird'' topology. Indeed, even though as a set it is isomorphic to the natural numbers, its topology is far from being discrete. In particular, the point $0$ is dense and, more generally, in this topology the closure of any $n\in \mathbb{N}$ is the set of all numbers $n'\in \mathbb{N}$ with $n' \geq n$. This is precisely an example of the kind of stratifications that we are studying in this chapter. In general, this is a particular case of the Harder--Narasimhan stratification.

Let $\sE$ be a holomorphic vector bundle on $X$, of rank $r$ and degree $d$.
\begin{prop}
 There exists a unique subbundle $\sE_1 \subset \sE$ such that, for every subbundle $\sE'\subset \sE$, we have
  \begin{equation*}
\mu(\sE') \leq \mu(\sE_1),
  \end{equation*}
with equality only if $\rk \sE' \leq \rk \sE_1$. Moreover, this subbundle is semistable.
\end{prop}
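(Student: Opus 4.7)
The plan is to first bound the slopes of subbundles of $\sE$ from above, then to take $\sE_1$ to be a subbundle of maximal slope having maximal rank among all such, and finally to deduce both semistability and uniqueness from these two maximality properties.

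First, I would establish that
\[\mu_{\max}(\sE) := \sup \{\mu(\sE') : 0\neq \sE'\subset \sE \text{ a subbundle}\}\]
is finite. \textbf{This boundedness is the main obstacle}, being the only step that requires nontrivial input. The classical approach is induction on $\rk \sE$: for a line subbundle $L\hookrightarrow \sE$, the embedding corresponds to a nonzero section of $\sE\otimes L^{-1}$, so a uniform upper bound on $\deg L$ follows from Riemann--Roch and Serre duality applied to $\sE\otimes L^{-1}$; for a higher rank subbundle $\sE'\subset \sE$ of rank $r'$, the determinant embedding $\det \sE'\hookrightarrow \wedge^{r'}\sE$ is a line subbundle of an exterior power, so the line bundle case bounds $\deg \sE' = \deg \det \sE'$. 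Since subbundles have rank in $\{1,\dots,r\}$ and degrees in $\Z$, slopes lie in a discrete subset of $\Q$ with uniformly bounded denominators, so once bounded above the supremum is attained.

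Next, I would pick a subbundle $\sE_1 \subset \sE$ realizing $\mu_{\max}(\sE)$ and, among such, of maximal rank. Semistability of $\sE_1$ is then automatic: any subbundle $\sF\subset \sE_1$ is in particular a subbundle of $\sE$, so $\mu(\sF)\leq \mu_{\max}(\sE) = \mu(\sE_1)$.

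For uniqueness, I would suppose $\sE_1'\subset \sE$ is another subbundle with $\mu(\sE_1')=\mu(\sE_1)$ and maximal rank, so that $\rk \sE_1'=\rk \sE_1 =: r_1$ and $\deg \sE_1'=\deg \sE_1$. The key tool is the short exact sequence of coherent sheaves
\[ 0\longrightarrow \sE_1\cap \sE_1'\longrightarrow \sE_1\oplus\sE_1'\longrightarrow \sE_1+\sE_1'\longrightarrow 0,\]
which gives $\deg(\sE_1\cap \sE_1')+\deg(\sE_1+\sE_1')=2\deg\sE_1$ and $\rk(\sE_1\cap \sE_1')+\rk(\sE_1+\sE_1')=2r_1$. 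The saturation of $\sE_1\cap \sE_1'$ inside $\sE$ is a subbundle of $\sE_1$, so by the semistability just proved $\mu(\sE_1\cap \sE_1')\leq \mu(\sE_1)$; an elementary rearrangement then yields $\mu(\sE_1+\sE_1')\geq \mu(\sE_1)$, and this slope inequality persists when passing to the saturation $(\sE_1+\sE_1')^{\mathrm{sat}}$ inside $\sE$. Maximality of $\mu(\sE_1)$ forces equality, and maximality of $\rk \sE_1$ among max-slope subbundles forces $\rk(\sE_1+\sE_1')\leq r_1$; combined with the obvious inclusion $\sE_1\subset \sE_1+\sE_1'$, this yields $\sE_1=\sE_1+\sE_1'=\sE_1'$.
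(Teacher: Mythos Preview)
Your argument is correct; both the existence and uniqueness steps follow a different (equally classical) route from the paper's hints.

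For boundedness, the paper bounds line subbundles directly: if $\sL \hookrightarrow \sE$ had arbitrarily large degree, then twisting by a very ample $\sO_X(1)$ one would obtain nonzero sections of $\sE(-n)$ for arbitrarily large $n$, which is impossible. Your exterior-power trick $\det \sE' \hookrightarrow \wedge^{r'}\sE$ is a clean way to reduce the higher-rank case to the line case in one stroke; the paper's route avoids exterior powers but rests on the same underlying vanishing.

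For uniqueness, the paper's hint is dual to yours: rather than forming $\sE_1+\sE_1'$ inside $\sE$, it considers the image $\sF$ of $\sE_1$ in the quotient $\sE/\sE_1'$. Semistability of $\sE_1$ forces $\mu(\sF)\geq \mu(\sE_1)$, while the preimage of $\sF^{\mathrm{sat}}$ in $\sE$ is a subbundle of rank strictly greater than $r_1$ and hence (by the defining maximality) of slope strictly less than $\mu(\sE_1)$; since this preimage is an extension of $\sF^{\mathrm{sat}}$ by $\sE_1'$, this contradicts $\mu(\sF^{\mathrm{sat}})\geq\mu(\sE_1)$. Your sum--intersection see-saw is more symmetric in $\sE_1$ and $\sE_1'$ and arguably slightly cleaner; the quotient approach, on the other hand, foreshadows the inductive construction of the Harder--Narasimhan filtration via successive quotients that immediately follows in the paper.
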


\begin{ej}
\textit{Show that there exists an integer $\mu_0$ such that, for every subbundle $\sE'\subset \sE$, we have $\mu(\sE')\leq \mu_0$. Deduce from here the existence of $\sE_1$.} \textbf{Hint}: Start by showing that there cannot exist non trivial maps $\sL\rightarrow \sE$ with $\sL$ is a line bundle with arbitrarily large degree. Indeed, in that case, if we fix a very ample line bundle $\sO_X(1)$ we would have sections of $E(-n)$, for $n$ arbitrarily large.

\textit{Show that $\sE_1$ is unique.} \textbf{Hint}: If there is another $\sE_1'$, we can consider the quotient $\sE/\sE_1'$, and its subbundle $\sF$ generated by the projection of $\sE_1$. Since $\sE_1$ is semistable, we have $\mu(\sF)\geq \mu(\sE_1)$. Reach a contradiction by proving that $\mu(\sF)< \mu(\sE_1)$.
\end{ej}

We call the subbundle $\sE_1 \subset \sE$ the \emph{maximal semistable subbundle} of $\sE$. Iterating this construction, we deduce the \emph{Harder--Narasimhan filtration}
\begin{equation*}
\left\{ 0 \right\} = \sE_0 \subset \sE_1 \subset \sE_2 \subset \dots \subset \sE_{s-1} \subset \sE_s=\sE,
\end{equation*}
defined by letting $\sE_i/\sE_{i-1}$ be the maximal semistable subbundle of $\sE/\sE_{i-1}$. We denote $r_i=\rk(\sE_i/\sE_{i-1})$, $d_i=\deg(\sE_i/\sE_{i-1})$ and $\mu_i=\mu(\sE_i/\sE_{i-1})=d_i/r_i$. Note that $r=\sum_{i=1}^s r_i$ and that $d=\sum_{i=1}^s d_i$.

\begin{ej}
\textit{Show that}
\begin{equation*}
\mu_1 > \mu_2 > \dots > \mu_s.
\end{equation*}
\end{ej}

We can now consider the vector
\begin{equation*}
\bm{\mu}= \bm{\mu}(\sE) = (\mu_1,\overset{(r_1)}{\dots},\mu_1,\mu_2,\overset{(r_2)}{\dots},\mu_2,\dots,\mu_{s-1},\overset{(r_{s-1})}{\dots},\mu_{s-1},\mu_s,\overset{(r_s)}{\dots},\mu_{s}) \in \mathbb{Q}^r.
\end{equation*}
This vector is called the \emph{Harder--Narasimhan type} of the bundle $\sE$. Note that it is a holomorphic invariant of $\sE$, since the Harder--Narasimhan filtration is canonical. We also note that, if $\sE$ is semistable, then $\bm{\mu}=(d/r,\dots,d/r)$.

Consider now the smooth complex vector bundle $E$ underlying $\sE$ and the space $\cC=\cC_E$ of holomorphic structures on $E$. For each $\bm{\mu}\in \mathbb{Q}^r$ (with $\mu_1\geq \mu_2\geq \dots \geq \mu_r$), we can consider the subspace $\cC_{\bm{\mu}}\subset \cC$ of holomorphic structures with Harder--Narasimhan type $\bm{\mu}$. This determines a decomposition
\begin{equation*}
\cC = \bigcup_{\bm{\mu}} \cC_{\bm{\mu}},
\end{equation*}
called the \emph{Harder--Narasimhan stratification}.

To show that the Harder--Narasimhan stratification is indeed an stratification we need to understand how the Harder--Narasimhan filtration degenerates on families. This was understood by Shatz \cite{Shatz}, from an algebraic point of view, and years later Atiyah and Bott \cite{AtiyahBott} gave a differential-geometric description. We consider the following partial order on $\mathbb{Q}^r$
\begin{equation*}
\bm{\mu} \leq \bm{\mu}' \text{ if } \sum_{j=1}^i \mu_j \leq \sum_{j=1}^i \mu'_j, \ i=1,\dots,r-1.
\end{equation*}
Note that $\sum_i \mu_i = d$ is fixed. Shatz's theorem says that
\begin{equation*}
\overline{\cC}_{\bm{\mu}} = \bigcup_{\bm{\mu}' \geq \bm{\mu}} \cC_{\bm{\mu}'}.
\end{equation*}
The minimal $\bm{\mu}$ appearing in the decomposition is $\bm{\mu}_0=(d/r,\dots,d/r)$, so $\cC_{\bm{\mu}_0}=\cC^{ss}$ is the subspace of semistable holomorphic structures. In particular, this subspace is open and dense in $\cC$.

\section{The Poincaré polynomial of $\check{\cN}$}
Going back to the study of $\check{\cN}$, we fix a complex vector bundle $E$ of rank $2$ and degree $1$, and let $\cC=\cC_E$ denote the space of holomorphic structures on $E$. In this case, the possible Harder--Narasimhan types of an element of $\cC$ are of the form $(k+1,-k)$, for $k\in \mathbb{N}$. This gives a stratification $\cC=\bigcup_{k=0}^\infty \cC_k$, for $\cC_k:=\cC_{(k+1,-k)}$. Note that $\cC_0=\cC_{(1,0)}$ is the space of stable holomorphic structures.

The complex gauge group $\cG^\C=\cG^\C_E$ acts on $\cC$ preserving the strata $\cC_k$. Thus we obtain a $\cG^\C$-equivariant stratification. Atiyah and Bott \cite{AtiyahBott} showed that this stratification is, in fact, perfect, and that the codimensions of the strata $\cC_k$ are finite and equal to $2g+4k-4$, so we can compute
\begin{equation*}
\cG^\C P_t(\cC) = \cG^\C P_t(\cC_0) + \sum^\infty_{k=1} t^{2g+4k-4} \cG^\C P_t(\cC_k).
\end{equation*}
In particular, this is telling us that we can obtain the $\cG^\C$-equivariant Poincaré polynomial for $\cC_0$ in terms of the Poincaré polynomials for $\cC$ and for the strata $\cC_k$. Atiyah and Bott proved that the cohomology of each of the strata $\cC_k$, for $k>0$ admits an explicit description, and its $\cG^\C$-equivariant Poincaré polynomial is given by
\begin{equation*}
\cG^\C P_t(\cC_k) = \left( \frac{(1+t)^{2g}}{1-t^2} \right)^2.
\end{equation*}
On the other hand, since $\cC$ is an affine space, it is contractible and $H^*_{\cG^\C}(\cC)=H^*(B\cG^\C)$. Atiyah and Bott also compute explicitly $H^*(B\cG^\C)$ and find that its Poincaré polynomial is equal to
\begin{equation*}
P_t(B\cG^\C) = \frac{[(1+t)(1+t^3)]^{2g}}{(1-t^2)^2(1-t^4)}.
\end{equation*}

The group $\cG^\C$ has the subgroup $\C^*$ of constant central gauge transformations, which acts trivially on $\cC$. The quotient $\overline{\cG}^\C=\cG^\C/\C^*$ acts freely on the stable locus $\cC_0$, and thus we have $\cN=\cC_0/\overline{\cG}^\C$. The classifying space $B\cG^\C$ decomposes as a product $B\cG^\C=B\C^* \times B\overline{\cG}^\C$, so we can write
\begin{equation*}
H_{\cG^\C}^*(\cC_0) = H^*(B\C^*) \otimes H^*_{\overline{\cG}^{\C}}(\cC_0) = H^*(B\C^*) \otimes H^*(\cC_0/\overline{\cG}^{\C}) = H^*(B\C^*) \otimes H^*(\cN).
\end{equation*}
For the Poincaré polynomial, this implies
\begin{equation*}
P_t(\cN) = (1-t^2)\cG^\C P_t(\cC_0).
\end{equation*}
Putting everything together, we get
\begin{equation*}
P_t(\cN) =(1+t)^{2g} \left[ \frac{(1+t^3)^{2g}-t^{2g}(1+t)^{2g}}{(1-t^2)(1-t^4)}\right].
\end{equation*}
Finally, recall that $\cN= (\check{\cN}\times \Pic^1(X))/\Gamma$ and that $P_t(\Pic^1(X))=(1+t)^{2g}$. Atiyah and Bott also proved that the finite group $\Gamma=\Jac(X)[2]$ acts trivially on $H^*(\Pic^1(X))$ and on $H^*(\check{\cN})$. This implies the formula of Harder--Narasimhan
\begin{equation*}
P_t(\check{\cN}) =\left[ \frac{(1+t^3)^{2g}-t^{2g}(1+t)^{2g}}{(1-t^2)(1-t^4)}\right].
\end{equation*}

\section{The Bialynicki--Birula stratification}
Let $M$ be a smooth complex quasi-projective variety equipped with a $\C^*$-action. We say that $M$ is \emph{semi-projective} if the fixed point locus $M^{\C^*}$ is projective and, for every $m\in M$, there exists some $m_0\in M^{\C^*}$ such that $\lim_{\lambda \rightarrow 0} \lambda \cdot m = m_0$. If $M$ is a semi-projective variety, we can associate with it a Bialynicki--Birula stratification. We start by defining the upward flow: for each point $m_0 \in M^{\C^*}$, we define its \emph{upward flow} as the subspace
\begin{equation*}
U_{m_0} = \left\{ m\in M: \lim_{\lambda \rightarrow 0} \lambda \cdot m =m_0 \right\}.
\end{equation*}
The \emph{Bialynicki--Birula partition} is the decomposition $M=\bigcup_{m_0 \in M^{\C^*}} U_{m_0}$, which indeed covers the whole $M$ since it is assumed to be quasi-projective.
For each connected component $F\subset M^{\C^*}$ of the fixed points, we define its \emph{attractor} as the subset
\begin{equation*}
U_F = \left\{ m \in M: \lim_{\lambda \rightarrow 0} \lambda \cdot m \in F \right\}= \bigcup_{m_0 \in F} U_{m_0}.
\end{equation*}
The \emph{Bialynicki--Birula stratification} is the decomposition
\begin{equation*}
M = \bigcup_{F \in \pi_0(M^{\C})} U_F.
\end{equation*}

The dimension of an upward flow $U_{m_0}$ can be computed as follows. The $\C^*$-action on $M$ determines a linear $\C^*$-action on the tangent space $T_{m_0} M$, and in turn a weight decomposition $T_{m_0}M = \bigoplus_{k\in \Z} (T_{m_0}M)_k$. We let $T_{m_0}^+M = \bigoplus_{k> 0} (T_{m_0}M)_k$. Note that a vector $v\in T_{m_0}M$ is tangent to $U_{m_0}$ if and only if it has positive weight, so we can identify $T_{m_0}U_{m_0}\cong T_{m_0}^+M$. The \emph{index} of the point $m_0$ is defined as the real dimension $i(m_0)=\dim_{\bbR}U_{m_0}=\dim_{\bbR} T^+_{m_0}M$. Note that the index is continuous as a map from $M$ to $\bbN$, so for a connected component $F\subset M^{\C^*}$, the index $i(F)=i(m_0)$, for any $m_0\in F$, is well defined, and we have
\begin{equation*}
\dim_{\bbR} U_F = \dim_{\bbR} F + i(m_0).
\end{equation*}

Suppose moreover that $M$ is endowed with a holomorphic symplectic form $\Omega$ such that $\lambda^* \Omega = \lambda \Omega$. Such a symplectic form identifies the component of $T_{m_0}M$ with weight $k$ with the one with weight $-k$, and thus we have
\begin{equation*}
i(m_0) = \tfrac{1}{2} \dim_{\bbR} M = \dim_{\C} M,
\end{equation*}
so
\begin{equation*}
\dim_{\bbR} U_F = \dim_{\bbR} F + \dim_{\C} M.
\end{equation*}
Moreover, it is not hard to show that in fact the symplectic form $\Omega$ vanishes at the subspace $T_{m_0}^+M$, so in fact the upward flows $U_{m_0}$ are (holomorphic) \emph{lagrangian submanifolds} of $M$.

\section{The $\C^*$-action on $\check{\cM}$}
The space $\check{\cM}$ comes equipped with a $\C^*$-action, defined as
\begin{align*}
  \C^* \times \check{\cM} & \longrightarrow \check{\cM} \\
(\lambda,(\sE,\varphi))    & \longmapsto (\sE,\lambda \varphi).
\end{align*}
The limit $\lim_{\lambda \rightarrow 0} (\sE,\lambda \varphi)$ always exists, so $\check{\cM}$ is in fact semi-projective.

\begin{ej}
Consider the holomorphic symplectic form $\Omega_1=\omega_2 + i\omega_3\in \Omega^{2,0}(\check{\cM})$. \textit{Show that $\lambda^* \Omega_1 = \lambda \Omega_1$.}
\end{ej}

\begin{thm}[Hitchin]
The space of fixed points $\check{\cM}^{\C^*}$ decomposes as a finite union of connected components
\begin{equation*}
\check{\cM}^{\C^*} = \bigcup_{k=0}^{g-1} F_k.
\end{equation*}
The component $F_0$ is the subspace of equivalence classes of pairs $(\sE,\varphi)$ with $\varphi=0$, and thus we can identify $F_0 \cong \check{\cN}$. Denoting $\bar{k}=2g-2k-1$, the component $F_k$ for $k>0$ is a $2^{2g}$-cover of the symmetric product
\begin{equation*}
S^{\bar{k}} X = (X\times \overset{(\bar{k})}{\cdots} \times X)/\mathfrak{S}_{\bar{k}},
\end{equation*}
where $\mathfrak{S}_{\bar{k}}$ is the symmetric group, with Galois group \[\mathrm{Gal}(F_k/S^{\bar{k}}X)\cong \Gamma (\cong (\Z_2)^{2g}).\]
\end{thm}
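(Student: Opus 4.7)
The plan is to characterize \(\C^*\)-fixed points in \(\check{\cM}\) as stable Higgs bundles admitting a compatible \(\C^*\)-action on the underlying bundle, and then to classify these in rank \(2\). A stable Higgs bundle \((\sE,\varphi)\) represents a fixed point if and only if for every \(\lambda \in \C^*\) there exists an automorphism \(g_\lambda\) of \(\sE\) with \(g_\lambda \varphi g_\lambda^{-1} = \lambda \varphi\). If \(\varphi = 0\) this is vacuous, so every stable bundle qualifies and \(F_0 \cong \check{\cN}\). The remaining components correspond to \(\varphi \neq 0\), which is where the real work lies.

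Next, I would argue that when \(\varphi \neq 0\) the family \(\{g_\lambda\}_{\lambda \in \C^*}\) can be chosen to assemble into a homomorphism \(\C^* \to \Aut(\sE)\). Since stable Higgs bundles are simple, \(\Aut(\sE,\varphi) = \C^* \cdot \id\), and a standard rigidity argument (combined with reductivity of \(\C^*\)) lets one lift this family to an honest \(\C^*\)-action on \(\sE\). Decomposing \(\sE\) into weight eigenspaces yields \(\sE = \bigoplus_i \sE_i\), and the condition \(g_\lambda \varphi g_\lambda^{-1} = \lambda \varphi\) forces \(\varphi(\sE_i) \subset \sE_{i+1} \otimes \bm{\Omega}^1_X\). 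In rank \(2\), after renormalizing weights, the only nontrivial possibility is \(\sE = \sL \oplus M\) with weights \(0\) and \(1\), so that \(\varphi\) is concentrated in its off-diagonal component \(\varphi\colon \sL \to M \otimes \bm{\Omega}^1_X\). In particular \(M\) is a proper \(\varphi\)-invariant subbundle.

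Applying stability and a numerical check then pins down which components arise. Writing \(k := \deg \sL\) and using \(\det \sE = \xi\) of degree \(1\), so \(\deg M = 1 - k\), stability against \(M\) forces \(\deg M < \tfrac12\), hence \(k \geq 1\). Nontriviality of \(\varphi \in H^0(X, \sL^{-2} \otimes \xi \otimes \bm{\Omega}^1_X)\) requires the degree \(2g - 1 - 2k\) of this line bundle to be nonnegative, hence \(k \leq g-1\). This yields exactly the indexing \(k \in \{1,\dots,g-1\}\).

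Finally, I would describe each \(F_k\) explicitly. The data of such a fixed point is equivalent to a pair \((\sL,[\varphi])\) with \(\deg \sL = k\) and \([\varphi] \in \bbP H^0(X, \sL^{-2} \otimes \xi \otimes \bm{\Omega}^1_X)\), where the projectivization accounts for the scaling of \(\varphi\) by the diagonal torus in \(\Aut(\sL \oplus M)\). Sending such a pair to the zero divisor of \(\varphi\) defines a morphism
\[
F_k \longrightarrow S^{\bar k} X, \qquad \bar k = 2g - 2k - 1,
\]
whose fibre over an effective divisor \(D\) is the set of \(\sL\) with \(\sL^{\otimes 2} \cong \xi \otimes \bm{\Omega}^1_X \otimes \sO_X(-D)\). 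Since the line bundle on the right has degree \(2k\), this fibre is a torsor under \(\Jac(X)[2] = \Gamma\), giving the desired \(2^{2g}\)-Galois cover. The main obstacle is the rigidity step lifting \(\lambda \mapsto [g_\lambda]\) to a genuine \(\C^*\)-action on \(\sE\); once this upgrade is in hand the weight decomposition of \(\sE\) is canonical, and the remainder reduces to bookkeeping with line bundles and effective divisors.
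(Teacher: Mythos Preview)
Your proposal is correct and follows essentially the same approach as the paper: lift the $\C^*$-action to a weight decomposition of $\sE$, read off the strictly triangular form of $\varphi$, and describe $F_k$ via the zero-divisor map to $S^{\bar k}X$ (which is the paper's fibered product $S^{\bar k}X \times_{\Pic^{\bar k}(X)} \Pic^k(X)$). You are actually a bit more thorough than the paper in two places: you explicitly derive the lower bound $k\geq 1$ from stability against the $\varphi$-invariant subbundle $M$, and you flag the rigidity step lifting $\lambda\mapsto [g_\lambda]$ to an honest $\C^*$-action---the paper simply asserts the existence of this action.
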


\begin{proof}
  Clearly $(\sE,0)$ is a fixed point of the $\C^*$-action. Moreover, stability for the pair $(\sE,0)$ amounts to stability for $\sE$, so $F_0\cong \check{\cN}$. Now, if $\varphi \neq 0$, then $(\sE,\varphi)$ is a fixed point of the $\C^*$-action if there is an induced $\C^*$-action $\lambda \mapsto (f_\lambda: \sE \rightarrow \sE)$ with $(\lambda \varphi) \circ f_\lambda = (f_\lambda \otimes \id_{\bm{\Omega}^1_X}) \circ \varphi$. The $\C^*$-action on $\sE$ induces an splitting
  \begin{equation*}
\sE = \sL \oplus \sL^{-1} \xi,
  \end{equation*}
  where $\sL$ is a holomorphic line bundle of degree $k=\deg \sL$ and we recall that $\xi=\det \sE$. Compatibility of the action with $\varphi$ implies that $\varphi$ can be written in lower triangular form as
  \begin{equation*}
    \varphi =
    \begin{pmatrix}
      0 & 0 \\
      \phi & 0
    \end{pmatrix},
  \end{equation*}
  for $\phi \in H^0(X,L^{-2}\bm{\Omega}^1_X \xi)$. For $\phi$ to be non-trivial we need that
  \begin{equation*}
0 \leq \deg (L^{-2}\bm{\Omega}^1_X \xi) =-2k + 2g-2 + 1= \bar{k},
  \end{equation*}
  so we conclude that $k \leq g -1$. We can thus identify $F_k$ as the moduli space of pairs
  \begin{equation*}
(\sL,\phi: \sL \rightarrow \sL^{-1} \bm{\Omega}^1_X \xi)
  \end{equation*}
  with $\deg \sL = k$. Consider now the map $\Pic^{k}(X)\rightarrow \Pic^{\bar{k}}(X)$, $\sL \mapsto \sL^{-2} \bm{\Omega}^1_X \xi$ and the Abel--Jacobi map
  \begin{align*}
    S^{\bar{k}} X & \longrightarrow  \Pic^{\bar{k}}(X) \\
    D=\sum_{i=1}^{\bar{k}} x_i  & \longmapsto \sO_X(D).
  \end{align*}
The space $F_k$ can then be identified as the fibered product
\begin{equation*}
F_k = S^{\bar{k}} X \times_{\Pic^{\bar{k}}(X)} \Pic^k(X).
\end{equation*}
The natural projection $F_k \rightarrow S^{\bar{k}} X$ is indeed a $2^{2g}$-cover, induced by the $\Gamma$-action on $F_k$.
\end{proof}

Recall that the $\C^*$-action on $\check{\cM}$ induces the Bialynicki--Birula stratification
\begin{equation*}
\check{\cM} = \bigcup_{k=0}^{g-1} U_k,
\end{equation*}
where the strata $U_k$ are the \emph{attractors}
\begin{equation*}
U_k := U_{F_k}= \left\{ (\sE,\varphi) \in \check{\cM}: \lim_{\lambda \rightarrow 0} (\sE,\lambda \varphi) \in F_k \right\}.
\end{equation*}

\begin{rmk}
Equivalently, Hitchin obtained the stratification above by a differential-geometric method, using the Morse function
\begin{equation*}
f(\sE,\varphi) = 2i \int_X \tr(\varphi \varphi^{\dagger_H}),
\end{equation*}
for $H$ a HEH metric on $(\sE,\varphi)$.
\end{rmk}

\section{The Poincaré polynomial of $\check{\cM}$}
We can use the Bialynicki--Birula stratification to compute the Poincaré polynomial of $\check{\cM}$. Indeed, Hitchin \cite{Hitchin_SelfDuality} showed that this stratification is perfect, so we can compute
\begin{equation*}
P_t(\check{\cM}) = \sum_{i=0}^{g-1} t^{\mathrm{codim}_{\bbR} U_k} P_t(U_k) = P_t(U_0) + \sum_{i=1}^{g-1} t^{\mathrm{codim}_{\bbR} U_k} P_t(U_k).
\end{equation*}
Note that $U_0\subset \check{\cM}$ is open, so it has codimension $0$.

First, we compute the real codimensions of the $U_k$, for $k>0$,
\begin{equation*}
\mathrm{codim}_{\bbR} U_k = \dim_{\C} \check{\cM} - \dim_{\bbR} F_k = 6g - 6 - \dim_{\bbR} S^{\bar{k}} X = 6g - 6 - 2\bar{k} = 2(g + 2k - 2) .
\end{equation*}
Therefore, the Poincaré polynomial of $\check{\cM}$ is
\begin{equation*}
P_t(\check{\cM}) = P_t(U_0) + \sum_{i=1}^{g-1} t^{2(g+2k-2)} P_t(U_k).
\end{equation*}
Now, since each $F_k$ is a deformation retract of $U_k$, we have $P_t(U_k)=P_t(F_k)$, so we can compute
\begin{equation} \label{eq:poincM}
P_t(\check{\cM}) = P_t(\check{\cN}) + \sum_{i=1}^{g-1} t^{2(g+2k-2)} P_t(F_k).
\end{equation}

The missing pieces are the Poincaré polynomials of the $F_k$, for $k>0$. Recall that $F_k$ is a Galois $\Gamma$-covering of $S^{\bar{k}}X$. Therefore, there is a $\Gamma$-action on the cohomology ring $H^*(F_k,\C)$, inducing a decomposition
\begin{equation*}
H^*(F_k, \C) = H^*(F_k,\C)^\Gamma \oplus \bigoplus_{\kappa \in \hat{\Gamma}\setminus \left\{ 0 \right\}} H^*(F_k,\C)_{\kappa}.
\end{equation*}
Where $H^*(F_k,\C)^\Gamma$ is the invariant part, $\hat{\Gamma}=\Hom(\Gamma,\C^*)$ is the group of characters of $\Gamma$ and $H^*(F_k,\C)_{\kappa}$ is the corresponding isotypic component. The invariant part is
\begin{equation*}
H^*(F_k,\C)^\Gamma = H^*(S^{\bar{k}}X,\C).
\end{equation*}
Now, the cohomology of the symmetric product $S^{\bar{k}}X$ was studied by Macdonald \cite{Macdonald}, who in particular showed that $P_t(S^{\bar{k}}X)$ is the coefficient in $x^{\bar{k}}$ of the expression
\begin{equation} \label{eq:macdonald}
\frac{(1+xt)^{2g}}{(1-x)(1-xt^2)}.
\end{equation}
On the other hand, the isotypic component $H^*(F_k,\C)_{\kappa}$ coincides with the cohomology of $S^{\bar{k}}X$ with coefficients in the local system
\begin{equation*}
\C_{F_k,\kappa} = F_k \times^{\kappa:\Gamma \rightarrow \C^*} \C := F_k \times \C /\left\{ (p,z) \sim (\gamma\cdot p, \kappa(\gamma)^{-1} z) \right\} .
\end{equation*}
We write
\begin{equation*}
H^*(F_k,\C)_\kappa = H^*(S^{\bar{k}}X,\C_{F_k,\kappa}).
\end{equation*}

Recall now the isomorphism $w:\hat{\Gamma}\rightarrow \Gamma$ induced by the Weil pairing, and put $\gamma=w(\kappa)$. The element $\gamma$ determines a line bundle $\sL_\gamma\rightarrow X$ of order $2$. We denote by $\C_\gamma$ the sheaf of locally constant sections of $\sL_\gamma$, which is a local system of rank $1$. From this local system, we can define a local system $\C_{\gamma,\bar{k}}$ of rank $1$ on $X^{\bar{k}}$ by putting \[\C_{\gamma,\bar{k}}=\pr_1^* \C_\gamma \otimes \pr_2^* \C_\gamma \otimes\cdots \otimes \pr_{\bar{k}}^* \C_\gamma.\]
We have a natural action of the symmetric group $\mathfrak{S}_n$ on the cohomology ring $H^*(X^{\bar{k}},\C_{\gamma,\bar{k}})$, and the cohomology $H^*(F_k,\C)_\kappa$ can be identified with the $\mathfrak{S}_n$-invariant part.

If $\C_\gamma$ is trivial, then $H^*(X^{\bar{k}},\C_{\gamma,\bar{k}})^{\mathfrak{S}_n}=H^*(X^{\bar{k}},\C)^{\mathfrak{S}_n}=H^*(S^{\bar{k}}X,\C)$, that was already considered. Now, if $\C_\gamma$ is not trivial, then it cannot have global sections, so $H^0(X,\C_\gamma)=0$ and, by Poincaré duality $H^2(X,\C_\gamma)=0$. For $H^1$, the Hodge decomposition gives an isomorphism
\begin{equation*}
H^1(X,\C_\gamma) \cong H^1(X,\sL_\gamma) \oplus H^0(X,\sL_\gamma \bm{\Omega}^1_X).
\end{equation*}
Hence, since $h^0(X,\sL_\gamma)=0$, using Serre duality and the Riemann--Roch theorem we get
\begin{equation*}
\dim_\C H^1(X,\C_\gamma) = h^1(X,\sL_\gamma) + h^0(X,\sL_\gamma\bm{\Omega}^1_X) = 2 h^1(X,\sL_\gamma) = 2g-2.
\end{equation*}

Consider now the fibration $X\rightarrow X^{\bar{k}} \rightarrow X^{\bar{k}-1}$. Using Mayer--Vietoris, since only $H^1(X,\C_\gamma)$ is non-zero, we get an isomorphism
\begin{equation*}
H^*(X^{\bar{k}},\C_{\gamma,\bar{k}}) \cong H^1(X,\C_\gamma) \otimes H^{*-1}(X^{\bar{k}-1},\C_\gamma) \cong \dots \cong H^1(X,\C_\gamma)^{\otimes \bar{k}}.
\end{equation*}
By construction, the symmetric part $H^*(X^{\bar{k}},\C_{\gamma,\bar{k}})$ is the alternating part of $H^1(X,\C_\gamma)^{\otimes \bar{k}}$, so
\begin{equation*}
H^*(F_k,\C)_\kappa= H^*(S^{\bar{k}}X,\C_{F_k,\kappa}) =  H^*(X^{\bar{k}},\C_{\gamma,\bar{k}})^{\mathfrak{S}} = \wedge^{\bar{k}} H^1(X,\C_\gamma).
\end{equation*}
We conclude that the Poincaré polynomial of $F_k$ is
\begin{equation} \label{eq:poincFk}
P_t(F_k) = P_t(S^{\bar{k}} X) + (2^{2g}-1) {2g-2 \choose \bar{k}}.
\end{equation}

\begin{ej}
Derive Hitchin's formula \eqref{eq:poincMthm} from the formulas \eqref{eq:poincM} and \eqref{eq:poincFk}, using Macdonald's formula \eqref{eq:macdonald} and the formula \eqref{eq:poincN} of Harder--Narasimhan.
\end{ej}

\section{Topological mirror symmetry in rank $2$} \label{ss:topologicalmirrorproof}
It turns out that the unraveled topological mirror symmetry formula for $r=2$ and $d=1$, for $\kappa\in \hat{\Gamma}$ and $\gamma=w(\kappa)$,
\begin{equation}
E_\kappa(\check{\cM};u,v) = E(\check{\cM}^\gamma/\Gamma,L_{\hat{\beta},\gamma};u,v)(uv)^{F(\gamma)},
\end{equation}
follows almost immediately from the calculations of previous section.

For the left hand side term, we observe from the decomposition
\begin{equation*}
H^*_c(\check{\cM}) = H^*(\check{\cN}) \oplus \bigoplus_{k=1}^{g-1} H_c^{*+ 2(g+2k-2)}(F_k)
\end{equation*}
and from the fact that $\Gamma$ acts trivially on $H^*(\check{\cN})$, that
\begin{equation*}
H^*_c(\check{\cM})_\kappa = \bigoplus_{k=1}^{g-1} H_c^{*+ 2(g+2k-2)}(F_k)_\kappa.
\end{equation*}
Recall that
\begin{equation*}
H_c^*(F_k)= H_c^{\bar{k}}(F_k) = \wedge^{\bar{k}} H^1(X,\C_\gamma)
\end{equation*}
for $\gamma=w(\kappa)$ and $\bar{k}=2g-2k-1$. Note that
\begin{equation*}
\bar{k} + g+2k-2 = 3g-3.
\end{equation*}
Therefore,
\begin{equation*}
E_\kappa(\check{\cM};u,v) = \sum_{k=1}^{g-1} (uv)^{3g-3} \sum_{p+q=\bar{k}} h^{p,q}(\wedge^{\bar{k}} H^1(X,\C_\gamma))u^p v^q.
\end{equation*}
It follows from our computations in last section that the cohomology group $H^1(X,\C_\gamma)$ has Hodge type $(g-1,g-1)$ and thus $h^{p,q}(\wedge^{\bar{k}} H^1(X,\C_\gamma))={g-1 \choose p}{g-1 \choose q}$. Therefore,
\begin{align*}
  E_\kappa(\check{\cM};u,v) &= (uv)^{3g-3} \sum_{\bar{k}=1,\text{odd}}^{2g-2} \ \sum_{p+q=\bar{k}} {g-1 \choose p}{g-1 \choose q} u^p v^q \\
  &= \tfrac{1}{2}(uv)^{3g-3}[(1-u)^{g-1}(1-v)^{g-1} - (1+u)^{g-1}(1+u)^{g-1}].
\end{align*}

We consider now the right hand side. A non-trivial element $\gamma \in \Gamma$ determines an unramified Galois $\Z_2$-cover $\pi_\gamma:X_\gamma \rightarrow X$, and we have a commutative diagram
\begin{center}
  \begin{tikzcd}
   \bm{T}^* \Pic^1(X_\gamma) \ar{r}{(\pi_\gamma)_*} \ar{d} & \cM \ar{d}{\det} \supset \check{\cM} \\
   \bm{T}^* \Pic^1(X_\gamma) \ar{r}{\Nm_{\pi_\gamma}} & \bm{T}^* \Pic^1(X) \ni (\xi,0) .
  \end{tikzcd}
\end{center}
From this diagram, it is not hard to see\footnote{This is a well known result of Narasimhan and Ramanan \cite{NarasimhanRamanan}, the reader can also consult \cite{OscarRamanan}.} that $\check{\cM}^\gamma$ is a torsor under $\bm{T}^* P_\gamma$, where $P_\gamma:=P(\pi_\gamma:X_\gamma\rightarrow X)$ is the Prym variety associated with the cover $\pi_\gamma$. Therefore,
\begin{equation*}
E(\check{\cM}^\gamma/\Gamma,L_{\hat{\beta},\gamma};u,v) = (uv)^{g-1} E(P_\gamma/\Gamma,L_{\hat{\beta},\gamma};u,v).
\end{equation*}
The Hodge numbers of the $(g-1)$-dimensional abelian variety $P_\gamma$ are easy to compute: we have
\begin{equation*}
h^{p,q}(P_\gamma) = {g-1 \choose p}{g-1 \choose q}.
\end{equation*}
Hence,
\begin{equation*}
E(P_\gamma;u,v) = \sum_{k=0}^{g-1} \sum_{p+q = k} {g-1 \choose p}{g-1 \choose q} u^p v^q = (1+u)^{g-1}(1+v)^{g-1}.
\end{equation*}
If we take coefficients on the local system $L_{\hat{\beta},\gamma}$ over $P_\gamma/\Gamma$ we just get the averaging
\begin{align*}
  E(P_\gamma/\Gamma,L_{\hat{\beta},\gamma};u,v) &= \frac{1}{2^{2g}} \sum_{\gamma' \in \Gamma}w(\gamma,\gamma') (1+ w(\gamma,\gamma')u)^{g-1}(1+ w(\gamma,\gamma')v)^{g-1} \\
  & = \tfrac{1}{2}[(1-u)^{g-1}(1-v)^{g-1} - (1+u)^{g-1}(1+u)^{g-1}].
\end{align*}
To prove the equality, we just need to show that the fermionic shift $F(\gamma)$ is equal to $2g-2$. But, indeed, since the $\gamma$-action respects the holomorphic symplectic structure, the fermionic shift is just half the complex codimension of $\bm{T}^* P_\gamma$ inside of $\check{\cM}$, which is equal to
\begin{equation*}
  \tfrac{1}{2}[6g-6 - 2(g-1)] = 2g-2.
\end{equation*}

\begin{rmk}
A surprising feature of topological mirror symmetry is the fact that the left hand side accounts for the contributions of the non-trivial components of the fixed points of the $\C^*$-action (since $\Gamma$ acts trivially on the cohomology of $\check{\cN}$). On the other hand, $\Gamma$ acts freely on these non-trivial components, so the contributions of the right hand side come essentially from the fixed points $\check{\cN}^\gamma \subset \check{\cN}$. In terms of representation theory, the fixed-points spaces $\check{\cM}^\gamma$ were interpreted by Ngô \cite{Ngo} as related to the \emph{endoscopic subgroups} of $\SL_2$.  In his monumental work, Ngô developed a very profound understanding of the cohomology of the moduli spaces of $G$-Higgs bundles, for general reductive $G$, and also over fields of positive characteristic. In particular, Ngô found some formulas from which, by taking trace of the Frobenius automorphism (when considered in positive characteristic) he managed to deduce the Fundamental Lemma of Langlands and Shelstad, which is one of the crucial pieces of the (classical) Langlands program.
\end{rmk}

\nocite{*}
\printbibliography
\end{document}